\newtheorem{thm} {Theorem} [section]
\newtheorem{Def} {Definition} 
\newtheorem{lem} [thm]{Lemma} 
\newtheorem{prop} [thm]{Proposition} 
\newtheorem{coro} [thm]{Corollary} 
\newtheorem{rem} [thm]{Remark} 
\def\s{{\rm s} } 
\def\sig{\sigma} 
\def\a{\alpha} 
\def\k{\kappa} 
\def\grad{\boldsymbol{\nabla} } 
\def\d{{\rm d} } 
\def\x{{\boldsymbol{x} } } 
\def\p{\partial} 
\def\div{{\grad \cdot} } 
\def\n{{\boldsymbol n} } 
\def\O{\Omega} 
\def\M{{\bf H} } 
\def\Ii{{\mathcal I} } 
\def\L{{\boldsymbol \Lambda} } 
\def\tf{{t_{\rm f} } } 
\def\Mm{\mathcal M} 
\def\Mmm{\mathfrak M}
\def\Dd{\mathcal D} 
\def\Ff{\mathcal F} 
\def\Ee{\mathcal E}
\def\Vv{\mathcal V} 
\def\Tt{\mathcal T}
\def\A{{\bf A}} 
\def\B{{\bf B} } 
\def\BV{{\rm BV} } 
\def\bV{{\bf V} } 
\def\G{\Gamma} 
\def\R{\mathbb{R} } 
\def\P{\mathbb{P} } 
\def\be{\begin{equation} } 
\def\ee{\end{equation} }
\def\bfeta{\boldsymbol{\eta} } 
\def\bpsi{\boldsymbol{\psi} } 
\def\bphi{\boldsymbol{\phi} } 
\def\eps{\epsilon} 
\def\M{{\bf M} } 
\def\dt{{\Delta t} } 
\def\bv{{\boldsymbol v} } 
\def\bh{{\boldsymbol h} } 
\def\bw{{\boldsymbol w} } 
\def\u{{\boldsymbol u} } 
\def\bmu{{\boldsymbol \mu} } 
\def\e{{\rm e} } 
\def\bfe{{\bf e} } 
\def\bd{{\boldsymbol \delta} } 
\def\ds{\displaystyle} 
\def\b{\beta} 
\def\h#1{{\widehat{#1} } } 
\def\ov#1{{\overline{#1} } } 
\def\wt#1{{\widetilde{#1} } } 
\def\meas{{\rm meas} } 
\def\bdt{{\boldsymbol{\Delta} t} } 
\def\1{{\bf 1} } 
\def\0{{\bf 0} } 
\def\hf{\mathfrak{h} } 
\def\gf{\mathfrak{g} } 
\def\Eee{\mathfrak{E} } 
\def\ICI{\begin{center} \color{red} \rule{\textwidth} {1pt} \\ICI \\\rule{\textwidth} {1pt} \end{center} }
\newcounter{cst}
\begin{document} 
\title[Free Energy diminishing FV scheme for parabolic equations]{Numerical analysis of a robust free energy diminishing Finite Volume scheme for parabolic equations with gradient structure} 
\author{Cl\'ement Canc\`es} 
\address{
Cl\'ement Canc\`es ({\tt clement.cances@inria.fr}).
Team RAPSODI, Inria Lille -- Nord Europe, 40 av. Halley, F-59650 Villeneuve d'Ascq, France.
} 
\curraddr{} 
\thanks{This work was supported by the French National Research Agency ANR (project GeoPor, grant ANR-13-JS01-0007-01).} 
\author{Cindy Guichard} 
\address{
Cindy Guichard ({\tt guichard@ljll.math.upmc.fr}).
Sorbonne Universit\'es, UPMC Univ. Paris 06, CNRS, INRIA, CEREMA, UMR 7598, 
Laboratoire Jacques-Louis Lions, \'Equipe ANGE, 4, place Jussieu 75005, Paris, France.
} 

\begin{abstract} 
We present a numerical method for approximating the solutions of degenerate 
parabolic equations with a formal gradient flow structure. The numerical method we propose 
preserves  at the discrete level the formal gradient flow structure, allowing the use of 
some nonlinear test functions in the analysis. 
The existence of a solution to and the convergence of the scheme 
are proved under very general assumptions on the continuous problem 
(nonlinearities, anisotropy, heterogeneity) and on the mesh. 
Moreover, we provide numerical evidences of the efficiency and of the robustness of our approach. 
\end{abstract} 

\maketitle

\section{Introduction} 

Many problems coming from physics (like e.g. porous media flows modeling~\cite{Bear72,AKM90,CP12}) 
or biology (like e.g. chemotaxis modeling~\cite{KS71}) lead to 
degenerate parabolic equations or systems. 
Many of these models can be interpreted as gradient flows in appropriate geometries. 
For instance, such variational structures were depicted for porous media 
flows~\cite{Otto01, LM13_Muskat, CGM15}, chemotaxis processes in biology~\cite{Bla14_KS}, 
superconductivity~\cite{AS08,AMS11}, 
or semiconductor devices modeling~\cite{Mie11, KMX15} (this list is far from being complete).

Designing accurate numerical schemes for approximating their solutions is therefore a major issue. 
In the case of porous media flow models --- used e.g. in oil-engineering, water resources management or 
nuclear waste repository management --- the problems may moreover be highly anisotropic and heterogeneous. 
As an additional difficulty, the meshes are often prescribed by geological data, yielding non-conformal grids 
made of elements of various shapes. This situation can also be encountered in mesh adaptation procedures. 
Hence, the robustness of the method w.r.t. anisotropy and to the grid is 
an important quality criterion for a numerical method in view of practical applications.

In this contribution, 
we focus on the numerical approximation of a single nonlinear Fokker-Planck equation.
Since it contains crucial difficulties arising in the applications, namely degeneracy and possibly strong anisotropy, 
the discretization of this nonlinear Fokker-Planck equation appears to be a keystone for the approximation of 
more complex problems.

\subsection{Presentation of the continuous problem} 
Let $\O$ be a polyhedral connected open bounded subset of $\R^d$ ($d=2$ or $3$), and let $\tf>0$ be a finite time horizon.  
In this contribution, we focus on the discretization of the model problem 
\be\label{eq:1} 
\begin{cases} 
\p_t u - \div\left(\eta(u) \L \grad (p(u) + V) \right) = 0 &\text{ in } Q_\tf := \O\times(0,\tf), \\
 \left(\eta(u) \L \grad (p(u) + V) \right)\cdot \n = 0 & \text{ on } \p\O\times(0,\tf), \\
 u_{|_{t=0} } = u_0 & \text{ in } \O,
\end{cases} 
\ee
which appears to be a keystone before discretizing more complex problems.
We do the following assumptions on the data of the continuous problem~\eqref{eq:1}.
\begin{itemize} 
\item[(${\bf A}1$)]  The function $\eta :\R_+\to\R_+$ is a continuous function such that $\eta(0) = 0$, 
	$\eta(u) >0$ if $ u \neq 0$ and $\eta$ is non-decreasing on $\R_+$. The function $\eta$ is continuously 
	extended on the whole $\R$ into an even function. 
	It is called the {\em mobility} function in reference to the porous media flow context.
	\medskip 
	
\item[(${\bf A}2$)] The so-called {\em (entropy) pressure} function $p\in L^1_{\rm loc} (\R_+)$ 
	is absolutely continuous and increasing on 
	$(0,+\infty)$ (i.e., $0 < p' \in L^1_{\rm loc} ((0,+\infty))$), and satisfies $\lim_{u\to+\infty} p(u) = +\infty$.
	In the case where $p(0) = \lim_{u \searrow0} p(u)$ is finite, the function $p$ is extended into an increasing absolutely 
	continuous function $p:\R\to\R$ defined by 
	\be\label{eq:extend-p} 
	p(u) = 2p(0) -p (-u), \qquad \forall u \le 0.
	\ee
	We denote by 
	$$\Ii_p = \begin{cases} 
	\R_+^\ast & \text{ if } p(0) = -\infty,\\
	\R & \text{ if } p(0) > -\infty.\\
	\end{cases} $$
	and by $\ov \Ii_p$ its closure in $\R$.
	We additionally require that the function $\sqrt{\eta} p'$ belongs to $L^1_{\rm loc} (\R_+)$ 
	(and is in particular integrable near $0$) and that $\lim_{u \searrow 0} \sqrt{\eta(u)} p(u) = 0$.
	
	\medskip
\item[(${\bf A}3$)] The tensor field $\L:\O \to (L^\infty(\R))^{d\times d} $ is such that $\L(\x)$ is symmetric for almost every $\x \in \O$. 
	Moreover, we assume that there exist $\lambda_\star>0$ and $\lambda^\star \in [\lambda_\star,+\infty)$ such that 
	\be\label{A:L} 
	\lambda_\star |{\bf v} |^2 \le \L(\x) {\bf v} \cdot {\bf v} \le \lambda^\star |{\bf v} |^2, \qquad \forall {\bf v} \in \R^d, \; \text{for a.e. } \x \in \O.
	\ee
	$\L$ is called the {\em intrinsic permeability} tensor field still in reference to the porous media flow context.
	\medskip
\item[(${\bf A}4$)] The initial data $u_0$ is assumed to belong to $L^1(\O)$. 
	Moreover, defining the convex function $\Gamma:\ov \Ii_p \to \R_+$ (called \emph{entropy function} in the following) 
	by 
	\be\label{eq:G} 
	\Gamma(u) = \int_{1} ^u (p(a) - p(1)) \d a, \qquad \forall u \in \Ii_p, 
	\ee
	we assume that the following positivity and finite entropy conditions are fulfilled:
	\be\label{A:u0} 
	u_0\ge 0 \text{ a.e. in } \O, \qquad \int_\O u_0 \d\x >0, \qquad \int_\O \G(u_0)\d\x < +\infty.
	\ee
	\medskip
\item[(${\bf A}5$)] The {\em exterior potential} $V: \O \to \R$ is Lipschitz continuous. 
\end{itemize} 
Throughout this paper, we adopt the convention 
\be\label{eq:conv-G} 
\text{$\G(u) = +\infty$\quad  if $p(0) = -\infty$ and $u <0$.} 
\ee

In order to give a proper mathematical sense to the solution of~\eqref{eq:1}, 
we need to introduce the function $\xi:\R_+ \to \R_+$ defined by 
\be\label{eq:xi} 
\xi(u) = \int_0^u \sqrt{\eta(a)} p'(a) \d a, \qquad \forall u \ge 0.
\ee
Note that $\xi$ is well defined since we assumed that $\sqrt{\eta} p'$ belongs to $L^1_{\rm loc} (\R_+)$. 
Moreover, in the case where $p(0)$ is finite, then the formula~\eqref{eq:xi} can be extended to the whole $\R$, 
leading to an odd function. We additionally assume that the following  relations between $\xi$, $\eta$ and $\G$ hold:
\begin{itemize} 
\item[(${\bf A}6$)] There exists $C>0$ such that 
\be\label{eq:C_xi} 
0 \le \xi(u) \le C\left(1+\G(u)\right), \qquad \forall u \in [0,+\infty).
\ee
Moreover, we assume that 
\be\label{eq:dlVP} 
\frac{\G(u)} {\eta(u)} \to +\infty \quad \text{ as } u \to +\infty,  
\ee
and that the function
\be\label{eq:H-unif-cont} 
 \sqrt{\eta\circ \xi^{-1} } \;\text{is uniformly continuous on the range of $\xi$}.
\ee
\end{itemize}

\begin{Def} [weak solution]\label{def:weak} 
A measurable function $u$ is said to be a weak solution to problem~\eqref{eq:1} if 
\begin{enumerate} [{\bf i.}]
\item the functions $u$ and $\eta(u)$ belong to $L^\infty((0,\tf);L^1(\O))$; 
\item the function $\xi(u)$ belongs to $L^2((0,\tf);H^1(\O))$; 
\item for all function $\psi \in C^\infty_c(\ov \O \times [0,\tf);\R)$, one has 
\begin{multline} 
\label{eq:weak} 
\iint_{Q_\tf} u \p_t \psi \, \d\x \d t + \int_\O u_0 \psi(\cdot,0)\d\x \\
- \iint_{Q_\tf} \eta(u) \L \grad V \cdot \grad \psi \d\x \d t - \iint_{Q_\tf} \L \grad \xi(u)  \cdot \sqrt{\eta(u)} \grad \psi \d\x\d t = 0.
\end{multline} 
\end{enumerate} 
\end{Def} 

Following the seminal work of~\cite{AL83}, there exists at least one  weak solution $u$ to the problem~\eqref{eq:1}. 
Denoting by 
$$\phi(u) = \int_0^u \eta(a) p'(a) \d a, \qquad \forall u \in \Ii_p,$$
the uniqueness of the solution (and even a $L^1$-contraction principle) is ensured as soon as $\eta\circ \phi^{-1} \in C^{0,1/2} $ (cf. \cite{Otto96}, see also~\cite{AB04} 
for a slightly weaker condition in the case of a smooth domain $\O$).
Moreover, $u$ belongs to $C([0,\tf];L^1(\O))$ (cf.~\cite{Cont_L1}) and $u(\cdot, t) \ge 0$ for all $t \in [0,\tf]$ 
thanks to classical monotonicity arguments.

\begin{rem}\label{Rem:A}
Assumptions (${\bf A}1$)--(${\bf A}6$) formulated above deserve some comments. 
\begin{itemize}
\item First of all, let us stress that Assumptions  (${\bf A}1$)-- (${\bf A}2$) and  (${\bf A}6$) 
are satisfied if $\eta(u) = u$ and $p(u) = \log(u)$ as in the seminal paper of 
Jordan, Kinderlehrer, and Otto~\cite{JKO98}. One can also deal with power like pressure functions 
$p(u) = u^{m-1}$, but only for $m >1$. Our study does not cover the case of the fast-diffusion equation $m<1$ 
with linear mobility function (see e.g.~\cite{Otto01}) because of the technical assumption (${\bf A}6$).

\smallskip
\item The most classical choice for the mobility function $\eta:\R_+ \to \R_+$ is $\eta(u) = u$. 
In this case, the convection is linear. In this situation, the formal 
gradient flow structure highlighted in \S\ref{ssec:GF-cont} can be made rigorous following the program
proposed in~\cite{JKO98, Otto01, Agueh05, AGS08, Lisini09} and many others.
The gradient flow structure can also be made rigorous in the case where $\eta$ is concave (cf.~\cite{DNS09}) 
and in the non-degenerate case $\eta(u) \ge \alpha >0$.
\smallskip

\item One assumes in (${\bf A}1$) that $\eta$ is nondecreasing on $\R_+$. This assumption is natural in all 
the applications we have in mind. However, it is not mandatory in the proof and can be easily relaxed: 
it would have been sufficient to assume that there exists $\gamma >0$ such that 
$$
\frac{\eta(a) + \eta(b)}2 = \gamma \max_{s \in [a,b]} \eta(s), \qquad \forall [a,b] \subset \R.
$$
This relation is clearly satisfied with $\gamma = 1/2$ when $\eta$ is nondecreasing.
\smallskip

\item In Assumption (${\bf A}2)$, the condition $\lim_{u\to \infty} p(u) = +\infty$  ensures that 
$$\lim_{u\to \infty} \frac{\G(u)}{u} = +\infty,$$ 
where $\G$ was defined in~(${\bf A}4)$.
Given a sequence $\left(u_n\right)_n \subset L^1(\O)$ with bounded entropy, i.e., 
such that $\int_\O \G(u_n) \d\x$ is bounded, then $\left(u_n\right)_n$ is uniformly equi-integrable thanks to 
the de La Vall\'ee Poussin's theorem~\cite{DM78}. Therefore, a sequence $\left(u_n\right)_n$ with 
bounded entropy relatively compact for the weak topology of $L^1(\O)$.

\smallskip
\item Since the unique weak solution to the problem remains non-negative, the extension of $\eta$ and $p$ 
on the whole $\R$ could seem to be useless. However, in the case where $p(0)$ is finite, the non-negativity of the 
solution may not be preserved by the numerical method we propose. The extension of the functions 
$\eta$ and $p$ on $\R_-$ is then necessary. 

\smallskip
\item Only the regularity of the potential $V$ is prescribed by (${\bf A}5$). Confining potentials like 
e.g. $V(\x) = \frac{|\x - \x_\star|^2}2$ for some $\x_\star \in \O$, or gravitational potential 
$V(\x) = -\boldsymbol{g} \cdot \x$, where $\boldsymbol{g}$ is the (downward) gravity vector can be considered. 
\end{itemize}
\end{rem}

\subsection{Formal gradient flow structure of the continuous problem} \label{ssec:GF-cont} 

Let us highlight the (formal) gradient flow structure of the system~\eqref{eq:1}. 
Following the path of~\cite[\S1.3]{Otto01} (see also~\cite{Mie11,Pel-lecture}), 
the calculations carried out in this section are formal. 
They can be made rigorous under the non-degeneracy assumption $\eta(u) \ge \alpha >0$ for all $u \ge 0$. 

Define the affine space 
$$
\Mmm = \left\{ u: \O \to \R \; \left| \; \int_\O u(\x) \d\x = \int_\O u_0(\x) \d\x \right\} \right.
$$
of the admissible states, called \emph{state space}. 

In order to define a Riemannian geometry on $\Mmm$, 
we need to introduce the tangent space $T_u\Mmm$, given by 
$$
T_u\Mmm = \left\{ w:\O\to \R \; \left| \; \int_\O w(\x) \d\x = 0 \right\} \right..
$$
We also need to define the metric tensor $\gf_u : T_u \Mmm \times T_u\Mmm \to \R$, 
which consists in a scalar product on $T_u \Mm$ (depending on the state $u$)
\be\label{eq:g_u} 
\gf_u (w_1, w_2) = \int_\O \phi_1 w_2 \, \d\x = \int_\O w_1 \phi_2 \d\x  = \int_\O \eta(u) \grad \phi_1 \cdot \L \grad \phi_2 \, \d\x, 
\ee
for all $w_1,  w_2 \in T_u\Mmm$, where $\phi_i$ are defined \emph{via} the elliptic problem
\be\label{eq:id-elliptic} 
\begin{cases} 
-\div(\eta(u) \L \grad \phi_i) = w_i \text{ in } \O, \\[5pt]
\eta(u) \L \grad \phi_i \cdot \n = 0 \text{ on } \p\O, \\[5pt]
\ds \int_\O \phi_i\,  \d\x = 0.
\end{cases} 
\ee
Note that $T_u\Mmm$ does not depend on $u$ (at least in the non-degenerate case), but 
the metric tensor $\gf_u(\cdot,\cdot)$ does. So
we are not in a Hilbertian framework. 

Define the \emph{free energy} functional (cf.~\cite{JKO97})
\be\label{eq:Eee} 
\Eee: \begin{cases} 
\Mmm \to \R \cup \{+\infty\} \\
\ds u \mapsto \Eee(u) = \int_\O \left( \Gamma(u(\x)) + u(\x) V(\x)\right)\d\x, 
\end{cases} 
\ee
and the \emph{hydrostatic pressure} function 
$$
\hf : \begin{cases} 
\Ii_p \times \O \to \R \\
(u,\x) \mapsto \hf (u,\x) = p(u) + V(\x) = D_u\Eee(u).
\end{cases} 
$$
Then given $w \in T_u\Mmm$, one has 
\be\label{eq:DEe} 
D_u \Eee(u) \cdot w =  \int_\O \hf(u(\x),\x) w(\x) \; \d\x 
= \int_\O \eta(u) \grad \hf(u,\cdot) \cdot \L \grad \phi \; \d\x, 
\ee
where $\phi$ is deduced from $w$ using the elliptic problem~\eqref{eq:id-elliptic}. 
Moreover, thanks to~\eqref{eq:g_u}, one has 
\be\label{eq:g_u-p_tu} 
\gf_u (\p_t u, w) = \int_\O \p_t u\, \phi \; \d\x, \qquad \forall w \in T_u\Mmm.
\ee
In view of~\eqref{eq:DEe}--\eqref{eq:g_u-p_tu}, the problem~\eqref{eq:1} is equivalent to
\be\label{eq:GF-g_u} 
\gf_u(\p_t u,w) = - D_u \Eee(u) \cdot w = - \gf_u( \grad_u \Eee(u), w), \qquad \forall w \in T_u\Mmm, 
\ee
where the cotangent vector $D_u \Eee(u) \in \left(T_u\Mmm\right)^\ast$ has been identified 
to the tangent vector $\grad_u \Eee(u) \in T_u\Mmm$ 
thanks to  Riesz theorem applied on $T_u\Mmm$ with the scalar product $\gf_u$.
This relation can be rewritten as
\be\label{eq:GF}
\p_t u = - \grad_u \Eee(u) = \div(\eta(u) \L \grad \hf(u,\cdot)) \quad \text{in } T_u\Mmm, 
\ee
justifying the gradient flow denomination. 

Choosing $w = \p_t u$ in~\eqref{eq:GF-g_u} and using~\eqref{eq:GF}, we get that 
\begin{multline*}
\frac{\d}{\d t} \Eee(u) = D_u \Eee(u) \cdot \p_t u = - D_u \Eee(u) \cdot \grad_u \Eee(u) 
 = \int_\O \eta(u) \grad \hf(u, \cdot) \cdot \L \grad \hf(u, \cdot) \d\x.
\end{multline*}
An integration w.r.t. time yields the classical energy/dissipation relation: 
$\forall t \in [0,\tf]$, 
\begin{multline} \label{eq:NRJ-V} 
\Eee(u(\cdot,t))-\Eee(u_0) \\+\int_0^{t} \int_\O \eta(u(\x,\tau))\L(\x)\grad\hf(u(\x,\tau),\x)
\cdot\grad\hf(u(\x,\tau),\x) \d\x \d \tau = 0.
\end{multline} 

The fact that a physical problem has a gradient flow structure provides some informations
concerning its evolution. The physical system aims at decreasing its free energy as fast a 
possible. As highlighted by~\eqref{eq:NRJ-V}, the whole energy decay corresponds to the dissipation. 
As a byproduct, the free energy is a Liapunov functional and the total dissipation (integrated w.r.t. time) 
is bounded by the free energy associated to the initial data. 
The variational structure was exploited for instance in~\cite{Otto01,BGG12,BGG13,ZM15} to 
study the long-time asymptotic of the system.

\subsection{Goal and positioning of the paper} \label{ssec:goal} 

The goal of this paper is to propose and analyse a numerical scheme that mimics at the discrete level the 
gradient flow structure highlighted in \S\ref{ssec:GF-cont}. 
Since the point of view adopted in our presentation concerning the gradient flow structure is formal, 
the rigorous numerical analysis of the scheme will rather rely 
on the well established theory of weak solutions in the sense of Definition~\ref{def:weak}. 
But as a byproduct of the formal gradient flow structure, the discrete free energy 
will decrease along time, yielding the non-linear stability of the scheme. 

There are some existing numerical methods based on Eulerian coordinates 
(as the one proposed in this paper). 
This is for instance the case of monotone discretizations, that can be reinterpreted as 
Markov chains~\cite{Maas11, EM14}, for which one can even prove a rigorous gradient flow structure. 
Classical ways to construct monotone discretizations in the isotropic setting $\L(\x) = \lambda(\x) {\bf I}_d$ 
are to use finite volumes schemes with two-points flux approximation (TPFA, see e.g.~\cite{EGH00,Tipi}) or 
finite elements on Delaunay's meshes (see e.g.~\cite{EF-L1}). 
An advanced second order in space finite volume method was proposed in~\cite{BCF12}
and discontinuous Galerkin schemes in~\cite{LY15,LW16}. 
However, these approaches --- as well as the finite difference scheme proposed in~\cite{LW14} ---
require strong assumptions on the mesh. 
Moreover, the extension to the anisotropic framework 
of TPFA finite volume scheme fails for consistency reasons (cf.~\cite{Tipi}), while 
finite elements are no longer monotone on a prescribed mesh for general anisotropy tensors $\L$. 
In~\cite{bench_FVCA5, Bench_FVCA6}, it appears that all the linear finite volume schemes 
(i.e., schemes leading to linear systems when linear equations are approximated) able 
to handle general grids and anisotropic tensors may loose at the discrete level the monotonicity 
of the continuous problem. 

The monotonicity at the discrete level can be restored thanks to nonlinear 
corrections~\cite{BE04, LePotier10, CClP13, LePotier14}. Another approach 
consists in designing directly monotone nonlinear schemes (see, e.g.,~\cite{Kap07, YS08, LSV09, LSV10, DlP11, SY11}).
However, the monotonicity of the method is not sufficient to ensure the decay of non-quadratic energies. 
Moreover, the available convergence proofs~\cite{DlP11, CClP13} require some numerical assumptions 
involving the numerical solution itself. 
Finally, let us mention here the recent contribution~\cite{FM14} where linear monotone schemes are constructed 
on cartesian grids for possibly anisotropic tensors $\L$. 

In the case where $\L = {\bf I}_d$ and $\eta(u) = u$, the formal gradient flow structure can 
be made rigorous in the metric space 
$$\mathcal{P}(\O) := \left\{ u \in L^2(\O; \R_+)\; \middle|\; \int_\O u \d\x = \int_\O u_0\d\x \right\}$$ 
endowed with the Wasserstein metric with quadratic cost function.
Several approach were proposed in the last years for solving the JKO minimization scheme 
(cf.~\cite{JKO98, AGS08}). This requires the computation of Wasserstein distances. 
If $d=1$, switching to Lagrangian coordinates is a natural choice that has been exploited for example 
in~\cite{KW99, BCC08, MO14, MO15}. The case $d\ge 2$ is more intricate. Methods based on a so-called 
{\em entropic regularization}~\cite{BCCNP15, Peyre15} of the transport plan appear to be costly, 
but very tractable. Another approach consists in solving the so-called Monge-Amp\`ere equation 
in order to compute the optimal transport plan~\cite{BGMO15}. Let us finally mention the 
application~\cite{BCL15} to the Wasserstein gradient flows of the 
{\em CFD relaxation approach} of Benamou and Brenier~\cite{BB00} to solve the Monge-Kantorovich 
problem.

Motivated by applications in the context of complex porous media flows where irregular grids are often prescribed 
by the geology, the scheme we propose was designed to be able to handle highly anisotropic and heterogeneous 
diffusion tensor $\L$ and very general grids (non-conformal grids, cells of various shapes). 
It relies on  the recently developed {\em Vertex Approximate Gradient} (VAG) 
method~\cite{VAG_bench, EGHM12,EGH12, BM13}, but 
alternative versions can be inspired from most of the existing symmetric coercive methods for approximating 
the solutions of linear elliptic equations~\cite{bench_FVCA5, Bench_FVCA6}. Moreover, we want our scheme 
to mimic at the discrete level the gradient flow structure highlighted in \S\ref{ssec:GF-cont}. 
This ensures in particular the decay of the discrete counterpart of the free energy, and thus the nonlinear stability 
of the scheme. 

A nonlinearly stable  control volume finite elements (CVFE) scheme was proposed 
in our previous contribution~\cite{CG16_MCOM} (see also \cite{CG-FVCA7}). 
The nonlinear CVFE scheme~\cite{CG16_MCOM} is based 
on a suitable upwinding of the mobility. It is only first order accurate in space 
while linear CVFE schemes are second order accurate in space. 
Moreover, it appears in the numerical simulations presented in~\cite{CG16_MCOM} 
that this nonlinear CVFE scheme lacks robustness with respect to anisotropy: its 
convergence is slow in particularly unfavorable situations.

The main goal of this paper is to propose a scheme that preserves some important features of 
the one introduced in~\cite{CG16_MCOM} 
(possible use of some prescribed nonlinear test function, decay of the physically motivated energy, 
convergence proof for discretization parameters tending to $0$), 
without jeopardizing the accuracy of the scheme compared to the more classical approach based on 
formulations with Kirchhoff transforms (see for instance \cite{EHV06, RPK08, EHV10,ABH13}).
Convincing numerical results are provided in~\S\ref{sec:num} as an evidence of the efficiency of our approach.
Two theorems are stated in~\S\ref{ssec:main} (and proved in \S\ref{sec:apriori} and \S\ref{sec:conv}) 
in order to ensure the following properties:
\begin{enumerate} 
\item Theorem~\ref{thm:main-disc}. At a fixed mesh, the scheme, that consists in a nonlinear system, 
admits (at least) one solution. 
This allows in particular to speak about the discrete solution provided by the scheme. Moreover, we take 
advantage of the gradient structure of the scheme for deriving some nonlinear stability estimates.
\item Theorem~\ref{thm:main-conv}. Letting the discretization parameters tend to 0 (while controlling some 
regularity factors related to the discretization), the discrete solution converges in some appropriate sense 
towards the unique weak solution to the problem~\eqref{eq:1} in the sense of Definition~\ref{def:weak}. 
\end{enumerate}

\begin{rem}\label{rem:convect}
We only consider potential convection in the paper. A more general convection 
with speed ${\bf v}$ can be split into a potential part and a divergence free part:
$$
{\bf v}= - \nabla V + \nabla \times {\bf w}.
$$ 
We suggest for instance to use classical (entropy stable) finite volume schemes (see for instance~\cite{LeV02})
for the divergence free part combined with our method for the potential part. 
\end{rem}

\section{Definition of the scheme and main results} \label{sec:scheme} 

As already mentioned, the scheme we propose is based on the so-called VAG scheme~\cite{VAG_bench}. 
In~\S\ref{ssec:mesh}, we state our assumptions on the spatial mesh and the time discretization of $(0,\tf)$.
Then in~\S\ref{ssec:VAG-NL}, we define the nonlinear scheme we will study in this paper. 
The gradient flow structure of the discretized problem is highlighted in \S\ref{ssec:GF-disc}, where 
a variational interpretation is given to the scheme. 
Finally, in~\S\ref{ssec:main}, we state the existence of discrete solutions to the scheme 
and their convergence towards the unique weak solution as the discretization parameters tend to $0$.

\subsection{Discretization of $Q_\tf$ and discrete functional spaces} \label{ssec:mesh} 

\subsubsection{Spatial discretization and discrete reconstruction operators} \label{sssec:mesh-space} 

Following~\cite{EGH12, BM13}, we consider generalized polyhedral discretizations of $\O$.
Let $\Mm$ be the set of the cells, that are disjoint polyhedral open subsets of $\O$ such that 
$
\bigcup_{\k\in\Mm} \ov \k = \ov \O.
$ 
Each cell $\k \in \Mm$ is assumed to be star-shaped with respect to its so-called {\em center}, denoted by $\x_\k$.
We denote by $\Ff$ the set of the faces of the mesh, which are not assumed to be planar if $d=3$ (whence the 
term ``generalized polyhedral"). We denote by $\Vv$ the set of the vertices of the mesh. We denote by $\x_\s\in \ov \O$ 
the location of the vertex $\s \in \Vv$. 
The sets $\Vv_\k$, $\Ff_\k$ and $\Vv_\sig$ denote respectively the vertices and faces of a cell $\k$, and the 
vertices of a face $\sig$. For any face $\sig \in \Ff_\k$, one has $\Vv_\sig \subset \Vv_\k$. 
Let $\Mm_\s$ denote the set of the cells sharing the vertex $\s$.
The set of edges of the mesh (defined only if $d=3$)  is denoted by $\Ee$ and $\Ee_\sig$ denotes the set of edges of the face
$\sig \in \Ff$, while $\Ee_\k$ denotes the set of the edges of the cell $\k$. 
    The set $\Vv_\e$ denotes the pair of vertices at the extremities of the edge $\e\in\Ee$.
 In the $3$-dimensional case, it is assumed that for each face $\sig \in \Ff$, there exists a so-called ``center" of
the face $\x_\sig$ such that
\be\label{eq:x_sig} 
\x_\sig = \sum_{\s \in \Vv_\sig} \beta_{\sig,\s} \x_\s, \qquad \text{with } \quad\sum_{\s \in \Vv_\sig} \beta_{\sig,\s} =1, 
\ee
and $\beta_{\sig,\s} \ge 0$ for all $\s \in \Vv_\sig$. 
The face $\sig$ is then assumed to match with the union of the triangles $T_{\sig,\e} $ defined as 
by the face center $\x_\sig$ and each of its edge $\e\in\Ee_\sig$.
A two-dimensional example of mesh $\Mm$ is drawn on figure~\ref{fig:Mm}.

\medskip
The previous discretization is denoted by $\Dd$, and we define the discrete space 
$$
W_\Dd =  \left\{ \bv = \left(v_\k,v_\s\right)_{\k\in \Mm,\s\in \Vv} \in \R^{\#\Mm+\#\Vv} \right\}.
$$
In the $3$-dimensional case, we introduce for all $\sig\in\Ff$ the operator $I_\sig:W_\Dd \to \R$ defined by
$$
I_\sig(\bv) = \sum_{\s \in \Vv_\sig} \beta_{\sig,\s} v_\s, \qquad \forall \bv \in W_\Dd,
$$
yielding a second order interpolation at $\x_\sig$ thanks to the definition~\eqref{eq:x_sig} of $\x_\sig$.

\begin{figure}[htb]
\begin{center}
\includegraphics[width=7cm]{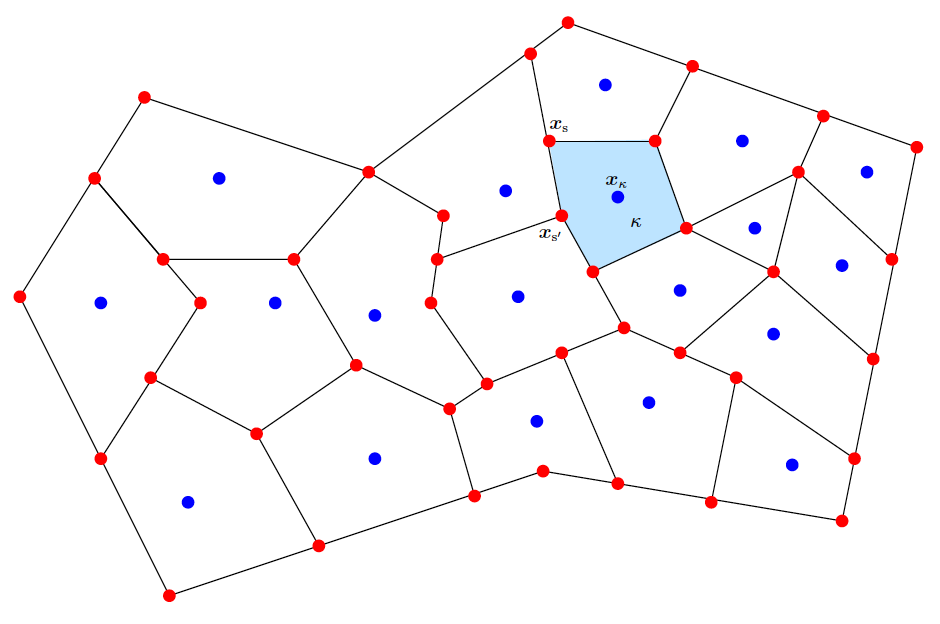}
\caption{The primal mesh $\Mm$ can be made of cells with various and general shapes. 
Degrees of freedom are located at the so-called cell center (blue dots) and at the so-called 
vertices (red dots). }
\label{fig:Mm}
\end{center}
\end{figure}

\medskip 
We introduce the simplicial submesh $\Tt$ (a two-dimensional illustration is provided on Figure~\ref{fig:Tt}) 
defined by 
\begin{itemize} 
\item $\Tt = \{T_{\k,\sig}, \k\in \Mm, \sig\in\Ff_\k\} $ in the two-dimensional case, where $T_{\k,\sig} $ denotes the triangle 
whose vertices are $\x_\k$ and $\x_\s$ for $\s \in \Vv_\sig$; 
\item $\Tt = \{T_{\k,\sig,\e}, \k\in \Mm, \sig\in\Ff_\s, \,\e\in\Ee_\sig\} $ in the three-dimensional case, where $T_{\k,\sig,\e} $ denotes the tetrahedron whose vertices are $\x_\k$, $\x_\sig$ and $\x_\s$ for  $\s \in \Vv_\e$.
\end{itemize} 
\begin{figure}[htb]
\begin{center}
\includegraphics[width=7cm]{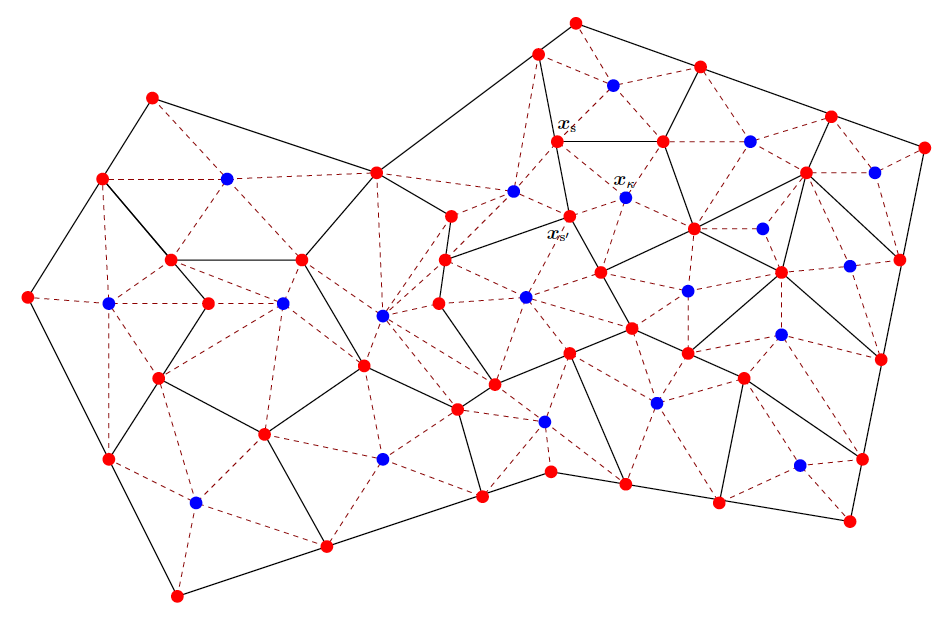}
\caption{The simplicial submesh $\Tt$ is derived from the primal mesh $\Mm$ by decomposing the 
primal cells $\k \in \Mm$ into triangles if $d=2$ or tetrahedra if $d=3$. This construction is possible since 
$\k$ was assumed to be star-shaped with respect to a ball centered in $\x_\k$. }
\label{fig:Tt}
\end{center}
\end{figure}

We define the regularity $\theta_\Tt$ of the simplicial mesh $\Tt$ by 
\be\label{eq:theta_T} 
\theta_\Tt = \max_{T\in\Tt} \frac{h_T} {\rho_T}, 
\ee
where $h_T$ and $\rho_T$ respectively denote the diameter of $T$ and the insphere diameter of $T$. 
We denote by 
\be\label{eq:hstar} 
h_\Tt = \max_{T \in \Tt} h_T
\ee
the maximum diameter of the simplicial mesh.
We also define the 
quantities $\ell_\k$ and $\ell_\s$ quantifying the number of vertices of the cell $\k$ and the number of 
neighboring cells for the vertex $\s$ respectively: 
\be\label{eq:chi_ks} 
\ell_\k = \#\Vv_\k, \quad \ell_\s = \#\Mm_\s, \quad \forall \k \in \Mm, \; \forall \s \in \Vv.
\ee
This allows to introduce the quantity
\be\label{eq:gamma_M} 
\ell_\Dd = \max \left( \max_{\b \in \Mm\cup\Vv} \ell_\b, \max_{\k \in \Mm} \{\#\Ff_\k\} \right),
\ee
controlling the regularity of the general discretization $\Dd$ of $\O$.

\medskip
Denoting by $H_\Tt\subset W^{1,\infty} (\O)$ the usual $\P_1$-finite element space on the simplicial mesh~$\Tt$, we define the 
reconstruction operator $\pi_\Tt : W_\Dd \to H_\Tt$ by setting, for all $\bv \in W_\Dd$ and all 
$(\s,\k,\sig) \in \Vv\times\Mm\times\Ff$, 
\be\label{eq:pi_Tt} 
\pi_\Tt \bv(\x_\s) = v_\s, \quad  \pi_\Tt \bv(\x_\k) = v_\k,\quad \text{and} \quad \pi_\Tt \bv(\x_\sig) = I_\sig(\bv).
\ee
This allows to define the operator $\grad_\Tt : W_\Dd \to (L^\infty(\O))^d$ by
\be\label{eq:grad_Tt} 
\grad_\Tt \bv = \grad \pi_\Tt \bv, \qquad \forall \bv \in W_\Dd.
\ee

\medskip
We aim now to reconstruct piecewise constant functions. To this end, we introduce 
a so-called \emph{mass lumping} mesh $\Dd$ depending on additional parameters 
that appear to play an important role in practical applications~\cite{EGHM12}.
Let $\k \in \Mm$, then introduce some weights $\left(\a_{\k,\s} \right)_{\s\in\Vv_\k} $ such that 
\be\label{eq:aks} 
 \a_{\k,\s} \ge 0, \quad\text{and} \quad \sum_{\s \in \Vv_\k} \a_{\k,\s} \le 1, \quad \forall \s \in \Vv_\k.
\ee
Denoting by ${\rm meas} (\k) = \int_\k \d\x$ the volume of $\k$, then we define the quantities 
\be\label{eq:mks} 
\begin{cases} 
m_{\k,\s} = \a_{\k,\s} {\rm meas} (\k), & \forall \k \in \Mm, \forall \s \in \Vv_\k, \\
m_\s = \sum_{\k \in \Mm_\s} m_{\k,\s}, & \forall \s \in \Vv, \\
m_\k =  {\rm meas} (\k) - \sum_{\s \in \Vv_\k} m_{\k,\s} 
, & \forall \k \in \Mm, 
\end{cases} 
\ee
so that one has 
$$
\sum_{\b \in \Mm\cup\Vv} m_\b = {\rm meas} (\O).
$$

For all $\k \in \Mm$, we denote by $\omega_{\k} $ and $\omega_{\k,\s} $ some disjointed open subsets of $\k$, 
such that 
$$
\ov \omega_\k \cup \left(\bigcup_{\s \in \Vv_\k} \ov \omega_{\k,\s} \right) = \ov \k, 
$$
and such that 
$$
\meas (\omega_\k) = m_\k \quad \text{and} \quad \meas(\omega_{\k,\s}) = m_{\k,\s}, \qquad  \forall \k \in \Mm, \;\forall \s \in \Vv_\k.
$$
Note that such a decomposition always exists thanks to~\eqref{eq:aks}--\eqref{eq:mks}. Then we denote by 
$$
\omega_\s =\bigcup_{\k \in \Mm_\s} \omega_{\k,\s}, \qquad \forall \s \in \Vv.
$$
The mass-lumping mesh $\Dd$ is made of the cells $\omega_\k$ and $\omega_\s$ for $\k \in \Mm$ and 
$\s \in \Vv$. An illustration is presented in Figure~\ref{fig:Dd}.
\begin{figure}[htb]
\begin{center}
\includegraphics[width=7cm]{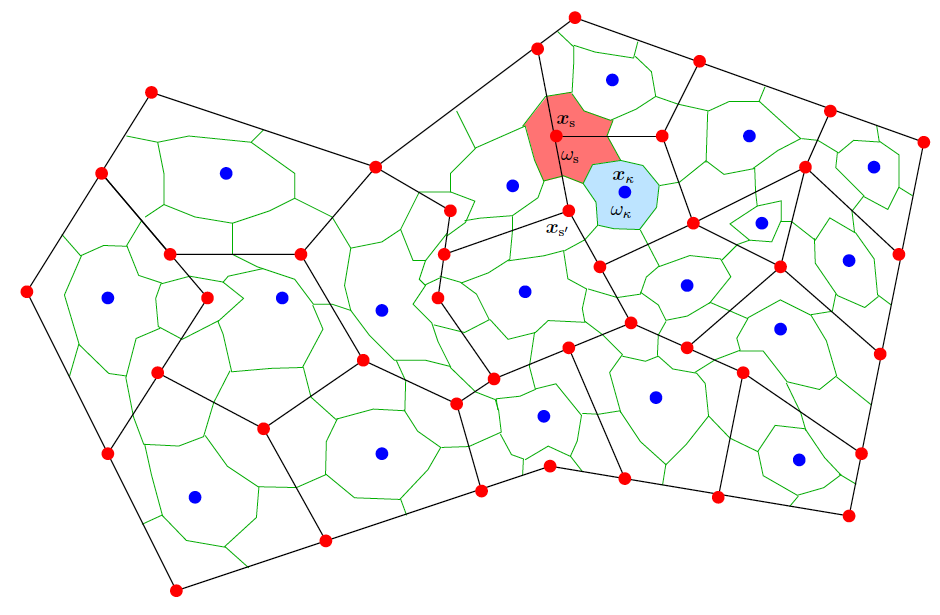}
\caption{The mass-lumping mesh $\Dd$ contains one cell $\omega_\k$ and $\omega_\s$ by degree of fredom. 
The mass repartition depends on the factors $\alpha_{\k,\s}$ introduced at~\eqref{eq:aks}.}
\label{fig:Dd}
\end{center}
\end{figure}
In \cite{EGHM12}, a study focused on the repartition of the porous volume between nodes and centers is proposed, 
in the case of a coupled problem (transport of a species in the flow of a fluid in a porous medium). 
The influence of this repartition mainly concerns the transport part, and not the fluid flow. 
In the framework of the present paper, we numerically observe that this repartition has not a 
strong influence in the cases studied. 
However, since the function $\eta$ can vanish, the limit choice $m_\k = 0$ (resp. $m_\s = 0$) can lead to singular cases, and therefore must be prevented.

In what follows, we denote by 
\be\label{eq:zeta_Dd} 
\zeta_\Dd=\min_{\beta\in\Mm\cup\Vv} \frac{m_\beta} {\int_\O\pi_\Tt\bfe_\beta\d\x},
\ee
where $\bfe_\beta$, $\beta \in \Mm \cup  \Vv$ is the unique element of $W_\Dd$ such that 
$$
\pi_\Tt \bfe_\beta(\x_\gamma) = \delta_\beta^\gamma, \qquad
\forall \gamma \in \Mm \cup \Vv, 
$$
and $\delta_\beta^\gamma$ is the kronecker symbol.

We can now define the piecewise constant reconstruction operators $\pi_\Dd: W_\Dd \to L^\infty\cap \BV(\O)$ and 
$\pi_\Mm: W_\Dd \to L^\infty\cap \BV(\O)$ by 
\be\label{eq:pi_Dd} 
\pi_\Dd(\bv)(\x) = \sum_{\k \in  \Mm} v_\k \1_{\omega_\k} (\x) +   \sum_{\s \in  \Vv} v_\s \1_{\omega_\s} (\x), \qquad \forall \x \in \O, \quad \forall \bv \in W_\Dd,
\ee
and 
\be\label{eq:pi_Mm} 
\pi_\Mm(\bv)(\x) = \sum_{\k \in  \Mm} v_\k \1_{\k} (\x)  \qquad \forall \x \in \O, \quad \forall \bv \in W_\Dd.
\ee

\medskip
Let $f : \R \to \R$ be a possibly nonlinear function, then denote by 
$$
f(\bv) = \left( f(v_\k), f(v_\s)\right)_{\k\in\Mm,\s\in\Vv}, \qquad \forall \bv =  \left( v_\k, v_\s\right)_{\k\in\Mm,\s\in\Vv} \in W_\Dd.
$$
Notice that in general, 
$$\pi_\Tt (f(\bv)) \neq f(\pi_\Tt(\bv))\quad  \text{and} \quad \grad_\Tt (f(\bv)) \neq \grad f(\pi_\Tt(\bv)),$$
but that 
\be\label{eq:nonlin-D} 
\pi_\Dd (f(\bv)) = f(\pi_\Dd(\bv)) \quad \text{and } \quad \pi_\Mm (f(\bv)) = f(\pi_\Mm(\bv)),  \qquad \forall \bv \in W_\Dd.
\ee

\subsubsection{Time-and-space discretizations and discrete functions} \label{sssec:mesh-time} 

Let $N\ge 1$ and let $0 = t_0 < t_1 < \dots < t_{N-1} < t_N = \tf$ be some subdivision of $[0,\tf]$.
We denote by $\dt_n = t_n - t_{n-1} $ for all $n \in \{1,\dots N\} $, by $\bdt = (\dt_1, \dots, \dt_N)^T \in \R^N,$
and by 
\be\label{eq:dtstar} 
\ov \dt = \max_{1\le n \le N} \dt_n. 
\ee

The time and space discrete space is then defined by 
$$
W_{\Dd,\bdt} = \left\{ \bv = \left(v_\k^n,v_\s^n\right)_{\k\in \Mm,\s\in \Vv, 1 \le n \le N} \in \R^{N(\#\Mm+\#\Vv)} \right\}.
$$
For $\bv\in W_{\Dd,\bdt} $ and $n \in \{1,\dots,N\} $, we denote by 
$$\bv^n =  \left(v_\k^n,v_\s^n\right)_{\k\in \Mm,\s\in \Vv} \in W_\Dd.$$

We deduce from the space reconstruction operators $\pi_\Dd$, $\pi_\Mm$ and $\pi_\Tt$ some time and space reconstructions operators 
$\pi_{\Dd,\bdt},\pi_{\Mm,\bdt}, \pi_{\Tt,\bdt} : W_{\Dd,\bdt} \to L^\infty(Q_\tf)$ mapping the elements of 
$W_{\Dd,\bdt}$ into constant w.r.t. time functions 
defined by 
$$
\pi_{\Dd,\bdt} \bv(\cdot,t) = \pi_\Dd(\bv^n),  \quad\pi_{\Mm,\bdt} \bv(\cdot,t) = \pi_\Mm(\bv^n)\quad  \text{and} \quad
\pi_{\Tt,\bdt} \bv(\cdot,t) = \pi_\Tt(\bv^n) 
$$
if $t \in (t_{n-1},t_n]$.  
The gradient reconstruction operator $\grad_{\Tt,\bdt} : W_{\Dd,\bdt} \to \left(L^\infty(Q_\tf)\right)^d$ is then defined by 
$$
\grad_{\Tt,\bdt} \bv = \grad \pi_{\Tt,\bdt} \bv, \qquad \forall \bv \in W_{\Dd,\bdt}.
$$

\subsection{The nonlinear scheme for degenerate parabolic equations} \label{ssec:VAG-NL} 
For $\k \in \Mm$, we denote by $\A_\k = \left(a^\k_{\s,\s'} \right)_{\s,\s' \in \Vv_\k} \in \R^{\ell_\k\times\ell_\k} $ 
the symmetric positive definite matrix whose coefficients are defined by 
\be\label{eq:akss} 
a_{\s,\s'} ^\k = \int_\k \L(\x) \grad_\Tt \bfe_\s(\x) \cdot \grad_\Tt \bfe_{\s'} (\x) \d\x = a^\k_{\s',\s}.
\ee
It results from the relation 
$$
\pi_\Tt \bfe_\k(\x) + \sum_{\s \in \Vv_\k} \pi_\Tt \bfe_\s(\x) = 1, \qquad \forall \x \in \k, \; \forall \k \in \Mm,
$$
that, for all $\u,\bv \in W_\Dd$ and all $\k \in \Mm$, one has 
\be\label{eq:bilin} 
\int_\k \L(\x) \grad_\Tt \u(\x) \cdot \grad_\Tt \bv(\x) \d\x = \sum_{\s \in \Vv_\k} \sum_{\s'\in\Vv_\k} a_{\s,\s'} ^\k (u_\k - u_\s) (v_\k - v_{\s'}).
\ee
For $\k \in \Mm$, we denote by $\bd_\k:W_\Dd \to \R^{\ell_\k} $ the linear operator defined by
$$
\left(\bd_\k \bv\right)_\s = v_\k - v_\s, \qquad \forall \s \in \Vv_\k, \; \forall \bv \in W_\Dd.
$$
With this notation, we obtain that~\eqref{eq:bilin} rewrites as
$$
\int_\k \L(\x) \grad_\Tt \u(\x) \cdot \grad_\Tt \bv(\x) \d\x =  \bd_\k \bv \cdot \A_\k\bd_\k \u, \qquad \forall \u,\bv \in W_\Dd,\; \forall \k \in \Mm. 
$$
\medskip

In order to deal with the nonlinearities of the problem, we introduce the sets $W^{\rm ad} _\Dd\subset W_\Dd$ and 
$W^{\rm ad} _{\Dd, \bdt} \subset W_{\Dd,\bdt} $ of the admissible states defined by 
$$
\bv \in W^{\rm ad} _\Dd \quad \text{iff} \qquad v_\nu \in \Ii_p, \quad \forall \nu \in \Mm\cup\Vv,
$$
and 
$$
\bv \in W^{\rm ad} _{\Dd,\bdt} \quad \text{iff} \qquad \bv^n \in W^{\rm ad} _\Dd, \quad \forall n \in \{1,\dots,N\}, 
$$
while we denote by $W_\Dd^{\rm en} \subset W_\Dd$ the set of finite entropy vectors:
\be\label{eq:Eee_Dd} 
\bv \in W^{\rm en} _\Dd \quad \text{iff} \quad
\Eee_\Dd(\bv) := \int_\O \left(\G(\pi_\Dd \bv) + \pi_\Dd(\bv) \pi_\Dd(\bV)\right) \d\x < \infty, 
\ee
where $\bV = (V_\k,V_\s)_{\k,\s} \in W_\Dd$ is defined by 
\be\label{eq:bV} 
V_\k = V(\x_\k)
\quad 
V_\s = V(\x_\s)
\qquad \forall \k \in \Mm, \forall \s \in \Vv. 
\ee
It is easy to check that, thanks to assumptions $({\bf A}2)$ and to the definition~\eqref{eq:G} of the 
convex function $\G$, one has $W^{\rm ad} _{\Dd} \subset W^{\rm en} _\Dd$.

Given $\u \in W_\Dd^{\rm ad} $ and $\bV\in W_\Dd$, we define the discrete hydrostatic pressure
$\bh(\u) = \left( \hf_\k(u_\k), \hf_\s(u_s)\right)_{\k,\s} \in W_\Dd$ 
by 
\be\label{eq:fhks} 
\hf_\k(u_\k) = p(u_\k) + V_\k, \qquad \hf_\s(u_\s) = p(u_\s) + V_\s, \qquad 
\forall \k \in \Mm, \, \forall \s \in \Vv.
\ee
\medskip

The initial data $u_0$ is discretized into an element $\u^0 \in {W^{\rm en} _\Dd} $ by 
\be\label{eq:u0-disc} 
u_\b^0 = \frac1{m_\b} \int_{\omega_\b} u_0(\x)\d\x, \qquad \forall \b \in \Mm \cup \Vv, 
\ee
so that, thanks to~\eqref{A:u0}, 
\be\label{eq:u0-mass} 
\int_\O \pi_\Dd (\u^0) \d\x = \int_\O u_0 \d\x >0. 
\ee
Let us state a first lemma that ensure that the discretized initial data has a finite 
discrete entropy. 
\begin{lem} \label{lem:entro-0} 
Let $u_0 \in L^1(\O)$ be such that~$({\bf A}{4})$ holds, $V$ be such that~$({\bf A}{5})$ holds, 
$\u^0$ be defined by~\eqref{eq:u0-disc}, and $\bV$ defined by~\eqref{eq:bV}. Then there 
exists $C$ depending only on $\|u_0\|_{L^1(\O)} $ and $\|\grad V\|_{L^\infty(\O)^d} $ such that 
\be\label{eq:entro-0_app} 
\Eee_\Dd(\u^0) \le \Eee(u_0) + C h_\Tt \le  \Eee(u_0) + C {\rm diam} (\O), 
\ee
where the entropy functional $\Eee$ is defined by~\eqref{eq:Eee} and its discrete counterpart $\Eee_\Dd$ is 
defined by~\eqref{eq:Eee_Dd}.
In particular, $\u^0$ belongs to ${W^{\rm en} _\Dd} $.
\end{lem}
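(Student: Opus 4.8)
The plan is to compare, degree of freedom by degree of freedom, the discrete entropy $\Eee_\Dd(\u^0)$ against the continuous entropy $\Eee(u_0)$, exploiting the Jensen inequality made available by the fact that $u_\b^0$ is precisely the average of $u_0$ over $\omega_\b$. First I would unfold the definition~\eqref{eq:Eee_Dd} and use that $\pi_\Dd(\u^0)$ is piecewise constant with value $u_\b^0$ on $\omega_\b$ and that $\meas(\omega_\b) = m_\b$, to write
\be\label{eq:plan-split}
\Eee_\Dd(\u^0) = \sum_{\b \in \Mm\cup\Vv} m_\b \G(u_\b^0) + \sum_{\b \in \Mm\cup\Vv} m_\b\, u_\b^0 \pi_\Dd(\bV)_{|\omega_\b}.
\ee
Since $\G$ is convex (assumption $({\bf A}4)$) and $u_\b^0 = \frac1{m_\b}\int_{\omega_\b} u_0\,\d\x$, Jensen's inequality gives $m_\b \G(u_\b^0) \le \int_{\omega_\b} \G(u_0)\,\d\x$, so the first sum is bounded by $\int_\O \G(u_0)\,\d\x$, which is finite by~\eqref{A:u0}. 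This already shows finiteness once the potential term is controlled.

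For the potential term I would compare $m_\b\, u_\b^0\, V_\b$ (where $V_\b$ is the nodal value $V(\x_\b)$ entering $\pi_\Dd(\bV)$) against $\int_{\omega_\b} u_0(\x) V(\x)\,\d\x$. Writing $V_\b = V(\x_\b)$ and using $m_\b u_\b^0 = \int_{\omega_\b} u_0$, the difference is $\int_{\omega_\b} u_0(\x)\bigl(V(\x_\b) - V(\x)\bigr)\,\d\x$, whose absolute value is at most $\|\grad V\|_{L^\infty} \cdot \mathrm{diam}(\omega_\b) \cdot \int_{\omega_\b} u_0$ by the Lipschitz bound in $({\bf A}5)$. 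Since $\omega_\b \subset \k$ for the relevant cell and $\mathrm{diam}(\k) \le h_\Tt$ — more precisely $\mathrm{diam}(\omega_\b)$ is controlled by the diameter of the containing primal cell, hence by $h_\Tt$ — summing over $\b$ and using $u_0 \ge 0$ together with $\sum_\b \int_{\omega_\b} u_0 = \|u_0\|_{L^1(\O)}$ yields $\bigl|\sum_\b m_\b u_\b^0 V_\b - \int_\O u_0 V\,\d\x\bigr| \le \|\grad V\|_{L^\infty}\, h_\Tt\, \|u_0\|_{L^1(\O)}$. Combining this with the Jensen bound gives~\eqref{eq:entro-0_app} with $C = \|u_0\|_{L^1(\O)}\|\grad V\|_{L^\infty(\O)^d}$ (the second inequality in~\eqref{eq:entro-0_app} being trivial from $h_\Tt \le \mathrm{diam}(\O)$), and finiteness of the right-hand side shows $\u^0 \in W^{\rm en}_\Dd$.

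One subtlety to handle carefully is the case $p(0) = -\infty$, where the convention~\eqref{eq:conv-G} sets $\G = +\infty$ on negatives: here one must note that $u_0 \ge 0$ a.e. forces $u_\b^0 \ge 0$, so $\G(u_\b^0)$ is finite (or the Jensen step degenerates harmlessly into $+\infty \le +\infty$ only on a null set, which does not occur), and $\Ii_p = \R_+^\ast$ requires checking $u_\b^0 > 0$ — but $\int_{\omega_\b} u_0$ could vanish for some $\b$; this is not a problem for the entropy bound since $\G(0)$ is then by convention the finite limit value or the bound $m_\b\G(u_\b^0)\le \int_{\omega_\b}\G(u_0)$ still holds as an inequality in $\R\cup\{+\infty\}$. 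The main obstacle, such as it is, is purely bookkeeping: making sure the geometric estimate $\mathrm{diam}(\omega_\b) \le h_\Tt$ is legitimate for the vertex-associated cells $\omega_\s$ as well as the $\omega_\k$, which follows because each $\omega_{\k,\s} \subset \k$ and the relevant contributions are organized cell by cell; once that is in place the argument is a direct Jensen-plus-Lipschitz computation.
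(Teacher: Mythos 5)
Your proof is correct and is essentially the paper's argument: Jensen's inequality for the entropy term plus a Lipschitz comparison of $V$ with its nodal values for the potential term, the cross term vanishing precisely because $u^0_\b$ is the mean of $u_0$ over $\omega_\b$; you simply organize the computation degree of freedom by degree of freedom instead of via the paper's global splitting $\int_\O u_0(\pi_\Dd\bV-V)\,\d\x+\int_\O(\pi_\Dd\u^0-u_0)\pi_\Dd\bV\,\d\x$. The only cosmetic slack is that the quantity you need is $\sup_{\x\in\omega_\b}|\x_\b-\x|$ (note $\x_\b$ need not lie in $\ov\omega_\b$), which is bounded by the diameter of the containing primal cell and hence by $2h_\Tt$ rather than $h_\Tt$; this only changes the harmless constant $C$ and the same slack is implicit in the paper's own estimate.
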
 
\begin{proof} 
We deduce from Jensen inequality that 
$$
\G(u^0_\beta) \le \frac1{m_\beta} \int_{\omega_\b} \G(u_0)\d\x, 
$$
whence, thanks to~$({\bf A}{4})$ and to the definition~\eqref{eq:Eee_Dd} of the discrete entropy functional~$\Eee_\Dd$, 
one has 
$$
\Eee_\Dd(\u^0) \le \Eee(u_0) 
+ \int_\O u_0 (\pi_\Dd \bV - V)\d\x + \int_\O (\pi_\Dd\u^0 - u_0) \pi_\Dd \bV \d\x. 
$$
The last term in the above inequality is equal to zero thanks to~\eqref{eq:u0-mass}. 
The Lipschitz regularity of $V$ yields 
$$\|\pi_\Dd \bV - V\|_{L^\infty(\O)} \le \|\grad V\|_{L^\infty(\O)^d} h_\Tt,$$
so that  
$$
\Eee_\Dd(\u^0) \le \Eee(u_0) + \|u_0\|_{L^1(\O)} \|\grad V\|_{L^\infty(\O)^d} h_\Tt.
$$
\end{proof} 
\medskip

With all this setting, we can present the scheme we will analyze in this contribution. 
For $\u\in W^{\rm ad} _{\Dd,\bdt} $, we introduce the notation
\be\label{eq:etaks}
\eta_{\k,\s} ^n = \frac{\eta(u^n_\k)+\eta(u^n_\s)} 2, \qquad \forall \k \in \Mm, \; \forall \s \in \Vv_\k, \forall n \in \{1,\dots, N\}.
\ee
Given $\u^{n-1} \in {W_\Dd^{\rm en} } $, the vector $\u^n \in W_\Dd^{\rm ad} $ is obtained by solving the 
following nonlinear system:
\begin{subequations} \label{eq:syst} 
\be\label{eq:k} 
m_\k \frac{u_\k^n - u_\k^{n-1} } {\dt_n} + \sum_{\s \in \Vv_\k} F_{\k,\s} (\u^n) = 0, \qquad \forall \k \in \Mm, 
\ee
\be\label{eq:s} 
m_\s \frac{u_\s^n - u_\s^{n-1} } {\dt_n} + \sum_{\k \in \Mm_\s} F_{\s,\k} (\u^n) = 0, \qquad \forall \s\in\Vv, 
\ee
\be
\label{eq:Fks} 
F_{\k,\s} (\u^n) = \sqrt{\eta_{\k,\s} ^n} \sum_{\s'\in\Vv_\k} a^\k_{\s,\s'} \sqrt{\eta_{\k,\s'} ^n} 
\left(\hf_\k(u_\k^n) - \hf_{\s'}(u_{\s'}^n) \right),
\quad \forall \k \in \Mm, \; \forall \s \in \Vv_\k, 
\ee
\be\label{eq:cons} 
F_{\k,\s} (\u^n) + F_{\s,\k} (\u^n) = 0, \qquad \forall \k \in \Mm, \; \forall \s \in \Vv_\k.
\ee
\end{subequations} 
The scheme~\eqref{eq:syst} can be interpreted as a finite volume scheme, the conservation being 
established on the cells $\omega_\k$ and $\omega_\s$ for $\k \in \Mm$ and $\s \in \Vv$.
As a direct consequence of the conservativity of the scheme, one has 
\be\label{eq:cons-mass} 
\int_\O \pi_\Dd(\u^n) \d\x = \int_\O \pi_\Dd(\u^{n-1}) \d\x.
\ee
However, contrarily to usual finite volume schemes, 
the fluxes $F_{\kappa,\s}^n$ are not issued from the computation of 
$\int_\sigma \eta(u)\Lambda\grad ( p(u) +V)\cdot \n_\sigma$
on a specific boundary $\sigma$ between identified control volumes. 
They result from the variational formulation of the scheme and  
are viewed as fluxes between control volumes located at nodes and centers. 

Defining, for all $\k \in \Mm$ and $\u = (u_\k, u_\s)_{\k,\s} \in W_\Dd$, the diagonal matrix $\M_\k(\u)\in \R^{\ell_\k \times \ell_\k} $ by 
$$
\left(\M_\k(\u)\right)_{\s,\s'} = \begin{cases} 
\sqrt{\frac{\eta(u_\k) + \eta(u_\s)} 2} & \text{ if } \s = \s', \\
0 & \text{ otherwise}, 
\end{cases} 
$$
the systems~\eqref{eq:syst} is equivalent to the following compact formulation: $\forall \bv \in W_\Dd$, 
\be\label{eq:syst-w} 
\int_\O \pi_\Dd \u^n  \pi_\Dd \bv \d\x  + \dt_n \sum_{\k \in \Mm} \bd_\k \bh(\u^n) \cdot \B_\k(\u^n) \bd_\k \bv  
= \int_\O   \pi_\Dd \u^{n-1} \pi_\Dd \bv \d\x, 
\ee
where 
\be\label{eq:Bkn} 
\B_\k(\u) := \M_\k(\u) \A_\k \M_\k(\u), \qquad  \forall \k \in \Mm,\; \forall \u \in W_\Dd,
\ee
 is a symmetric semi-positive matrix since $\A_\k$ and $\M_\k(\u)$ have this property.

\subsection{Gradient flow interpretation for the scheme} \label{ssec:GF-disc} 
The goal of this section is to transpose the formal variational structure pointed out 
in~\S\ref{ssec:GF-cont} to the discrete setting. A natural discretization of the manifold $\Mmm$ consists in 
\be\label{eq:Mmm_Dd} 
\Mmm_\Dd = \left\{ \u \in W_\Dd \; \left|\; \int_\O \pi_\Dd \u \d\x = \int_\O u_0 \d\x \right\} \right., 
\ee
leading to 
\be\label{eq:TMmm_Dd} 
T_\u \Mmm_\Dd = \left\{ \bv \in W_\Dd \; \left|\; \int_\O \pi_\Dd \bv \d\x = 0 \right\} \right..
\ee
In order to define the discrete counterpart $\gf_{\Dd,\u} $ of the metric tensor $\gf_u$ defined 
by~\eqref{eq:g_u}--\eqref{eq:id-elliptic}, one needs a discrete counterpart of
\begin{itemize} 
\item  the classical $L^2(\O)$ scalar product: we will use
$$
(\bw_1, \bw_2) \mapsto \int_\O \pi_\Dd \bw_1 \, \pi_\Dd \bw_2 \d\x, \qquad \forall \bw_1, \bw_2 \in W_\Dd; 
$$
\item the weighted ${H} ^1(\O)$ ``scalar product" with weight $\eta(u)$: we use
$$
(\bw_1, \bw_2) \mapsto \sum_{\k \in \Mm} \bd_\k\bw_1 \cdot \B_\k(\u) \bd_\k \bw_2, 
\qquad \forall \bw_1, \bw_2 \in W_\Dd.
$$
\end{itemize} 
This allows to define the discrete metric tensor $\gf_{\Dd, \u} $ by: 
$\forall \u \in W_\Dd$, $\forall \bw_1, \bw_2 \in T_\u \Mmm_\Dd$,
\be\label{eq:gDdu} 
\gf_{\Dd,\u} (\bw_1, \bw_2) = \int_\O \pi_\Dd \bw_1 \pi_\Dd \bphi_2 \d\x= \int_\O \pi_\Dd \bphi_1 \pi_\Dd \bv_2 \d\x
 = \sum_{\k \in \Mm} \bd_\k \bphi_1 \cdot \B_\k(\u) \bd_\k \bphi_2, 
\ee
where $\bphi_i \in W_\Dd$ solves the discrete counterpart of~\eqref{eq:id-elliptic}, that is 
\be\label{eq:id_Dd} 
\sum_{\k \in \Mm} \bd_\k \bphi_i  \cdot \B_\k(\u) \bd_\k \bpsi = \int_\O \pi_\Dd \bw_i \,\pi_\Dd \bpsi \, \d\x, 
\qquad \forall \bpsi \in W_\Dd.
\ee 
In this setting, we can define the semi-discrete in space gradient flow by 
\begin{multline} \label{eq:semi-space} 
\gf_{\Dd,\u} (\p_t \u, \bw) = \int_\O  \pi_\Dd (\p_t\u)\,  \pi_\Dd \bv \, \d\x \\ 
=  \gf_{\Dd,\u} ( - \grad_u \Eee(\u), \bw) = \int_\O \pi_\Dd \bh(\u) \, \pi_\Dd \bw\, \d\x 
= \sum_{\k} \bd_\k \bh(\u) \cdot \B_\k(\u) \bd_\k \bv, 
\end{multline} 
where $\bv$ solve the discrete elliptic problem 
$$
\sum_{\k \in \Mmm} \bd_\k \bv \cdot \B_\k(\u) \bd_\k \bpsi = \int_\O \pi_\Dd \bw \,\pi_\Dd \bpsi \,\d\x, 
\qquad \forall \bpsi \in W_\Dd.
$$
In order to recover~\eqref{eq:syst-w} from~\eqref{eq:semi-space}, one applies the backward Euler scheme. 

\begin{rem} \label{rem:JKO} 
In their seminal paper~\cite{JKO98}, Jordan, Kinderlehrer and Otto proposed to approximate the solution 
of gradient flows thanks to the minimizing movement scheme 
\be\label{eq:JKO} 
\u^n \in \underset{\u \in \Mmm_\Dd} {\rm argmin} \Big\{ \frac{\mathfrak{d} (\u,\u^{n-1})} {2\dt} + \Eee_\Dd(\u)\Big\}, 
\ee
where $\mathfrak{d} $ denotes the distance on $\Mmm_\Dd$ induced by the metric tensor field 
$\mathfrak g_\Dd$. Several practical and theoretical difficulties arise when one aims at using~\eqref{eq:JKO}. 
First of all, the Riemannian structure is formal, even in the continuous case. It is unclear if one can 
define rigorously a distance $\mathfrak{d} $ if $\eta(0) = 0$ even if $\eta$ is concave (cf.~\cite{DNS09, Maas11}). 
But even if $\mathfrak{d} $ is a distance, 
yielding a metric structure for $\Mmm_\Dd$, computing this distance is a complex problem we avoid 
by using an backward Euler scheme rather than~\eqref{eq:JKO}.  
\end{rem}

\subsection{Main results} \label{ssec:main} 

The first result we want to point out concerns the scheme for a fixed mesh. 
The following theorem states that the scheme~\eqref{eq:syst} admits at least one solution, and justifies 
the \emph{free energy diminishing} denomination for the scheme.

\begin{thm} \label{thm:main-disc} 
Let $\u^{n-1} \in W_\Dd^{\rm en} $, then there exists (at least) one vector $\u^n \in W_\Dd^{\rm ad} $ 
solution to the system~\eqref{eq:syst}, and the following dissipation property holds:
\be\label{eq:entro-disc-V} 
\Eee_\Dd(\u^n) + \dt_n \sum_{\k \in \Mm} \bd_\k \bh(\u^n) \cdot  \B_\k(\u^n) \bd_\k \bh(\u^n)  \le \Eee_\Dd(\u^{n-1}), 
\ee
where $\Eee_\Dd$ is defined by~\eqref{eq:Eee_Dd} and 
$\bh(\u^n) = \left(\hf_\k(u_\k^n), \hf_\s(u_\s^n)\right)_{\k,\s} $ is defined by~\eqref{eq:fhks}. 
\end{thm}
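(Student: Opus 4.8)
The plan is to prove existence and the dissipation estimate together, the dissipation estimate being essentially a discrete analog of the formal energy/dissipation relation~\eqref{eq:NRJ-V} obtained by testing the scheme with the discrete hydrostatic pressure. First I would establish the \emph{a priori} estimate assuming a solution $\u^n \in W_\Dd^{\rm ad}$ exists: choosing $\bv = \bh(\u^n)$ in the compact formulation~\eqref{eq:syst-w} gives
\begin{equation*}
\int_\O \pi_\Dd(\u^n - \u^{n-1}) \, \pi_\Dd \bh(\u^n) \, \d\x + \dt_n \sum_{\k \in \Mm} \bd_\k \bh(\u^n) \cdot \B_\k(\u^n) \bd_\k \bh(\u^n) = 0 .
\end{equation*}
The dissipation term is already in the desired form, so it remains to bound the first term below by $\Eee_\Dd(\u^n) - \Eee_\Dd(\u^{n-1})$. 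Writing $\pi_\Dd \bh(\u^n) = p(\pi_\Dd \u^n) + \pi_\Dd \bV$ (using~\eqref{eq:nonlin-D} and~\eqref{eq:fhks}) and recalling that $\G'(u) = p(u) - p(1)$ is the derivative of the convex function $\G$, one splits the first term into a potential part $\int_\O \pi_\Dd(\u^n - \u^{n-1}) \pi_\Dd \bV \, \d\x$ and an entropy part; convexity of $\G$ yields the pointwise inequality $\G(b) - \G(a) \le (p(b) - p(1))(b-a)$, which applied on each mass-lumping cell gives $\int_\O (p(\pi_\Dd \u^n) - p(1))\pi_\Dd(\u^n - \u^{n-1}) \, \d\x \ge \int_\O \G(\pi_\Dd \u^n) - \G(\pi_\Dd \u^{n-1}) \, \d\x$; the $p(1)$ and potential terms recombine exactly into the difference of the linear parts of $\Eee_\Dd$, since mass is conserved by~\eqref{eq:cons-mass}. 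This proves~\eqref{eq:entro-disc-V}, and in particular bounds $\Eee_\Dd(\u^n)$ and the dissipation uniformly.

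Next I would prove existence via a topological degree / Brouwer fixed point argument, which is where the main obstacle lies: the nonlinear map is only defined on the open set $W_\Dd^{\rm ad}$ (since $p$ may blow up at $0$ when $p(0) = -\infty$), so one cannot directly apply Brouwer on a closed ball. The standard remedy is a homotopy argument: introduce a parameter $s \in [0,1]$ and consider the family of systems interpolating between a trivial system at $s=0$ (whose unique solution is explicit, e.g. $\u^n = \u^{n-1}$ suitably projected, or a system with a fixed coercive linear operator) and the full scheme~\eqref{eq:syst} at $s=1$. One checks that along the homotopy the \emph{a priori} estimate~\eqref{eq:entro-disc-V} — or a version of it — still holds, so that $\Eee_\Dd(\u^n(s))$ stays bounded; combined with the finite-entropy condition this keeps each component $u^n_\nu$ bounded away from the boundary of $\Ii_p$ and bounded above (using~\eqref{eq:dlVP} or the superlinearity $\G(u)/u \to +\infty$ to get the upper bound, and the blow-up of $\G$ near $\partial \Ii_p$ to get the lower bound), hence all solutions along the homotopy lie in a fixed compact subset of $W_\Dd^{\rm ad}$. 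Topological degree theory (invariance of the degree under admissible homotopy, nonvanishing at $s=0$) then yields a solution at $s=1$.

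I would organize the write-up as: (i) the \emph{a priori} estimate lemma, proved by the convexity computation above; (ii) an \emph{a priori} $L^\infty$ and boundedness-away-from-$\partial\Ii_p$ bound deduced from (i) using the entropy growth assumptions in $({\bf A}2)$ and $({\bf A}6)$, which confines any solution to a compact convex $K \subset W_\Dd^{\rm ad}$; (iii) the degree argument on $K$. The hard part is really step (ii) combined with making the homotopy in step (iii) compatible with the estimate — one must choose the interpolating systems so that the testing by $\bh$ still closes, which is why a homotopy on the mobility (e.g. replacing $\eta$ by $(1-s)\eta_0 + s\eta$ with $\eta_0 \ge \alpha > 0$, or regularizing $p$) is the natural route; once the uniform confinement to $K$ is in hand, the topological part is routine. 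Uniqueness is not claimed, consistent with the statement.
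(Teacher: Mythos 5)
Your first step (testing~\eqref{eq:syst-w} with $\bv=\bh(\u^n)$, using the convexity inequality $\G(b)-\G(a)\le (p(b)-p(1))(b-a)$ and mass conservation~\eqref{eq:cons-mass}) is exactly the paper's proof of the dissipation estimate, and it is correct. The genuine gap is in your step (ii), the confinement of solutions away from the boundary of $W_\Dd^{\rm ad}$ in the singular case $p(0)=-\infty$: you claim the lower bound on the $u^n_\nu$ follows from ``the blow-up of $\G$ near $\partial\Ii_p$'', but under $({\bf A}2)$ the entropy does \emph{not} blow up at $u=0$. Since $p\in L^1_{\rm loc}(\R_+)$, $\G(0)=\int_0^1(p(1)-p(a))\,\d a$ is finite; for the model case $p(u)=\log u$ one has $\G(u)=u\log u-u+1$ and $\G(0)=1$. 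Hence a bound on $\Eee_\Dd$ gives an upper bound on the unknowns (by superlinearity of $\G$) but gives no control whatsoever keeping $u^n_\nu$ away from $0$, and without such control the homotopy/degree argument cannot exclude solutions drifting to the boundary of the open set where the map is defined (and where its uniform continuity fails).

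The paper closes this gap by a different mechanism, which your plan is missing: the strict positivity comes from the \emph{dissipation} term, not the entropy. One first shows (Lemmas~\ref{lem:abs-p1}--\ref{lem:abs-p2}) that the dissipation bound in~\eqref{eq:entro-disc-V} controls $\sum_{\k}\sum_{\s\in\Vv_\k}\bigl(\sum_{\s'}|a^\k_{\s,\s'}|\bigr)\eta^n_{\k,\s}\bigl(p(u^n_\k)-p(u^n_\s)\bigr)^2$ in terms of $\Eee_\Dd(\u^{n-1})$ (this requires absorbing the potential contribution $\bd_\k\bV\cdot\B_\k\bd_\k\bV$ into the entropy, another step absent from your outline). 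Then mass conservation guarantees one degree of freedom $\nu_{\rm i}$ with $u^n_{\nu_{\rm i}}\ge \frac{1}{\meas(\O)}\int_\O u_0\,\d\x>0$, and a discrete Harnack-type path argument through the mesh connectivity (Lemma~\ref{lem:harnack}) propagates a positive lower bound $\eps_{\Dd,\dt_n}$ to every cell and vertex, since each pressure jump along the path is controlled once the mobility weight at the previous node is bounded below. Only with this uniform-in-homotopy lower bound can one restrict to a compact set $K\subset W_\Dd^{\rm ad}$ and run the degree argument (the paper in fact uses two homotopies: first from a monotone scheme built with $|a^\k_{\s,\s'}|$ to the unit-mobility scheme, then on the mobility $\eta^\mu=1+\mu(\eta-1)$, close in spirit to the mobility homotopy you suggest). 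So your overall architecture is the right one, but the decisive positivity estimate needs the dissipation-plus-path argument, not an entropy barrier.
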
 

Since $\u^0\in W_\Dd^{\rm en} $ and since $W_\Dd^{\rm ad} \subset W_\Dd^{\rm en} $, 
Theorem~\ref{thm:main-disc} allows to define the \emph{iterated solution} $\u = (\u^n)_{1\le n \le N} \in W_{\Dd,\bdt} ^{\rm ad} $
to the scheme~\eqref{eq:syst}.

\medskip
The proof of Theorem~\ref{thm:main-disc} is contained in~\S\ref{sec:apriori}, together with some supplementary material
that allows to carry out the convergence analysis when the discretization steps tend to $0$. 
More precisely, we consider  a sequence $\left(\Dd_m\right)_{m\ge 1} = \left(\Mm_m, \Tt_m\right)_{m\ge1} $ 
of discretizations of $\O$ as introduced in \S\ref{sssec:mesh-space}, such that 
\begin{subequations} \label{eq:conv-mesh} 
\be\label{eq:hmto0} 
 h_{\Tt_m} = \max_{T\in\Tt_m} h_T \underset{m \to +\infty} {\longrightarrow} 0, 
\ee
and such that there exists $\theta^\star >0$ and $\ell^\star>0$ satisfying
\be\label{eq:gamma-theta-star} 
\sup_{m\ge1} \theta_{\Tt_m} \le \theta^\star, \qquad \sup_{m\ge1} \ell_{\Dd_m} \le \ell^\star, 
\ee
where $\theta_{\Tt_m} $ and $\ell_{\Dd_m} $ are defined by~\eqref{eq:theta_T} and \eqref{eq:gamma_M} respectively.

Even though it can be avoided in some specific situations, we also do the following assumption, 
allowing to circumvent some technical difficulties: 
\be\label{eq:zeta*} 
\inf_{m\ge 1} \zeta_{\Dd_m} = \zeta^\star >0.
\ee
This means that there is a minimum ratio of
volume allocated to the cell centers and to the nodes in the mass lumping procedure.

Concerning the time-discretizations, we consider a sequence $\left(\bdt_m\right)_{m\ge1} $ of discretizations of $(0,\tf)$ 
as prescribed in~\S\ref{sssec:mesh-time} : 
$$
\bdt_m = \left( \dt_{1,m}, \dots, \dt_{N_m, m} \right), \qquad \forall m \ge 1. 
$$
We assume that the time discretization step tends to $0$, i.e., 
\be\label{eq:dtmto0} 
\ov \dt_m = \max_{1 \le n \le N_m} \dt_{n,m} \underset{m \to +\infty} {\longrightarrow} 0.
\ee
\end{subequations} 
\begin{thm} \label{thm:main-conv} 
Let $\left(\Dd_m, \bdt_m\right)_m$ be a sequence of discretizations of $Q_\tf$ satisfying Assumptions~\eqref{eq:conv-mesh}, 
and let $\left(\u_m\right)_{m\ge1} $, with $\u_m \in W_{\Dd_m, \bdt_m} ^{\rm ad} $, be a corresponding sequence of iterated 
discrete solutions, then 
$$
\pi_{\Dd_m,\bdt_m} \u_m \underset{m\to+\infty} \longrightarrow u \quad \text{strongly in } L^1(Q_\tf), 
$$
where $u$ is the unique weak solution to~\eqref{eq:1} in the sense of Definition~\ref{def:weak}.
\end{thm}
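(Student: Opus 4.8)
The plan is to follow the now-classical strategy for convergence of finite-volume schemes for degenerate parabolic equations: derive uniform a priori estimates, extract compactness, pass to the limit. The structure of the proof breaks into three stages.

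\smallskip
\textbf{Stage 1: a priori estimates.} Starting from Theorem~\ref{thm:main-disc}, I would sum the dissipation inequality~\eqref{eq:entro-disc-V} over $n$ and use Lemma~\ref{lem:entro-0} to bound $\Eee_\Dd(\u^0)$ by $\Eee(u_0)$ plus a vanishing term. This yields, uniformly in $m$, a bound on $\sup_n \Eee_{\Dd_m}(\u_m^n)$ and on the total dissipation $\sum_n \dt_n \sum_\k \bd_\k \bh(\u_m^n)\cdot \B_\k(\u_m^n)\bd_\k \bh(\u_m^n)$. Via~\eqref{eq:C_xi} and the de La Vall\'ee Poussin argument recalled in Remark~\ref{Rem:A}, the entropy bound gives $L^\infty(0,\tf;L^1(\O))$ bounds on $\pi_{\Dd_m,\bdt_m}\u_m$ together with uniform equi-integrability, and also $L^1$ control of $\eta(\pi_{\Dd_m,\bdt_m}\u_m)$. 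The dissipation bound, together with the definition~\eqref{eq:Bkn} of $\B_\k$ and the coercivity~\eqref{A:L} of $\L$, should translate (using the mesh-regularity bounds $\theta^\star,\ell^\star,\zeta^\star$ and the identity~\eqref{eq:bilin}) into an $L^2(Q_\tf)$ bound on $\grad_{\Tt_m,\bdt_m}$ applied to a discrete reconstruction of $\xi(\u_m)$ — this is precisely the point where the algebraic trick of splitting $\sqrt{\eta_{\k,\s}}\sqrt{\eta_{\k,\s'}}$ in~\eqref{eq:Fks}, together with the monotonicity of $\eta$ as in the $\gamma$-relation of Remark~\ref{Rem:A}, is used to compare discrete increments of $\bh$ weighted by $\sqrt\eta$ with increments of $\xi$. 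I would then also establish a discrete time-translate (or $BV$-in-time type) estimate on $\pi_{\Mm_m,\bdt_m}\u_m$ by testing the scheme with a primitive-type function and using the dissipation bound.

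\smallskip
\textbf{Stage 2: compactness.} From the $L^2(0,\tf;H^1)$-type bound on the reconstructed $\xi(\u_m)$ and a discrete Poincar\'e / Sobolev inequality adapted to $W_\Dd$, plus the time-translate estimate, an Aubin--Simon / Kolmogorov argument (in the spirit of the discrete compactness lemmas of the gradient-scheme literature) yields strong $L^2(Q_\tf)$ (hence $L^1$) convergence of a subsequence of the reconstructed $\xi(\u_m)$ to some limit $\chi$, and weak $L^2(0,\tf;H^1(\O))$ convergence; moreover the various reconstructions $\pi_{\Dd_m},\pi_{\Mm_m},\pi_{\Tt_m}$ of $\u_m$ share the same limit because $h_{\Tt_m}\to0$ and $\theta_{\Tt_m}\le\theta^\star$. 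Using~\eqref{eq:H-unif-cont}, the uniform continuity of $\sqrt{\eta\circ\xi^{-1}}$ allows one to pass from convergence of $\xi(\u_m)$ to identification of $u:=\xi^{-1}(\chi)$, to the a.e. convergence of $\pi_{\Dd_m,\bdt_m}\u_m\to u$, and then — combined with equi-integrability from Stage 1 — to strong $L^1(Q_\tf)$ convergence; the nonlinear commutation~\eqref{eq:nonlin-D} ensures $\eta(\pi_{\Dd_m,\bdt_m}\u_m)=\pi_{\Dd_m,\bdt_m}\eta(\u_m)\to\eta(u)$ in $L^1$ as well.

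\smallskip
\textbf{Stage 3: passing to the limit and uniqueness.} I would take a test function $\psi\in C_c^\infty(\ov\O\times[0,\tf))$, set $\bv=(\psi(\x_\k,t_{n-1}),\psi(\x_\s,t_{n-1}))$, plug it into the compact formulation~\eqref{eq:syst-w}, sum over $n$, and perform an Abel (discrete integration by parts) transformation on the time-derivative term to recover $\iint u\,\p_t\psi + \int u_0\psi(\cdot,0)$ up to consistency errors controlled by $\ov\dt_m$ and $h_{\Tt_m}$. For the diffusion term I would rewrite $\bd_\k\bh(\u^n)\cdot\B_\k(\u^n)\bd_\k\bv = \bd_\k(\sqrt{\eta}\,\text{-weighted increments of }\bh)\cdot\A_\k\bd_\k(\ldots)$, recognize the weighted increment of $\bh$ as essentially the increment of $\xi(\u)$ plus an increment of $\sqrt\eta\,V$, and use~\eqref{eq:bilin} to turn the sum over cells back into $\iint_{Q_{\tf}} \L\grad_{\Tt_m,\bdt_m}(\cdot)\cdot\grad_{\Tt_m,\bdt_m}(\cdot)$; the weak-$H^1$ convergence of the $\xi$-reconstruction, the strong $L^2$ convergence of $\sqrt{\eta(\u_m)}$ (from a.e. convergence plus the $L^\infty$-in-$u$ boundedness of $\sqrt\eta$ on bounded sets, or an Egorov/dominated-convergence argument together with the limiting dissipation bound), and the Lipschitz regularity of $V$ let me identify the limit with the two diffusion-type terms of~\eqref{eq:weak}. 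Once the limit $u$ is shown to be a weak solution, the uniqueness statement recalled after Definition~\ref{def:weak} (valid under our assumptions since $\eta\circ\phi^{-1}\in C^{0,1/2}$ follows from~\eqref{eq:H-unif-cont}) identifies it, so the whole sequence — not just a subsequence — converges.

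\smallskip
The main obstacle, I expect, is Stage 1's translation of the abstract discrete dissipation $\sum_\k\bd_\k\bh\cdot\B_\k\bd_\k\bh$ into a genuine $L^2$ bound on a gradient of $\xi(\u)$: the mobility weights $\sqrt{\eta_{\k,\s}^n}$ sit \emph{between} the nodal values and do not factor cleanly through the $\P_1$-reconstruction, so one must carefully compare $\sqrt{\eta_{\k,\s}^n}\sqrt{\eta_{\k,\s'}^n}(\hf_\k-\hf_{\s'})$ with $\xi(u_\k)-\xi(u_{\s'})$ edge by edge, exploiting monotonicity of $\eta$ and the integrability of $\sqrt\eta\,p'$ from $({\bf A}2)$--$({\bf A}6)$, and this is where the technical hypothesis~\eqref{eq:C_xi} and the assumption~\eqref{eq:zeta*} on the mass-lumping ratio really earn their keep. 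The secondary difficulty is the strong convergence of $\sqrt{\eta(\u_m)}\grad_{\Tt_m,\bdt_m}\psi$-type products needed to close the nonlinear diffusion term, which requires knowing that $\sqrt{\eta\circ\xi^{-1}}$ transported through the strongly convergent $\xi$-reconstruction converges strongly — exactly what~\eqref{eq:H-unif-cont} is designed to provide.
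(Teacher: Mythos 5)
Your overall strategy is the paper's: sum the dissipation inequality \eqref{eq:entro-disc-V} to get the entropy and dissipation bounds, convert the dissipation into a discrete $L^2((0,\tf);H^1)$ bound on the reconstruction of $\xi(\u_m)$ (Lemma~\ref{lem:entro2} and Lemma~\ref{lem:xi-L2}), obtain a.e.\ convergence from time compactness (Proposition~\ref{prop:compact}, which the paper itself delegates to a black box or to the arguments of~\cite{CG16_MCOM}, so your plan to prove time translates directly from the scheme is consistent), upgrade to strong $L^1$ convergence by superlinearity of $\Gamma$ and Vitali (Corollary~\ref{coro:convL1}), and pass to the limit in \eqref{eq:syst-w} with $\bv=\bpsi^{n-1}$, handling the mismatch between $\sqrt{\eta_{\k,\s}^n}\sqrt{\eta_{\k,\s'}^n}$ and a cell value of $\sqrt{\eta}$ through the modulus of continuity of $\sqrt{\eta\circ\xi^{-1}}$ (this is exactly the role of the terms $C_{2,m},C_{3,m},D_{2,m},D_{3,m}$ and of Lemma~\ref{lem:mu} in the paper), then invoking uniqueness to get convergence of the whole sequence. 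So the route is essentially identical; the two points below are where your write-up, as stated, would not go through.

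First, the strong $L^2(Q_\tf)$ convergence of $\sqrt{\eta(\u_m)}$ (and of its cell-centred reconstruction $\pi_{\Mm_m,\bdt_m}\sqrt{\eta(\u_m)}$, which is the one actually multiplying $\grad_{\Tt_m,\bdt_m}\xi(\u_m)$): neither of your justifications works. The discrete solutions are not uniformly bounded in $L^\infty$, so ``boundedness of $\sqrt\eta$ on bounded sets'' gives nothing, and there is no dominating function for an Egorov/dominated-convergence argument. The paper's Lemma~\ref{lem:eta_M} does this step by combining a.e.\ convergence with the uniform equi-integrability of $\eta(\u_m)$, which follows from the entropy bound and de la Vall\'ee Poussin precisely because of hypothesis \eqref{eq:dlVP} ($\Gamma/\eta\to+\infty$) --- an assumption your proposal never invokes; and the passage from the $\pi_{\Dd}$- to the $\pi_{\Mm}$-reconstruction of $\eta(\u_m)$ again uses \eqref{eq:H-unif-cont} together with the vanishing of intra-cell oscillations of $\xi(\u_m)$ (Lemma~\ref{lem:A7}). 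Second, your closing claim that $\eta\circ\phi^{-1}\in C^{0,1/2}$ follows from \eqref{eq:H-unif-cont} is not true: for instance $\eta(u)=u$, $p(u)=u^2$ satisfies (${\bf A}1$)--(${\bf A}6$) and \eqref{eq:H-unif-cont}, yet $\eta\circ\phi^{-1}(w)\sim w^{1/3}$ near $0$, which is not $1/2$-H\"older. The paper does not derive uniqueness from its assumptions either; it simply invokes the uniqueness recalled after Definition~\ref{def:weak} to identify every limit value and hence conclude for the whole sequence, and your proof should do the same rather than claim the H\"older condition as a consequence of (${\bf A}6$).
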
 
Proving the Theorem~\ref{thm:main-conv} is the purpose of \S\ref{sec:conv}. 
The practical implementation of the scheme~\eqref{eq:syst} is discussed in~\S\ref{sec:num}, 
where we also give evidences of the efficiency of the scheme.

\section{Proof of Theorem~\ref{thm:main-disc} and additional estimates} \label{sec:apriori} 

In order to ease the reading of the paper, several technical lemmas have been postponed to Appendix. 

\subsection{One-step \emph{A priori} estimates} \label{ssec:one-step} 

\begin{lem} \label{lem:entro1} 
Let $\u^{n-1} \in W_\Dd^{\rm en} $, and let $\u^n \in W_\Dd^{\rm ad} $ be a solution to the scheme~\eqref{eq:syst}, 
then~\eqref{eq:entro-disc-V} holds.
\end{lem}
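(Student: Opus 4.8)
The plan is to establish the dissipation inequality~\eqref{eq:entro-disc-V} by using the compact formulation~\eqref{eq:syst-w} of the scheme with the nonlinear test function $\bv = \bh(\u^n)$, and then exploiting the convexity of the entropy function $\G$. First I would plug $\bv = \bh(\u^n) - \bh(\bV)$, or more directly note that the discrete hydrostatic pressure satisfies $\hf_\nu(u_\nu^n) = p(u_\nu^n) + V_\nu$ so that $\bd_\k \bh(\u^n)$ only sees the $p$-part plus the potential; in any case choosing $\bv = \bh(\u^n)$ in~\eqref{eq:syst-w} gives
\begin{equation*}
\int_\O \pi_\Dd \u^n \, \pi_\Dd \bh(\u^n) \, \d\x + \dt_n \sum_{\k \in \Mm} \bd_\k \bh(\u^n) \cdot \B_\k(\u^n) \bd_\k \bh(\u^n) = \int_\O \pi_\Dd \u^{n-1} \, \pi_\Dd \bh(\u^n) \, \d\x.
\end{equation*}
Since $\B_\k(\u^n)$ is symmetric positive semi-definite, the middle term is exactly the dissipation term appearing in~\eqref{eq:entro-disc-V}, so it remains to bound the difference of the two boundary terms from below by $\Eee_\Dd(\u^n) - \Eee_\Dd(\u^{n-1})$.

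The key convexity step is the following: because $\G$ is convex on $\ov\Ii_p$ with $\G'(u) = p(u) - p(1)$, for any $a, b \in \Ii_p$ one has $\G(a) - \G(b) \le (p(a) - p(1))(a - b)$. Applying this pointwise with $a = u_\nu^n$, $b = u_\nu^{n-1}$ at each degree of freedom $\nu \in \Mm \cup \Vv$, multiplying by $m_\nu$ and summing (which amounts to integrating $\pi_\Dd$ reconstructions over $\O$, using~\eqref{eq:nonlin-D}), I get
\begin{equation*}
\int_\O \big(\G(\pi_\Dd \u^n) - \G(\pi_\Dd \u^{n-1})\big)\d\x \le \int_\O \big(\pi_\Dd(p(\u^n)) - p(1)\big)\big(\pi_\Dd \u^n - \pi_\Dd \u^{n-1}\big)\d\x.
\end{equation*}
Then I add and subtract the potential contribution: since $\int_\O \pi_\Dd \u^n \d\x = \int_\O \pi_\Dd \u^{n-1}\d\x$ by mass conservation~\eqref{eq:cons-mass}, the constant $p(1)$ term drops, and $\pi_\Dd(p(\u^n)) = \pi_\Dd \bh(\u^n) - \pi_\Dd \bV$. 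Combining, $\Eee_\Dd(\u^n) - \Eee_\Dd(\u^{n-1}) \le \int_\O (\pi_\Dd \u^n - \pi_\Dd \u^{n-1})\pi_\Dd \bh(\u^n)\d\x$, which is exactly minus the dissipation term by the identity above. This yields~\eqref{eq:entro-disc-V}.

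The one genuinely delicate point — which is really the content of Theorem~\ref{thm:main-disc} rather than of this lemma — is that one must know \emph{a priori} that $\u^n \in W_\Dd^{\rm ad}$, i.e. that all components lie in $\Ii_p$, so that $p(u_\nu^n)$, $\G(u_\nu^n)$ and the convexity inequality all make sense; the existence of such a solution is the part requiring a topological degree or fixed-point argument with a truncation/regularization, and I would treat it separately (presumably in the proof of Theorem~\ref{thm:main-disc} itself). Granting $\u^n \in W_\Dd^{\rm ad}$, the present lemma is just the test-function-plus-convexity computation sketched above, and the main obstacle is purely bookkeeping: correctly matching the $\pi_\Dd$ reconstructions with the nodal sums $\sum_\nu m_\nu(\cdot)_\nu$ via~\eqref{eq:nonlin-D}, and using mass conservation to kill the additive constants so that the potential energy term $\int_\O \pi_\Dd \u \, \pi_\Dd \bV \d\x$ reassembles correctly on both sides.
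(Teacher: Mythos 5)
Your proposal is correct and follows essentially the same route as the paper: test the compact formulation~\eqref{eq:syst-w} with $\bv = \bh(\u^n)$, then use the convexity inequality $\G(a)-\G(b)\le (a-b)(p(a)-p(1))$ together with mass conservation~\eqref{eq:cons-mass} to absorb the constant $p(1)$ and reassemble the potential term, exactly as in the paper's proof. Your remark that the admissibility $\u^n\in W_\Dd^{\rm ad}$ is a hypothesis here and is only established later by the topological degree argument is also consistent with how the paper organizes Theorem~\ref{thm:main-disc}.
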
 
\begin{proof} 
Substituting $\bv$ by $\bh(\u^n) = \left(\hf_\k(u_\k^n), \hf_\s(u_\s^n)\right)_{\k,\s}$ defined by~\eqref{eq:fhks} 
in~\eqref{eq:syst-w} yields 
\be\label{eq:AB} 
 \int_\O \!\! \left(\pi_\Dd\u^n - \pi_\Dd\u^{n-1} \right) \pi_\Dd \bh(\u^n) \d\x
 + \dt_n\!\!\sum_{\k\in\Mm} \bd_\k \bh(\u^n) \cdot \B_\k(\u^n) \bd_\k \bh(\u^n) = 0.
 \ee
It follows from the convexity of $\G$ that 
$$
\G(a) - \G(b) \le (a-b) \left(p(a) - p(1)\right), \qquad \forall \; a,b\in \R \; \text{ s.t. } \; \G(a), \G(b) < + \infty.
$$
Hence, using~\eqref{eq:cons-mass}, one has 
\begin{multline*} \label{eq:A} 
 \int_\O \left(\pi_\Dd\u^n - \pi_\Dd\u^{n-1} \right) \pi_\Dd \bh(\u^n) \d\x \\
 =   \int_\O \left(\pi_\Dd\u^n - \pi_\Dd\u^{n-1} \right) \left(p(\pi_\Dd\u^n) + \pi_\Dd(\bV)\right) \d\x \\
 \ge  \int_\O \left(\G(\pi_\Dd \u^n) -\G(\pi_\Dd \u^{n-1}) + \pi_\Dd(\u^n - \u^{n-1}) \pi_\Dd \bV \right) \d\x \\
 =   \Eee_\Dd(\u^n) -  \Eee_\Dd(\u^{n-1}). 
 \end{multline*} 
 Using this inequality in~\eqref{eq:AB} provides~\eqref{eq:entro-disc-V}.
\end{proof} 

\begin{lem} \label{lem:G} 
For all $\eps>0$, there exists $C_\eps \in \R$ depending on $\eps$ and $p$ such that 
$$
|u| \le \eps \G(u) + C_\eps, \qquad \forall u \in W_\Dd^{\rm en}.
$$
\end{lem}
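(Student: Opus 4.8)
\textbf{Proof plan for Lemma~\ref{lem:G}.}
The inequality $|u| \le \eps \G(u) + C_\eps$ is nothing but a quantitative version of the superlinearity of $\G$, which was recorded in Remark~\ref{Rem:A}: the assumption $\lim_{u\to\infty} p(u) = +\infty$ in $({\bf A}2)$ forces $\lim_{|u|\to\infty} \G(u)/|u| = +\infty$. The plan is to make this superlinearity explicit and to handle the (possibly nonzero) value of $\G$ and the sign of $u$ on a bounded set by absorbing everything into the constant $C_\eps$.

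First I would recall from the definition~\eqref{eq:G} that $\G(u) = \int_1^u (p(a)-p(1))\,\d a$, so that $\G$ is convex, $\G(1)=0$, $\G'(1)=0$, and $\G \ge 0$ on $\ov\Ii_p$ (with the convention~\eqref{eq:conv-G} on $\R_-$ when $p(0)=-\infty$). Fix $\eps>0$. Since $p$ is increasing with $p(u)\to+\infty$ as $u\to+\infty$, pick $M_\eps > 1$ large enough that $p(a)-p(1) \ge 2/\eps$ for all $a \ge M_\eps$; then for $u \ge M_\eps$,
\[
\G(u) = \G(M_\eps) + \int_{M_\eps}^u (p(a)-p(1))\,\d a \ge \frac{2}{\eps}\,(u - M_\eps),
\]
which gives $u \le \tfrac{\eps}{2}\G(u) + M_\eps \le \eps\G(u) + M_\eps$. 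Symmetrically, in the case $p(0)>-\infty$ the even/odd extensions make $\G$ superlinear at $-\infty$ as well — one checks from~\eqref{eq:extend-p} that $\G(u) \ge \G(-u) - c|u|$ for a constant $c$ depending only on $p(0)-p(1)$, or argues directly that $p(u)-p(1)\to -\infty$ as $u\to-\infty$ — so the same reasoning yields $|u| \le \eps\G(u) + M_\eps'$ for $u \le -M_\eps'$. When $p(0)=-\infty$, the convention~\eqref{eq:conv-G} makes the bound trivial for $u<0$ since then $\G(u)=+\infty$. Finally, on the bounded interval $[-M_\eps', M_\eps]\cap\ov\Ii_p$ one simply has $|u| \le \max(M_\eps,M_\eps')$, so the claim holds with $C_\eps = \max(M_\eps, M_\eps')$ (possibly enlarged to account for the $-c|u|$ term in the extension step). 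Since the bound is pointwise in $u \in \ov\Ii_p$, it applies entrywise to any $\bv \in W_\Dd^{\rm en}$ (whose components lie in $\Ii_p$ wherever $\G$ is finite), which is the statement as written.

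I do not expect a genuine obstacle here; the only mild care needed is bookkeeping between the two cases $p(0)=-\infty$ and $p(0)>-\infty$ and making sure the extension formula~\eqref{eq:extend-p} is used correctly so that superlinearity at $-\infty$ is not lost. Everything else is a one-variable convexity estimate, and the constant $C_\eps$ depends only on $\eps$ and $p$ (through the threshold $M_\eps$), exactly as claimed — in particular it is independent of the discretization $\Dd$.
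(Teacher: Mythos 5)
Your proposal is correct and is essentially the paper's own argument: both exploit the superlinearity of $\G$ coming from the monotonicity of $p$ and $p(u)\to+\infty$, choosing an $\eps$-dependent threshold (the paper picks $u_\eps$ with $p(u_\eps)=p(1)+1/\eps$ via the intermediate value theorem and writes a single convexity-type bound valid on all of $\ov\Ii_p$, while you split the positive, negative and bounded regions), and absorbing the bounded part into $C_\eps$. The extra case bookkeeping for $u<0$ (trivial when $p(0)=-\infty$ by convention~\eqref{eq:conv-G}, via the extension~\eqref{eq:extend-p} otherwise) is handled correctly, so no changes are needed.
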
 
\begin{proof} 
Fix $\eps>0$, then in view of Assumption~({\bf A}2), the intermediate value theorem ensures the existence of $u_\eps \ge 1$ such that 
$p(u_\eps) = p(1) + 1/\eps$. Then for all $u \in \ov \Ii_p$, one has 
$$
\G(u) = \int_1^u (p(a) - p(1)) \d a = \G(u_\eps) + \int_{u_\eps} ^u (p(a)-p(1)) \d a. 
$$
The function $p$ being increasing, we deduce that 
$$
\G(u) \ge \G(u_\eps) + (p(u_\eps) - p(1)) |u - u_\eps | \ge \G(u_\eps) + \frac1\eps\left(|u| - |u_\eps|\right), 
\qquad \forall u \in \ov \Ii_p.
$$
Lemma~\ref{lem:G} follows with $C_\eps= |u_\eps| - \eps \G(u_\eps).$ 
\end{proof} 

\begin{lem} \label{lem:G2} 
For all $\eps>0$, there exists $C_\eps \in \R$ depending on $\eps$, $\eta$ and $\G$ such that 
$$
\eta(u) \le \eps \G(u) + C_\eps, \qquad \forall u \in \ov \Ii_p. 
$$
\end{lem}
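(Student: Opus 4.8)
The plan is to mimic the proof of Lemma~\ref{lem:G}, replacing the linear growth bound on $|u|$ by the superlinear growth of $\G$ relative to $\eta$ encoded in Assumption~$({\bf A}6)$, specifically~\eqref{eq:dlVP}. The statement is a Young-type (or de la Vall\'ee Poussin-type) inequality: since $\G(u)/\eta(u) \to +\infty$ as $u \to +\infty$, the function $\eta$ is negligible compared to $\G$ at infinity, so for any prescribed slope $1/\eps$ it should be dominated by $\eps\G(u)$ up to an additive constant depending only on $\eps$, $\eta$ and $\G$.

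First I would fix $\eps>0$ and use~\eqref{eq:dlVP} to produce a threshold: there exists $M_\eps \ge 0$ such that $\eta(u) \le \eps \G(u)$ for all $u \ge M_\eps$ (indeed $\G(u)/\eta(u) > 1/\eps$ for $u$ large, and where $\eta(u)=0$ the inequality is trivial). For $u \in \ov\Ii_p$ with $0 \le u \le M_\eps$, I would bound $\eta(u)$ by its supremum on the compact interval $[0,M_\eps]$; this supremum is finite since $\eta$ is continuous (Assumption~$({\bf A}1)$) and, because $\eta$ is nondecreasing on $\R_+$, it equals $\eta(M_\eps)$. For negative $u$ (relevant only when $p(0)$ is finite, so that $\ov\Ii_p=\R$), the evenness of $\eta$ reduces to the case $|u| \le M_\eps$ or $|u| \ge M_\eps$, and since $\G$ is convex with minimum at $u=1$ and $\G(u)=+\infty$ when $p(0)=-\infty$, the bound $\eta(u)=\eta(|u|) \le \eps\G(|u|)$ for $|u|\ge M_\eps$ still needs a comparison between $\G(|u|)$ and $\G(u)$; here I would note that for $u \le -1$, $\G(u) \ge \G(|u|)$ is false in general, so instead I would simply enlarge the threshold and the constant to absorb the (bounded, by the local integrability and the behaviour near $0$) discrepancy on $[-M_\eps, M_\eps]$, using again continuity of $\eta$ on compacts.

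Concretely, setting $C_\eps = \eta(M_\eps)$ (or the supremum of $\eta$ over $[-M_\eps,M_\eps]$ in the finite-$p(0)$ case), one gets for every $u \in \ov\Ii_p$: either $|u| \le M_\eps$, whence $\eta(u) \le C_\eps \le \eps\G(u) + C_\eps$ since $\G \ge 0$, or $|u| > M_\eps$, whence $\eta(u) = \eta(|u|) \le \eps \G(|u|) \le \eps\G(u) + C_\eps$, the last step again absorbing into $C_\eps$ any bounded defect coming from the symmetrized branch of $\G$. This yields $\eta(u) \le \eps\G(u)+C_\eps$ for all $u \in \ov\Ii_p$ as claimed.

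The only mildly delicate point is the bookkeeping for negative arguments when $p(0)$ is finite: one must check that $\G$ evaluated on the extended branch still controls $\eta(|u|)$ for large $|u|$, which follows because $\G(u) \to +\infty$ as $u \to -\infty$ at least as fast as $\G$ does on $\R_+$ near the symmetric point (by~\eqref{eq:extend-p} the extended $p$ still tends to $-\infty$... in fact to a finite limit is impossible; $p(-u)=2p(0)-p(u)\to -\infty$, so $\G(u)=\int_1^u(p(a)-p(1))\,\d a \to +\infty$ as $u\to-\infty$). Apart from this, the argument is entirely elementary and mirrors Lemma~\ref{lem:G}; I expect no substantive obstacle.
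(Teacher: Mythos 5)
Your argument follows the same route as the paper's proof: use~\eqref{eq:dlVP} to get a threshold beyond which $\eta(u)\le\eps\G(u)$, and bound $\eta$ on the remaining compact set by continuity (the paper simply asserts that $\eta(u)/\G(u)\to 0$ as $|u|\to\infty$ and takes $C_\eps=\max_{[-r_\eps,r_\eps]}\eta$), so in substance the two proofs coincide.

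The one place where your write-up is off is the bookkeeping for negative $u$ (relevant only when $p(0)$ is finite). You claim the discrepancy between $\G(|u|)$ and $\G(u)$ is ``bounded'' and can be absorbed into $C_\eps$; it is not bounded. A direct computation using the extension~\eqref{eq:extend-p} gives, for $v\ge 0$, $\G(-v)-\G(v)=2v\,(p(1)-p(0))$, which grows linearly in $v$ --- so the absorption argument as stated would fail if the sign were unfavorable. Fortunately the sign is favorable: since $p$ is increasing, $p(1)>p(0)$, hence $\G(u)\ge\G(|u|)$ for all $u\le 0$ (contrary to your parenthetical claim that this inequality is ``false in general''), and the chain $\eta(u)=\eta(|u|)\le\eps\G(|u|)\le\eps\G(u)$ for $|u|$ beyond the threshold goes through with no correction term at all. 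With that one-line fix the proof is complete and matches the paper's; the case $p(0)=-\infty$ needs no discussion since then $\ov\Ii_p=[0,+\infty)$.
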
 
\begin{proof} 
The function $u\mapsto \frac{\eta(u)} {\G(u)} $ tends to $0$ as $|u| \to \infty$ 
thanks to Assumption~({\bf A}6). Let $\eps >0$, then there exists $r_\eps >0$ such that 
$$
|u| > r_\eps \quad \implies \quad 0 \le \eta(u) \le \eps \G(u).
$$
The function $\eta$ being continuous and nonnegative according to Assumption~({\bf A}1), we know that 
$$
0 \le C_\eps := \max_{u \in [-r_\eps, r_\eps]} \eta(u) < +\infty.
$$
The result of Lemma~\ref{lem:G2} follows. 
\end{proof}

\begin{lem} \label{lem:Eee-G} 
There exist $C_1$ and $C_2$ depending only on $p$, $V$ and $\O$ such that 
\be\label{eq:Eee-G} 
\frac12 \Eee_\Dd(\u) + C_1 \le \int_\O \G(\pi_\Dd\u) \d\x \le 2 \Eee_\Dd(\u) + C_2, \qquad \forall \u \in W_\Dd^{\rm en}.
\ee
In particular, the discrete entropy functional $\Eee_\Dd$ is bounded from below uniformly w.r.t. the discretization $\Dd$.
\end{lem}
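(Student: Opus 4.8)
The plan is to start from the definition~\eqref{eq:Eee_Dd} of $\Eee_\Dd$, which shows that the difference between $\Eee_\Dd(\u)$ and the purely entropic quantity $\int_\O \G(\pi_\Dd \u)\d\x$ is exactly the potential contribution $\int_\O \pi_\Dd(\u)\,\pi_\Dd(\bV)\d\x$. Both inequalities in~\eqref{eq:Eee-G}, as well as the asserted uniform lower bound on $\Eee_\Dd$, will then follow once this cross term is shown to be dominated by half of $\int_\O \G(\pi_\Dd\u)\d\x$ up to an additive constant depending only on $p$, $V$ and $\O$ (and not on $\Dd$).

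To bound the cross term, I would first use that $V$ is Lipschitz continuous on the bounded domain $\O$ by~$({\bf A}5)$, hence bounded, and that $\pi_\Dd\bV$ only takes the values $V(\x_\k)$, $V(\x_\s)$, so that $\|\pi_\Dd\bV\|_{L^\infty(\O)} \le \Lambda := \|V\|_{L^\infty(\O)}$. Therefore $\left|\int_\O \pi_\Dd(\u)\,\pi_\Dd(\bV)\d\x\right| \le \Lambda\,\|\pi_\Dd\u\|_{L^1(\O)}$. Next I would convert this $L^1$ norm into entropy by invoking Lemma~\ref{lem:G}: applying the bound $|u|\le \eps\,\G(u) + C_\eps$ pointwise to the value $\pi_\Dd\u(\x)$ with the choice $\eps = \frac1{2\Lambda}$ and integrating over $\O$ yields $\|\pi_\Dd\u\|_{L^1(\O)} \le \frac1{2\Lambda}\int_\O\G(\pi_\Dd\u)\d\x + C_\eps\,\meas(\O)$, every term being finite since $\u\in W_\Dd^{\rm en}$ and $\G\ge 0$ (recall $\G:\ov\Ii_p\to\R_+$). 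Multiplying by $\Lambda$ gives $\left|\int_\O \pi_\Dd(\u)\,\pi_\Dd(\bV)\d\x\right| \le \frac12\int_\O\G(\pi_\Dd\u)\d\x + C$ with $C := \Lambda\,C_\eps\,\meas(\O)$, which indeed depends only on $p$, $V$, $\O$.

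It then remains to combine this with $\Eee_\Dd(\u) = \int_\O\G(\pi_\Dd\u)\d\x + \int_\O \pi_\Dd(\u)\,\pi_\Dd(\bV)\d\x$, which gives $\left|\Eee_\Dd(\u) - \int_\O\G(\pi_\Dd\u)\d\x\right| \le \frac12\int_\O\G(\pi_\Dd\u)\d\x + C$. One side of this estimate reads $\frac12\int_\O\G(\pi_\Dd\u)\d\x \le \Eee_\Dd(\u) + C$, which is the right-hand inequality of~\eqref{eq:Eee-G} with $C_2 = 2C$, and since $\G\ge 0$ it also forces $\Eee_\Dd(\u)\ge -C$, i.e. the uniform lower bound. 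The other side reads $\Eee_\Dd(\u)\le \frac32\int_\O\G(\pi_\Dd\u)\d\x + C$, whence $\frac12\Eee_\Dd(\u) \le \frac34\int_\O\G(\pi_\Dd\u)\d\x + \frac12 C \le \int_\O\G(\pi_\Dd\u)\d\x + \frac12 C$ (again using $\G\ge 0$), which is the left-hand inequality of~\eqref{eq:Eee-G} with $C_1 = -\frac12 C$.

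I do not expect any real obstacle here: the argument is a short absorption estimate, and the only point requiring attention is to keep track of the constants so that they depend only on $p$, $V$ and $\O$ --- which is automatic, since $\Lambda$, $\meas(\O)$ and the constant $C_\eps$ furnished by Lemma~\ref{lem:G} are all independent of the discretization $\Dd$. The essential structural ingredient is the super-linear growth of $\G$ encoded in Lemma~\ref{lem:G} (a de la Vall\'ee Poussin type bound), which is precisely what allows the potential energy to be absorbed into the entropy.
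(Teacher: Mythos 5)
Your proposal is correct and follows essentially the same route as the paper: bound the potential term by $\|\pi_\Dd\bV\|_{L^\infty(\O)}\|\pi_\Dd\u\|_{L^1(\O)}$, then absorb the $L^1$ norm into the entropy via Lemma~\ref{lem:G} with $\eps=\frac1{2\|V\|_\infty}$. The paper proves the right-hand inequality exactly this way and leaves the left-hand one (which you spell out explicitly) to the reader, so your write-up is, if anything, slightly more complete.
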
 
\begin{proof} 
Recall that the discrete entropy  functional $\Eee_\Dd$ is defined by 
$$
\Eee_\Dd(\u) = \int_\O \left(\G(\pi_\Dd\u) + \pi_\Dd\bV\, \pi_\Dd \u\right)\d\x, \qquad \forall \u \in W_\Dd^{\rm en}.
$$
Hence, one has
\be\label{eq:Eee-G-1} 
\int_\O \G(\pi_\Dd\u) \d\x \le \Eee_\Dd(\u) + \|\pi_\Dd \bV\|_{L^\infty(\O)} \|\pi_\Dd\u\|_{L^1(\O)}, 
\qquad \forall \u \in W_\Dd^{\rm en}.
\ee
Let $\eps>0$ a parameter to be fixed later on.
Thanks to Lemma~\ref{lem:G}, there exists a quantity $C_\eps$ depending only on $p$ and $\eps$ such that 
$$
|u|\le \eps \G(u) + C_\eps, \qquad \forall u \in \ov\Ii_p, 
$$
ensuring that 
\be\label{eq:Eee-G-2} 
\|\pi_\Dd \u\|_{L^1(\O)} \le \eps \int_\O \G(\pi_\Dd\u)\d\x + C_\eps\meas(\O),\qquad \forall \u \in W_\Dd^{\rm en}. 
\ee
On the other hand, Assumption~({\bf A}5) together with the definition~\eqref{eq:bV} of 
$\bV = \left(V_\k, V_\s\right)_{\k,\s} $ ensure that 
$$
\|\pi_\Dd\bV\|_{L^\infty(\O)} \le \|V\|_{\infty}.
$$
Setting $\eps = \frac1{2\|V\|_\infty} $ in~\eqref{eq:Eee-G-2} and injecting 
the resulting estimate in~\eqref{eq:Eee-G-1} ends the proof of the second inequality of~\eqref{eq:Eee-G}.
The proof of the first inequality of~\eqref{eq:Eee-G} being similar, it is left to the reader. 
\end{proof} 

\begin{lem} \label{lem:abs-p1} 
There exists $C$ depending only on 
$\L$, $\O$, $\theta_\Tt$, $\zeta_\Dd$, $\ell_\Dd$, $\eta$, $p$ and $V$
 such that, for all 
$\bv=(v_\k, v_\s)_{\k,\s} \in W_\Dd^{\rm ad} $, one has 
\begin{multline} \label{eq:abs-p1} 
\sum_{\k \in \Mm} \sum_{\s \in \Vv_\k} \left(\sum_{\s' \in \Vv_\k} |a^\k_{\s,\s'} | \right) 
\eta_{\k,\s} (\bv) \left(p(v_\k) - p(v_\s)\right)^2 \\
\le C\left( 1 +\Eee_\Dd(\bv) + \sum_{\k \in \Mm} \bd_\k \bh(\bv) \cdot \B_\k(\bv) \bd_\k \bh(\bv) \right),
\end{multline} 
where we have set $\eta_{\k,\s} (\bv) = \frac{\eta(v_\k)  + \eta(v_\s)} 2$ for all $\k \in \Mm$ and all $\s \in \Vv_\k$.
\end{lem}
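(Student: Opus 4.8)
The plan is to estimate separately the \emph{hydrostatic} and the \emph{potential} contributions to the pressure increment. Since $\big(\bd_\k\bh(\bv)\big)_\s=\hf_\k(v_\k)-\hf_\s(v_\s)=\big(p(v_\k)-p(v_\s)\big)+(V_\k-V_\s)$ by~\eqref{eq:fhks}, the elementary bound $(a-b)^2\le 2a^2+2b^2$ gives $\big(p(v_\k)-p(v_\s)\big)^2\le 2\big(\bd_\k\bh(\bv)\big)_\s^2+2(V_\k-V_\s)^2$, so the left-hand side of~\eqref{eq:abs-p1} is $\le 2(S_1+S_2)$, where $S_1$ (resp.\ $S_2$) is the same sum with $\big(p(v_\k)-p(v_\s)\big)^2$ replaced by $\big(\bd_\k\bh(\bv)\big)_\s^2$ (resp.\ by $(V_\k-V_\s)^2$). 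The term $S_1$ will be absorbed by the dissipation term of~\eqref{eq:abs-p1}, while $S_2$, which is of lower order, will be controlled by the discrete free energy $\Eee_\Dd(\bv)$. The pivotal ingredient --- which I would isolate as a separate lemma in the Appendix --- is the cell-wise estimate
\be\label{eq:plan-key}
\sum_{\s\in\Vv_\k}\Big(\sum_{\s'\in\Vv_\k}|a^\k_{\s,\s'}|\Big)z_\s^2\ \le\ C\,{\bf z}\cdot\A_\k{\bf z},\qquad \forall\,\k\in\Mm,\ \forall\,{\bf z}=(z_\s)_{\s\in\Vv_\k},
\ee
with $C$ depending only on $\L$, $\theta_\Tt$ and $\ell_\Dd$.

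To establish~\eqref{eq:plan-key}, first bound $|a^\k_{\s,\s'}|\le\lambda^\star\int_\k|\grad_\Tt\bfe_\s|\,|\grad_\Tt\bfe_{\s'}|\,\d\x$ from~\eqref{eq:akss} and $({\bf A}3)$, so that $\sum_{\s'}|a^\k_{\s,\s'}|\le\lambda^\star\int_\k|\grad_\Tt\bfe_\s|\big(\sum_{\s'}|\grad_\Tt\bfe_{\s'}|\big)\d\x$; on each simplex $T\subset\k$ one has $|\grad_\Tt\bfe_\s|\le C\theta_\Tt/h_T$ and $\sum_{\s'}|\grad_\Tt\bfe_{\s'}|\le C\ell_\Dd\theta_\Tt/h_T$, whence $\sum_{\s'}|a^\k_{\s,\s'}|\le C\lambda^\star\ell_\Dd\theta_\Tt^2\sum_{T\subset\k}\meas(T)h_T^{-2}$, the sum running over the simplices of $\k$ on which $\bfe_\s$ does not vanish. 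On the other hand, on each simplex $T\subset\k$ the $\P_1$ function $\pi_\Tt(\sum_\s z_\s\bfe_\s)$ vanishes at $\x_\k$, hence its gradient has modulus at least $h_T^{-1}\max_\s|z_\s|$ over the mesh-vertices $\s$ of $T$; using~\eqref{eq:bilin} and $({\bf A}3)$ this gives ${\bf z}\cdot\A_\k{\bf z}=\int_\k\L\,|\grad_\Tt(\sum_\s z_\s\bfe_\s)|^2\,\d\x\ge c\lambda_\star\sum_{T\subset\k}\meas(T)h_T^{-2}\sum_{\s}z_\s^2$. Since the weights $\meas(T)h_T^{-2}$ coincide on the two sides, summing over $\s$ yields~\eqref{eq:plan-key}; this matching of weights is precisely what makes the estimate immune to a strong disparity between the sizes of the simplices inside a single cell (when $d=3$ one additionally tracks the face-center weights $\beta_{\sig,\s}$, which does not change the conclusion). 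Applying now~\eqref{eq:plan-key} with $z_\s=\big(\M_\k(\bv)\bd_\k\bh(\bv)\big)_\s=\sqrt{\eta_{\k,\s}(\bv)}\,\big(\bd_\k\bh(\bv)\big)_\s$, so that $z_\s^2=\eta_{\k,\s}(\bv)\big(\bd_\k\bh(\bv)\big)_\s^2$ and, by~\eqref{eq:Bkn} and the symmetry of $\M_\k(\bv)$, ${\bf z}\cdot\A_\k{\bf z}=\bd_\k\bh(\bv)\cdot\B_\k(\bv)\bd_\k\bh(\bv)$, and summing over $\k$, one gets $S_1\le C\sum_{\k\in\Mm}\bd_\k\bh(\bv)\cdot\B_\k(\bv)\bd_\k\bh(\bv)$, which is the dissipation term of~\eqref{eq:abs-p1} up to the constant.

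For $S_2$, bound $|V_\k-V_\s|\le\|\grad V\|_{L^\infty(\O)^d}|\x_\k-\x_\s|\le\|\grad V\|_{L^\infty(\O)^d}h_T$ for the simplex $T$ carrying $(\x_\k,\x_\s)$, apply~\eqref{eq:plan-key} with $z_\s=\sqrt{\eta_{\k,\s}(\bv)}(V_\k-V_\s)$, and estimate the resulting local Dirichlet integral simplex by simplex using the crude bound $\eta_{\k,\s}(\bv)\le\eta(v_\k)+\eta(v_\s)$; this yields, with $C$ now also depending on $\|\grad V\|_{L^\infty(\O)^d}$,
$$
S_2\ \le\ C\sum_{\k\in\Mm}\sum_{T\subset\k}\meas(T)\Big(\eta(v_\k)+\sum_{\substack{\s\in\Vv_\k\\ \bfe_\s|_T\ne 0}}\eta(v_\s)\Big).
$$
The $\eta(v_\k)$-part sums to $\sum_\k\meas(\k)\eta(v_\k)$ and, since $\meas(\k)=(d+1)\int_\O\pi_\Tt\bfe_\k\,\d\x\le(d+1)\zeta_\Dd^{-1}m_\k$ by~\eqref{eq:zeta_Dd}, is $\le(d+1)\zeta_\Dd^{-1}\sum_\k m_\k\eta(v_\k)$. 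For the $\eta(v_\s)$-part, exchanging the order of summation, the simplices on which $\bfe_\s$ does not vanish contribute (weighted by $\beta_{\sig,\s}$ when $d=3$) to $\int_\O\pi_\Tt\bfe_\s\,\d\x$, so $\sum_\k\sum_{T\subset\k,\,\bfe_\s|_T\ne 0}\meas(T)\le C\int_\O\pi_\Tt\bfe_\s\,\d\x\le C\zeta_\Dd^{-1}m_\s$; hence $S_2\le C\zeta_\Dd^{-1}\sum_{\b\in\Mm\cup\Vv}m_\b\eta(v_\b)$. By Lemma~\ref{lem:G2} (with $\eps=1$), $\eta(v_\b)\le\G(v_\b)+C_1$, so $\sum_\b m_\b\eta(v_\b)\le\int_\O\G(\pi_\Dd\bv)\,\d\x+C_1\meas(\O)$, and Lemma~\ref{lem:Eee-G} then gives $S_2\le C(1+\Eee_\Dd(\bv))$. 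Together with the bound on $S_1$ this proves~\eqref{eq:abs-p1}.

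The main obstacle is~\eqref{eq:plan-key}: $\A_\k$ is symmetric positive definite but, for a general anisotropy $\L$, need not be an $M$-matrix, so the ``wrong-sign'' off-diagonal entries must be dominated --- and this has to be done with constants depending only on the shape regularity $\theta_\Tt$ and on $\ell_\Dd$, i.e.\ without any quantitative lower bound on how small a sub-simplex of a cell can be compared to that cell, which is exactly why one must keep the weights $\meas(T)h_T^{-2}$ explicit and match them on both sides. The only other point requiring some care is the mass-lumping bookkeeping behind $S_2$, where the dependence on $\zeta_\Dd^{-1}$ appears (harmless here, since $C$ is allowed to depend on $\zeta_\Dd$, but it is the reason for the hypothesis~\eqref{eq:zeta*} in the convergence analysis).
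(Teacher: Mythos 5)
Your proof is correct and follows essentially the same route as the paper: the splitting of the pressure increment into the $\hf$- and $V$-contributions, the key cell-wise estimate~\eqref{eq:plan-key} (which is exactly Lemma~\ref{lem:abs-Ak}, proved in the paper via the condition-number bound of Lemma~\ref{lem:cond-Ak} rather than by your explicit weight-matching argument), and the control of the $V$-part by the mass-lumped $L^1$-norm of $\eta(\bv)$ followed by Lemmas~\ref{lem:G2} and~\ref{lem:Eee-G} (the paper packages your $\zeta_\Dd$-bookkeeping into Lemmas~\ref{lem:poids-Aks} and~\ref{lem:Aovbv}). The only differences are organizational, so nothing further is needed.
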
 
\begin{proof} 
Let $\bv \in W_\Dd^{\rm ad} \subset W_\Dd^{\rm en} $, then 
it follows from the definition~\eqref{eq:fhks} of 
$\bh(\bv) = \left(\hf_\k(v_\k), \hf_\s(v_s)\right)_{\k,\s} \in W_\Dd$ that 
$$
 \bd_\k p(\bv) \cdot \B_\k(\bv) \bd_\k p(\bv) 
 \le 2\bd_\k \bh(\bv) \cdot \B_\k(\bv) \bd_\k \bh(\bv)
+ 2 \bd_\k \bV\cdot \B_\k(\bv) \bd_\k \bV, \qquad \forall \k \in \Mm.
$$
It follows from Lemma~\ref{lem:abs-Ak} 
that there exists $C$ 
depending only on $\L$, $\theta_\Tt$ and 
$\ell_\Dd$ such that 
$$
\sum_{\s\in \Vv_\k}\left(\sum_{\s' \in \Vv_\k} |a^\k_{\s,\s'} | \right) \eta_{\k,\s} (\bv) \left(p(v_\k) - p(v_\s)\right)^2 
\le C \bd_\k p(\bv) \cdot \B_\k(\bv) \bd_\k p(\bv), \qquad \forall \k \in \Mm.
$$
Therefore, it only remains to prove that 
\be\label{eq:abs-p1-0} 
\sum_{\k \in \Mm} \bd_\k \bV\cdot \B_\k(\bv) \bd_\k \bV \le \Eee_\Dd(\bv) + C, 
\ee
for some $C$ depending only on the prescribed data. 
Using Lemma~\ref{lem:poids-Aks}, we get that 
\be\label{eq:abs-p1-1} 
\sum_{\k \in \Mm} \bd_\k \bV\cdot \B_\k(\bv) \bd_\k \bV \le
\sum_{\k \in \Mm} 
\max_{\s \in \Vv_\k} \eta_{\k,\s} (\bv) \sum_{\s \in \Vv_\k} 
\left(\sum_{\s' \in \Vv_\k} |a_{\s,\s'} ^\k| \right) \left( V_\k-V_\s\right)^2.
\ee
It results from 
Lemma~\ref{lem:abs-Ak} that for all $\k \in \Mm$, 
\be\label{eq:abs-p1-2} 
\sum_{\s \in \Vv_\k} \left(\sum_{\s' \in \Vv_\k} |a_{\s,\s'} ^\k| \right) \left( V_\k-V_\s\right)^2 
\le C \int_\k \grad_\Tt \bV \cdot \L \grad_\Tt \bV \d\x \le C |\k| \lambda^\star  \|\grad V\|_{\infty} ^2.
\ee
Denote by 
 $ \ov \eta(\bv) = \left(\ov \eta_\k (\bv), \ov \eta_\s(\bv) \right)_{\k,\s} \in W_\Dd$ the vector defined by 
 $$
 \ov \eta_\k (\bv) =  \max\left(\eta(v_k) ; \max_{\s' \in \Vv_\k} \eta(v_\s')\right), \qquad 
 \ov \eta_\s(\bv) = 0, \qquad \forall \k \in \Mm, \; \forall \s \in \Vv, 
 $$
 and remark that 
 $$
 \max_{\s \in \Vv_\k} \eta_{\k,\s} (\bv) \le \ov \eta_\k (\bv), \qquad \forall \k \in \Mm.
 $$
 Hence, we deduce using~\eqref{eq:abs-p1-2} in~\eqref{eq:abs-p1-1} that 
 $$
\sum_{\k \in \Mm} \bd_\k \bV\cdot \B_\k(\bv) \bd_\k \bV \le C \int_\O \pi_\Mm \ov \eta(\bv) \d\x,
 $$
 for some $C$ depending only on $\theta_\Tt$, $\L$, $\ell_\Dd$ and 
 $\|\grad V\|_{\infty} $, the operator $\pi_\Mm$ being defined by~\eqref{eq:pi_Mm}.
 Let us now use Lemma~\ref{lem:Aovbv} to obtain that 
 \be\label{eq:abs-p1-3} 
\sum_{\k \in \Mm} \bd_\k \bV\cdot \B_\k(\bv) \bd_\k \bV \le \wt C \int_\O \pi_\Dd \eta(\bv) \d\x
 \ee
 for some $\wt C$ depending only on the prescribed data, namely $\theta_\Tt$, $\L$, 
 $\ell_\Dd$, $\zeta_\Dd$ and $\|\grad V\|_{\infty} $.
 Using Lemma~\ref{lem:G2}, we know that for all $\eps >0$, there exists $C_\eps$ 
 depending only on $\eps$, $\eta$, $\G$  and $\meas(\O)$ such that 
$$
\int_\O \pi_\Dd \eta(\bv) \d\x \le \eps \int_\O \G(\pi_\Dd\bv)\d\x + C_\eps.
$$
Combining this result with Lemma~\ref{lem:Eee-G} and~\eqref{eq:abs-p1-3}, 
we deduce that for all $\eps >0$, there exists $C_\eps$ depending only 
on $\eps$, $\L$, $\O$, $\theta_\Tt$, $\zeta_\Dd$, $\ell_\Dd$, $\eta$, $p$ and $V$
such that 
$$
\sum_{\k \in \Mm} \bd_\k \bV\cdot \B_\k(\bv) \bd_\k \bV \le \eps \wt C \Eee_\Dd(\bv) + C_\eps, 
\qquad \forall \bv \in W_\Dd^{\rm en}. 
$$
We obtain~\eqref{eq:abs-p1-0} by choosing $\eps = \frac1{\wt C} $. This ends the proof of Lemma~\ref{lem:abs-p1}. 
\end{proof} 

\begin{lem} \label{lem:abs-p2} 
Let $\u^{n-1} \in W_\Dd^{\rm en} $, and let $\u^n \in W_\Dd^{\rm ad} $ be a solution to the scheme~\eqref{eq:syst}.
There exist $C_1$ and $C_2$ 
depending on $\dt_n$, $\L$, $\O$, $\theta_\Tt$, $\zeta_\Dd$, $\ell_\Dd$, $\eta$, $p$ and $V$
such that 
\begin{multline*} 
\sum_{\k \in \Mm} \sum_{\s \in \Vv_\k} \left(\sum_{\s' \in \Vv_\k} |a^\k_{\s,\s'} | \right) 
\eta_{\k,\s} ^n \left(p(u_\k^n) - p(u_\s^n)\right)^2 \\
 \le C_1 \left(1+\Eee_\Dd(\u^{n-1})\right) \le C_2\left( 1+ \int_\O \G(\pi_\Dd \u^{n-1}) \d\x \right). 
\end{multline*} 
\end{lem}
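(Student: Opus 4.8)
The plan is to combine the one-step dissipation inequality from Lemma~\ref{lem:entro1} (equivalently Theorem~\ref{thm:main-disc}), namely \eqref{eq:entro-disc-V}, with the deterministic estimate of Lemma~\ref{lem:abs-p1} applied at $\bv = \u^n$. Indeed, Lemma~\ref{lem:abs-p1} bounds the left-hand side of the claimed inequality by
$C\bigl(1 + \Eee_\Dd(\u^n) + \sum_{\k} \bd_\k \bh(\u^n)\cdot \B_\k(\u^n)\bd_\k\bh(\u^n)\bigr)$.
From \eqref{eq:entro-disc-V}, the term $\sum_{\k} \bd_\k \bh(\u^n)\cdot \B_\k(\u^n)\bd_\k\bh(\u^n)$ is bounded by $\tfrac1{\dt_n}\bigl(\Eee_\Dd(\u^{n-1}) - \Eee_\Dd(\u^n)\bigr)$, and also $\Eee_\Dd(\u^n)\le \Eee_\Dd(\u^{n-1})$ since the dissipation term in \eqref{eq:entro-disc-V} is nonnegative ($\B_\k$ being symmetric positive semidefinite, as noted after \eqref{eq:Bkn}). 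Hence the whole right-hand side of Lemma~\ref{lem:abs-p1} is controlled by a constant times $1 + (1 + \tfrac1{\dt_n})\Eee_\Dd(\u^{n-1})$, which after absorbing the lower bound on $\Eee_\Dd$ (Lemma~\ref{lem:Eee-G}) gives the first inequality with a constant $C_1$ depending on $\dt_n$ and the prescribed data.

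For the second inequality, I would invoke Lemma~\ref{lem:Eee-G} once more: it provides $\Eee_\Dd(\u^{n-1}) \le 2\int_\O \G(\pi_\Dd\u^{n-1})\d\x + C_2$ for a constant depending only on $p$, $V$, $\O$. Substituting this into $C_1(1 + \Eee_\Dd(\u^{n-1}))$ yields $C_2'\bigl(1 + \int_\O \G(\pi_\Dd\u^{n-1})\d\x\bigr)$, which is precisely the stated bound after relabelling the constant. A minor point to be careful about: one should check that $\int_\O\G(\pi_\Dd\u^{n-1})\d\x \ge 0$ up to an additive constant (again Lemma~\ref{lem:Eee-G} or the definition \eqref{eq:G} of $\G$ as a nonnegative convex function on $\ov\Ii_p$) so that the ``$1+$'' genuinely dominates; since $\u^{n-1}\in W_\Dd^{\rm en}$ this is automatic.

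There is essentially no genuine obstacle here: the lemma is a bookkeeping corollary assembling \eqref{eq:entro-disc-V} and Lemma~\ref{lem:abs-p1}. The only mildly delicate step is tracking the dependence of the constants — in particular that $C_1$ and $C_2$ are allowed to blow up as $\dt_n \to 0$ through the factor $1/\dt_n$ coming from rearranging \eqref{eq:entro-disc-V}, which is harmless because this lemma is used only at fixed mesh and time step (the $\dt_n$-uniform versions are obtained later by summation over $n$). So the proof is short: apply Lemma~\ref{lem:abs-p1} at $\bv=\u^n$, bound each term on its right-hand side via Theorem~\ref{thm:main-disc}/Lemma~\ref{lem:entro1}, then convert $\Eee_\Dd(\u^{n-1})$ into $\int_\O\G(\pi_\Dd\u^{n-1})\d\x$ via Lemma~\ref{lem:Eee-G}.
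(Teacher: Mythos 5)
Your proposal is correct and follows essentially the same route as the paper: apply Lemma~\ref{lem:abs-p1} at $\bv=\u^n$, use the dissipation estimate~\eqref{eq:entro-disc-V} (together with the lower bound on $\Eee_\Dd$ from Lemma~\ref{lem:Eee-G}) to control $\Eee_\Dd(\u^n)$ and the dissipation term by $\Eee_\Dd(\u^{n-1})$ up to a factor involving $1/\dt_n$, and conclude with Lemma~\ref{lem:Eee-G}. Your tracking of the $\dt_n$-dependence of the constants and of the need for the uniform lower bound on $\Eee_\Dd$ matches the paper's (terser) argument.
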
 

\begin{proof} 
Since $\u^n$ is a solution of the scheme~\eqref{eq:syst}, the nonlinear discrete stability 
estimate~\eqref{eq:entro-disc-V} holds. Therefore, taking~\eqref{eq:entro-disc-V} into 
account in~\eqref{eq:abs-p1} yields 
$$
\sum_{\k \in \Mm} \sum_{\s \in \Vv_\k} \left(\sum_{\s' \in \Vv_\k} |a^\k_{\s,\s'} | \right) 
\eta_{\k,\s} ^n \left(p(u_\k^n) - p(u_\s^n)\right)^2 
 \le C_1 \left(1+\Eee_\Dd(\u^{n-1})\right)
$$
for some $C_1$ depending on the prescribed data. 
Then it only remains to use Lemma~\ref{lem:Eee-G} to conclude the proof of Lemma~\ref{lem:abs-p2}.
\end{proof} 

\subsection{Existence of a discrete solution} \label{ssec:exist-disc} 

The scheme~\eqref{eq:syst} can be rewritten in the form of a nonlinear system
$$
\Ff(\u^n) = {\bf 0} _{\R^{\#\Mm+\#\Vv} }.
$$
In the case where $p(0) = -\infty$, the function $\Ff$ is continuous on $W_\Dd^{\rm ad} $, 
but not uniformly continuous. The existence proof for a discrete solution we propose relies on a 
topological degree argument (see e.g.~\cite{LS34,Dei85}), whence we need to restrict the 
set of the possible $\u^n$ for recovering the uniform continuity by avoiding the singularity near $0$. 
This is the purpose of the following lemma, which is an adaptation of~\cite[Lemma~3.10]{CG16_MCOM}.

\begin{lem} \label{lem:harnack} 
Let $\u^{n-1} \in W_\Dd^{\rm en} $ be such that $\int_\O \pi_\Dd \u^{n-1} \d\x >0$ and let $\u^n$ be a solution to the scheme~\eqref{eq:syst}.
Assume that $p(0) = -\infty$, then there exists $\eps_{\Dd,\dt_n} >0$ depending on $\dt_n$, $\Dd$, $\L$, $\O$, $\eta$, $p$, $V$, and $\Eee_\Dd(\u^{n-1})$ such that 
$$
u_\nu^n \ge \eps_{\Dd,\dt_n}, \qquad \forall \nu \in \Mm \cup \Vv.
$$
\end{lem}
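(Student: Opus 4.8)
The plan is to derive a lower bound on all the discrete unknowns $u_\nu^n$ from the finite entropy dissipation estimate~\eqref{eq:entro-disc-V}, which forces control on the pressure differences $p(u_\k^n)-p(u_\s^n)$ across neighbouring degrees of freedom, combined with a uniform lower bound on the total mass. First I would record that, by Lemma~\ref{lem:entro1} and Lemma~\ref{lem:abs-p2}, there is a constant $C$ depending only on $\dt_n$, $\Dd$, $\L$, $\O$, $\eta$, $p$, $V$ and $\Eee_\Dd(\u^{n-1})$ such that
\be\label{eq:plan-1}
\sum_{\k\in\Mm}\sum_{\s\in\Vv_\k}\left(\sum_{\s'\in\Vv_\k}|a^\k_{\s,\s'}|\right)\eta^n_{\k,\s}\left(p(u_\k^n)-p(u_\s^n)\right)^2\le C.
\ee
Because $p(0)=-\infty$, if some $u_\nu^n$ were very small then $p(u_\nu^n)$ would be a large negative number, and~\eqref{eq:plan-1} would force all neighbours $u_{\nu'}^n$ (sharing a cell with $\nu$ through a nonzero coefficient $a^\k_{\s,\s'}$, hence through the connected graph structure of the mesh) to also be small, unless the mobility weight $\eta^n_{\k,\s}$ degenerates. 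This is the mechanism of the argument; it is a discrete Harnack-type inequality, and the reference to~\cite[Lemma~3.10]{CG16_MCOM} signals that the proof is an adaptation of a known one.

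The key steps, in order, would be: (i) use the mass conservation~\eqref{eq:cons-mass} together with~\eqref{eq:u0-mass} to get $\int_\O\pi_\Dd(\u^n)\d\x=\int_\O u_0\d\x=:M>0$, hence at least one $\nu_0\in\Mm\cup\Vv$ satisfies $u_{\nu_0}^n\ge M/\meas(\O)>0$; (ii) argue by contradiction: suppose $m:=\min_\nu u_\nu^n$ is attained at some $\nu_\ast$ and is extremely small; (iii) propagate the bound along the connectivity graph of $\Dd$ — for any cell $\k$ containing $\nu_\ast$ and any other vertex $\s'$ of $\k$, the single term $\bigl(\sum_{\s''}|a^\k_{\s_\ast,\s''}|\bigr)\,\eta^n_{\k,\s_\ast}\,(p(u^n_{\s'})-p(u^n_{\s_\ast}))^2$ in~\eqref{eq:plan-1} is bounded by $C$, and since $\eta^n_{\k,\s_\ast}=\tfrac12(\eta(u^n_\k)+\eta(u^n_{\s_\ast}))\ge \tfrac12\eta(u^n_{\s'})$ when $u^n_{\s'}$ is the relevant neighbour — here I would need to be slightly careful and possibly walk the chain through cell-centers and vertices alternately — one concludes $|p(u^n_{\s'})-p(u^n_{\s_\ast})|$ is controlled by $C/\sqrt{\eta(u^n_{\s'})}$, hence by monotonicity $p(u^n_{\s'})\le p(m)+C/\sqrt{\eta(u^n_{\s'})}$; (iv) iterate over a path in the connected mesh from $\nu_\ast$ to $\nu_0$ (path length $\le \#\Mm+\#\Vv$, a mesh-dependent constant), accumulating the estimate so that $p(u^n_{\nu_0})$ is bounded above by $p(m)$ plus a quantity depending only on the data and on $m$ through the mobilities of the intermediate nodes; (v) since all intermediate nodes have $u^n\le u^n_{\nu_0}$ is false in general, instead bound $\eta(u^n_{\s'})$ from below along the chain inductively — each node on the path is controlled in terms of $m$ and the previous one, so a finite induction gives $p(u^n_{\nu_0})\le \Phi(m)$ for an explicit increasing function $\Phi$ with $\Phi(m)\to-\infty$ as $m\to 0$; (vi) combined with $p(u^n_{\nu_0})\ge p(M/\meas(\O))$, this yields $\Phi(m)\ge p(M/\meas(\O))$, i.e. $m\ge\eps_{\Dd,\dt_n}$ for some strictly positive constant depending on exactly the advertised quantities.

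The main obstacle is the bookkeeping in step (iii)–(v): the mobility weight $\eta^n_{\k,\s}$ appearing in~\eqref{eq:plan-1} is the \emph{average} $\tfrac12(\eta(u^n_\k)+\eta(u^n_\s))$, so when the minimal node is a vertex $\s_\ast$ the weight can be kept away from zero only if the adjacent cell-center $u^n_\k$ is not also small; one must therefore propagate the lower bound simultaneously through vertices and cell centers, and control the degeneracy of $\eta$ near $0$ against the blow-up of $p$ near $0$. The hypotheses in $({\bf A}2)$ that $\sqrt{\eta}\,p'\in L^1_{\rm loc}$ and $\lim_{u\searrow0}\sqrt{\eta(u)}p(u)=0$ (equivalently, control on $\xi$ via~\eqref{eq:xi}) are precisely what make this quantitatively work: the relevant dissipation term is really $\eta^n_{\k,\s}(p(u_\k)-p(u_\s))^2$, which behaves like $(\xi(u_\k)-\xi(u_\s))^2$, and $\xi$ extends continuously to $0$ by assumption, so a bound on $\xi$-differences plus a positive lower bound on $\xi$ at one node gives a positive lower bound on $\xi$, hence on $u$, at all nodes. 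I would likely reorganize the argument to work with $\xi(u^n_\nu)$ rather than $p(u^n_\nu)$ directly, which removes the awkward coupling between $\eta$ and $p$ and turns the propagation into a clean "bounded increments along a connected graph" estimate; the dependence of $\eps_{\Dd,\dt_n}$ on the full discretization $\Dd$ (not just $\theta_\Tt,\ell_\Dd,\zeta_\Dd$) enters through the number of nodes on the connecting path and through $\min_{\k}|\k|$, which is acceptable since this lemma is only used at fixed mesh for the topological-degree existence argument.
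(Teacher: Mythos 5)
Your main route --- extract from Lemma~\ref{lem:entro1} and Lemma~\ref{lem:abs-p2} the bound $\sum_{\k,\s}\eta_{\k,\s}^n\,(p(u_\k^n)-p(u_\s^n))^2\le C_{\Dd,\dt_n}$, locate by mass conservation one node $\nu_{\rm i}$ with $u_{\nu_{\rm i}}^n\ge \frac1{\meas(\O)}\int_\O\pi_\Dd\u^{n-1}\d\x>0$, and propagate a positive lower bound along a cell/vertex-alternating path of the connectivity graph --- is essentially the paper's proof (the paper runs it forward from $\nu_{\rm i}$ rather than by contradiction from the minimum, which is logically equivalent). The bookkeeping you worry about in step (iii) is also simpler than you fear: each term of the dissipation sum couples the two endpoints $\k$ and $\s$ of the \emph{same} cell--vertex edge, so along the path one bounds $\eta_{\nu_q,\nu_{q+1}}^n=\tfrac12(\eta(u_{\nu_q}^n)+\eta(u_{\nu_{q+1}}^n))\ge\tfrac12\eta(u_{\nu_q}^n)$ by the mobility at the endpoint already controlled by the induction; no third node ever enters, and the finiteness of the number of paths closes the argument.

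The genuine flaw is the reorganization you say you would ``likely'' adopt, replacing $p$ by $\xi$. The dissipation does control $\sum(\xi(u_\k^n)-\xi(u_\s^n))^2$, but the increment bound per edge is $\sqrt{C_{\Dd,\dt_n}}$, which is in no way small, and $\xi$ is continuous at $0$ with $\xi(0)=0$. Hence ``a bound on $\xi$-differences plus a positive lower bound on $\xi$ at one node'' gives only $\xi(u_{\nu_{\rm f}}^n)\ge\xi(u_{\nu_{\rm i}}^n)-\ell\sqrt{C_{\Dd,\dt_n}}$, whose right-hand side may be negative, so no positive lower bound on $\xi$, and a fortiori none on $u$, follows at the other nodes. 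The mechanism of the lemma is precisely the singularity $p(0)=-\infty$: a \emph{finite} additive loss of pressure along the path still leaves $p(u_\nu^n)$ finite, which forces $u_\nu^n>0$; passing to $\xi$ discards exactly this singularity and the argument collapses. So you must keep the pressure formulation and feed the inductively obtained lower bound on the mobility into the dissipation estimate edge by edge, as in the paper's proof; the $\xi$-based ``bounded increments'' shortcut would fail.
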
 
\begin{proof} 
First of all, remark that proving Lemma~\ref{lem:harnack} is equivalent to proving that there exists $C_{\Dd,\dt_n} >0$ such that 
\be\label{eq:harnack-p} 
p(u_\nu^n) \ge -C_{\Dd,\dt_n}, \qquad \forall \nu \in \Mm \cup \Vv.
\ee

Because of the conservation of mass~\eqref{eq:cons-mass}, we have 
$$
\int_\O \pi_\Dd \u^n \d\x = \int_\O \pi_\Dd \u^{n-1} \d\x >0.
$$
Therefore, we can claim that there exists $\nu_{\rm i} \in \Mm \cup \Vv$ such that 
\be\label{eq:u_nu_i} 
u_{\nu_{\rm i} } ^n \ge \frac1{\meas(\O)} \int_\O \pi_\Dd \u^{n-1} \d\x >0.
\ee
Let $(\nu_{\rm f})  \in \Mm \cup \Vv$ be arbitrary, and 
 $(\nu_q)_{q=0,\dots,\ell} $ be a path from $\nu_{\rm i} $ to $\nu_{\rm f} $, 
i.e. 
\begin{itemize} 
\item $\nu_0 = \nu_{\rm i} $, $\nu_\ell = \nu_{\rm f} $, and $\nu_p \neq \nu_q$ if $p \neq q$;
\item for all $q \in \{0,\dots, \ell-1\} $, one has: 
$$
\nu_q \in \Mm \implies \nu_{q+1} \in \Vv_{\nu_{q} }, \quad \text{ and } 
\quad \nu_q \in \Vv \implies \nu_{q+1} \in \Mm_{\nu_{q} }.
$$
\end{itemize} 
Let $q \in \{0,\dots,\ell-1\} $

It follows from Lemma~\ref{lem:abs-p2} that there exists $C_{\Dd,\dt_n} $ depending on $\Dd, \dt_n$ 
 $\L$, $\O$, $\eta$, $p$,  $V$, and $\Eee_\Dd(\u^{n-1})$ such that 
$$
\sum_{\k\in\Mm} \sum_{\s\in\Vv_\k} \eta_{\k,\s} ^n (p(u_\k^n) - p(u_\s^n))^2 \le C_{\Dd,\dt_n}.
$$
This ensures in particular that 
\be\label{eq:estim-path} 
\sum_{q = 0} ^{\ell-1} \eta_{\nu_{q},\nu_{q+1} } ^n (p(u_{\nu_q} ^n) - p(u_{\nu_{q+1} } ^n))^2 \le C_{\Dd,\dt_n}, 
\ee
where we have set $\eta_{\nu_{q},\nu_{q+1} } ^n = \eta_{\k,\s} ^n$ if $\{\nu_{q},\nu_{q+1} \} = \{\k,\s\} $.

We can now prove~\eqref{eq:harnack-p} thanks to an induction along the path.  
Assume that $u_{\nu_q} ^n > \eps_{\Dd,\dt_n} $ for some $\eps_{\Dd,\dt_n} >0$, whence 
$
\eta_{\nu_q,\nu_{q+1} } ^n \ge \frac{\eta(u_{\nu_{q} } ^n)} 2\ge  \eps_{\Dd,\dt_n} ' >0.
$ 
Then it follows from~\eqref{eq:estim-path} that 
$$
p(u_{\nu_{q+1} } ^n) \ge p(u_{\nu_q} ^n) - \sqrt{\frac{C_{\Dd, \dt_n} } {\eps_{\Dd,\dt_n} '} } \ge -C'_{\Dd, \dt_n} 
\quad \implies \quad u_{\nu_{q+1} } ^n \ge \eps_{\Dd,\dt_n} '' >0.
$$
We conclude as in~\cite[Lemma 3.10]{CG16_MCOM} thanks to the finite number of possible paths.
\end{proof} 

Thanks to Lemma~\ref{lem:harnack}, one can apply the same strategy as in~\cite{CG16_MCOM} for 
proving the existence of a solution to the scheme~\eqref{eq:syst}. 
\begin{prop} \label{prop:exists} 
Let $\u^{n-1} \in W_\Dd^{\rm en} $, then there exists (at least) one vector $\u^n \in W_\Dd^{\rm ad} $ 
solution to the system~\eqref{eq:syst}.
\end{prop}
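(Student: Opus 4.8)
The plan is to reproduce the topological degree argument of~\cite[\S3]{CG16_MCOM}. With $\u^{n-1}$ frozen, the scheme~\eqref{eq:syst} amounts to finding a zero of a map $\Ff$ on $W_\Dd$ that is continuous on the open admissible cone $\{\bv\in W_\Dd: v_\nu\in\Ii_p\ \forall\nu\}$; when $p(0)>-\infty$ this map is even continuous on all of $W_\Dd\cong\R^{\#\Mm+\#\Vv}$ and everything is comparatively soft. The genuine difficulty is the case $p(0)=-\infty$, where $\Ff$ is defined only on $\{v_\nu>0\ \forall\nu\}$ and fails to be uniformly continuous up to its boundary. The strategy is therefore to exhibit, by \emph{a priori} estimates, a compact subset $K$ of the interior of the admissible set that is known in advance to contain every solution, and then to run the degree argument on~$K$.

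First I would gather the estimates. Testing~\eqref{eq:syst-w} against the nonlinear discrete function $\bv=\bh(\bv)$ --- the step that the discrete gradient-flow structure of~\S\ref{ssec:GF-disc} is designed to license --- and using the convexity of $\G$ exactly as in Lemma~\ref{lem:entro1} yields the dissipation inequality~\eqref{eq:entro-disc-V}; together with Lemmas~\ref{lem:Eee-G} and~\ref{lem:G} (superlinearity of $\G$) this bounds $\Eee_\Dd(\bv)$ and, the mesh being fixed so that every $m_\b>0$, each entry $v_\b$ from above and, when $p(0)$ is finite, also from below. When $p(0)=-\infty$ the lower bound comes instead from Lemma~\ref{lem:harnack}: mass conservation~\eqref{eq:cons-mass} together with $\int_\O\pi_\Dd\u^{n-1}\d\x>0$ forces some node value to be bounded below, and the estimate of Lemma~\ref{lem:abs-p2} propagates a strictly positive lower bound along a path in the mesh. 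This produces the desired $K$.

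Next I would set up the homotopy. Let $\bar u=\frac1{\meas(\O)}\int_\O\pi_\Dd\u^{n-1}\d\x>0$, let $\bar\u\in W_\Dd$ be the constant state equal to $\bar u$ --- so that $\pi_\Dd\bar\u$ has the same (positive) mass as $\pi_\Dd\u^{n-1}$ and $\Eee_\Dd(\bar\u)<\infty$ --- and for $\lambda\in[0,1]$ let $\Ff_\lambda$ be obtained from $\Ff$ by replacing $\u^{n-1}$ with $\u^\ast_\lambda=\lambda\u^{n-1}+(1-\lambda)\bar\u$ and multiplying the flux terms $F_{\k,\s}$ by $\lambda$. Then $\Ff_1=\Ff$, while $\Ff_0(\bv)={\bf 0}$ has the single solution $\bv=\bar\u$, which lies in the interior of $K$ and for which $\deg(\Ff_0,\mathrm{int}\,K,{\bf 0})=1$. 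Summing the equations $\Ff_\lambda(\bv)={\bf 0}$ and using the conservativity~\eqref{eq:cons} shows that every solution shares the mass of $\u^\ast_\lambda$, hence of $\u^{n-1}$; testing against $\bh(\bv)$ and invoking the convexity of $\Eee_\Dd$ then gives $\Eee_\Dd(\bv)\le\max\big(\Eee_\Dd(\u^{n-1}),\Eee_\Dd(\bar\u)\big)$ and $\lambda\,\dt_n\sum_\k\bd_\k\bh(\bv)\cdot\B_\k(\bv)\bd_\k\bh(\bv)\le C$, uniformly in $\lambda$. Homotopy invariance of the topological degree then yields $\deg(\Ff,\mathrm{int}\,K,{\bf 0})=1\neq0$, hence a zero of $\Ff$ in $K\subset W_\Dd^{\rm ad}$, which is the claim.

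The crux is the \emph{uniform in $\lambda$} strict positivity in the case $p(0)=-\infty$: the dissipation term produced by the test above carries a factor $\lambda$, so the Harnack-type argument of Lemma~\ref{lem:harnack} degenerates as $\lambda\downarrow0$. This is repaired by splitting $[0,1]=[0,\delta]\cup[\delta,1]$. On $[\delta,1]$ the bound on $\lambda\,\dt_n\sum_\k\bd_\k\bh(\bv)\cdot\B_\k(\bv)\bd_\k\bh(\bv)$ gives a $\lambda$-independent bound on the dissipation, hence, through Lemma~\ref{lem:abs-p2}, a uniform lower bound on the entries; on $[0,\delta]$ one reads off from the equations $\Ff_\lambda(\bv)={\bf 0}$ themselves, the boundedness of $\bv$, and the bound $\lambda\sum_{\k,\s}\eta_{\k,\s}(\bv)(p(v_\k)-p(v_\s))^2\le C$ (which, via Lemma~\ref{lem:abs-p1}, controls each flux term by $C/\sqrt\lambda$) that any solution satisfies $\bv-\u^\ast_\lambda\to{\bf 0}$ as $\lambda\to0$, hence $\bv\to\bar\u$ uniformly, so that $\bv$ stays in a fixed ball around the strictly positive state $\bar\u$ once $\delta$ is small enough. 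Taking $K$ to cover both regimes closes the argument; the remaining verifications --- continuity of each $\Ff_\lambda$ on $K$ and absence of zeros on $\partial K$ --- are then routine, exactly as in~\cite{CG16_MCOM}.
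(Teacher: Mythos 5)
Your argument is correct, but it is not the route taken in the paper, so a comparison is in order. The paper also uses Leray--Schauder degree, but through \emph{two successive homotopies that never switch off the diffusion}: first a deformation~\eqref{eq:scheme-gamma} in $\gamma$ from a monotone scheme (built with the absolute values $|a^\k_{\s,\s'}|$ and diagonal pairing, whose unique solvability gives the base degree $1$) to the VAG structure with the full pressure differences $\hf_\k-\hf_{\s'}$ and no mobility weights; then a second deformation~\eqref{eq:scheme-mu} in $\mu$ of the mobility, $\eta^\mu = 1+\mu(\eta-1)$, from $\eta^0\equiv 1$ to $\eta^1=\eta$. Because the flux term is never multiplied by a vanishing parameter, the dissipation estimate and hence the Harnack-type lower bound of Lemma~\ref{lem:harnack} are uniform along both homotopies, and the set $K$ (defined by $u_\b\ge \eps_{\Dd,\dt_n}/2$ and $\Eee_\Dd(\u)\le\Eee_\Dd(\u^{n-1})+1$) excludes boundary zeros without further work. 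You instead use a single homotopy that scales the fluxes by $\lambda$ and slides the data to the constant positive state $\bar\u$; this buys a trivial base case (a diagonal affine map, degree $1$) and a single deformation, but at the price that the dissipation bound carries the factor $\lambda$, so in the singular case $p(0)=-\infty$ the positivity argument degenerates as $\lambda\downarrow 0$. Your repair --- the $[0,\delta]\cup[\delta,1]$ splitting, with the uniform Harnack bound on $[\delta,1]$ and, on $[0,\delta]$, the quantitative bound $|F_{\k,\s}(\bv)|\le C/\sqrt\lambda$ (from Lemma~\ref{lem:abs-p1} plus the boundedness of $\eta$ via the entropy bound at fixed mesh), giving $\lambda|F_{\k,\s}|\le C\sqrt\lambda$ and hence solutions pinned near the strictly positive state $\bar\u$ --- is sound, and the replacement of $\u^{n-1}$ by $\u^\ast_\lambda$ is exactly what keeps the mass positive and $\Eee_\Dd(\u^\ast_\lambda)$ finite at every $\lambda$ (note $\G(u^\ast_{\lambda,\b})<\infty$ by convexity). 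Two minor caveats: your $\lambda=0$ endpoint is nondegenerate only if every $m_\b>0$ (the division by $m_\b$ in the perturbation step also needs this), which is consistent with the paper's stipulation that the limit choices $m_\k=0$ or $m_\s=0$ be prevented but is a genuine structural requirement of your homotopy that the paper's first homotopy does not need; and the $\delta$-splitting makes your constants ($\eps_\delta$, the radius around $\bar\u$) interdependent, so the final definition of $K$ should be spelled out with the explicit margins, as you indicate. With these details written out, your proof is a valid alternative to the paper's two-homotopy argument.
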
 
\begin{proof}
As in~\cite[Proposition~3.11]{CG16_MCOM}, the proof relies on a topological degree argument 
(cf.~\cite{LS34, Dei85}) applied twice. More precisely, start from the parametrized nonlinear 
problem that consists in looking for $\u^{n,\gamma}$ solution to  
\begin{multline}\label{eq:scheme-gamma}
\int_\O \pi_\Dd \left(\frac{\u^{n,\gamma}- \u^{n-1}}{\dt_n}\right) \pi_\Dd \bv \d\x 
	+ \gamma \sum_{\s \in \Vv_\k} \sum_{\s' \in \Vv_\k} a^\k_{\s,\s'} \left( \hf_\k(u_\k^{n,\gamma}) - \hf_\s(u_\s^{n,\gamma}) \right)(v_\k - v_\s') 
\\
+  
(1-\gamma)\sum_{\s \in \Vv_\k} \left(\sum_{\s' \in \Vv_\k} |a^\k_{\s,\s'}|\right) \left(\hf_\k(u_\k^{n,\gamma}) - \hf_\s(u_\s^{n,\gamma})\right) (v_\k - v_\s) 
	= 0, \qquad \forall \bv \in W_\Dd. 
\end{multline}
For $\gamma = 0$, the corresponding scheme is monotone, hence the nonlinear system~\eqref{eq:scheme-gamma} 
admits a unique solution $\u^{n,0}\in W_\Dd^{\rm ad}$, and its corresponding topological degree is equal to 1 
(see for instance~\cite{EGGH98} for the application of this argument to the case of a pure hyperbolic equation).
In the case where $\lim_{u\searrow 0} p(u) = -\infty$, one can prove as in Lemma~\ref{lem:harnack} that any solution 
$\u^{n,\gamma} \in W_\Dd^{\rm ad}$ to~\eqref{eq:scheme-gamma} satisfies 
$$
u_\beta^{n,\gamma} \ge \eps_{\Dd,\dt_n}, \quad  \forall \beta \in \Mm \cup \Vv
$$
for some $\eps_{\Dd,\dt_n}>0$ not depending on $\gamma$. The convex subset on which one looks for a 
solution $\u^{n,\gamma}$ can be restricted to the subset $W_\Dd^{\rm ad}$ defined by 
$$
K:=\left\{ \u \in W_\Dd^{\rm ad} \; \middle| \; u_\beta \ge \frac{\eps_{\Dd, \dt_n}}2 \; \text{and}\; \Eee_{\Dd}(\u) \le \Eee_\Dd(\u^{n-1})+1\right\}, 
$$
the corresponding topological degree being still equal to $1$. Note that the bound $u_\beta \ge \frac{\eps_{\Dd, \dt_n}}2$ 
must be removed if $p(0)$ is finite. This ensures the existence of at least one solution 
to the nonlinear system~\eqref{eq:scheme-gamma} when $\gamma$ is equal to 1. 

Starting from the system~\eqref{eq:scheme-gamma} with $\gamma=1$, one defines a second homotopy 
parametrized by $\mu \in [0,1]$ to get~\eqref{eq:syst-w}. 
More precisely (the superscript $\gamma=1$ has been removed for clarity), we set 
$$\eta^\mu: u \mapsto  1 + \mu (\eta(u) - 1), \qquad \forall \mu \in [0,1],$$
so that $\eta^0$ is constant equal to $1$ and $\eta^1\equiv \eta$.
Define $\u^{n,\mu} \in W_\Dd^{\rm ad}$ as a solution to
\begin{multline}\label{eq:scheme-mu}
\int_\O \pi_\Dd \left(\frac{\u^{n,\mu}- \u^{n-1}}{\dt_n}\right) \pi_\Dd \bv \d\x \\
	+ \sum_{\s \in \Vv_\k} \sqrt{\eta_{\k,\s}^{n,\mu}}\sum_{\s' \in \Vv_\k} a^\k_{\s,\s'} 
	\sqrt{\eta_{\k,\s'}^{n,\mu}} \left( \hf_\k(u_\k^{n,\mu}) - \hf_\s(u_\s^{n,\mu}) \right)(v_\k - v_\s')
	= 0,
\end{multline}
where $\bv \in W_\Dd$ is arbitrary, and where 
$$
\eta_{\k,\s'}^{n,\mu} = \frac{\eta^\mu(u_\k^n) + \eta^\mu(u_\s^n)}2, \qquad \forall \k \in \Mm, \, \forall \s \in \Vv_\k.
$$
{\em A priori} estimates similar to those derived previously in the paper ensure that the solutions to 
\eqref{eq:scheme-mu} cannot belong to $\p K$ whatever the value of $\mu \in [0,1]$. The existence of 
a solution to the scheme~\eqref{eq:syst} follows. 
\end{proof}

\subsection{Multistep \emph{a priori} estimates} \label{ssec:multi-step} 
As a byproduct of the existence of a discrete solution $\u^n$ for all $n\in\{1,\dots,N\} $, we can 
now derive \emph{a priori} estimates on functions reconstructed thanks to the discrete solution $\u \in W_{\Dd,\bdt} $.

The first estimate we get is obtained by summing Ineq.~\eqref{eq:entro-disc-V} w.r.t. $n$, and by using the 
positivity of the dissipation. This provides 
\be\label{eq:entro-disc-max} 
\max_{n\in \{1,\dots, N\} } \Eee_\Dd(\u^n) 
\le \Eee_\Dd(\u^0) \le \Eee(u_0) + C \le C, 
\ee
where $C$ only depends on $V$, $u_0$, $p$ and $\O$ thanks to Lemma~\ref{lem:entro-0}.
Since the discrete entropy functional $\Eee_\Dd$ is bounded from below by a quantity depending 
only on $p$, $V$ and $\O$ (cf. Lemma~\ref{lem:Eee-G}), we deduce also from the summation 
of~\eqref{eq:entro-disc-V} w.r.t. $n$ that there exists $C$ depending only on $p$, $V$, $\O$, and $u_0$ 
(but not on $\Dd$)
such that 
\be\label{eq:dissip-disc-int} 
 \sum_{n=1} ^N \dt_n \sum_{\k \in \Mm} \bd_\k \bh(\u^n) \cdot \B_\k(\u^n)  \bd_\k \bh(\u^n)
\le C.
\ee
Mimicking the proof of Lemma~\ref{lem:abs-p1}, this yields 
\be\label{eq:dissip-disc-int2} 
 \sum_{n=1} ^N \dt_n \sum_{\k \in \Mm} \bd_\k p(\u^n) \cdot \B_\k(\u^n)  \bd_\k p(\u^n) 
 \le C
\ee
for some quantity $C$ depending on $\L$, $\O$, $\theta_\Tt$, $\zeta_\Dd$, $\ell_\Dd$, $\eta$, $p$, $V$
and $\tf$.

The following lemma is a direct consequence of Estimate~\eqref{eq:entro-disc-max} and Lemma~\ref{lem:Eee-G}.
Its detailed proof is left to the reader.
\begin{lem} \label{lem:G-int} 
There exists $C$ depending only on $\O$, $p$, $V$, $u_0$ and $\O$ (but not on the discretization) such that 
$$
\left\| \G(\pi_{\Dd,\bdt} \u)\right\|_{L^\infty((0,\tf);L^1(\O))} \le C. 
$$
\end{lem}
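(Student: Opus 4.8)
The plan is to chain together two facts already established in the paper: the uniform bound on the discrete free energy and the two-sided comparison between $\Eee_\Dd$ and the integral of $\G$ against $\pi_\Dd$. First I would recall from Estimate~\eqref{eq:entro-disc-max} that
\[
\max_{n\in\{1,\dots,N\}} \Eee_\Dd(\u^n) \le \Eee(u_0) + C \le C,
\]
where $C$ depends only on $u_0$, $V$, $p$ and $\O$, and in particular not on the discretization $\Dd$ nor on $\bdt$. This is the key input, and it already contains all the real work, which was carried out in Lemma~\ref{lem:entro-0} (controlling $\Eee_\Dd(\u^0)$ by $\Eee(u_0)$ up to a term vanishing with the mesh size) together with the summation of the dissipation inequality~\eqref{eq:entro-disc-V} over $n$ and the positivity of the dissipation.

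Next I would invoke Lemma~\ref{lem:Eee-G}, applied with $\u = \u^n$ (which lies in $W_\Dd^{\rm ad}\subset W_\Dd^{\rm en}$), to obtain
\[
\int_\O \G(\pi_\Dd \u^n)\d\x \le 2\Eee_\Dd(\u^n) + C_2 \le 2C + C_2,
\]
the constant $C_2$ depending only on $p$, $V$ and $\O$. Since this bound is uniform in $n\in\{1,\dots,N\}$ and, by construction of $\pi_{\Dd,\bdt}$, one has $\G(\pi_{\Dd,\bdt}\u)(\cdot,t) = \G(\pi_\Dd\u^n)$ for $t\in(t_{n-1},t_n]$ (using the identity $\pi_\Dd(f(\bv)) = f(\pi_\Dd\bv)$ from~\eqref{eq:nonlin-D} with $f = \G$), taking the supremum over $t\in(0,\tf)$ yields
\[
\left\|\G(\pi_{\Dd,\bdt}\u)\right\|_{L^\infty((0,\tf);L^1(\O))} = \max_{1\le n\le N} \int_\O \G(\pi_\Dd\u^n)\d\x \le 2C + C_2,
\]
which is the claimed estimate with a constant depending only on $\O$, $p$, $V$ and $u_0$.

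There is essentially no obstacle here, which is why the paper leaves the proof to the reader; the only point requiring a moment's care is bookkeeping the dependence of constants, namely checking that neither $\Eee(u_0)$ nor $C_2$ secretly depends on the discretization. That is guaranteed because $\Eee(u_0)$ is a property of the continuous data alone (finite by Assumption~$({\bf A}4)$) and $C_2$ was produced in Lemma~\ref{lem:Eee-G} from Lemma~\ref{lem:G} using only $p$, together with $\|V\|_\infty$ and $\meas(\O)$. One should also note in passing that the lower bound half of~\eqref{eq:Eee-G} is not needed for this particular statement, only the upper bound $\int_\O\G(\pi_\Dd\u)\d\x \le 2\Eee_\Dd(\u) + C_2$.
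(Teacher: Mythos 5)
Your proof is correct and follows exactly the route the paper intends: it combines the uniform bound~\eqref{eq:entro-disc-max} on $\Eee_\Dd(\u^n)$ with the upper inequality of Lemma~\ref{lem:Eee-G}, and the passage from the per-time-step bound to the $L^\infty((0,\tf);L^1(\O))$ norm via the piecewise-constant-in-time reconstruction and~\eqref{eq:nonlin-D} is exactly the bookkeeping the paper leaves to the reader.
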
 
\medskip

The introduction of the Kirchhoff transform was avoided in our scheme. 
Its extension to complex problems (like e.g., systems, problems with hysteresis) in therefore easier.
However, the (semi-)Kirchhoff transform $\xi$ defined by~\eqref{eq:xi} is useful 
for carrying the analysis out. The purpose of the following lemma is to provide a 
discrete $L^2((0,T);H^{1}(\O))$ estimate on $\xi(\u)$.

\begin{lem} \label{lem:entro2} 
There exists $C>0$depending only on $\L$, $\eta$, $V$, $\theta_\Tt$ and $\ell_\Dd$, $\O$, $\tf$, $u_0$  such that 
$$
\iint_{Q_\tf} \L \grad_{\Tt,\bdt} \xi(\u) \cdot  \grad_{\Tt,\bdt} \xi(\u) \d\x \le 
C.
$$
\end{lem}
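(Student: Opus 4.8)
The plan is to reduce the claimed $L^2((0,\tf);H^1(\O))$-type bound on $\xi(\u)$ to the already-established dissipation estimate~\eqref{eq:dissip-disc-int2} on $p(\u)$. By the definition~\eqref{eq:grad_Tt} of $\grad_\Tt$ together with the bilinear identity~\eqref{eq:bilin}, one has for each time step
\begin{equation*}
\int_\O \L \grad_\Tt \xi(\u^n) \cdot \grad_\Tt \xi(\u^n) \d\x
= \sum_{\k \in \Mm} \sum_{\s,\s' \in \Vv_\k} a^\k_{\s,\s'} \left(\xi(u_\k^n) - \xi(u_\s^n)\right)\left(\xi(u_\k^n) - \xi(u_{\s'}^n)\right),
\end{equation*}
and the first reduction step is to control this (possibly sign-indefinite) quantity by the diagonal sum $\sum_{\k}\sum_{\s \in \Vv_\k}\bigl(\sum_{\s'}|a^\k_{\s,\s'}|\bigr)\left(\xi(u_\k^n)-\xi(u_\s^n)\right)^2$, which is exactly the kind of estimate provided by Lemma~\ref{lem:abs-Ak} (used already in the proof of Lemma~\ref{lem:abs-p1}). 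So it suffices to bound $\sum_n \dt_n \sum_{\k}\sum_{\s \in \Vv_\k}\bigl(\sum_{\s'}|a^\k_{\s,\s'}|\bigr)\left(\xi(u_\k^n)-\xi(u_\s^n)\right)^2$.

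The heart of the argument is then a pointwise comparison between the increments of $\xi$ and those of $p$, weighted by $\eta$. From the definitions $\xi(u) = \int_0^u \sqrt{\eta(a)}\,p'(a)\,\d a$ and $p' \ge 0$, one expects an inequality of the form
\begin{equation*}
\left(\xi(a) - \xi(b)\right)^2 \le C\,\frac{\eta(a)+\eta(b)}{2}\left(p(a) - p(b)\right)^2 \qquad \text{for all } a, b \in \ov\Ii_p,
\end{equation*}
with $C$ depending only on $\eta$ (here the structural hypothesis that $\eta$ is nondecreasing on $\R_+$, or the relaxed version in Remark~\ref{Rem:A} with constant $\gamma$, is what makes $\max_{s\in[a,b]}\eta(s) \le \gamma^{-1}\frac{\eta(a)+\eta(b)}2$, and hence $|\xi(a)-\xi(b)| \le \sqrt{\max_{[a,b]}\eta}\,|p(a)-p(b)|$). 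Applying this with $a = u_\k^n$, $b = u_\s^n$ turns the $\xi$-sum into
\begin{equation*}
\sum_n \dt_n \sum_{\k}\sum_{\s \in \Vv_\k}\Bigl(\sum_{\s'}|a^\k_{\s,\s'}|\Bigr)\left(\xi(u_\k^n)-\xi(u_\s^n)\right)^2
\le C \sum_n \dt_n \sum_{\k}\sum_{\s \in \Vv_\k}\Bigl(\sum_{\s'}|a^\k_{\s,\s'}|\Bigr)\eta_{\k,\s}^n\left(p(u_\k^n)-p(u_\s^n)\right)^2,
\end{equation*}
and the right-hand side is bounded by summing Lemma~\ref{lem:abs-p2} over $n$, using Lemma~\ref{lem:G-int} (equivalently~\eqref{eq:entro-disc-max}) to bound $\sum_n \dt_n \int_\O \G(\pi_\Dd \u^{n-1})\d\x$ by $\tf$ times the uniform entropy bound; alternatively one can appeal directly to~\eqref{eq:dissip-disc-int2}. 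Collecting constants and tracking dependencies ($\L$, $\theta_\Tt$, $\ell_\Dd$ from Lemma~\ref{lem:abs-Ak}; $\eta$ from the comparison inequality; $V$, $u_0$, $\O$, $\tf$ from the entropy bound) yields the stated $C$.

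The main obstacle I anticipate is the pointwise inequality $\left(\xi(a)-\xi(b)\right)^2 \le C\,\eta_{\k,\s}(a,b)\left(p(a)-p(b)\right)^2$: one must be careful near $u=0$, where both $\eta$ and (if $p(0)=-\infty$) $p$ degenerate, and where the hypotheses $\sqrt{\eta}\,p' \in L^1_{\rm loc}$ and $\lim_{u\searrow 0}\sqrt{\eta(u)}\,p(u)=0$ from $({\bf A}2)$ are precisely what guarantee $\xi$ is well-defined and finite there. Writing $\xi(a)-\xi(b) = \int_b^a \sqrt{\eta(s)}\,p'(s)\,\d s$ and bounding $\sqrt{\eta(s)} \le \sqrt{\max_{[a\wedge b, a\vee b]}\eta} \le \sqrt{\gamma^{-1}\eta_{\k,\s}(a,b)}$ before pulling $\sqrt{\eta}$ out of the integral is the clean way to do this, leaving $\int_b^a p'(s)\,\d s = p(a)-p(b)$; one should double-check that this manipulation is valid on all of $\ov\Ii_p$ including the extended negative part when $p(0)$ is finite. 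Once this lemma is in hand the rest is bookkeeping already done elsewhere in the paper.
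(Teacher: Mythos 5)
Your proof is correct and is essentially the paper's argument: the crux in both is the pointwise inequality $(\xi(a)-\xi(b))^2\le 2\,\frac{\eta(a)+\eta(b)}2\,(p(a)-p(b))^2$, obtained from the monotonicity of $\eta$ exactly as you describe, followed by a reduction to the time-summed dissipation estimate~\eqref{eq:dissip-disc-int2}; the paper merely packages the linear algebra differently (it bounds $\bd_\k\xi(\u^n)\cdot\A_\k\bd_\k\xi(\u^n)$ directly through the condition-number bound of Lemma~\ref{lem:cond-Ak}, whereas you pass through the absolute row sums, i.e.\ essentially Lemma~\ref{lem:dissip-abs}). Two citation-level corrections: the bound of the quadratic form by the absolute diagonal sum is the elementary direction (Young's inequality, as in Lemma~\ref{lem:poids-Aks} with unit weights), not Lemma~\ref{lem:abs-Ak}, which states the reverse inequality; and summing Lemma~\ref{lem:abs-p2} over $n$ does not suffice because its constants contain $1/\dt_n$ --- your alternative, namely \eqref{eq:dissip-disc-int2} combined with Lemma~\ref{lem:abs-Ak} (which is precisely Lemma~\ref{lem:dissip-abs}), is the right route.
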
 
\begin{proof} 
Since the function $\eta$ was assumed to be nondecreasing, see Assumption ({\bf A}1).
We know that for all interval $[a,b] \subset \Ii_p$, 
$
\max_{c \in [a,b]} \eta(c) = \max\{ \eta(a), \eta(b)\}, 
$
hence, denoting by $\Ii_{\k,\s} ^n$ the interval with extremities $u_\k^n$ and $u_\s^n$, we obtain that 
\be\label{eq:eta>max/2} 
\eta_{\k,\s} ^n \ge \frac12 \max_{c \in \Ii_{\k,\s} ^n} \eta(c), \qquad \forall \k \in \Mm, \; \forall \s \in \Vv_\k.
\ee
The definition~\eqref{eq:xi} of the function $\xi$ implies that 
$$
\left(\xi(u_\s^n) - \xi(u_\k^n)\right)^2 \le\left( \max_{c \in \Ii_{\k,\s} ^n} \eta(c)\right)  \left(p(u_\s^n) - p(u_\k^n)\right)^2, 
$$
whence we obtain that for all $\k \in \Mm$, 
\begin{align*} 
\left| \M_\k(\u^n) \bd_\k p(\u^n) \right|^2 = &\sum_{\s \in \Vv_\k} \eta_{\k,\s} ^n \left( p(u_\s^n) - p(u_\k^n) \right)^2 \\
\ge &\frac12 \sum_{\s \in \Vv_\k} \left(\xi(u_\s^n) - \xi(u_\k^n)\right)^2 = \frac12{\left| \bd_\k \xi(\u^n) \right|^2}.
\end{align*} 
Using that 
$${\bv} \cdot \A_\k {\bv} \ge \bw \cdot \A_\k \bw, \qquad \forall  \; \bv,\bw \in \R^{\ell_\k} 
 \;\text{ s.t. } \; |\bv|^2 \ge {\rm Cond} _2(\A_\k) |\bw|^2,$$   
 and that $\B_\k(\u) = \M_\k(\u) \A_\k  \M_\k(\u)$, 
 we get that for all $\k \in \Mm$, 
 \begin{align*} 
 \bd_\k p(\u^n) \cdot \B_\k(\u^n) \bd_\k p(\u^n)  \ge &
\; \frac{1} {2 {\rm Cond} _2(\A_\k)} \bd_\k \xi (\u^n) \cdot \A_\k \bd_\k \xi (\u^n) \\
 = &\;  \frac{1} {2 {\rm Cond} _2(\A_\k)} \int_\k \L \grad_\Tt \xi(\u^n) \cdot  \grad_\Tt \xi(\u^n) \d\x.
 \end{align*} 
Thanks to Lemma~\ref{lem:cond-Ak} stated in 
 appendix, we know that $C>0$ depending only on $\L, \theta_\Tt$ and $\ell_\Dd$ such 
 that ${\rm Cond} _2(\A_\k) \le C$, for all $\k \in \Mm$, so that: $\forall \k \in \Mm$, $\forall n \in \{1,\dots, N\} $,
 \be\label{eq:entro2_1} 
 \int_\k \grad_\Tt \xi(\u^n) \cdot \L \grad_\Tt \xi(\u^n) \d\x \le C  \bd_\k p(\u^n) \cdot \B_\k(\u^n) \bd_\k p(\u^n). 
 \ee
 In order to conclude the proof, it only remains to multiply~\eqref{eq:entro2_1} by $\dt_n$, to sum 
 over $\k \in \Mm$ and $n \in \{1,\dots, N\} $, and finally to use~\eqref{eq:dissip-disc-int2}.
 \end{proof} 

Combining Estimate~\eqref{eq:dissip-disc-int2} and Lemma~\ref{lem:abs-Ak} yields the following lemma, 
whose complete proof is left to the reader. 

\begin{lem} \label{lem:dissip-abs} 
There exists $C$ depending only on $\L$, $\O$, $\theta_\Tt$, $\zeta_\Dd$, $\ell_\Dd$, $\eta$, $p$, $V$
and $\tf$ such that 
$$
 \sum_{n=1} ^N \dt_n \sum_{\k \in \Mm} 
 \sum_{\s \in \Vv_\k} \left(\sum_{\s \in \Vv_\k} |a_{\s,\s'} ^\k|\right) \eta_{\k,\s} ^n \left(p(u_\k^n) - p(u_\s^n)\right)^2 \le C. 
$$
\end{lem}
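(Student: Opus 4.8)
The plan is to deduce the estimate directly from the global dissipation bound~\eqref{eq:dissip-disc-int2} together with the cell-local comparison inequality furnished by Lemma~\ref{lem:abs-Ak}, following verbatim the argument already carried out in the proof of Lemma~\ref{lem:abs-p1}, but applied now to the whole sequence $(\u^n)_{1\le n\le N}$ of discrete solutions rather than to a single time step.

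First I would fix $n\in\{1,\dots,N\}$ and $\k\in\Mm$, and recall from~\eqref{eq:Bkn} that $\B_\k(\u^n)=\M_\k(\u^n)\A_\k\M_\k(\u^n)$, the diagonal matrix $\M_\k(\u^n)$ carrying the entries $\sqrt{\eta_{\k,\s}^n}$ defined in~\eqref{eq:etaks}. Applying Lemma~\ref{lem:abs-Ak} with the mobility weights $\eta_{\k,\s}^n$ to the vector $p(\u^n)$ then yields a constant $C$ depending only on $\L$, $\theta_\Tt$ and $\ell_\Dd$ such that
$$
\sum_{\s\in\Vv_\k}\left(\sum_{\s'\in\Vv_\k}|a^\k_{\s,\s'}|\right)\eta_{\k,\s}^n\left(p(u_\k^n)-p(u_\s^n)\right)^2 \le C\,\bd_\k p(\u^n)\cdot\B_\k(\u^n)\,\bd_\k p(\u^n),\qquad\forall\k\in\Mm,\ \forall n.
$$
This is precisely the per-cell inequality already invoked in the proof of Lemma~\ref{lem:abs-p1}, read with $\bv=\u^n$; the constant is uniform in $n$ and in the state because it only reflects the geometry of $\k$ and the bounds on $\L$ in~\eqref{A:L}, the $\eta$-dependent diagonal scaling being entirely absorbed into $\B_\k(\u^n)$ through $\M_\k(\u^n)$.

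To conclude, I would multiply the displayed inequality by $\dt_n$, sum over $\k\in\Mm$ and over $n\in\{1,\dots,N\}$, and invoke Estimate~\eqref{eq:dissip-disc-int2}, which bounds $\sum_{n=1}^N\dt_n\sum_{\k\in\Mm}\bd_\k p(\u^n)\cdot\B_\k(\u^n)\,\bd_\k p(\u^n)$ by a constant depending only on $\L$, $\O$, $\theta_\Tt$, $\zeta_\Dd$, $\ell_\Dd$, $\eta$, $p$, $V$ and $\tf$. Since~\eqref{eq:dissip-disc-int2} is itself obtained by mimicking the proof of Lemma~\ref{lem:abs-p1} from the summed entropy/dissipation inequality~\eqref{eq:dissip-disc-int}---the potential contribution being absorbed by the uniform entropy bound~\eqref{eq:entro-disc-max}---the resulting constant has exactly the claimed dependencies. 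I do not expect a genuine obstacle: the only point requiring mild care is to verify that the constant produced by Lemma~\ref{lem:abs-Ak} is independent of the discrete state (which holds for the reason given above) and that all constants depend on the discretization $\Dd$ only through the regularity parameters $\theta_\Tt$, $\zeta_\Dd$ and $\ell_\Dd$.
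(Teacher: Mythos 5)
Your proposal is correct and follows exactly the route the paper intends: the per-cell inequality obtained by applying Lemma~\ref{lem:abs-Ak} to the vector $\M_\k(\u^n)\bd_\k p(\u^n)$ (so that the weights $\eta_{\k,\s}^n$ are absorbed into $\B_\k(\u^n)$), then multiplication by $\dt_n$, summation over $\k$ and $n$, and the global dissipation bound~\eqref{eq:dissip-disc-int2}. This is precisely the combination the paper indicates and leaves to the reader, with the same constant dependencies.
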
 

\section{Proof of Theorem~\ref{thm:main-conv} } \label{sec:conv} 

In what follows, we consider a sequence $(\Dd_m, \bdt_m)_{m\ge1} $ of discretizations of $Q_\tf$ 
such that~\eqref{eq:conv-mesh} holds. In order to prove the convergence of the reconstructed discrete solution 
$\pi_{\Dd_m, \bdt_m} \u_m$ towards the weak solution of~\eqref{eq:1} as $m$ tends to $\infty$, 
we adopt the classical strategy 
that consists in showing first that the family $\left(\pi_{\Dd_m, \bdt_m} \u_m\right)_{m\ge1} $ is precompact 
in $L^1(Q_\tf)$ (this is the purpose of \S\ref{ssec:compact}), 
then to identify in \S\ref{ssec:identify} the limit as a weak solution of~\eqref{eq:1} in the sense of 
Definition~\ref{def:weak}.

As a direct consequence of Theorem~\ref{thm:main-disc}, one knows that 
the scheme admits a solution $\u_m = \left(u^n_{\k,m}, u^n_{\s,m} \right)$ that, thanks to 
the regularity assumptions~\eqref{eq:gamma-theta-star}--\eqref{eq:zeta*} on the discretization and 
thanks to Lemmas~\ref{lem:G-int}, \ref{lem:entro2} and~\ref{lem:dissip-abs}, satisfies the 
following uniform estimates w.r.t. $m$:
\be\label{eq:G-int_m} 
\left\| \pi_{\Dd_m,\bdt_m} \G(\u_m)\right\|_{L^\infty((0,\tf);L^1(\O))} \le C, 
\ee
\be\label{eq:dissip-xi_m} 
\iint_{Q_\tf} \grad_{\Tt_m, \bdt_m} \xi(\u_m) \cdot \L \grad_{\Tt_m, \bdt_m} \xi(\u_m) \d\x\d t \le C, 
\ee
\be\label{eq:dissip-abs_m} 
 \sum_{n=1} ^{N_m} \dt_{n,m} \sum_{\k \in \Mm_m} 
 \sum_{\s \in \Vv_\k} \left(\sum_{\s \in \Vv_\k} |a_{\s,\s'} ^\k|\right) \eta_{\k,\s} ^n \left(p(u_{\k,m} ^n) - p(u_{\s,m} ^n)\right)^2 \le C, 
\ee
where $C$ may depend on the data of the continuous problem, and on the discretization 
regularity factors $\theta^\star$, $\ell^\star$ and $\zeta^\star$ but not on $m$.

\subsection{Compactness properties of the discrete solutions} \label{ssec:compact} 

\begin{lem} \label{lem:xi-L2} 
Let $(\Dd_m, \bdt_m)$ be a sequence of discretizations of $Q_\tf$ satisfying Assumptions~\eqref{eq:conv-mesh}, 
there exists $C$ depending only on $\L$, $\theta^\star$, $\ell^\star$, $\O$, $\tf$,  $p$  and $u_0$ such that, for all $m\ge1$, one has
$$
\left\| \pi_{\Tt_m, \bdt_m} \xi(\u_m) \right\|_{L^2((0,\tf);H^1(\O))} \le C \quad \text{ and } \quad \left\| \pi_{\Dd_m, \bdt_m} \xi(\u_m) \right\|_{L^2(Q_\tf)} \le C.
$$
\end{lem}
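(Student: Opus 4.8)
The goal is an $L^2((0,\tf);H^1(\O))$-bound on $\pi_{\Tt_m,\bdt_m}\xi(\u_m)$ and an $L^2(Q_\tf)$-bound on $\pi_{\Dd_m,\bdt_m}\xi(\u_m)$, both uniform in $m$. The gradient part is already essentially done: Lemma~\ref{lem:entro2} combined with the coercivity estimate~\eqref{A:L} on $\L$ gives
\be
\lambda_\star\iint_{Q_\tf}|\grad_{\Tt_m,\bdt_m}\xi(\u_m)|^2\d\x\d t \le \iint_{Q_\tf}\L\grad_{\Tt_m,\bdt_m}\xi(\u_m)\cdot\grad_{\Tt_m,\bdt_m}\xi(\u_m)\d\x\d t \le C,\nn
\ee
so $\|\grad_{\Tt_m,\bdt_m}\xi(\u_m)\|_{L^2(Q_\tf)^d}\le C$, where by~\eqref{eq:dissip-xi_m} the constant $C$ depends only on $\L,\eta,V,\theta^\star,\ell^\star,\O,\tf,u_0$ (and in fact the $\eta,V$-dependence can be traced back to $p,u_0$ through Lemma~\ref{lem:Eee-G} and~\eqref{eq:entro-disc-max}, consistent with the statement). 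So the only real work is to control the $L^2(Q_\tf)$-norm of $\pi_{\Tt_m,\bdt_m}\xi(\u_m)$ itself, after which the full $H^1$-bound follows by summation.

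\textbf{Controlling the $L^2(Q_\tf)$-norm.} The natural tool is a discrete Poincar\'e--Wirtinger inequality: since $\pi_{\Tt_m}$ reconstructs a genuine $\P_1$ finite element function on the simplicial mesh $\Tt_m$ whose regularity is controlled by $\theta^\star$, one has
\be
\left\|\pi_{\Tt_m}\xi(\u^n_m) - \langle\pi_{\Tt_m}\xi(\u^n_m)\rangle_\O\right\|_{L^2(\O)} \le C_{\rm P}\,\|\grad_{\Tt_m}\xi(\u^n_m)\|_{L^2(\O)^d},\nn
\ee
with $C_{\rm P}$ depending only on $\O$ (the classical Poincar\'e--Wirtinger constant for $H^1(\O)$ applied to $\pi_{\Tt_m}\xi(\u^n_m)\in H^1(\O)$). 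So it remains to bound the spatial average $\langle\pi_{\Tt_m}\xi(\u^n_m)\rangle_\O = \frac1{\meas(\O)}\int_\O\pi_{\Tt_m}\xi(\u^n_m)\d\x$ uniformly in $m$ and $n$. Here I would use Assumption~(${\bf A}6$), specifically~\eqref{eq:C_xi}: $0\le\xi(u)\le C(1+\G(u))$ for $u\ge0$ (and by oddness/the convention~\eqref{eq:conv-G} one handles the sign issues — note $\xi\ge0$ on $\R_+$, and on $\R_-$ in the finite-$p(0)$ case $\xi$ is odd so $|\xi(u)|=\xi(|u|)$). Comparing the two piecewise reconstructions $\pi_{\Tt_m}$ and $\pi_{\Mm_m}$/$\pi_{\Dd_m}$ via Lemma~\ref{lem:Aovbv}-type estimates (or directly, since $\pi_{\Tt_m}\bv(\x_\nu)=v_\nu$ at all degrees of freedom and the mass of each basis function $\int_\O\pi_{\Tt_m}\bfe_\beta\d\x$ is comparable to $m_\beta$ via $\zeta^\star$), one bounds $\int_\O|\pi_{\Tt_m}\xi(\u^n_m)|\d\x$ by $C\sum_\beta m_\beta|\xi(u^n_{\beta,m})| = C\int_\O\pi_{\Dd_m}|\xi(\u^n_m)|\d\x$, and then $|\xi(u^n_\beta)|\le C(1+\G(u^n_\beta))$ pointwise gives
\be
\left\|\pi_{\Tt_m}\xi(\u^n_m)\right\|_{L^1(\O)} \le C\left(\meas(\O) + \int_\O\G(\pi_{\Dd_m}\u^n_m)\d\x\right) \le C,\nn
\ee
uniformly in $n$ by the entropy bound~\eqref{eq:G-int_m} (equivalently~\eqref{eq:entro-disc-max} plus Lemma~\ref{lem:Eee-G}). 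Since $|\langle f\rangle_\O|\le\meas(\O)^{-1}\|f\|_{L^1(\O)}$, this controls the average.

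\textbf{Assembling and the main obstacle.} Putting the pieces together: for each $n$, $\|\pi_{\Tt_m}\xi(\u^n_m)\|_{L^2(\O)}\le C(1+\|\grad_{\Tt_m}\xi(\u^n_m)\|_{L^2(\O)^d})$; multiplying by $\dt_{n,m}$, summing over $n$, and using $\sum_n\dt_{n,m}=\tf$ together with the already-established gradient bound yields $\|\pi_{\Tt_m,\bdt_m}\xi(\u_m)\|_{L^2(Q_\tf)}\le C$, hence the $H^1$-in-space bound. Finally the $L^2(Q_\tf)$-bound on $\pi_{\Dd_m,\bdt_m}\xi(\u_m)$ follows from the same reconstruction-comparison argument (e.g.\ Lemma~\ref{lem:Aovbv} applied to $\xi(\u_m)$, or a direct $L^2$ comparison of $\pi_\Dd$ and $\pi_\Tt$ controlled by $\theta^\star$, $\ell^\star$, $\zeta^\star$ and the $H^1$-seminorm just obtained). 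The step I expect to be the most delicate is making the comparison between the finite-element reconstruction $\pi_{\Tt_m}$ and the piecewise-constant reconstructions $\pi_{\Dd_m}$, $\pi_{\Mm_m}$ fully rigorous with constants depending only on the allowed regularity factors — this is exactly where $\zeta^\star>0$ (Assumption~\eqref{eq:zeta*}) and $\ell^\star$, $\theta^\star$ enter, and it presumably relies on the appendix lemmas (Lemma~\ref{lem:Aovbv} and companions) rather than on anything deep.
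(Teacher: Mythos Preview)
Your proof is correct and shares all the essential ingredients with the paper's argument: the gradient bound from Lemma~\ref{lem:entro2} (i.e.~\eqref{eq:dissip-xi_m}), the $L^1$-control of $\xi(\u_m)$ through Assumption~(${\bf A}6$) and the entropy estimate~\eqref{eq:G-int_m}, and the comparison between the reconstructions $\pi_{\Tt_m}$ and $\pi_{\Dd_m}$. The one genuine difference is in how you pass from ``$L^1$ plus $L^2$-gradient'' to the full $L^2((0,\tf);H^1(\O))$ bound: you invoke the Poincar\'e--Wirtinger inequality directly on each time slice, whereas the paper first obtains boundedness in $L^2((0,\tf);W^{1,1}(\O))$ and then bootstraps via Sobolev embeddings. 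Your route is more direct and is in fact the alternative the authors themselves point to in Remark~\ref{rem:Igbida}.

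Two small corrections on the appendix references. The reconstruction-comparison tool you need is Lemma~\ref{lem:34-BM13} (or, with a $\zeta^\star$-dependent constant, Lemma~\ref{lem:equiv-TtDd}), not Lemma~\ref{lem:Aovbv}, which compares $\pi_\Mm\ov\bv$ to $\pi_\Dd\bv$ and serves a different purpose. If you use Lemma~\ref{lem:34-BM13} --- as the paper does in deriving~\eqref{eq:Dd-Tt-xi} --- the resulting constant is independent of $\zeta^\star$, in agreement with the list of dependencies in the statement; your closing remark that $\zeta^\star$ must enter is therefore unnecessary here.
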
 
\begin{proof} 
It follows from the Estimate~\eqref{eq:dissip-xi_m} and from Lemma~\ref{lem:34-BM13} 
that for all $m \ge 1$, 
\begin{multline} \label{eq:Dd-Tt-xi} 
\left\| \pi_{\Dd_m,\bdt_m} \xi(\u_m) -  \pi_{\Tt_m,\bdt_m} \xi(\u_m) \right\|_{L^2((0,\tf);L^1(\O))} \\
\le  \meas(\O)^{1/2} \left\| \pi_{\Dd_m,\bdt_m} \xi(\u_m) -  \pi_{\Tt_m,\bdt_m} \xi(\u_m) \right\|_{L^2(Q_\tf)} \le C
\end{multline} 
for some $C$ depending only on~$\L$, $\O$, $\tf$, $\ell^\star$, $\theta^\star$, $p$, and $u_0$ (but not on $m$).

Moreover, it follows from Assumption~\eqref{eq:C_xi} that 
$$
\left\|\pi_{\Dd_m, \bdt_m} \xi(\u_m) \right\|_{L^\infty((0,\tf); L^1(\O))} \le 
C\left(1+ \left\|\pi_{\Dd_m, \bdt_m} \G(\u_m) \right\|_{L^\infty((0,\tf); L^1(\O))} \right) \le C
$$
thanks to the estimate~\eqref{eq:G-int_m}. Combining this inequality with~\eqref{eq:Dd-Tt-xi} provides that 
$$
\left\|\pi_{\Tt_m, \bdt_m} \xi(\u_m) \right\|_{L^1(Q_\tf)} \le C, 
$$
whence the sequence $\left(\pi_{\Tt_m, \bdt_m} \xi(\u_m) \right)_{m\ge1} $ is bounded 
in $L^2((0,\tf);W^{1,1} (\O))$ thanks to~\eqref{eq:dissip-xi_m}. 
A classical bootstrap argument using Sobolev inequalities allows to claim that it is 
bounded in $L^2((0,\tf);H^1(\O))$, thus in particular in $L^2(Q_\tf)$. 
One concludes that $\left(\pi_{\Tt_m, \bdt_m} \xi(\u_m) \right)_{m\ge1} $ is 
also bounded in $L^2(Q_{\tf})$ thanks to~\eqref{eq:Dd-Tt-xi}.
\end{proof} 

\begin{rem} \label{rem:Igbida} 
An alternative way to prove the key-point of Lemma~\ref{lem:xi-L2}, namely
$$
\left\| \pi_{\Tt_m, \bdt_m} \xi(\u_m) \right\|_{L^2((0,\tf);H^1(\O))} \le C, 
$$
would consist in using~\cite[Lemma A.1]{Igb07}, that shows that 
$$
v \in L^1(\O) \text{ and } \grad \xi(v) \in L^2(\O)^d \quad  \implies \quad \xi(v) \in H^1(\O). 
$$
\end{rem} 

As a consequence of Lemma~\ref{lem:xi-L2}, we know that the sequence 
$\left(\pi_{\Tt_m, \bdt_m} \xi(\u_m)\right)_{m\ge1} $ is relatively compact for the $L^2((0,\tf);H^1(\O))$-weak 
topology. Moreover, the space $H^1(\O)$ being locally compact in $L^2(\O)$, a uniform information on the 
time translates of  $\pi_{\Tt_m, \bdt_m} \xi(\u_m)$ will provide the relative compactness of 
$\left(\pi_{\Tt_m, \bdt_m} \xi(\u_m)\right)_{m\ge1} $ in the $L^2(Q_\tf)$-strong topology (see e.g.~\cite{Sim87}).
Such a uniform time-translate estimate can be obtained by using directly the numerical scheme (see e.g.~\cite{EGH00,CG16_MCOM}). One can also make use of black-boxes like e.g.~\cite{And11,ACM}. Note 
that the result of~\cite{GL12} does not apply here because of the degeneracy of the problem. 
We do not provide the proof of next proposition here, since a suitable black-box will be contained 
in the forthcoming contribution~\cite{ACM}. A more classical but calculative possibility would consist in 
mimicking the proof of~\cite[Lemmas 4.3 and 4.5]{CG16_MCOM}.

\begin{prop} \label{prop:compact} 
Let $(\Dd_m, \bdt_m)$ be a sequence of discretizations of $Q_\tf$ satisfying Assumptions~\eqref{eq:conv-mesh}, 
and let $(\u_m)_{m\ge1} $ be the corresponding sequence of solutions to the scheme~\eqref{eq:syst}.
Then there exists a measurable function $u: Q_\tf \to \R$ with $\xi(u) \in L^2((0,\tf);H^1(\O))$ such that, 
up to a subsequence, one has 
\be\label{eq:ae-conv} 
\pi_{\Dd_m, \bdt_m} \u_m \underset{m\to\infty} {\longrightarrow} u \quad \text{ a.e. in } Q_\tf.
\ee
\end{prop}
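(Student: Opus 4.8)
The plan is to follow the classical strategy for degenerate parabolic problems (see e.g.~\cite{EGH00,CG16_MCOM}): establish the relative compactness of $\xi(\u_m)$ in $L^2(Q_\tf)$, then transfer it to $\u_m$ and identify the limit. Throughout, recall from~\eqref{eq:nonlin-D} that $\pi_{\Dd_m,\bdt_m}f(\u_m)=f\!\left(\pi_{\Dd_m,\bdt_m}\u_m\right)$ for any continuous $f$. As a preliminary reduction, note that the function $\xi$ of~\eqref{eq:xi} is continuous on $\ov\Ii_p$ and, since $\sqrt\eta\,p'>0$ a.e. on $\Ii_p$ by~(${\bf A}1$)--(${\bf A}2$), strictly increasing; it thus has a continuous, strictly increasing inverse $\xi^{-1}$ on its range. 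Since $\pi_{\Dd_m,\bdt_m}\u_m=\xi^{-1}\!\left(\pi_{\Dd_m,\bdt_m}\xi(\u_m)\right)$, it suffices to prove that, up to a subsequence, $\pi_{\Dd_m,\bdt_m}\xi(\u_m)$ converges a.e. on $Q_\tf$ to some $w$ that is finite a.e.: one then takes $u:=\xi^{-1}(w)$, which gives~\eqref{eq:ae-conv}, and $\xi(u)=w\in L^2((0,\tf);H^1(\O))$ by Lemma~\ref{lem:xi-L2}. Moreover, estimate~\eqref{eq:Dd-Tt-xi} from the proof of Lemma~\ref{lem:xi-L2} gives $\left\|\pi_{\Dd_m,\bdt_m}\xi(\u_m)-\pi_{\Tt_m,\bdt_m}\xi(\u_m)\right\|_{L^2(Q_\tf)}\to0$, so it is in fact enough to prove the strong $L^2(Q_\tf)$ relative compactness of $\left(\pi_{\Tt_m,\bdt_m}\xi(\u_m)\right)_{m\ge1}$.

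The heart of the argument is this space-time compactness. By Lemma~\ref{lem:xi-L2}, $\left(\pi_{\Tt_m,\bdt_m}\xi(\u_m)\right)_{m\ge1}$ is bounded in $L^2((0,\tf);H^1(\O))$, and since $H^1(\O)\hookrightarrow L^2(\O)$ is compact, an Aubin--Simon type argument (see e.g.~\cite{Sim87}) will conclude provided one has a uniform estimate on the time translates of the discrete reconstructions of $\xi(\u_m)$. This last estimate is the only genuinely scheme-dependent ingredient. I would obtain it from the compact formulation~\eqref{eq:syst-w}: summing the equations over the time steps lying between $t$ and $t+\tau$ and testing against an appropriate piecewise-in-time combination of the discrete hydrostatic pressures $\bh(\u_m^k)$, one uses the discrete dissipation bound~\eqref{eq:dissip-disc-int} (equivalently~\eqref{eq:dissip-disc-int2}) to control by $C\,(\tau+\ov\dt_m)$ the quantity pairing $\pi_{\Dd_m,\bdt_m}\u_m(\cdot,t+\tau)-\pi_{\Dd_m,\bdt_m}\u_m(\cdot,t)$ with $\pi_{\Dd_m,\bdt_m}p(\u_m)(\cdot,t+\tau)-\pi_{\Dd_m,\bdt_m}p(\u_m)(\cdot,t)$. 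This mixed bound is then upgraded into a genuine uniform $L^2$-equicontinuity in time of $\pi_{\Dd_m,\bdt_m}\xi(\u_m)$, invoking the monotonicity of $\xi$, the inequality $\left(\xi(b)-\xi(a)\right)^2\le\big(\max_{[a,b]}\eta\big)\left(p(b)-p(a)\right)^2$ already exploited in the proof of Lemma~\ref{lem:entro2}, the $L^\infty((0,\tf);L^1(\O))$ bounds~\eqref{eq:G-int_m} and Lemma~\ref{lem:G}, and --- crucially --- the uniform continuity of $\sqrt{\eta\circ\xi^{-1}}$ postulated in~(${\bf A}6$) through~\eqref{eq:H-unif-cont}; the passage from the $\pi_\Dd$ to the $\pi_\Tt$ reconstruction is then routine under the regularity bounds~\eqref{eq:gamma-theta-star}--\eqref{eq:zeta*}. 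Alternatively --- and this is the route I would actually take --- the required space-time compactness can be quoted directly from the discrete Aubin--Simon black-box of~\cite{ACM}, or established by reproducing the computations of~\cite[Lemmas 4.3 and 4.5]{CG16_MCOM}.

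It then remains to pass to the limit. By the previous step and~\cite{Sim87}, there exist a subsequence (not relabelled) and $w\in L^2(Q_\tf)$ with $\pi_{\Tt_m,\bdt_m}\xi(\u_m)\to w$ strongly in $L^2(Q_\tf)$; since this sequence is bounded in $L^2((0,\tf);H^1(\O))$, its weak limit in that space is also $w$, so $w\in L^2((0,\tf);H^1(\O))$. By~\eqref{eq:Dd-Tt-xi}, $\pi_{\Dd_m,\bdt_m}\xi(\u_m)\to w$ in $L^2(Q_\tf)$ as well, hence a.e. on $Q_\tf$ along a further subsequence. The $L^\infty((0,\tf);L^1(\O))$ bound on $\pi_{\Dd_m,\bdt_m}\u_m$ (a consequence of~\eqref{eq:G-int_m} and Lemma~\ref{lem:G}) together with Fatou's lemma prevents $\pi_{\Dd_m,\bdt_m}\u_m$ from diverging on a set of positive measure, so $w$ takes its values in the range of $\xi$ a.e.; setting $u:=\xi^{-1}(w)$ and using the continuity of $\xi^{-1}$ together with $\pi_{\Dd_m,\bdt_m}\u_m=\xi^{-1}\!\left(\pi_{\Dd_m,\bdt_m}\xi(\u_m)\right)$, we obtain $\pi_{\Dd_m,\bdt_m}\u_m\to u$ a.e. on $Q_\tf$, with $\xi(u)=w\in L^2((0,\tf);H^1(\O))$, which is exactly Proposition~\ref{prop:compact}.

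The main obstacle is the uniform time-translate estimate of the second step; the first and third steps are soft and use only a priori bounds already in hand. Within it, the delicate points are, first, turning the ``natural'' mixed quantity --- pairing the time increment of $\u_m$ with that of $p(\u_m)$ --- into a true modulus of continuity for $\xi(\u_m)$, which is exactly where Assumption~(${\bf A}6$), and in particular the uniform continuity of $\sqrt{\eta\circ\xi^{-1}}$, is indispensable; and second, absorbing the mass-lumping discrepancy between the $\pi_\Dd$ and $\pi_\Tt$ reconstructions and the fact that $\eta(\pi_{\Dd_m,\bdt_m}\u_m)$ is only controlled in $L^\infty((0,\tf);L^1(\O))$, not in $L^\infty(Q_\tf)$. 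For these reasons I would rely on the dedicated (nonlinear) Aubin--Simon results of~\cite{ACM}, or mimic~\cite[Lemmas 4.3 and 4.5]{CG16_MCOM}, rather than redo the full computation.
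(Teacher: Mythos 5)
Your proposal follows essentially the same route as the paper: the $L^2((0,\tf);H^1(\O))$ bound of Lemma~\ref{lem:xi-L2} combined with a uniform time-translate estimate obtained from the scheme (or quoted from the black-box of~\cite{ACM}, or by mimicking~\cite[Lemmas 4.3 and 4.5]{CG16_MCOM}) and Simon's compactness theorem~\cite{Sim87}, which is exactly the strategy the paper indicates, the paper itself deliberately not writing out the time-translate estimate. Your extra step transferring the a.e. convergence from $\xi(\u_m)$ to $\u_m$ through the strict monotonicity and continuity of $\xi$, the commutation property~\eqref{eq:nonlin-D}, and the Fatou argument excluding values outside the range of $\xi$ is correct and simply makes explicit what the paper leaves implicit.
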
 

\begin{coro} \label{coro:convL1} 
Keeping the assumption and notations of Proposition~\ref{prop:compact}, one has 
$$
\pi_{\Dd_m, \bdt_m} \u_m \underset{m\to\infty} {\longrightarrow} u \quad \text{ strongly in } L^1(Q_\tf).
$$
\end{coro}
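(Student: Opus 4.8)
The plan is to deduce Corollary~\ref{coro:convL1} from the almost everywhere convergence in Proposition~\ref{prop:compact} by combining it with a uniform integrability estimate that upgrades a.e.\ convergence to $L^1(Q_\tf)$ convergence, via Vitali's convergence theorem (or equivalently via the generalized dominated convergence argument built on de~La~Vall\'ee~Poussin). First I would recall that $\pi_{\Dd_m,\bdt_m}\u_m \to u$ a.e.\ in $Q_\tf$ along the subsequence provided by Proposition~\ref{prop:compact}. Then the only missing ingredient is equi-integrability of the sequence $\left(\pi_{\Dd_m,\bdt_m}\u_m\right)_{m\ge1}$ in $L^1(Q_\tf)$.

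The equi-integrability follows from the uniform entropy bound. Indeed, Estimate~\eqref{eq:G-int_m} gives
$$
\mathrm{ess\,sup}_{t\in(0,\tf)} \int_\O \G\!\left(\pi_{\Dd_m,\bdt_m}\u_m(\cdot,t)\right)\d\x \le C
$$
uniformly in $m$, hence, integrating in time, $\iint_{Q_\tf} \G(\pi_{\Dd_m,\bdt_m}\u_m)\,\d\x\,\d t \le C\tf$. Since Assumption~$({\bf A}2)$ ensures $\lim_{u\to\infty}\G(u)/u = +\infty$ (as noted in Remark~\ref{Rem:A}), the convex function $\G$ is superlinear at infinity, and the convention~\eqref{eq:conv-G} together with the extension in~\eqref{eq:extend-p} keeps $\G$ superlinear on all of $\ov\Ii_p$ (for negative arguments when $p(0)$ is finite, $\G$ grows at least linearly and one uses Lemma~\ref{lem:G} to control $|u|$). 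Therefore de~La~Vall\'ee~Poussin's theorem (see~\cite{DM78}) applies on the finite-measure space $Q_\tf$: the uniform bound on $\iint_{Q_\tf}\G(\pi_{\Dd_m,\bdt_m}\u_m)$ implies that $\left(\pi_{\Dd_m,\bdt_m}\u_m\right)_{m\ge1}$ is uniformly equi-integrable in $L^1(Q_\tf)$. Alternatively, one may invoke Lemma~\ref{lem:G} directly: for every $\eps>0$, $|\pi_{\Dd_m,\bdt_m}\u_m| \le \eps\,\G(\pi_{\Dd_m,\bdt_m}\u_m) + C_\eps$ pointwise, so on any measurable set $E\subset Q_\tf$,
$$
\int_E |\pi_{\Dd_m,\bdt_m}\u_m|\,\d\x\,\d t \le \eps\, C\tf + C_\eps\,\meas(E),
$$
which is the equi-integrability statement after choosing $\eps$ small then $\meas(E)$ small.

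With a.e.\ convergence and equi-integrability in hand, Vitali's theorem gives $\pi_{\Dd_m,\bdt_m}\u_m \to u$ strongly in $L^1(Q_\tf)$ along the extracted subsequence. The usual subsequence-of-subsequences argument then promotes this to convergence of the whole sequence, because the limit $u$ is uniquely characterized (it is the unique weak solution in the sense of Definition~\ref{def:weak}, once Proposition~\ref{prop:compact} is complemented by the identification carried out in~\S\ref{ssec:identify}); in any case, for the purposes of this corollary it suffices to state the result along the subsequence of Proposition~\ref{prop:compact}, as the notations of that proposition are kept. I do not expect any real obstacle here: the only subtlety is to make sure the superlinearity of $\G$ is genuinely available on the whole range of the discrete unknowns, including possibly negative values when $p(0)>-\infty$, which is exactly what Lemma~\ref{lem:G} and the conventions~\eqref{eq:conv-G}--\eqref{eq:extend-p} guarantee. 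The argument is short and entirely standard once Proposition~\ref{prop:compact} is granted.
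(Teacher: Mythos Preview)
Your proof is correct and follows essentially the same approach as the paper: almost everywhere convergence from Proposition~\ref{prop:compact}, uniform equi-integrability from the entropy bound~\eqref{eq:G-int_m} combined with the superlinearity of $\G$ (Assumption~({\bf A}2)), and then Vitali's theorem. The paper's argument is slightly terser, but the ingredients and logic are identical; your additional remarks on negative values and the alternative route via Lemma~\ref{lem:G} are sound elaborations.
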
 
\begin{proof} 
As a result of Proposition~\ref{prop:compact}, the almost everywhere convergence property~\eqref{eq:ae-conv} 
holds. On the other hand, it follows from Assumption~({\bf A}2), more precisely from the fact that 
$\lim_{u \to \infty} p(u) = +\infty$ that the function $\G$ defined by~\eqref{eq:G} is superlinear, i.e., 
$$\lim_{u \to +\infty} \frac{\G(u)} {u} = +\infty.$$
Therefore, Estimate~\eqref{eq:G-int_m} implies that $\left(\pi_{\Dd_m, \bdt_m} \u_m\right)_{m\ge1} $ is uniformly 
equi-integrable. Hence we can apply Vitali's convergence theorem to conclude the proof of Corollary~\ref{coro:convL1}.
\end{proof}

\begin{lem} \label{lem:conv-xi} 
Under the assumptions of Proposition~\ref{prop:compact}, one has 
$$
\pi_{\Tt_m, \bdt_m} \xi(\u_m) \underset{m\to\infty} {\longrightarrow} \xi (u) \quad \text{ weakly in } L^2((0,\tf);H^1(\O)), 
$$
where $u$ is the solution exhibited in Proposition~\ref{prop:compact}.
\end{lem}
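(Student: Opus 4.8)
The plan is to combine the two compactness facts already established, namely the boundedness of $\bigl(\pi_{\Tt_m,\bdt_m}\xi(\u_m)\bigr)_{m\ge1}$ in $L^2((0,\tf);H^1(\O))$ from Lemma~\ref{lem:xi-L2} and the almost everywhere convergence $\pi_{\Dd_m,\bdt_m}\u_m \to u$ from Proposition~\ref{prop:compact}, the bridge between the two reconstruction operators being the consistency estimate already used in the proof of Lemma~\ref{lem:xi-L2}. Since $L^2((0,\tf);H^1(\O))$ is a Hilbert space, the bound of Lemma~\ref{lem:xi-L2} shows that, up to a further subsequence, $\pi_{\Tt_m,\bdt_m}\xi(\u_m)$ converges weakly in $L^2((0,\tf);H^1(\O))$ — hence \emph{a fortiori} weakly in $L^2(Q_\tf)$ — towards some $\chi \in L^2((0,\tf);H^1(\O))$. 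The whole task then reduces to identifying $\chi$ with $\xi(u)$: once this is done, uniqueness of the limit upgrades the convergence to the full sequence exhibited in Proposition~\ref{prop:compact}, since every subsequence admits a weakly convergent subsubsequence whose limit is forced to be $\xi(u)$.

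To identify the limit, I would first pass through the piecewise constant reconstruction $\pi_\Dd$. The function $\xi$ defined by~\eqref{eq:xi} is continuous on $\ov\Ii_p$ (it is absolutely continuous because $\sqrt\eta\,p' \in L^1_{\rm loc}$, with $\xi(0)=0$ when $p(0)=-\infty$, and it is extended into a continuous odd function when $p(0)$ is finite), so the almost everywhere convergence $\pi_{\Dd_m,\bdt_m}\u_m \to u$ together with the commutation identity~\eqref{eq:nonlin-D} gives
$$
\pi_{\Dd_m,\bdt_m}\xi(\u_m) = \xi\bigl(\pi_{\Dd_m,\bdt_m}\u_m\bigr) \longrightarrow \xi(u) \quad \text{a.e. in } Q_\tf .
$$
Combining this with the uniform $L^2(Q_\tf)$ bound on $\pi_{\Dd_m,\bdt_m}\xi(\u_m)$ provided by Lemma~\ref{lem:xi-L2}, and using that $Q_\tf$ has finite measure, the standard fact that an $L^2$-bounded, almost everywhere convergent sequence converges weakly in $L^2$ yields $\pi_{\Dd_m,\bdt_m}\xi(\u_m) \rightharpoonup \xi(u)$ weakly in $L^2(Q_\tf)$ (in particular $\xi(u)\in L^2(Q_\tf)$, consistently with Proposition~\ref{prop:compact}).

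It then remains to transfer this to $\pi_\Tt$. Invoking once more the $\P_1$-consistency estimate of Lemma~\ref{lem:34-BM13}, this time retaining the factor $h_{\Tt_m}$ that was merely bounded by a constant in~\eqref{eq:Dd-Tt-xi}, together with the dissipation bound~\eqref{eq:dissip-xi_m} and $h_{\Tt_m}\to 0$, one obtains
$$
\bigl\| \pi_{\Dd_m,\bdt_m}\xi(\u_m) - \pi_{\Tt_m,\bdt_m}\xi(\u_m) \bigr\|_{L^2(Q_\tf)} \le C\, h_{\Tt_m} \longrightarrow 0 \quad \text{as } m\to\infty .
$$
Adding this vanishing correction to the weak $L^2(Q_\tf)$ convergence of $\pi_{\Dd_m,\bdt_m}\xi(\u_m)$ to $\xi(u)$ shows $\pi_{\Tt_m,\bdt_m}\xi(\u_m) \rightharpoonup \xi(u)$ weakly in $L^2(Q_\tf)$, hence $\chi = \xi(u)$, and the argument of the first paragraph concludes. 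The only genuinely technical ingredient is this last consistency estimate, i.e. that the gap between the $\pi_\Dd$- and $\pi_\Tt$-reconstructions tends to $0$ in $L^2(Q_\tf)$; it is exactly the content of the BM13-type Lemma~\ref{lem:34-BM13} applied to the discrete functions $\xi(\u_m^n)$, and the degeneracy of the equation plays no role in it. Everything else (extraction of a weak limit, continuity of $\xi$, passage to the limit in a nonlinearity along an a.e. convergent sequence) is routine functional analysis.
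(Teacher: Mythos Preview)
Your proof is correct and follows essentially the same route as the paper's own argument: extract a weak $L^2((0,\tf);H^1(\O))$ limit from the bound of Lemma~\ref{lem:xi-L2}, identify the a.e.\ limit of $\pi_{\Dd_m,\bdt_m}\xi(\u_m)$ as $\xi(u)$ via Proposition~\ref{prop:compact} and continuity of $\xi$, and bridge the two reconstructions through Lemma~\ref{lem:34-BM13}. You are somewhat more explicit than the paper in justifying the step ``a.e.\ convergence $+$ $L^2$-bound $\Rightarrow$ weak $L^2$ convergence'' and in recovering the full sequence from subsequences, but these are exactly the details the paper's terser ``$\pi_{\Tt_m,\bdt_m}\xi(\u_m)$ and $\pi_{\Dd_m,\bdt_m}\xi(\u_m)$ have the same limit'' is hiding.
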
 
\begin{proof} 
Thanks to Lemma~\ref{lem:xi-L2}, 
the sequence 
$
\left(\grad_{\Tt_m, \bdt_m} \xi(\u_m) \right)_{m\ge1} 
$
is uniformly bounded in $L^2(Q_\tf)^d$. Therefore, there exists $\Xi \in L^2((0,\tf);H^1(\O))$ such that 
$$
\pi_{\Tt_m, \bdt_m} \xi(\u_m) \underset{m\to\infty} {\longrightarrow} \Xi \quad \text{ weakly in } L^2((0,\tf);H^1(\O)).
$$
But in view of Proposition~\ref{prop:compact} and of the continuity of $\xi$, we know that 
$$
\xi\left(\pi_{\Dd_m, \bdt_m} \u_m\right) = \pi_{\Dd_m, \bdt_m} \xi(\u_m) \underset{m\to\infty} {\longrightarrow} \xi(u) \quad \text{a.e. in } Q_\tf. 
$$
Since $\pi_{\Tt_m, \bdt_m} \xi(\u_m)$ and $\pi_{\Dd_m, \bdt_m} \xi(\u_m)$ have the same limit (cf. Lemma~\ref{lem:34-BM13}), 
we get that $\Xi = \xi(u)$.
\end{proof}

\begin{lem} \label{lem:eta_M} 
Let $u$ be the limit value of $\pi_{\Dd_m, \bdt_m} \u_m$ obtained in Proposition~\ref{prop:compact}, 
then
\be\label{eq:piD-eta} 
\pi_{\Dd_m, \bdt_m} \eta(\u_m) \underset{m\to\infty} {\longrightarrow} \eta(u) \quad \text{ strongly in } L^1(Q_\tf),
\ee
and
\be\label{eq:piM-eta} 
\pi_{\Mm_m, \bdt_m} \eta(\u_m)\underset{m\to\infty} {\longrightarrow} \eta(u) \quad \text{ strongly in } L^1(Q_\tf).
\ee
\end{lem}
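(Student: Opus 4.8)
The plan is to derive both convergences from Proposition~\ref{prop:compact} by combining convergence in measure on $Q_\tf$ with a uniform equi-integrability estimate, and then to invoke Vitali's convergence theorem. The convergence~\eqref{eq:piD-eta} is almost immediate: by the commutation identity~\eqref{eq:nonlin-D} one has $\pi_{\Dd_m,\bdt_m}\eta(\u_m)=\eta(\pi_{\Dd_m,\bdt_m}\u_m)$, so Proposition~\ref{prop:compact} and the continuity of $\eta$ (Assumption~$({\bf A}1)$) give $\pi_{\Dd_m,\bdt_m}\eta(\u_m)\to\eta(u)$ a.e.\ in $Q_\tf$; and, using Lemma~\ref{lem:G2}, for each $\eps>0$ there is $C_\eps$ with $\eta(v)\le\eps\,\G(v)+C_\eps$ on $\ov\Ii_p$, so that (using~\eqref{eq:nonlin-D} again, the fact that $\G\ge0$ on $\ov\Ii_p$ with minimum $\G(1)=0$, and the uniform bound~\eqref{eq:G-int_m}) one gets $\iint_A\pi_{\Dd_m,\bdt_m}\eta(\u_m)\,\d\x\d t\le \eps\,\tf\,C+C_\eps\,|A|$ for every measurable $A\subset Q_\tf$, which is the required uniform equi-integrability. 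Vitali's theorem then yields the strong $L^1(Q_\tf)$ convergence.

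For~\eqref{eq:piM-eta} the issue is that Proposition~\ref{prop:compact} only provides the pointwise convergence of the $\pi_{\Dd_m}$-reconstruction, so one first has to establish that $\pi_{\Mm_m,\bdt_m}\u_m\to u$ in measure on $Q_\tf$, and for this the degenerate change of variable $\xi$ is the right tool. By~\eqref{eq:nonlin-D}, $\pi_{\Mm_m,\bdt_m}\xi(\u_m)=\xi(\pi_{\Mm_m,\bdt_m}\u_m)$ and $\pi_{\Dd_m,\bdt_m}\xi(\u_m)=\xi(\pi_{\Dd_m,\bdt_m}\u_m)$; the latter converges a.e.\ to $\xi(u)$ by Proposition~\ref{prop:compact} and the continuity of $\xi$, while $\|\pi_{\Mm_m,\bdt_m}\xi(\u_m)-\pi_{\Dd_m,\bdt_m}\xi(\u_m)\|_{L^2(Q_\tf)}\to0$ by the interpolation estimates comparing $\pi_{\Mm_m}\bv$ and $\pi_{\Dd_m}\bv$ with $\pi_{\Tt_m}\bv$ (as in Lemma~\ref{lem:34-BM13}), together with~\eqref{eq:dissip-xi_m} and $h_{\Tt_m}\to0$. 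Hence $\pi_{\Mm_m,\bdt_m}\xi(\u_m)\to\xi(u)$ in measure, so a.e.\ along a subsequence; since $\eta>0$ on $(0,+\infty)$ and $p'>0$ (Assumptions~$({\bf A}1)$--$({\bf A}2)$), the function $\xi$ is continuous and strictly increasing on $\ov\Ii_p$, so $\xi^{-1}$ is continuous on its range and $\pi_{\Mm_m,\bdt_m}\u_m\to u$ a.e.\ along that subsequence; as the limit does not depend on the extracted subsequence, $\pi_{\Mm_m,\bdt_m}\u_m\to u$ in measure, and the continuity of $\eta$ gives $\pi_{\Mm_m,\bdt_m}\eta(\u_m)=\eta(\pi_{\Mm_m,\bdt_m}\u_m)\to\eta(u)$ in measure.

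It then remains to establish the uniform equi-integrability of $(\pi_{\Mm_m,\bdt_m}\eta(\u_m))_m$. Here I would use again that $\G\ge0$ and that Assumption~\eqref{eq:zeta*} forces a uniform lower bound on the ratio of cell-center to nodal volumes: since $\int_\O\pi_{\Tt_m}\bfe_\k\,\d\x=\meas(\k)/(d+1)$, the definition~\eqref{eq:zeta_Dd} of $\zeta_{\Dd_m}$ and~\eqref{eq:zeta*} yield $\meas(\k)\le\frac{d+1}{\zeta^\star}m_\k$ for every $\k\in\Mm_m$; consequently $\int_\O\pi_{\Mm_m}\G(\u_m^n)\,\d\x\le\frac{d+1}{\zeta^\star}\int_\O\pi_{\Dd_m}\G(\u_m^n)\,\d\x\le C$ uniformly in $n$ and $m$ by~\eqref{eq:G-int_m}, and Lemma~\ref{lem:G2} once more provides the equi-integrability exactly as for~\eqref{eq:piD-eta}. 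Vitali's theorem then concludes.

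I expect the transfer from the $\pi_{\Dd_m}$- to the $\pi_{\Mm_m}$-reconstruction to be the only real obstacle: because the degenerate mobility makes $\xi$ (and not $u$ or $\eta(u)$) the quantity carrying a usable discrete $H^1$ bound, one cannot estimate $\sum_{\k}\sum_{\s\in\Vv_\k}m_{\k,\s}|u_\k^n-u_\s^n|$ directly, and is forced to route the argument through $\xi$ and the invertibility of $\xi$, which requires some care with the modes of convergence (a.e.\ versus in measure, and subsequence extraction) and with the mass-lumping parameters.
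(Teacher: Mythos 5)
Your proof is correct. For~\eqref{eq:piD-eta} it coincides with the paper's argument: a.e. convergence by continuity of $\eta$ and Proposition~\ref{prop:compact}, uniform equi-integrability from the entropy bound~\eqref{eq:G-int_m} (your use of Lemma~\ref{lem:G2} is exactly the quantitative form of the de la Vall\'ee-Poussin argument invoked in the paper, via~\eqref{eq:dlVP}), then Vitali. For~\eqref{eq:piM-eta} you take a genuinely different route at the transfer step. The paper stays at the level of $\sqrt{\eta}$: it deduces from~\eqref{eq:dissip-xi_m} and Lemma~\ref{lem:34-BM13} that $\pi_{\Dd_m,\bdt_m}\xi(\u_m)-\pi_{\Mm_m,\bdt_m}\xi(\u_m)\to0$ a.e. up to a subsequence, and then uses the uniform continuity assumption~\eqref{eq:H-unif-cont} through a modulus of continuity $\varpi$ of $\sqrt{\eta\circ\xi^{-1}}$ to conclude that the two reconstructions of $\sqrt{\eta}$ share the limit $\sqrt{\eta(u)}$; the equi-integrability of the $\Mm$-reconstruction is only asserted to hold ``for the same reasons''. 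You instead invert $\xi$: since $\sqrt{\eta}\,p'>0$ a.e. by~(${\bf A}1$)--(${\bf A}2$), $\xi$ is continuous and strictly increasing, so $\xi^{-1}$ is continuous on its range, and the same comparison of the $\Mm$- and $\Dd$-reconstructions of $\xi$ yields convergence in measure of $\pi_{\Mm_m,\bdt_m}\u_m$ towards $u$, whence $\eta(\pi_{\Mm_m,\bdt_m}\u_m)\to\eta(u)$ in measure by plain continuity of $\eta$. This buys you two things: assumption~\eqref{eq:H-unif-cont} is not needed for this particular lemma (it remains indispensable later, in Lemma~\ref{lem:mu}, where one needs a pointwise estimate between nodal values within a cell rather than a comparison of reconstructions, so nothing is gained globally), and you make explicit the equi-integrability of the $\Mm$-reconstruction via $\meas(\k)\le\frac{d+1}{\zeta^\star}\,m_\k$ --- essentially Lemma~\ref{lem:equiv-DdMm} with $q=1$ applied to $\G(\u_m)\ge0$ --- which is precisely the detail hidden behind the paper's ``same reasons'' and the place where~\eqref{eq:zeta*} enters (the exact constant, $d$ in~\eqref{eq:mkk} versus your $d+1$, is immaterial here). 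The paper's route, in exchange, packages the nonlinearity once and for all in the modulus $\varpi$, which it then reuses in the proof of Lemma~\ref{lem:mu}.
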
 
\begin{proof} 
Let us first establish~\eqref{eq:piD-eta}. Thanks to the entropy estimate~\eqref{eq:G-int_m}, we know that 
the sequence $\left( \pi_{\Dd_m, \bdt_m} \G(\u_m)\right)_{m\ge1} $ is uniformly bounded in $L^\infty((0,\tf);L^1(\O))$, 
thus in $L^1(Q_{\tf})$. Then Assumption~\eqref{eq:dlVP} allows to use the de la Vall\'ee-Poussin theorem 
to claim that $\left( \pi_{\Dd_m, \bdt_m} \eta(\u_m)\right)_{m\ge1} $ is uniformly equi-integrable on $Q_\tf$.
Moreover, the continuity of $\eta$ and Proposition~\ref{prop:compact} provide that 
$$
\left( \pi_{\Dd_m, \bdt_m} \eta(\u_m)\right)_{m\ge1} \underset{m\to\infty} \longrightarrow \eta(u) 
\quad \text{ a.e. in } Q_\tf. 
$$
Therefore, we obtain~\eqref{eq:piD-eta} by applying Vitali's theorem.
\smallskip

Let us now prove~\eqref{eq:piM-eta} by proving that $\pi_{\Dd_m, \bdt_m} \eta(\u_m)$ and 
$\pi_{\Mm_m, \bdt_m} \eta(\u_m)$ (that is uniformly equi-integrable for the same reasons as 
$\pi_{\Dd_m, \bdt_m} \eta(\u_m)$ is) have the same limit $\eta(u)$ as $m$ tends to $\infty$. 
It follows from a combination of~\eqref{eq:dissip-xi_m} with 
Lemma~\ref{lem:34-BM13} that, still up to an unlabeled subsequence,  
\be\label{eq:xiDM}
\pi_{\Dd_m,\bdt_m} \xi(\u_m) - \pi_{\Mm_m,\bdt_m} \xi(\u_m)\;  \underset{m\to\infty}{\longrightarrow} \; 0 \quad \text{a.e. in } Q_\tf. 
\ee
Since the function $\sqrt{\eta\circ\xi^{-1} } $ is assumed to be uniformly continuous (cf.~\eqref{eq:H-unif-cont}), 
it admits a non-decreasing modulus of continuity $\varpi \in C(\R_+ ; \R_+)$ with $\varpi(0) = 0$ such that, 
for all $v,\h v$ in the range of $\xi$, 
\be\label{eq:varpi} 
\left| \sqrt{\eta\circ\xi^{-1} } (v) - \sqrt{\eta\circ\xi^{-1} } (\h v) \right| \le  \varpi(|v-\h v|), 
\ee
so that 
$$
\left|\sqrt{\pi_{\Dd_m, \bdt_m}\eta( \u_m)} - \sqrt{\pi_{\Mm_m, \bdt_m} \eta(\u_m)}\right| 
\le  \varpi(|\pi_{\Dd_m,\bdt_m} \xi(\u_m) - \pi_{\Mm_m,\bdt_m} \xi(\u_m)|).
$$
Therefore, it follows from~\eqref{eq:xiDM} that 
$$
\sqrt{\pi_{\Dd_m, \bdt_m}\eta( \u_m)} - \sqrt{\pi_{\Mm_m, \bdt_m} \eta(\u_m)}  
 \underset{m\to\infty}{\longrightarrow} \; 0 \quad \text{a.e. in } Q_\tf. 
$$
Thus $\left(\pi_{\Dd_m, \bdt_m}\eta( \u_m)\right)_{m\ge1}$ and $\left(\pi_{\Mm_m, \bdt_m} \eta(\u_m)\right)_{m\ge1}$ 
share the same limit. 
\end{proof}

\subsection{Identification of the limit as a weak solution} \label{ssec:identify}

\begin{prop} \label{prop:identify} 
Let $u$ be a limit value of the sequence $\left(\pi_{\Dd_m, \bdt_m} \u_m \right)_{m\ge1} $ exhibited in Proposition~\ref{prop:compact}, 
then $u$ is the unique weak solution to the problem~\eqref{eq:1} in the sense of Definition~\ref{def:weak}.
\end{prop}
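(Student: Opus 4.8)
The plan is to pass to the limit in the discrete weak formulation. I would start from the compact formulation~\eqref{eq:syst-w}, tested against $\bv = \bpsi^n$ where $\bpsi^n = (\psi(\x_\k,t_n),\psi(\x_\s,t_n))_{\k,\s}$ is the interpolate of a fixed test function $\psi \in C^\infty_c(\ov\O\times[0,\tf);\R)$ at the degrees of freedom at time $t_n$. Multiplying by nothing extra (the $\dt_n$ already sits in~\eqref{eq:syst-w}) and summing over $n \in \{1,\dots,N_m\}$, the accumulation terms $\int_\O \pi_\Dd(\u^n - \u^{n-1})\pi_\Dd\bpsi^n\,\d\x$ telescope after a discrete integration by parts in time into a term converging to $-\iint_{Q_\tf} u\,\p_t\psi\,\d\x\d t - \int_\O u_0\psi(\cdot,0)\,\d\x$; here I use Corollary~\ref{coro:convL1} (strong $L^1$ convergence of $\pi_{\Dd_m,\bdt_m}\u_m$ to $u$), the smoothness of $\psi$, and the fact that $\pi_{\Dd_m}\u_m^0 \to u_0$ in $L^1(\O)$ by the definition~\eqref{eq:u0-disc} of the discrete initial data.

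The diffusive term is the heart of the matter. I would split the discrete bilinear expression $\dt_n\sum_\k \bd_\k\bh(\u^n)\cdot\B_\k(\u^n)\bd_\k\bpsi^n$ using $\bh = p(\u) + \bV$ and $\B_\k(\u) = \M_\k(\u)\A_\k\M_\k(\u)$ into a ``pressure'' part and a ``potential'' part. For the pressure part one wants to recognize, cell by cell, that $\M_\k(\u^n)\bd_\k p(\u^n)$ is close to $\bd_\k\xi(\u^n)$: indeed the definition~\eqref{eq:xi} of $\xi$ combined with the monotonicity bound~\eqref{eq:eta>max/2} already used in Lemma~\ref{lem:entro2} shows $|\sqrt{\eta^n_{\k,\s}}(p(u^n_\s)-p(u^n_\k)) - (\xi(u^n_\s)-\xi(u^n_\k))|$ is controlled; the uniform continuity assumption~\eqref{eq:H-unif-cont} and Lemma~\ref{lem:eta_M} let one replace the symmetrized mobility weight $\M_\k(\u^n)$ acting on the $\bpsi$-side by $\sqrt{\eta(u)}$ in the limit. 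Using the identity~\eqref{eq:bilin} rewriting $\bd_\k\xi(\u^n)\cdot\A_\k\bd_\k\bpsi^n = \int_\k \L\grad_\Tt\xi(\u^n)\cdot\grad_\Tt\bpsi^n\,\d\x$, together with the weak $L^2((0,\tf);H^1(\O))$ convergence $\pi_{\Tt_m,\bdt_m}\xi(\u_m)\rightharpoonup\xi(u)$ from Lemma~\ref{lem:conv-xi} and the $L^2$ convergence of $\grad_{\Tt_m,\bdt_m}\bpsi_m \to \grad\psi$ (a consistency property of the VAG reconstruction that follows from the mesh regularity~\eqref{eq:gamma-theta-star}), this part converges to $\iint_{Q_\tf}\L\grad\xi(u)\cdot\sqrt{\eta(u)}\grad\psi\,\d\x\d t$. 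For the potential part, $\bd_\k\bV\cdot\A_\k$ composed with the mobility factors rewrites via~\eqref{eq:bilin} as $\int_\k\eta(\cdot)\L\grad_\Tt\bV\cdot\grad_\Tt\bpsi^n$-type expressions; here I would use the strong $L^1$ convergence of $\pi_{\Mm_m,\bdt_m}\eta(\u_m)\to\eta(u)$ from~\eqref{eq:piM-eta}, the Lipschitz regularity of $V$ so that $\grad_{\Tt_m}\bV_m$ approximates $\grad V$ in $L^\infty$ up to boundary layers of vanishing measure, and the $L^\infty$ convergence of the test-function gradients, to get the limit $\iint_{Q_\tf}\eta(u)\L\grad V\cdot\grad\psi\,\d\x\d t$.

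Assembling the three limits yields exactly~\eqref{eq:weak}. I would then verify the remaining requirements of Definition~\ref{def:weak}: item~\textbf{i.} ($u,\eta(u)\in L^\infty((0,\tf);L^1(\O))$) follows by Fatou from the uniform entropy bound~\eqref{eq:G-int_m} together with the superlinearity of $\G$ and Lemma~\ref{lem:G2}; item~\textbf{ii.} ($\xi(u)\in L^2((0,\tf);H^1(\O))$) is already part of Proposition~\ref{prop:compact}. Finally, uniqueness of the weak solution (recalled after Definition~\ref{def:weak}, under the condition $\eta\circ\phi^{-1}\in C^{0,1/2}$ which follows here from~\eqref{eq:H-unif-cont}-type regularity, or more robustly one simply invokes that the whole sequence converges because every subsequence has a sub-subsequence converging to the same unique limit) upgrades the subsequential convergence to convergence of the full sequence, completing the proof and also Theorem~\ref{thm:main-conv}.

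The main obstacle I anticipate is the careful handling of the symmetrized mobility weights $\eta^n_{\k,\s} = (\eta(u^n_\k)+\eta(u^n_\s))/2$ in the diffusive flux: one factor $\sqrt{\eta^n_{\k,\s}}$ must be absorbed into $\bd_\k\xi(\u^n)$ to use the $H^1$ estimate, while the other factor $\sqrt{\eta^n_{\k,\s}}$ multiplying the test-function differences must be shown to converge a.e. to $\sqrt{\eta(u)}$ — this is precisely where~\eqref{eq:H-unif-cont} and the a.e. convergence of $\pi_{\Dd_m}\xi(\u_m) - \pi_{\Mm_m}\xi(\u_m)\to 0$ (as in~\eqref{eq:xiDM}) are essential, and the consistency error between $\M_\k(\u^n)\bd_\k p(\u^n)$ and $\bd_\k\xi(\u^n)$ must be estimated by a residual that is dominated, after summation against the bounded test-function gradients, by a vanishing fraction of the dissipation bound~\eqref{eq:dissip-abs_m}. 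Keeping track of the boundary layers where $\pi_{\Mm}$ and $\pi_\Dd$ differ from the $\P_1$ reconstruction, and of the $I_\sig$ interpolation at face centers in $d=3$, is the other technical point, but these are of vanishing measure as $h_{\Tt_m}\to0$.
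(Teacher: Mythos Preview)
Your proposal is correct and follows essentially the same route as the paper: test the compact formulation against the interpolate of a smooth $\psi$, perform discrete summation by parts in time, split the flux term via $\bh = p(\u)+\bV$, and in the pressure part absorb one $\sqrt{\eta_{\k,\s}^n}$ into $\bd_\k\xi(\u^n)$ while showing the other one can be replaced by $\sqrt{\eta(u_\k^n)}$ (and ultimately $\sqrt{\eta(u)}$) up to residuals controlled by the dissipation bound~\eqref{eq:dissip-abs_m}. The paper makes your ``main obstacle'' paragraph explicit by introducing the auxiliary quantity $\mu_\k^n = \max_{\s\in\Vv_\k}\bigl|\sqrt{\eta(u_\s^n)}-\sqrt{\eta(u_\k^n)}\bigr|$ and proving a dedicated lemma that $\pi_{\Mm_m,\bdt_m}\bmu_m\to 0$ in $L^2(Q_\tf)$ via~\eqref{eq:H-unif-cont} and the $L^2$-equi-integrability inherited from $\pi_{\Dd}\eta(\u)$; this is precisely the mechanism you describe informally, so there is no genuine difference in strategy.
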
 
\begin{proof} 
In order to check that $u$ is a weak solution, it only remains to check that the weak formulation~\eqref{eq:weak} holds. 
Let $\psi \in C^\infty_c(\ov\O\times[0,\tf))$, then, for all $m \ge 1$, for all $\beta \in \Mm_m \cup \Vv_m$ and 
all $n \in \{0,\dots, N_m\} $, we denote by $\bdt_m  = \left(\dt_{1,m},\dots, \dt_{N_m,m} \right)$, by $t_{n,m} = \sum_{i=1} ^n \dt_{i,m} $, 
by $\psi_\b^n = \psi(\x_\b, t_{n,m})$, 
by $\bpsi^n_m = \left(\psi_\beta^n\right)_{\beta\in\Mm_m\cup\Vv_m} \in W_{\Dd_m} $, and by 
$\bpsi_m = \left(\bpsi_m^n\right)_{0\le n\le N_m} \in W_{\Dd_m, \bdt_m} $.
Note that since $\psi(\cdot,\tf) = 0$, one has $\bpsi^{N_m} _m = \0$ for all $m\ge 1$. 

Setting $\bv  = \bpsi_m^{n-1} $ in~\eqref{eq:syst-w} and summing over $n \in \{1,\dots, N_m\} $ leads 
after a classical reorganization of the sums~\cite{EGH00} to 
\be\label{eq:ABCD} 
A_m + B_m + C_m + D_m = 0, 
\ee
where we have set 
\begin{align*} 
A_m  =& \sum_{n=1} ^{N_m} \Delta t_{n,m} \int_\O \pi_{\Dd_m} \u_m^n(\x) \pi_{\Dd_m} \left( \frac{\bpsi^{n-1} - \bpsi^{n} } {\dt_{n,m} } \right)(\x)\d\x,\\
B_m = & - \int_\O \pi_{\Dd_m} \u_m^0(\x) \pi_{\Dd_m} \bpsi^0(\x) \d\x, 	\\
C_m = & \sum_{n=1} ^{N_m} \dt_{n,m} \sum_{\k \in \Mm_m} \bd_\k p(\u_m^n) \cdot \B_\k(\u_m^n)   \bd_\k \bpsi_m^{n-1}, \\
D_m = & \sum_{n=1} ^{N_m} \dt_{n,m} \sum_{\k \in \Mm_m} \bd_\k \bV_m \cdot \B_\k(\u_m^n)   \bd_\k \bpsi_m^{n-1}, 
\end{align*} 
and $\bV_m = \left(V(\x_{\k}), V(\x_{\s})\right)_{\k \in \Mm_m, \s \in \Vv_m} $.

The regularity of $\psi$ yields 
$$
\sum_{n=1} ^{N_m} \pi_{\Dd_m} \left( \frac{\bpsi^{n-1} - \bpsi^{n} } {\dt_{n,m} } \right) \1_{[t_{n-1,m},t_{n,m})} \underset{m\to\infty} {\longrightarrow} - \p_t \psi 
\quad \text{ uniformly on  } Q_\tf 
$$
where $t_{n,m} = \sum_{i=1} ^n \dt_{i,m} $, so that, using Corollary~\ref{coro:convL1}, one gets 
\be\label{eq:Am} 
\lim_{m\to\infty} A_m = - \iint_{Q_\tf} u \p_t \psi \; \d\x \d t. 
\ee

The function $\pi_{\Dd_m} \u_m^0(\x)$ tends strongly in $L^1(\O)$ towards $u_0$ and $\pi_{\Dd_m} \bpsi^0$ 
converges uniformly towards $\psi(\cdot, 0)$ as $m$ tends to $+\infty$, leading to 
\be\label{eq:Bm} 
\lim_{m\to\infty} B_m = - \int_{\O} u_0(\x)  \psi(\x,0) \; \d\x. 
\ee

We split the term $C_m$ into three parts
\be\label{eq:Cm} 
C_m  = C_{1,m} + C_{2,m} + C_{3,m}, \qquad m\ge1, 
\ee
where, setting $\h\bpsi_m^0 = \bpsi_m^{0} $, $\h\bpsi_m^n = \bpsi_m^{n-1} \in W_{\Dd_m} $ for all $n \in \{1,\dots, N_m\} $, and 
 $\h\bpsi_m = \left(\h\bpsi_m^n\right)_{0 \le n \le N_m} \in W_{\Dd_m, \bdt_m} $, one has  
\begin{align*} 
C_{1,m} = & \iint_{Q_\tf} \pi_{\Mm_m,\bdt_m} \sqrt{\eta(\u_m)} \grad_{\Tt_m, \bdt_m} \xi(\u_m) 
\cdot \L  \grad_{\Tt_m, \bdt_m} \h\bpsi_m \d\x \d t, \\
C_{2,m} = & \sum_{n=1} ^{N_m} \dt_{n,m} \sum_{\k\in\Mm_m} \sum_{\s\in\Vv_\k} \sum_{\s'\in\Vv_\k} 
\sqrt{\eta_{\k,\s}^n}\left( p(u_\k^n) - p(u_\s^n) \right) 
a_{\s,\s'} ^\k  \\
& \hspace{4cm} \times\;  \left( \sqrt{\eta_{\k,\s'} ^n} - \sqrt{\eta(u_\k^n)} \right)\left( \psi_\k^{n-1} - \psi_{\s'} ^{n-1} \right),\\
C_{3,m} = & \sum_{n=1} ^{N_m} \dt_{n,m} \sum_{\k\in\Mm_m} \sqrt{\eta(u_\k^n)} \sum_{\s\in\Vv_\k} 
\left( \sqrt{\eta_{\k,\s} ^n} (p(u_\k^n)-p(u_\s^n)) - \left( \xi(u_\k^n) - \xi(u_\s^n) \right)\right) \\
&\hspace{6cm} \times\;\sum_{\s'\in\Vv_\k} a_{\s,\s'} ^\k  
\left( \psi_\k^{n-1} - \psi_{\s'} ^{n-1} \right).
\end{align*} 
Thanks to Lemma~\ref{lem:eta_M}, we know that  
$$
 \pi_{\Mm_m,\bdt_m} \sqrt{\eta(\u_m)} \underset{m\to\infty} {\longrightarrow} \sqrt{\eta(u)} \quad \text{ strongly in } L^2(Q_\tf).
$$
Hence, it follows from the weak convergence in $L^2(Q_\tf)$ of $\grad_{\Tt_m, \bdt_m} \xi(\u_m)$ towards $\grad \xi(u)$ (cf. Lemma~\ref{lem:conv-xi}) and from the uniform convergence of  $\grad_{\Tt_m, \bdt_m} \h\bpsi_m$ 
towards $\grad \psi$ as $m$ tends to $+\infty$ (see for instance~\cite[Theorem 16.1]{Ciarlet_Handbook}) 
that 
\be\label{eq:C'm} 
\lim_{m\to\infty} C_{1,m} = \iint_{Q_\tf} \sqrt{\eta(u)} \grad \xi(u) \cdot \L \grad \psi \; \d\x\d t.
\ee

Let us focus now of $C_{2,m} $. 
Using the inequality $ab \le \eps a^2 + \frac{1} {4\eps} b^2$, one gets that 
\be\label{eq:Qq_decomp} 
C_{2,m} \le   \eps C_{2,m} ' + \frac{C_{2,m} ''} {4 \eps}, \qquad \forall \eps >0, 
\ee
where 
\begin{align*} 
C_{2,m} '   = & \sum_{n=1} ^{N_m} \dt_{n,m} \sum_{\k\in\Mm_m} \sum_{\s\in\Vv_\k} \left(\sum_{\s'\in\Vv_\k} \left| a_{\s,\s'} ^\k \right| \right)
\eta_{\k,\s}^n\left( p(u_\k^n) - p(u_\s^n) \right)^2, \\
C_{2,m} '' = & \sum_{n=1} ^{N_m} \dt_{n,m} \!\!\! \sum_{\k\in\Mm_m} \sum_{\s\in\Vv_\k} \left(\sum_{\s'\in\Vv_\k} \left| a_{\s,\s'} ^\k \right| \right)
\!\! \left( \sqrt{\eta_{\k,\s} ^n} - \sqrt{\eta(u_\k^n)} \right)^2\!\! \left( \psi_\k^{n-1} - \psi_{\s} ^{n-1} \right)^2.
\end{align*} 
We deduce from Estimate~\eqref{eq:dissip-abs_m} that
\be\label{eq:Qq'} 
C_{2,m} '  \le C, \qquad \forall m \ge 1,
\ee
for some $C$ depending only on $u_0$, $p$, $\O$, $\L$, $\theta^\star$ and $\ell^\star$. 

Define $\bmu_m = \left(\mu_\k^n, \mu_\s^n\right)_{\k,\s,n} \in W_{\Dd_m, \bdt_m} $ by 
\be\label{eq:mu_def} 
\begin{cases} 
\mu_\s^n = 0, & \forall \s \in \Vv_m,\\
 \mu_\k^n = \max_{\s \in \Vv_\k} \left| \sqrt{\eta(u_\s^n)} - \sqrt{\eta(u_\k^n)}\right|
 & \forall \k \in \Mm_m, 
 \end{cases} 
\qquad  \forall n \in \{0, \dots, N_m\}.
\ee
The definition~\eqref{eq:etaks} of $\eta_{\k,\s}^n$ implies that 
$$
\left| \sqrt{\eta_{\k,\s} ^n} - \sqrt{\eta(u_\k^n)} \right| \le \mu_\k^n, \qquad \forall \k \in \Mm_m, \; \forall \s \in \Vv_\k.
$$
Therefore, we get that 
$$
C_{2,m} '' \le  \sum_{n=1} ^{N_m} \dt_{n,m}  \sum_{\k\in\Mm_m} \left(\mu_\k^n\right)^2
\sum_{\s\in\Vv_\k} \left(\sum_{\s'\in\Vv_\k} \left| a_{\s,\s'} ^\k \right| \right) \left( \psi_\k^{n-1} - \psi_{\s} ^{n-1} \right)^2.
$$
Then thanks to Lemma~\ref{lem:abs-Ak}, there exists $C$ depending only on $\L$, $\theta^\star$ and $\ell^\star$ such that 
$$
C_{2,m} '' \le C \iint_{Q_\tf} \left(\pi_{\Mm_m, \bdt_m} \bmu_m\right)^2 \; \grad_{\Tt_m, \bdt_m} \h\bpsi_m 
\cdot \L \grad_{\Tt_m, \bdt_m} \h\bpsi_m \d\x \d t, 
\qquad \forall m \ge 1. 
$$
Since $\pi_{\Mm_m, \bdt_m} \bmu_m$ converges to $0$ strongly in $L^2(Q_\tf)$ as $m$ tends to $\infty$ 
(this is the purpose of Lemma~\ref{lem:mu} hereafter), 
and since $\grad_{\Tt_m, \bdt_m} \h\bpsi_m$ remains bounded in $L^\infty(Q_\tf)$ uniformly w.r.t. $m \ge 1$, one gets that 
\be\label{eq:Qq''} 
\lim_{m\to \infty} C_{2,m} ''  = 0.
\ee
Therefore, it follows from~\eqref{eq:Qq_decomp}--\eqref{eq:Qq''} that 
$\limsup_{m\to\infty} C_{2,m} \le C \eps$ for arbitrary small values of $\eps>0$, 
whence
\be\label{eq:Qq} 
\lim_{m\to\infty} C_{2,m} = 0. 
\ee

As a preliminary before considering $C_{3,m} $, let us set, for all $\k \in \Mm_m$, all $\s \in \Vv_\k$, and all $n \in \{1, \dots, N_m\} $, 
$$
\widetilde \eta_{\k,\s} ^n = \begin{cases} 
\displaystyle \left( \frac{\xi(u_\k^n) - \xi(u_\s^n)} {p(u_\k^n) - p(u_\s^n)} \right)^2 & \text{ if } u_\k^n \neq u_\s^n, \\[10pt]
\eta(u_\k^n) & \text{ if } u_\k^n = u_\s^n.
\end{cases} 
$$
Thanks to the mean value theorem, we can claim that, for all $\k \in \Mm_m$, all $\s \in \Vv_\k$ and all $n\in \{1,\dots,N_m\} $, 
there exists $\widetilde u_{\k,\s} ^n \in \Ii_{\k,\s} ^n = [\min(u_\k^n, u_\s^n), \max(u_\k^n, u_\s^n)]$ such that 
$\widetilde \eta_{\k,\s} ^n = \eta(\widetilde u_{\k,\s} ^n).$ 
In particular, this ensures that 
$$
\left|\sqrt{\eta_{\k,\s}^n} - \sqrt{ \widetilde \eta_{\k,\s}^n}\right| \le \mu_\k^n, \qquad \forall \k \in \Mm_m, \, \forall \s \in \Vv_\k
$$
where $\mu_\k$ was defined by~\eqref{eq:mu_def}.
Using moreover that $\eta(u)_\k^n \le 2 \eta_{\k,\s}^n$, one gets that 
$$
C_{3,m} \le 2 \sum_{n=1} ^{N_m} \dt_{n,m} \sum_{\k\in\Mm_m} \mu_\k^n \sum_{\s\in\Vv_\k} 
 \sqrt{\eta_{\k,\s} ^n} (p(u_\k^n)-p(u_\s^n))  
\sum_{\s'\in\Vv_\k} a_{\s,\s'} ^\k  
\left( \psi_\k^{n-1} - \psi_{\s'} ^{n-1} \right).
$$
Cauchy-Schwarz inequality and Estimate~\eqref{eq:dissip-abs_m} yield
$$
C_{3,m} \le  C \left( \sum_{\k \in \Mm_m} \left(\mu_\k^n\right)^2 \sum_{\s \in \Vv_\k} 
\left(\sum_{\s' \in \Vv_\k} |a^\k_{\s,\s'}| \right) \left(\psi_\k^{n-1} - \psi_\s^{n-1}\right)^2\right)^{1/2}.
$$
Lemma~\ref{lem:abs-Ak}, and the 
regularity of $\psi$ provide that
$$
C_{3,m} \le C \left\| \pi_{\Mm_m,\bdt_m} \bmu_m\right\|_{L^2(Q_\tf)}.
$$
Applying Lemma~\ref{lem:mu}, we get 
\be\label{eq:Rr} 
\lim_{m\to \infty} C_{3,m} = 0.
\ee

Putting~\eqref{eq:Cm} together with~\eqref{eq:C'm}, \eqref{eq:Qq}, and~\eqref{eq:Rr}, one gets that 
\be\label{eq:Cm2} 
\lim_{m\to \infty} C_m = \iint_{Q_\tf} \sqrt{\eta(u)} \grad \xi(u) \cdot \L \grad \psi \; \d\x\d t.
\ee 
\bigskip

Now, we focus on the term $D_m$ that can be decomposed into 
\be\label{eq:D123} 
D_m = D_{1,m} + D_{2,m} + D_{3,m}, \qquad \forall m \ge 1, 
\ee
where we have set 
\begin{align*} 
D_{1,m} =&\iint_{Q_\tf} \pi_{\Mm_m,\bdt_{m} } \eta(\u_m)\grad_{\Tt_m} \bV_m\cdot\L\grad_{\Tt_m,\bdt_m} 
\h\bpsi_m\d\x\d t,\\
D_{2,m} =&\frac12\sum_{n = 1} ^{N_m} \dt_{n,m} \sum_{\k \in \Mm_m} \sum_{\s \in \Vv_\k} \sum_{\s' \in \Vv_\k} 
a_{\s,\s'} ^\k\left(\sqrt{\eta_{\k,\s} ^n} -\sqrt{\eta(u_\k^n)} \right)(V_\k - V_\s)\\
&\hspace{5cm} \times\left(\sqrt{\eta_{\k,\s'} ^n} +\sqrt{\eta(u_\k^n)} \right)\left( \psi_\k^{n-1} -\psi_{\s'} ^{n-1} \right), \\
D_{3,m} =&\frac12\sum_{n = 1} ^{N_m} \dt_{n,m} \sum_{\k \in \Mm_m} \sum_{\s \in \Vv_\k} \sum_{\s' \in \Vv_\k} 
a_{\s,\s'} ^\k\left(\sqrt{\eta_{\k,\s} ^n} + \sqrt{\eta(u_\k^n)} \right)(V_\k - V_\s)\\
&	\hspace{5cm} \times \left(\sqrt{\eta_{\k,\s'} ^n} - \sqrt{\eta(u_\k^n)} \right) \left( \psi_\k^{n-1} - \psi_{\s'} ^{n-1} \right).
\end{align*} 

It follows from Lemma~\ref{lem:eta_M}, from the uniform convergence of $\grad_{\Tt_m} \bV_m$ towards 
$\grad V$ and of $\grad_{\Tt_m, \bdt_m} \h \bpsi_m$ towards $\grad \psi$ as $m$ tends to $+\infty$ that 
\be\label{eq:D1} 
\lim_{m\to \infty} D_{1,m} = \int_{Q_\tf} \eta(u) \grad V \cdot \L \grad \psi \, \d\x \d t. 
\ee

Let $\eps >0$, using again the inequality $|ab| \le \eps a^2 + \frac{b^2} {4\eps} $, we obtain that 
\be\label{eq:D2m0} 
|D_{2,m} | \le \eps D'_{2,m} + \frac1{16 \eps} D_{2,m} '', \qquad \forall m \ge 1,
\ee
where we have set 
\begin{align*} 
D_{2,m} '=&\sum_{n = 1} ^{N_m} \dt_{n,m} \sum_{\k\in\Mm_m} \sum_{\s\in\Vv_\k} 
\left(\sum_{\s' \in \Vv_\k} |a_{\s,\s'} ^\k|\right)\left(\sqrt{\eta_{\k,\s} ^n} + \sqrt{\eta(u_\k^n)} \right)^2
\left(\psi_\k^{n-1} -\psi_\s^{n-1} \right)^2,\\
D_{2,m} ''=&\sum_{n = 1} ^{N_m} \dt_{n,m} \sum_{\k \in \Mm_m} \sum_{\s \in \Vv_\k} 
\left(\sum_{\s' \in \Vv_\k} |a_{\s,\s'} ^\k|\right)\left(\sqrt{\eta_{\k,\s} ^n} -\sqrt{\eta(u_\k^n)} \right)^2
\left(V_\k - V_\s\right)^2.
\end{align*} 
Define  $\ov \bfeta_m = (\ov \eta_\k^n, \ov \eta_\s^n)_{\k,\s} \in W_{\Dd_m, \bdt_m} $ by 
$$
\ov \eta_\k^n = \max \Big( \eta(u_\k^n), \max_{\s \in \Vv_\k} u_\s^n \Big), \qquad \ov \eta_\s^n = 0, \qquad 
\forall \k \in \Mm_m, \; \forall \s \in \Vv_m, 
$$
then one has 
$$
\left(\sqrt{\eta_{\k,\s} ^n} + \sqrt{\eta(u_\k^n)} \right)^2 \le 4 \ov \eta_\k^n, 
\qquad \forall \k \in \Mm_m, \; \forall \s \in \Vv_\k, 
$$
whence 
$$
D_{2,m} '  \le C \iint_{Q_\tf} \pi_{\Mm_m, \bdt_m} \ov\bfeta_m \; 
\L\grad_{\Tt_m, \bdt_m} \bpsi_m\cdot \grad_{\Tt_m, \bdt_m} \bpsi_m  \d\x \d t, \qquad \forall m \ge 1
$$
thanks to Lemma~\ref{lem:abs-Ak}.
Using Lemma~\ref{lem:Aovbv}, we know that there exists $C$ depending on the data of the 
continuous problem and of the regularity factors $\theta^\star$, $\ell^\star$ and $\zeta^\star$ such that 
$$
\left\|\pi_{\Mm_m, \bdt_m} \ov \bfeta_m\right\|_{L^1(Q_\tf)} \le C, \qquad \forall m \ge 1, 
$$
while the regularity of $\psi$ ensures that 
$$
\left\| \grad_{\Tt_m, \bdt_m} \bpsi_m\right\|_{L^\infty(Q_\tf)} \le C, \qquad \forall m \ge 1. 
$$
Therefore, there exists $C$ depending only on the data of the continuous problem and 
the regularity factors $\theta^\star$, $\ell^\star$ and $\zeta^\star$ such that 
\be\label{eq:D2m'} 
D_{2,m} '  \le C, \qquad \forall m \ge 1.
\ee
The term $D_{2,m} ''$ can be studied as $C_{2,m} ''$ was, leading to 
\be\label{eq:D2m''} 
\lim_{m\to \infty} D_{2,m} ''  =0, 
\ee
whence, taking~\eqref{eq:D2m'}--\eqref{eq:D2m''} into account in~\eqref{eq:D2m0}, one gets that 
\be\label{eq:D2m} 
\lim_{m \to \infty} D_{2,m} = 0.
\ee
Reproducing the calculations carried out for dealing with $D_{2,m} $ allows to show that 
\be\label{eq:D3m} 
\lim_{m \to \infty} D_{3,m} = 0.
\ee
Combining~\eqref{eq:D123}--\eqref{eq:D1} and \eqref{eq:D2m}--\eqref{eq:D3m}, we obtain that 
\be\label{eq:Dm} 
\lim_{m\to \infty} D_{m} = \iint_{Q_\tf} \eta(u) \L\grad V  \cdot \grad \psi \d\x\d t. 
\ee

Finally, it follows from~\eqref{eq:ABCD}, \eqref{eq:Am}--\eqref{eq:Bm}, \eqref{eq:Cm2} 
and~\eqref{eq:Dm} that the limit $u$ 
of the discrete reconstructions $\left( \pi_{\Dd_m, \bdt_m} \u_m \right)_{m\ge1} $ is a weak solution 
to the problem~\eqref{eq:1} in the sense of Definition~\ref{def:weak}.
\end{proof} 

\begin{lem} \label{lem:mu} 
Let $\bmu_m =  \left(\mu_\k^n, \mu_\s^n\right)_{\k,\s,n} \in W_{\Dd_m, \bdt_m} $ be defined by~\eqref{eq:mu_def}, 
then 
$$
\pi_{\Mm_m, \bdt_m} \bmu_m \underset{m\to\infty} {\longrightarrow} 0 \quad \text{ strongly in } L^2(Q_\tf), 
$$
\end{lem}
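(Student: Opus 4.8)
The plan is to squeeze $\pi_{\Mm_m,\bdt_m}\bmu_m$ between a sequence tending to $0$ in measure and a sequence whose square is uniformly equi-integrable, and then to invoke Vitali's theorem. The starting point consists in two elementary cellwise bounds on $\mu_\k^n$. Writing $\sqrt{\eta}=\big(\sqrt{\eta\circ\xi^{-1}}\big)\circ\xi$ and using the uniform continuity of $\sqrt{\eta\circ\xi^{-1}}$ (Assumption~\eqref{eq:H-unif-cont}) through its nondecreasing modulus of continuity $\varpi$ from~\eqref{eq:varpi}, one obtains
\[
0\le\mu_\k^n\le\max_{\s\in\Vv_\k}\varpi\big(|\xi(u_\s^n)-\xi(u_\k^n)|\big)=\varpi\Big(\max_{\s\in\Vv_\k}|\xi(u_\s^n)-\xi(u_\k^n)|\Big),\qquad\forall\k\in\Mm_m,\;\forall n.
\]
On the other hand, the elementary inequality $|\sqrt a-\sqrt b|\le\sqrt{\max(a,b)}$ gives $\mu_\k^n\le\sqrt{\ov\eta_\k^n}$, where $\ov\eta_\k^n:=\max\big(\eta(u_\k^n),\max_{\s\in\Vv_\k}\eta(u_\s^n)\big)$ is the cellwise maximum of $\eta$ already used in the proof of Lemma~\ref{lem:abs-p1} and around~\eqref{eq:D2m'}. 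Introducing the reconstructions $g_m:=\sum_{\k,n}\big(\max_{\s\in\Vv_\k}|\xi(u_\s^n)-\xi(u_\k^n)|\big)\1_{\k\times(t_{n-1,m},t_{n,m}]}$ and, with $\ov\bfeta_m:=(\ov\eta_\k^n,0)_{\k,\s,n}\in W_{\Dd_m,\bdt_m}$, the function $\pi_{\Mm_m,\bdt_m}\ov\bfeta_m$, these bounds read $0\le\pi_{\Mm_m,\bdt_m}\bmu_m\le\varpi(g_m)$ and $(\pi_{\Mm_m,\bdt_m}\bmu_m)^2\le\pi_{\Mm_m,\bdt_m}\ov\bfeta_m$.

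The central point is to prove that $g_m\to0$ in measure on $Q_\tf$. For fixed $\delta>0$, Markov's inequality gives $\meas\{g_m>\delta\}\le\delta^{-2}\sum_{n}\dt_{n,m}\sum_{\k}|\k|\sum_{\s\in\Vv_\k}(\xi(u_\s^n)-\xi(u_\k^n))^2$, and three ingredients are chained to estimate the right-hand side. First, by the mesh-regularity assumptions~\eqref{eq:gamma-theta-star}--\eqref{eq:zeta*} and the comparison of cell volumes with the entries of the local stiffness matrices underlying the appendix lemmas, one has $|\k|\le C\,h_{\Tt_m}^2\sum_{\s'\in\Vv_\k}|a^\k_{\s,\s'}|$ for every $\k\in\Mm_m$ and $\s\in\Vv_\k$. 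Second, exactly as in the proof of Lemma~\ref{lem:entro2}, the bound~\eqref{eq:eta>max/2} and the definition~\eqref{eq:xi} of $\xi$ yield $(\xi(u_\s^n)-\xi(u_\k^n))^2\le 2\,\eta_{\k,\s}^n\,(p(u_\k^n)-p(u_\s^n))^2$. Third, the uniform estimate~\eqref{eq:dissip-abs_m} bounds $\sum_{n}\dt_{n,m}\sum_{\k}\sum_{\s\in\Vv_\k}\big(\sum_{\s'\in\Vv_\k}|a^\k_{\s,\s'}|\big)\eta_{\k,\s}^n(p(u_\k^n)-p(u_\s^n))^2$ uniformly in $m$. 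Chaining these three, $\sum_{n}\dt_{n,m}\sum_{\k}|\k|\sum_{\s\in\Vv_\k}(\xi(u_\s^n)-\xi(u_\k^n))^2\le C\,h_{\Tt_m}^2$, hence $\meas\{g_m>\delta\}\le C\,h_{\Tt_m}^2\delta^{-2}\to0$ as $m\to\infty$. Since $\varpi\in C(\R_+;\R_+)$ with $\varpi(0)=0$, it follows in particular that $\pi_{\Mm_m,\bdt_m}\bmu_m\to0$ in measure on $Q_\tf$.

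It then remains to check that $\{(\pi_{\Mm_m,\bdt_m}\bmu_m)^2\}_{m\ge1}$ is uniformly equi-integrable, which by the second cellwise bound reduces to the equi-integrability of $\pi_{\Mm_m,\bdt_m}\ov\bfeta_m$. As $\eta$ is even and nondecreasing on $\R_+$ (Assumption~$({\bf A}1)$), $\ov\eta_\k^n=\eta(\ov u_\k^n)$ for the admissible degree of freedom $\ov u_\k^n$ of $\k$ of largest modulus; Lemma~\ref{lem:G2} together with Assumption~\eqref{eq:dlVP} then furnishes, for every $\eps>0$, a constant $C_\eps$ with $\ov\eta_\k^n\le\eps\,\G(\ov u_\k^n)+C_\eps\le\eps\,\ov\G_\k^n+C_\eps$, where $\ov\G_\k^n\ge0$ denotes the maximum of $\G(\u_m^n)$ over the degrees of freedom of $\k$ (recall that $\G\ge0$). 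Writing $\Phi_m$ for the corresponding piecewise constant reconstruction, this means $\pi_{\Mm_m,\bdt_m}\ov\bfeta_m\le\eps\,\Phi_m+C_\eps$, while Lemma~\ref{lem:Aovbv} applied to the nonnegative vector $\G(\u_m^n)$ gives $\|\Phi_m\|_{L^1(Q_\tf)}\le C\,\|\pi_{\Dd_m,\bdt_m}\G(\u_m)\|_{L^1(Q_\tf)}\le C'$ thanks to~\eqref{eq:G-int_m}. As $\sup_m\|\Phi_m\|_{L^1(Q_\tf)}<\infty$, the usual sufficient condition for equi-integrability applies, so $\pi_{\Mm_m,\bdt_m}\ov\bfeta_m$, hence $(\pi_{\Mm_m,\bdt_m}\bmu_m)^2$, is uniformly equi-integrable.

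With both ingredients, Vitali's convergence theorem yields $\pi_{\Mm_m,\bdt_m}\bmu_m\to0$ strongly in $L^2(Q_\tf)$, which is the asserted statement. The hard part is the second paragraph, namely $\meas\{g_m>\delta\}\to0$: this is where the degeneracy of the problem is felt, since no $H^1$-type bound is available on $\eta(\u_m)$ itself but only the $\eta$-weighted pressure-jump estimate~\eqref{eq:dissip-abs_m}, and the extra factor $h_{\Tt_m}^2$ that makes the bound collapse is produced precisely by comparing cell volumes with the entries of the matrices $\A_\k$, so that the regularity assumptions~\eqref{eq:gamma-theta-star}--\eqref{eq:zeta*} and the geometric lemmas of the appendix enter there essentially. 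A slightly less computational route to the same intermediate bound would combine the $L^2((0,\tf);H^1(\O))$-estimate of Lemma~\ref{lem:xi-L2} with the reconstruction-comparison Lemma~\ref{lem:34-BM13}.
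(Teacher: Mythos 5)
Your proof is correct and follows the same architecture as the paper's: the squeeze $0\le\mu_\k^n\le\varpi\big(\max_{\s\in\Vv_\k}|\xi(u_\k^n)-\xi(u_\s^n)|\big)$, smallness of the cellwise $\xi$-oscillations, uniform $L^2$-equi-integrability, and Vitali. The two ingredients are, however, obtained by different routes. For the smallness, the paper applies a generalized Lemma~\ref{lem:A7} (with $q=2$) to $\xi(\u_m^n)$ together with~\eqref{eq:dissip-xi_m}, getting $\pi_{\Mm_m,\bdt_m}\bw_m\to0$ in $L^2(Q_\tf)$ and hence a.e. along a subsequence; you re-derive the same $O(h_{\Tt_m}^2)$ bound by hand from $\meas(\k)\le C\,h_{\Tt_m}^2\sum_{\s'\in\Vv_\k}|a^\k_{\s,\s'}|$ — a true inequality, but note it is not a stated lemma: it comes from the lower spectral bound inside the proof of Lemma~\ref{lem:cond-Ak} (the diagonal entry satisfies $a^\k_{\s,\s}\ge c\,\meas(\k)/h_\k^2$, and $h_\k\le 2h_{\Tt_m}$ by star-shapedness), so you should either spell this out or simply invoke Lemma~\ref{lem:A7} as the paper does — combined with~\eqref{eq:eta>max/2} and~\eqref{eq:dissip-abs_m}; working with convergence in measure lets you treat the whole sequence without extracting subsequences, which is a small gain in cleanliness. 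For the equi-integrability, the paper bounds $(\mu_\k^n)^2\le 2\big(\eta(u_\k^n)+\sum_{\s\in\Vv_\k}\eta(u_\s^n)\big)$ and compares $\pi_{\Mm_m}$ with $\pi_{\Dd_m}$ via the mass-lumping lower bounds stemming from $\zeta^\star$, then quotes the equi-integrability of $\pi_{\Dd_m,\bdt_m}\eta(\u_m)$ already established in Lemma~\ref{lem:eta_M}; you instead bound $(\mu_\k^n)^2\le\ov\eta_\k^n$ and run a de la Vall\'ee Poussin-type argument directly through Lemma~\ref{lem:G2}, Lemma~\ref{lem:Aovbv} and~\eqref{eq:G-int_m}, which is exactly the mechanism the paper uses for $D_{2,m}'$; both variants rest on the same hypotheses, yours being slightly more self-contained at the price of reproving material that Lemmas~\ref{lem:A7} and~\ref{lem:eta_M} already package. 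One inaccuracy in your closing aside: Lemma~\ref{lem:34-BM13} compares the reconstructions $\pi_\Dd$, $\pi_\Mm$, $\pi_\Tt$ and does not by itself control the cellwise maximum of nodal differences weighted by $\meas(\k)$; the packaged tool for that is Lemma~\ref{lem:A7}. This aside does not affect your main argument.
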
 
\begin{proof} 
Using $(a-b)^2 \le 2 \left(a^2 + b^2\right)$, one gets that 
$$
\left( \mu_\k^n \right)^2 \le 
 2 \left( \eta(u_\k^n) + \max_{\s \in \Vv_\k} \eta(u_\s^n) \right)
 \le 2 \left( \eta(u_\k^n) + \sum_{\s \in \Vv_\k} \eta(u_\s^n) \right)
$$
for all $\k \in \Mm_m$ and all $n \in \{1, \dots, N_m\}$.
Using~\eqref{eq:gamma-theta-star}--\eqref{eq:zeta*}, which ensure that 
$$
m_\k \ge \frac{\zeta^\star} d \meas(\k) 
\quad \text{ and } \quad 
m_\s \ge \frac{\zeta^\star} {d \ell^\star} \meas(\k), 
$$
we deduce that there exists $C$ depending on $d$, $\ell^\star$ and $\zeta^\star$ such that 
$$
\left(\pi_{\Mm_m, \bdt_m} \bmu_m\right)^2 \le C \pi_{\Dd_m, \bdt_m} \eta(\u_m), \qquad \forall m \ge 1. 
$$
As a particular consequence of Lemma~\ref{lem:eta_M}, we know that 
$\left( \pi_{\Dd_m, \bdt_m} \eta(\u_m) \right)_{m\ge1} $ is uniformly equi-integrable, 
 whence 
\be\label{eq:mum-equi} 
\text{$\left(\pi_{\Mm_m, \bdt_m} \bmu_m  \right)_{m\ge1} $ is uniformly $L^2$-equi-integrable.} 
\ee

Let us introduce $\bw_m = (w_\k^n, w_\s^n)_{\k,\s,n} \in W_{\Dd_m, \bdt_m} $ defined 
for all $\k \in \Mm_m$, all $\s \in \Vv_m$ and all $n \in \{1,\dots, N_m\} $ by 
\be\label{eq:wkn} 
w_\s^n = 0, \quad w_\k^n = \max_{\s\in\Vv_\k} \left| \xi(u_\k^n) - \xi(u_\s^n)\right|.
\ee
It follows from a straightforward generalization of Lemma~\ref{lem:A7} and from estimate~\eqref{eq:dissip-xi_m} 
that $\pi_{\Mm_m, \bdt_m} \bw_m$ converges strongly in $L^2(Q_\tf)$ towards $0$. Therefore, up to an unlabeled 
subsequence, it converges almost everywhere. As a consequence, 
\be\label{eq:nonlin-phi} 
\pi_{\Mm_m, \bdt_m} \phi (\bw_m) \underset{m\to\infty} \longrightarrow 0 \quad \text{ a.e. in } Q_\tf
\ee
for all continuous function $\phi:\R_+ \to \R$ such that $\phi(0) = 0$. 

It follows from the definition of $\bmu_m$ that 
$$
\mu_\k^n \le  \varpi(w_\k^n), \quad \forall \k \in \Mm_m, \; \forall n \in \{1,\dots, N_m\}, 
$$
where $\varpi$ is a modulus of continuity of $\sqrt{\eta\circ\xi^{-1}}$ (cf.~\eqref{eq:varpi}). Hence, 
we obtain that 
$$
0 \le \pi_{\Mm_m, \bdt_m} \bmu_m \le \pi_{\Mm_m, \bdt_m} \varpi(\bw_m). 
$$
Thanks to~\eqref{eq:nonlin-phi}, we obtain that 
\be\label{eq:mum-ae} 
\pi_{\Mm_m, \bdt_m} \bmu_m \underset{m\to\infty} \longrightarrow 0 \quad \text{ a.e. in } Q_\tf.
\ee

In order to conclude, it only remains to remark that~\eqref{eq:mum-equi} and~\eqref{eq:mum-ae} 
allow us to use Vitali's convergence theorem. 
\end{proof}

\section{Numerical implementation and results} \label{sec:num} 
This section is devoted to the numerical resolution of the nonlinear system \eqref{eq:syst}.
First, we discuss in~\S\ref{ssec:schur} the strategy that we used for solving the nonlinear system~\eqref{eq:syst}.
Then we present in~\S\ref{ssec:res} two $2$-dimensional cases with analytical solutions in order to illustrate the 
numerical convergence of the method. 

\subsection{Newton method, Schur complement and time-step adaptation}\label{ssec:schur}

The nonlinear system  \eqref{eq:syst} obtained at each time step is solved by a Newton-Raphson algorithm. 
Given $\u^{n-1} \in W_\Dd$, this leads to the computation of a sequence $\left(\u^{n,i}\right)_{i\ge 0} \subset W_\Dd$ 
such that $\u^n = \lim_{i \to \infty}\u^{n,i}$ is a solution to~\eqref{eq:syst}. 
The variation of the discrete unknowns between two Newton-Raphson algorithm iterations is denoted as follows,
$$
\d\u^{n,i} ~=~ \left( ~ \d u_\s^{n,i},~ \d u_\k^{n,i}  ~\right)_{\s\in \Vv,\k\in \Mm} = \u^{n,i+1} - \u^{n,i}, \qquad \forall i \ge 0.
$$
Let us briefly detail the practical implementation of the iterative procedure allowing to deduce $\u^n$ from $\u^{n-1}$.
\begin{enumerate}
 \item In the case where $p(0)$ is finite, the initial guess for the Newton algorithm is, as usual, taken as 
 $\u^{n,0}=\left(  u_\s^{n-1} , u_\k^{n-1} \right)_{\k,\s}$ for all $\s\in \Vv, \k\in \Mm$. In the singular case 
 $p(0) = -\infty$, it was proved in Lemma~\ref{lem:harnack} that the solution $\u^{n} = (u_\k^n,u_\s^n)_{\k,\s}$ 
 of~\eqref{eq:syst} is such that 
 $
\min_{\beta \in \Mm\cup\Vv} u_\beta^n >0.
 $
Therefore, we can initialize the Newton algorithm by 
$$
\u^{n,0}=\left(  \max\left(\eps, u_\s^{n-1}\right) , \max\left(\eps,u_\k^{n-1}\right) \right)_{\k,\s}. 
$$
In the computations, we fixed $\eps = 10^{-10}$.
 \item The Newton-Raphson algorithm iterations are done until a convergence criterion on 
 the $L^{\infty}(\O)$ norm of the variation of the discrete unknowns is reached or 
 until the maximum number of iterations is reached.
 At each iteration, the Jacobian matrix resulting of \eqref{eq:syst} is computed and has the
 following block structure
$$
  \begin{pmatrix}
    {\bf A} & {\bf B} \\
    {\bf C} & {\bf D}
  \end{pmatrix}
    {\d \u^{n,i}}
=
  \begin{pmatrix}
    {\bf b}_{1} \\
    {\bf b}_{2}
  \end{pmatrix},
$$
where the sub-matrices have the following sizes: ${\bf A} \in \R^{\#\Vv} \otimes \R^{\#\Vv}$,
${\bf B} \in \R^{\#\Vv} \otimes \R^{\#\Mm}$, ${\bf C} \in\R^{\#\Mm} \otimes \R^{\#\Vv}$, and 
${\bf D} \in \R^{\#\Mm} \otimes \R^{\#\Mm}$. 
The sub-vectors at the right hand side have thus the following sizes: ${\bf b}_{1}\in\R^{\#\Vv}$ and ${\bf b}_{2}\in\R^{\#\Mm}$.
The dependence of the sub-matrices and the sub-vectors w.r.t. $n$ and $i$ was not highlighted here 
for the ease of notations. 
A main characteristic of this block structure is that the block ${\bf D}$ is a non singular diagonal matrix, 
thus the Schur complement can be easily computed without fill-in to
eliminate the variation of the cell unknowns. This allow to reduce the linear system 
to the variation of the vertices unknowns as is usual when using the VAG scheme. 
The resulting linear system that we have to solve in order to obtain the variation of the vertices unknowns is given by,
\be
({\bf A}-{\bf B D}^{-1}{\bf C} ) (\d u_\s^{n,i})_{\s\in\Vv} = {\bf b}_1 -{\bf B D}^{-1} {\bf b}_2,
\label{eq:schur_complement}
\ee
and then the variation of the cell unknowns can be easily deduced by the matrix-vector product below,
$$
(\d u_\k^{n,i})_{\k\in\Mm} = {\bf D}^{-1}({\bf b}_2-{\bf C} (\d u_\s^{n,i})_{\s\in\Vv} ).
$$
As for the initial step, we have to take into account the singular case at each Newton-Raphson iteration by,
$$
 \u^{n,i+1} \ = \ \max( \u^{n,i} + \d\u^{n,i} \ , \ \eps ).
$$
\item If the Newton-Raphson algorithm  stops before the  maximum number of iterations is reached, 
the next time iteration is proceeded by increasing the time step. Otherwise, the current time iteration is recomputed
by reducing the time step. 
The time step is bounded by a maximum value denoted ${\dt_{\max}}$.
A maximum number of convergence failures of the nonlinear methods is imposed
in order to abort the simulation in case of a non-convergence.  
\end{enumerate}

\subsection{Definitions of the test-cases and numerical results}\label{ssec:res}

We present here four $2$-dimensional numerical cases where $\Omega$ is the unit square.
The space domain is discretized by using meshes obtained from a benchmark on anisotropic 
diffusion problem~\cite{bench_FVCA5}.
In the following numerical experiments, the tensor is defined by  
$$\L = \left( \begin{array}{cc} 
l_x & 0 \\ 0 & l_y \end{array} \right)
$$
where $l_x$ and $l_y$ are chosen constant in $\O$, and
the exterior potential is defined by $V(\x)=-{\mathbf g}\cdot\x$ for all $\x\in\O$ where
${\mathbf g}=(g,0)^t$ with $g\in\R_+$. 
The weights of the VAG scheme defined in \eqref{eq:aks} are defined 
by $\a_{\k,\s} = \frac{0.1}{\sharp \Vv_\k}$ for all $\k \in \Mm$, ${\s\in\Vv_\k}$.
We refer to~\cite{EGHM12,BGGM15} for a discussion on the mass distribution 
for heterogeneous problems.
The linear solver applied to solve \eqref{eq:schur_complement}  is a home-made direct 
solver using a gaussian elimination with an optimal reordering.

In some of the test cases presented hereafter, Dirichlet boundary conditions are considered instead of no-flux boundary conditions. 
This allows to construct analytical solutions to the continuous problem and to perform a 
convergence study. Even though it has not been done in this paper, the convergence proof for 
the scheme can still be carried out when (sufficiently regular) Dirichlet boundary conditions are considered. 
However, the gradient flow structure is destroyed and the free energy might not be decreasing 
anymore in this case.

Errors are computed in the classical discrete $L^2(Q_\tf)$, $L^1(Q_\tf)$ and $L^{\infty}(Q_\tf)$ norms. 
All the results are presented in the Tables below. Each table provides the mesh size $h$, 
the initial and maximum time steps, the discrete errors, their associated convergence rate
and the minimum value of the discrete solution. It also contains the total (integrated over time) 
number of Newton-Raphson iterations  needed to compute the solution as a indicator of the 
cost of the numerical method. 

\subsubsection{Test 1: Linear Fokker-Planck equation with no-flux boundary condition}

This first test case matches with the problem defined by \eqref{eq:1} with 
the functions $\eta(u)=u$ on $\R_+$ and $p(u)=\log(u)$, and with the gravitational 
potential $V(x,y) = - g x$ where the constant $g$ is fixed to $1$.
Setting ${\mathbf g}  = (g,0)^T$, the problem \eqref{eq:1} leads to the linear equation
\be
\p_t u - \div\left( \L \ ( \grad u - u {\mathbf g}  ) \ \right) = 0 \;\; \text{ in } Q_\tf.
\label{eq:ifct2}
\ee
We compare the results obtained with the nonlinear scheme~\eqref{eq:syst} with those obtained using
the definition of the fluxes
\begin{multline}\label{eq:Fks2}
\widetilde{F}_{\k,\s}(\u^n) = \sum_{\s'\in\Vv_\k} a^\k_{\s,\s'} (u_\k^n - u_{\s'}^n) +
\frac{u_\k^n+u_\s^n}2  \sum_{\s'\in\Vv_\k} a^\k_{\s,\s'}(V_\k -  V_{\s'}), \\
\quad \forall \k \in \Mm, \; \forall \s \in \Vv_\k
\end{multline}
instead of \eqref{eq:Fks}.
The resulting scheme is called the {\em linear scheme}.
The numerical convergence of both schemes has been compared on the following analytical solution 
(built from a $1$-dimensional case):
\begin{multline}\label{eq:sol2}
\widetilde{u}(x,y,t) =  
\exp\left(- \alpha t + \frac g 2 x\right) \left(\pi \cos( \pi x) + \frac g 2 \sin(\pi x)\right) + 
 \pi \exp\left(g(x-\frac 1 2)\right), \\ \quad \forall ((x,y),t)\in\Omega\times(0,\tf),
\end{multline}
where $\alpha =  l_x \ (\pi^2 + \frac {g^2} 4)$.
This function satisfies the homogeneous Neumann boundary condition and 
the property $\widetilde{u}(x,y,t)>0$ for all $(x,y,t)\in Q_\tf$.

In order to make a numerical convergence study, we have used a family of triangular meshes. 
These triangle meshes show no symmetry which could artificially increase the convergence rate. 
This family of meshes is built  through the same pattern, which is reproduced at different scales: 
the first (coarsest) mesh and the third mesh are shown by Figure~\ref{fig:meshGrossier}.
Although the analytical solution is one-dimensional and the permeability tensor is diagonal, 
the discrete problem is really 2D because of the non-structured grids. 
The 2D aspect of the problem is amplified by the choice of a stronger diffusion 
in the transversal direction.
\begin{figure}[h!]
\includegraphics[width=4cm]{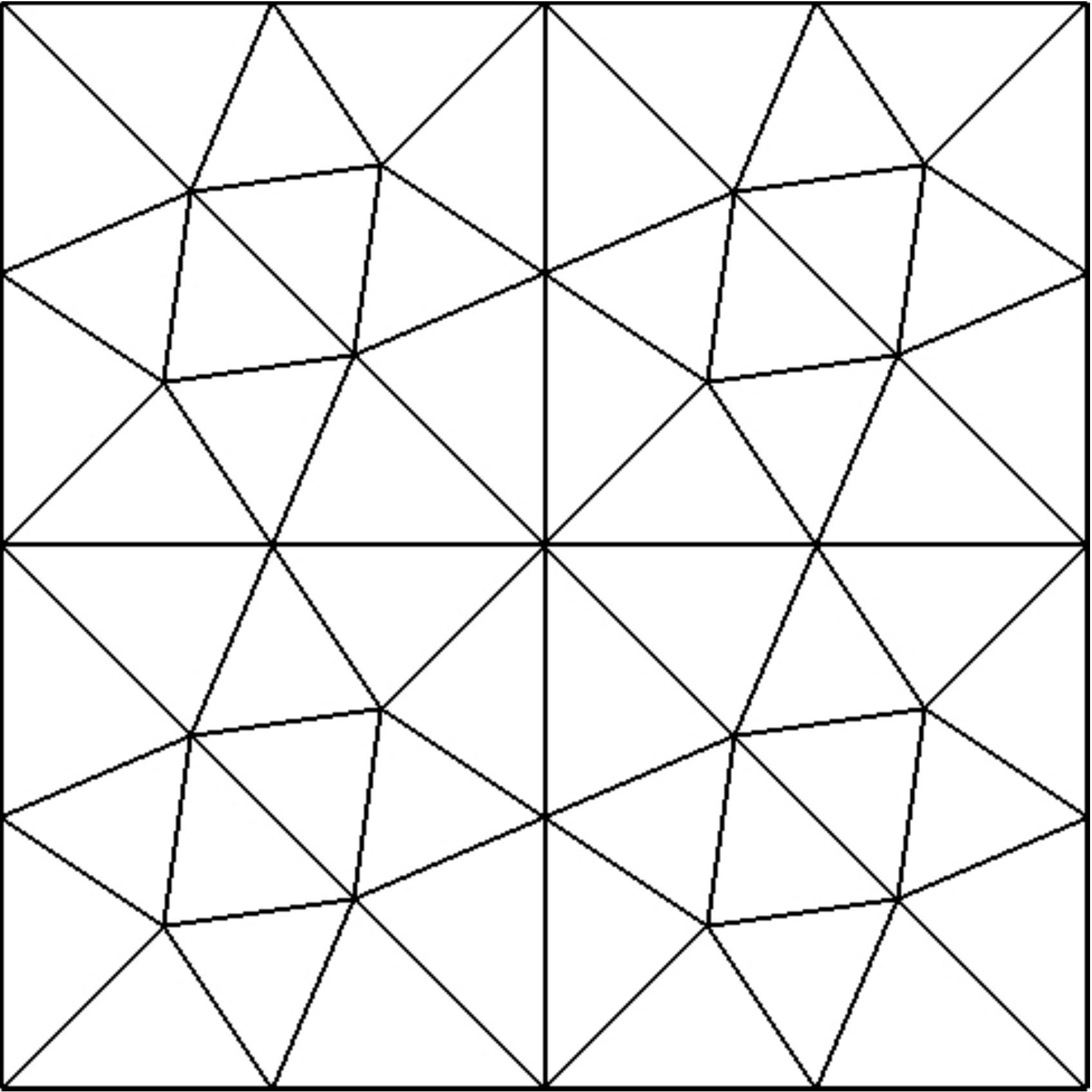} \hspace*{1cm}
\includegraphics[width=4cm]{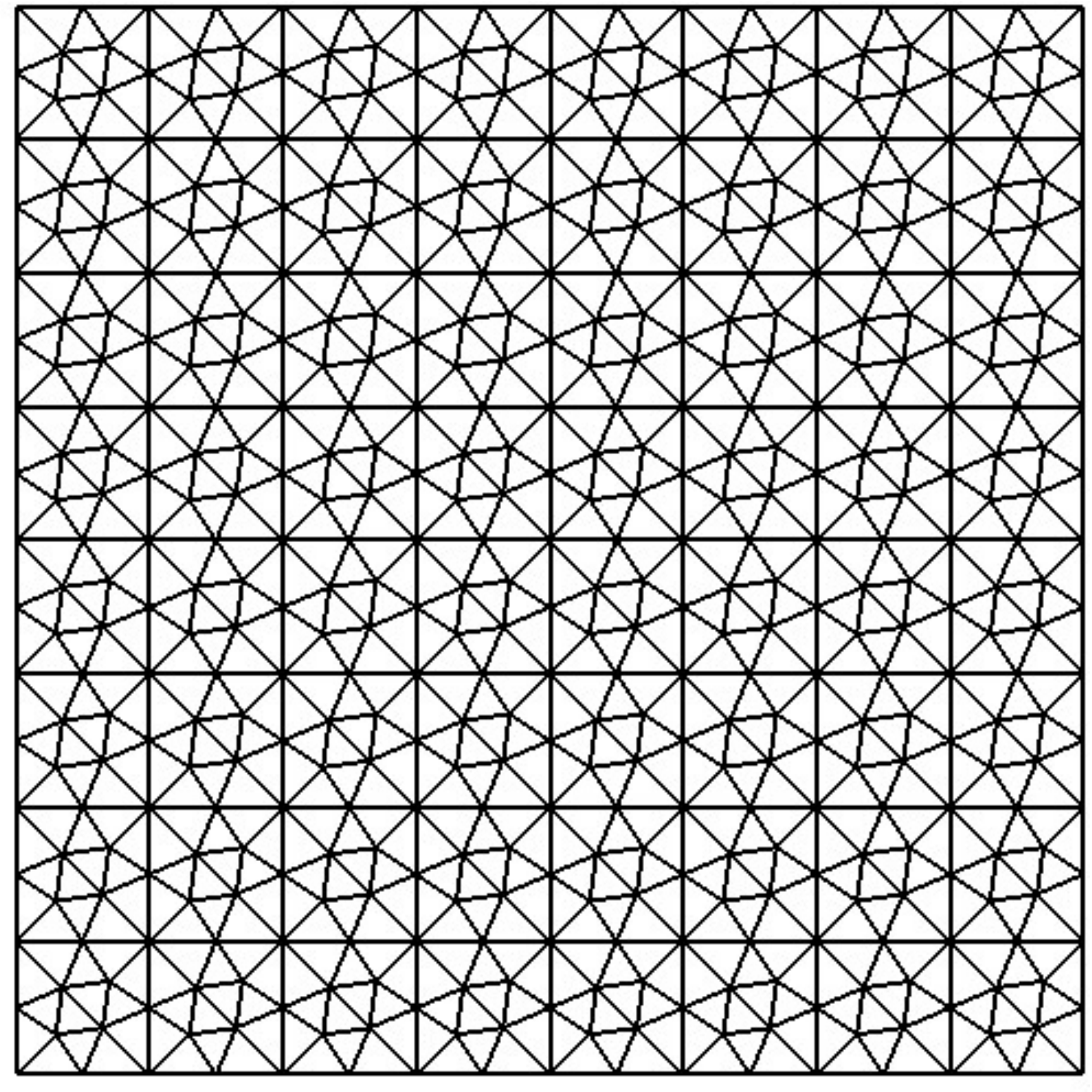}
\caption{First and third mesh used in the numerical examples.}
\label{fig:meshGrossier}
\end{figure}
For the tests on triangular grids, the final time $\tf$ has been chosen to $0.25$ and  
an anisotropic tensor has been consider: $l_x=1$ and $l_y=10$.

\begin{table}[h!]
\resizebox{\textwidth}{!}{
 \begin{tabular}{|cc|cc|cc|cc|cc|c|c|}
\hline
$h$  & $\#\Vv$ & $\dt_{\textrm{init}}$ &  $\dt_{\max}$  & $\textrm{err}_{L^2}$ & rate &
$\textrm{err}_{L^1}$ & rate &  $\textrm{err}_{L^\infty}$ & rate &
$u_{\textrm{min}}$ & \#Newton  \\
\hline
 0.250 & 37   & 0.001   & 0.01024  &  0.196E-01 &    -   &  0.754E-02 &    -   &  0.216E+00 &    -   &  0.022 & 204\\
 0.125 & 129  & 0.00025 & 0.00256  &  0.512E-02 &  1.935 &  0.178E-02 &  2.084 &  0.600E-01 &  1.848 &  0.004 & 456\\
 0.063 & 481  & 0.00006 & 0.00064  &  0.129E-02 &  1.986 &  0.430E-03 &  2.050 &  0.157E-01 &  1.931 &  0.001 &1307\\
 0.031 & 1857 & 0.00002 & 0.00016  &  0.324E-03 &  1.997 &  0.107E-03 &  2.007 &  0.473E-02 &  1.734 &  0.000 &3935\\
\hline   
\end{tabular}
}
\caption{Triangles. Nonlinear scheme~\eqref{eq:syst}.}
\label{tab:tri-lx1ly10-VAG}
\end{table}

\begin{table}[h!]
\resizebox{\textwidth}{!}{
 \begin{tabular}{|cc|cc|cc|cc|cc|c|c|}
\hline
$h$  & $\#\Vv$ & $\dt_{\textrm{init}}$ &  $\dt_{\max}$  & $\textrm{err}_{L^2}$ & rate &
$\textrm{err}_{L^1}$ & rate &  $\textrm{err}_{L^\infty}$ & rate &
$u_{\textrm{min}}$ & \#Newton  \\
\hline
 0.250 & 37   & 0.001   & 0.01024 &  0.187E-01 &    -   &  0.708E-02 &    -   &  0.225E+00 &    -   & -0.155 &33\\
 0.125 & 129  & 0.00025 & 0.00256 &  0.469E-02 &  1.993 &  0.165E-02 &  2.100 &  0.786E-01 &  1.515 & -0.046 &106\\
 0.063 & 481  & 0.00006 & 0.00064 &  0.117E-02 &  1.999 &  0.406E-03 &  2.023 &  0.228E-01 &  1.784 & -0.012 &400\\
 0.031 & 1857 & 0.00002 & 0.00016 &  0.293E-03 &  1.999 &  0.102E-03 &  1.999 &  0.611E-02 &  1.901 & -0.003 &1570\\
\hline   
\end{tabular}
}
\caption{Triangles. Linear scheme, fluxes defined by~\eqref{eq:Fks2}.}
\label{tab:tri-lx1ly10-linear}
\end{table}

Let us first observe that
the numerical order of convergence is close to 2 for both schemes.
The nonlinear scheme is of course more expensive than the linear one but 
it preserves the positivity of the solution, unlike the linear scheme. This numerical behavior
is a verification of the theoretical result mentioned in the Lemma \ref{lem:harnack}. 
In the linear case, the number of Newton-Raphson iterations is equal to the number of time steps. 
On the finest mesh, the ratio of the number of Newton iterations between the nonlinear and the linear schemes 
is about $2.5$. It seems to be acceptable in cases where preserving the positivity is mandatory.

Now, in order to exhibit the ability of the VAG scheme to deal with general meshes, the same test case has been applied on a so-called Kershaw grid (cf. Figure \ref{fig:MeshKer}). Instead of an irrelevant numerical convergence study 
--- it is difficult to define a refinement factor for this type of grids ---, we aim to give an evidence that the scheme is 
free energy diminishing (thus positivity preserving) and that the long-time behavior of the continuous problem is preserved at the discrete level by the scheme.
The final time $\tf$ has been chosen to $250$ and  
an anisotropic tensor has been consider: $l_x=0.001$ and $l_y=1$.
The results are listed on the Table \ref{tab:ker-linear} and
we can check again that the nonlinear scheme is positivity preserving despite the irregular grid.

\begin{table}[h!]
\resizebox{\textwidth}{!}{
 \begin{tabular}{c|c|c|c|c|c|c|c|c|}
 & $\#\Vv$ & $\dt_{\textrm{init}}$ &  $\dt_{\max}$  & $\textrm{err}_{L^2}$ & 
$\textrm{err}_{L^1}$ & $\textrm{err}_{L^\infty}$ & $u_{\textrm{min}}$ & \#Newton  \\ 
\hline \hline
nonlinear scheme & 324   & 2.E-04   & 1 &  3.99E-02 &  0.404 &  1.42E-02 & 8.92E-04 &1148\\ 
\hline \hline
linear scheme & 324  & 2.E-04 & 1 &  3.47E-02 &   0.377 & 2.01E-02 & -1.49E-02 &259\\
\hline   
\end{tabular}
}
\caption{Kershaw grid. Nonlinear and linear scheme, with an anisotropic tensor.}
\label{tab:ker-linear}
\end{table}

Denoting by $w= \pi \exp\left(g(x-\frac 1 2)\right)$ the long-time asymptotic of $\wt u$ defined 
by~\eqref{eq:sol2}, then the relative entropy of a function $u:\O\to\R_+$ w.r.t. $w$ is defined by 
\be\label{eq:entrel}
E^w(u)= \int_{\O} \left( u \log\left( \frac u w \right) - u  + w \right) \d\x. 
\ee
It is simple to verify that 
$$
\frac{\d}{\d t}E^w(u) \frac{\d}{\d t} E(u)=0.
$$
Therefore the decay of the free energy is equivalent to the decay of the relative entropy.
Note that $E^w$ is undefined (or is set to $+\infty$) if $u<0$ on a positive measure set. 
It is well known (see e.g.~\cite{CJMTU01,Lisini09}) that 
the relative entropy $E^w(\wt u(\cdot,t))$ converges 
exponentially fast towards $0$ as $t$ tends to $+\infty$. 
Exponential convergence 
results in the discrete setting were proved for instance in~\cite{CHJS14,CH14_FVCA7, BCCH16_HAL} in the case of 
a monotone discretization of dissipative equation (see also \cite{Bessemoulin-Chatard_PhD}). 
In order to check this asymptotic behavior 
at the discrete level, we introduce the discrete relative entropy $E_\Dd^w(\u)$ defined for all nonnegative 
$\u = {(u_\k,u_\s)}_{\k,\s} \in W_\Dd$ (i.e., such that $u_\b \ge 0$ for all $\b \in \Mm\cup\Vv$) by 
\be\label{eq:entrel_D}
E^w_\Dd(\u) = \sum_{\b \in \Mm \cup \Vv}m_\b \left( u_\b \log\left(\frac{u_\b}{w(\x_\b)}\right) - u_\b + w(\x_\b)\right).
\ee
The exponential convergence towards equilibrium is recovered as it appears clearly on Figure~\ref{fig:MeshKer}.
\begin{figure}[h!]
\includegraphics[width=4.cm]{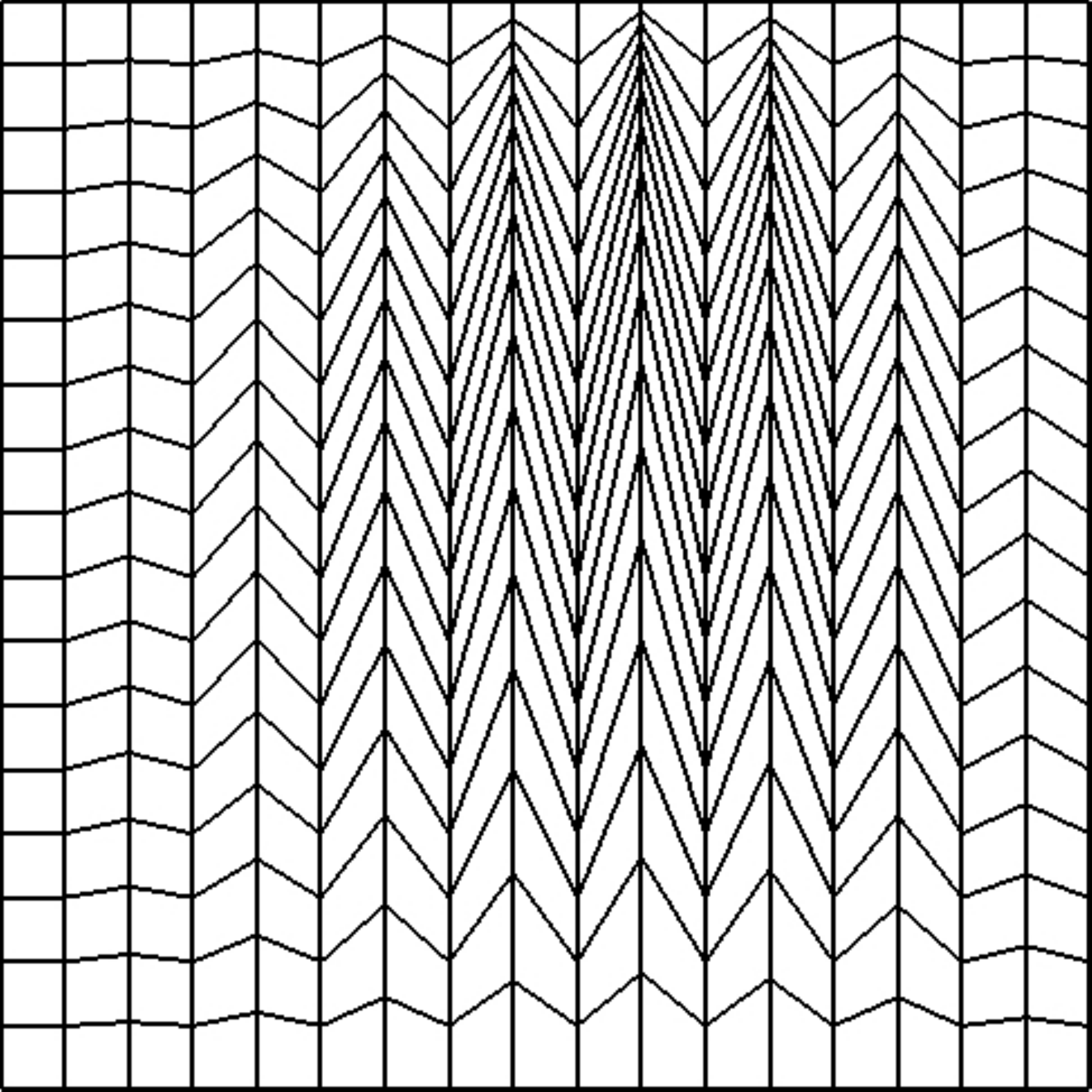} 
\hspace{1cm}
\includegraphics[width=6.cm]{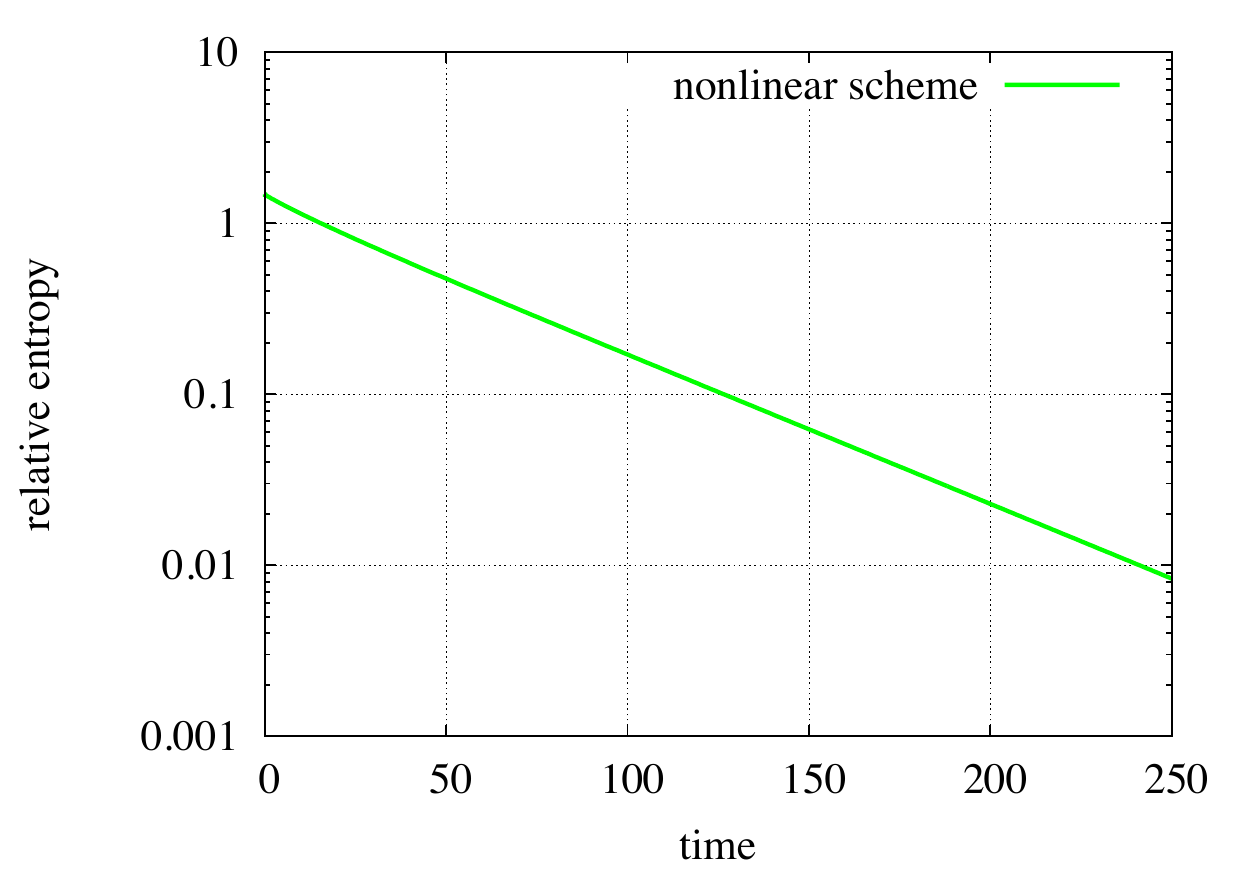}
\caption{Left: Kershaw mesh. Right: Evolution of the relative entropy $t \mapsto E^w_\Dd(\u(\cdot,t))$ 
on a logarithmic scale in function of time.}
\label{fig:MeshKer}
\end{figure}

\subsubsection{Test 2: Porous medium equation with Dirichlet boundary condition}

In this section, we apply our scheme to the case of the anisotropic porous medium equation 
\be
\p_t u - \div\left( \L \ \grad (u^2 ) \ \right) = 0 \;\; \text{ in } Q_\tf
\label{eq:ifct4}
\ee
for different choices of functions $\eta$ and $p$ with $\int^u \eta(a) p'(a) \d a = u^2$, 
namely 
\begin{enumerate}
\item[{\bf (a)}] $\eta(u) = 2 u^2$ and $p(u) = \log(u)$,
\item[{\bf (b)}] $\eta(u) =  2 u$ and $p(u) = u$, 
\item[{\bf (c)}] $\eta(u) = 1$ and $p(u) = u^2$.
\end{enumerate}

For the choice {\bf (a)}, the function $\eta$ is strictly convex. 
Therefore, the rigorous gradient flow structure of the problem 
corresponding to this choice of mobility function $\eta$ is unclear~\cite{DNS09}. 
The pressure function $p$ is singular near $0$, hence Lemma~\ref{lem:harnack} 
implies that the corresponding scheme is positivity preserving.
 
The choice {\bf (b)} with a  linear mobility corresponds to the now classical setting highlighted in~\cite{Otto01, Lisini09}.

Finally, the choice {\bf (c)} corresponds to the usual approach for discretizing 
the porous medium equation. The corresponding scheme enters into the framework of~\cite{EFGGH13}, where 
its convergence is proved. 

The problem is closed here with Dirichlet boundary conditions (destroying by the way the gradient flow structure 
but not the convergence of the scheme).
\medskip

\noindent
\emph{Comparison with a one-dimensional analytical solution.}
The numerical convergence of the three schemes has first been compared 
thanks to the 
following analytical solution (again built in $1$-dimension),
\be\label{eq:sol4}
\widehat{u}(x,y,t) =  
\max \left( 2 l_x t - x \ ,\ 0 \right), \quad \forall ((x,y),t)\in Q_\tf.
\ee
Note that \eqref{eq:sol4} is the unique weak solution corresponding to the initial condition
$u_0(x,y) = \widehat{u}(x,y,0)$
and to the Dirichlet boundary condition $u_D(x,y,t) = \widehat{u}(x,y,t)$ on $\partial\Omega\times(0,\tf)$.
Our numerical convergence study makes use of the family of triangular meshes 
already used for Test 1. 
Once again, the final time $\tf$ is fixed to $0.25$ and an anisotropic tensor is given by $l_x=1$ and $l_y=10$.

\begin{table}[h!]
\resizebox{\textwidth}{!}{
 \begin{tabular}{|cc|cc|cc|cc|cc|c|c|}
\hline
$h$  & $\#\Vv$ & $\dt_{\textrm{init}}$ &  $\dt_{\max}$  & $\textrm{err}_{L^2}$ & rate &
$\textrm{err}_{L^1}$ & rate &  $\textrm{err}_{L^\infty}$ & rate &
$u_{\textrm{min}}$ & \#Newton  \\
\hline
 0.306 & 37   & 0.001   & 0.01024  & 0.523E-02 &    -   &  0.997E-03 &    -   &  0.105E+00 &    -   &  0.000  &479\\
 0.153 & 129  & 0.00025 & 0.00256  & 0.205E-02 &  1.352 &  0.344E-03 &  1.535 &  0.522E-01 &  1.013 &  0.000  &1143\\
 0.077 & 481  & 0.00006 & 0.00064  & 0.898E-03 &  1.190 &  0.123E-03 &  1.490 &  0.259E-01 &  1.012 &  0.000  &2218\\
 0.038 & 1857 & 0.00002 & 0.00016  & 0.380E-03 &  1.240 &  0.417E-04 &  1.554 &  0.128E-01 &  1.012 &  0.000  &5652\\
\hline   
\end{tabular}
}
\caption{Test 2: Choice {\bf (a)} of mobility and pressure functions, convergence towards~\eqref{eq:sol4}.}
\label{tab:t2-lx1ly100-VAG}
\end{table}

\begin{table}[h!]
\resizebox{\textwidth}{!}{
\begin{tabular}{|cc|cc|cc|cc|cc|c|c|}
\hline
$h$  & $\#\Vv$ & $\dt_{\textrm{init}}$ &  $\dt_{\max}$  & $\textrm{err}_{L^2}$ & rate &
$\textrm{err}_{L^1}$ & rate &  $\textrm{err}_{L^\infty}$ & rate &
$u_{\textrm{min}}$ & \#Newton  \\
\hline
0.306 & 37   & 0.001   & 0.01024  &  0.769E-02 &    -   &  0.210E-02 &    -   &  0.645E-01 &    -   & -0.032  & 138\\
0.153 & 129  & 0.00025 & 0.00256  &  0.263E-02 &  1.546 &  0.613E-03 &  1.775 &  0.326E-01 &  0.983 & -0.017  & 383\\
0.077 & 481  & 0.00006 & 0.00064  &  0.897E-03 &  1.554 &  0.173E-03 &  1.823 &  0.164E-01 &  0.996 & -0.009  & 1246\\
0.038 & 1857 & 0.00002 & 0.00016  &  0.306E-03 &  1.551 &  0.481E-04 &  1.849 &  0.821E-02 &  0.996 & -0.005  & 4234\\
\hline
\end{tabular}
}
\caption{Test 2: Choice {\bf (b)} of mobility and pressure functions, convergence towards~\eqref{eq:sol4}.}
\label{tab:t2-lx1ly100-linear-b}
\end{table}

\begin{table}[h!]
\resizebox{\textwidth}{!}{
 \begin{tabular}{|cc|cc|cc|cc|cc|c|c|}
\hline
$h$  & $\#\Vv$ & $\dt_{\textrm{init}}$ &  $\dt_{\max}$  & $\textrm{err}_{L^2}$ & rate &
$\textrm{err}_{L^1}$ & rate &  $\textrm{err}_{L^\infty}$ & rate &
$u_{\textrm{min}}$ & \#Newton  \\
\hline
 0.306 & 37   & 0.001   & 0.01024  & 0.116E-01 &    -   &  0.371E-02 &    -   &  0.764E-01 &    -   & -0.065  &148\\
 0.153 & 129  & 0.00025 & 0.00256  & 0.423E-02 &  1.461 &  0.116E-02 &  1.672 &  0.388E-01 &  0.977 & -0.039  &436\\
 0.077 & 481  & 0.00006 & 0.00064  & 0.149E-02 &  1.501 &  0.337E-03 &  1.788 &  0.233E-01 &  0.737 & -0.021  &1438 \\
 0.038 & 1857 & 0.00002 & 0.00016  & 0.524E-03 &  1.513 &  0.932E-04 &  1.856 &  0.129E-01 &  0.856 & -0.010  &4912 \\
\hline   
\end{tabular}
}
\caption{Test 2: Choice {\bf (c)} of mobility and pressure functions, convergence towards~\eqref{eq:sol4}.}
\label{tab:t2-lx1ly100-linear}
\end{table}

We  observe in Tables~\ref{tab:t2-lx1ly100-VAG}--\ref{tab:t2-lx1ly100-linear} 
that second order convergence is destroyed for all the three schemes because 
of the lack of regularity of the exact solution. 
As expected, the discrete solution corresponding to the choice {\bf (a)} remains positive
while the discrete solutions to the schemes corresponding to the choices {\bf (b)} and {\bf (c)} 
suffer of undershoots.
The choice {\bf (b)} appears to be both cheaper and more accurate 
than the choice {\bf (c)}, and the amplitude of the undershoots is smaller. 

\begin{figure}[h!]
 \includegraphics[width=6.cm]{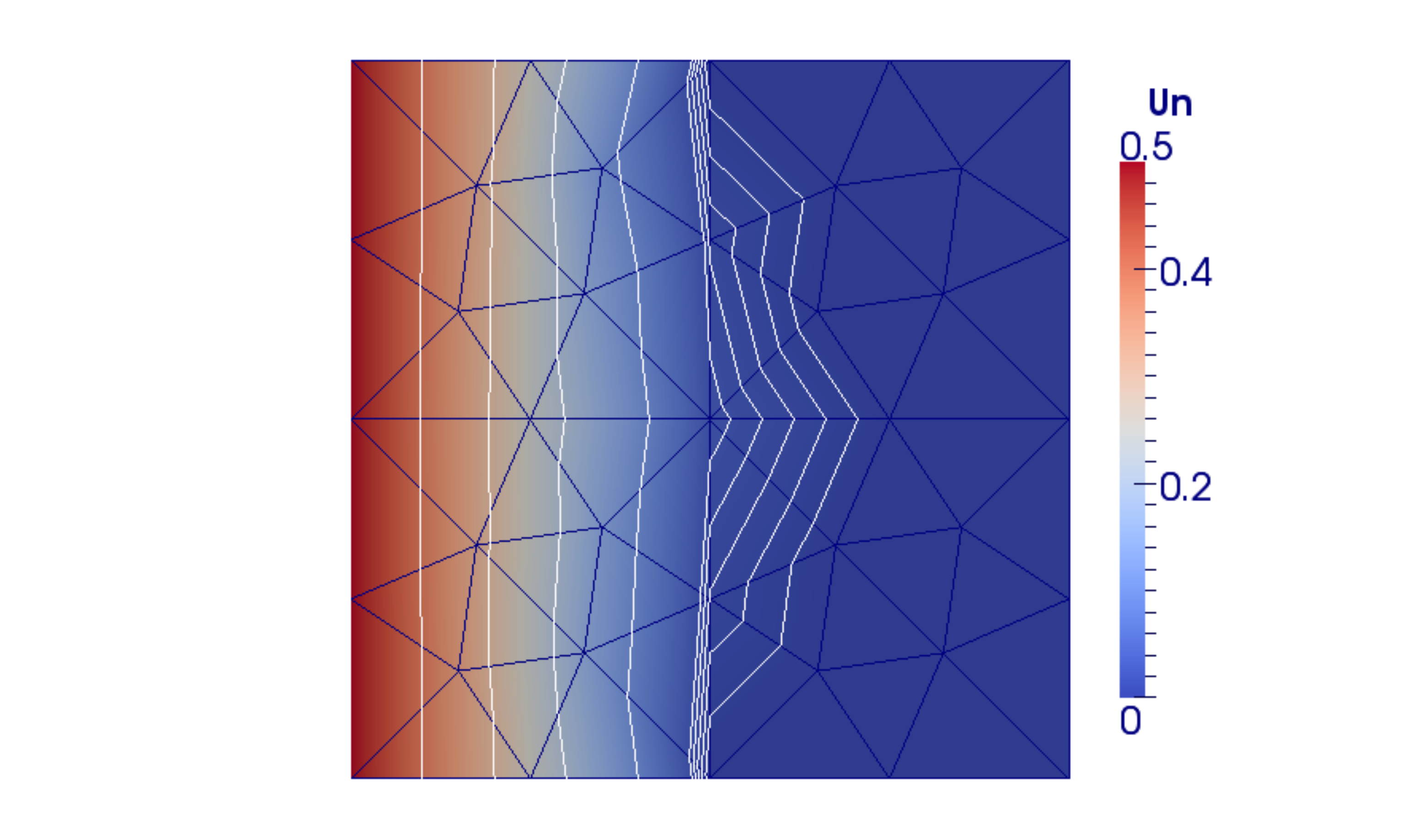} 
 \includegraphics[width=6.cm]{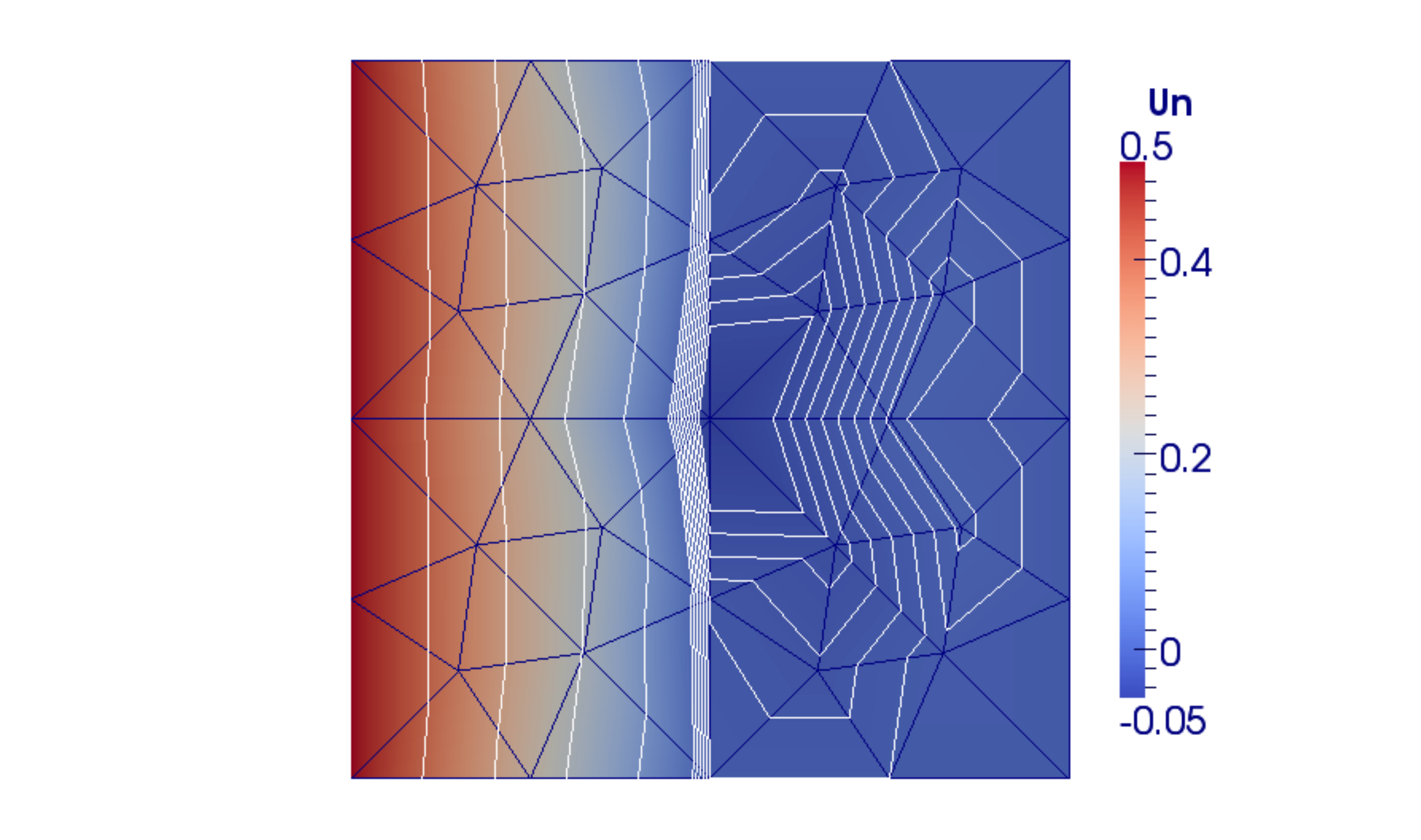}
\caption{Test 2. Coarsest grid. Discrete unknown $(u_{\s})_{\s\in \Vv_\k}$ and its iso-values. Choice {\bf (a)} (left) and {\bf (c)} (right) for $\eta$ and $p$.}
\label{fig:paraDege}
\end{figure}

Figure \ref{fig:paraDege} illustrates the iso-values of the piecewise affine functions defined on the triangular 
mesh $\Mm$ reconstructed thanks to its nodal values $\left(u_\s^n\right)_{\s\in\Vv}$ for the coarsest triangle grid 
at the final time $\tf$.
For the choice {\bf (a)} of the mobility and the pressure (left), the iso-values are chosen from $0$ to $0.025$ by step of $0.05$ and then from $0.1$ to $0.5$ by step of $0.1$. For the choice {\bf (c)} of the mobility and the pressure (right), the iso-values are taken from $-0.025$ to $0.025$ by step of $0.05$ and also from $0.1$ to $0.5$ by step of $0.1$.

\medskip

\noindent
\emph{Comparison with a two-dimensional analytical solution.}
We test our approach on the two-dimensional analytical solution 
\be\label{eq:sol-anal-2D}
\widehat{u}(x,y,t) = \dfrac{ \alpha (x-0.5)^2 + \beta (y-0.5)^2 }{1-t} \qquad  \mathrm{in}\;Q_\tf
\ee
of the anisotropic porous medium equation~\eqref{eq:ifct4}, where $\tf$ has been set to 0.25. 
The permeability tensor is still assumed to be diagonal with $l_x = 0.1$ and $l_y = 10$, 
and we have set $\alpha = \frac{1}{16 l_x}$ and $\beta = \frac{1}{16 l_y}$.
The problem is closed with Dirichlet boundary conditions and the initial condition 
corresponding to~\eqref{eq:sol-anal-2D}. The results are gathered in 
Tables~\ref{tab:t4-lx01ly10-VAG}--\ref{tab:t2-lx01ly10-linear}.

\begin{table}[h]
\resizebox{\textwidth}{!}{
 \begin{tabular}{|cc|cc|cc|cc|cc|c|c|}
\hline
$h$  & $\#\Vv$ & $\dt_{\textrm{init}}$ &  $\dt_{\max}$  & $\textrm{err}_{L^2}$ & rate &
$\textrm{err}_{L^1}$ & rate &  $\textrm{err}_{L^\infty}$ & rate &
$u_{\textrm{min}}$ & \#Newton  \\
\hline
 0.306 & 37   & 0.001   & 0.01024  &  0.270E-02 &    -   &  0.114E-02 &    -   &  0.120E-01 &    -   &  0.000 &  89\\
 0.153 & 129  & 0.00025 & 0.00256  &  0.942E-03 &  1.517 &  0.381E-03 &  1.578 &  0.473E-02 &  1.341 &  0.000 &  223\\
 0.077 & 481  & 0.00006 & 0.00064  &  0.293E-03 &  1.688 &  0.119E-03 &  1.686 &  0.163E-02 &  1.534 &  0.000 &  805\\
 0.038 & 1857 & 0.00002 & 0.00016  &  0.802E-04 &  1.868 &  0.334E-04 &  1.828 &  0.461E-03 &  1.826 &  0.000 &  3142\\
\hline   
\end{tabular}
}
\caption{Test 2: Choice {\bf (a)} of mobility and pressure functions, convergence towards~\eqref{eq:sol-anal-2D}.}
\label{tab:t4-lx01ly10-VAG}
\end{table}

\begin{table}[h!]
\resizebox{\textwidth}{!}{
\begin{tabular}{|cc|cc|cc|cc|cc|c|c|}
\hline
$h$  & $\#\Vv$ & $\dt_{\textrm{init}}$ &  $\dt_{\max}$  & $\textrm{err}_{L^2}$ & rate &
$\textrm{err}_{L^1}$ & rate &  $\textrm{err}_{L^\infty}$ & rate &
$u_{\textrm{min}}$ & \#Newton  \\
\hline
0.306 & 37   & 0.001   & 0.01024  & 0.645E-02 &    -   &  0.271E-02 &    -   &  0.271E-01 &    -   & -0.027 & 99\\
0.153 & 129  & 0.00025 & 0.00256  & 0.202E-02 &  1.676 &  0.828E-03 &  1.709 &  0.992E-02 &  1.447 & -0.008 & 237\\
0.077 & 481  & 0.00006 & 0.00064  & 0.604E-03 &  1.742 &  0.246E-03 &  1.753 &  0.341E-02 &  1.540 & -0.002 & 801\\
0.038 & 1857 & 0.00002 & 0.00016  & 0.161E-03 &  1.905 &  0.667E-04 &  1.882 &  0.966E-03 &  1.821 &  0.000 & 3140\\
\hline
\end{tabular}
}
\caption{Test 2: Choice {\bf (b)} of mobility and pressure functions, convergence towards~\eqref{eq:sol-anal-2D}.}
\label{tab:t4-lx01ly10-VAG-b}
\end{table}

\begin{table}[h]
\resizebox{\textwidth}{!}{
 \begin{tabular}{|cc|cc|cc|cc|cc|c|c|}
\hline
$h$  & $\#\Vv$ & $\dt_{\textrm{init}}$ &  $\dt_{\max}$  & $\textrm{err}_{L^2}$ & rate &
$\textrm{err}_{L^1}$ & rate &  $\textrm{err}_{L^\infty}$ & rate &
$u_{\textrm{min}}$ & \#Newton  \\
\hline
 0.306 & 37   & 0.001   & 0.01024  &  0.102E-01 &    -   &  0.448E-02 &    -   &  0.575E-01 &    -   & -0.046 &  128\\
 0.153 & 129  & 0.00025 & 0.00256  &  0.321E-02 &  1.661 &  0.134E-02 &  1.739 &  0.194E-01 &  1.569 & -0.011 &  250\\
 0.077 & 481  & 0.00006 & 0.00064  &  0.933E-03 &  1.783 &  0.383E-03 &  1.811 &  0.553E-02 &  1.808 & -0.003 &  810\\
 0.038 & 1857 & 0.00002 & 0.00016  &  0.244E-03 &  1.933 &  0.101E-03 &  1.927 &  0.147E-02 &  1.914 & -0.001 &  3140\\
\hline   
\end{tabular}
}
\caption{Test 2. Choice {\bf (c)} of mobility and pressure functions, convergence towards~\eqref{eq:sol-anal-2D}. } 
\label{tab:t2-lx01ly10-linear}
\end{table}

As expected, the choice {\bf (a)} leads to a positivity preserving scheme, contrarily to 
the choices {\bf (b)} and {\bf (c)}. Moreover, the scheme  {\bf (a)} is the most accurate 
and does not come with an additional cost. 

\subsubsection{Test 3: Porous medium equation with drift}

In this third test case, we have set $\eta(u)=u$ on $\R_+$ and $p(u)=2u$ and $g = 1$,
leading to the degenerate problem
\be
\p_t u - \div\left( \L \ ( \grad (u^2) - u {\mathbf g}  ) \ \right) = 0 \;\; \text{ in } Q_\tf.
\label{eq:pmedrift}
\ee
The problem is endowed with Dirichlet boundary conditions.
The tensor $\L$ is chosen to be diagonal with $l_x = 1$ and $l_y = 100$.
We compare the results obtained by \eqref{eq:syst} with those obtained using,
instead of \eqref{eq:Fks}, this particular definition of the fluxes
\begin{multline}\label{eq:Fks-pmedrift}
\widehat{F}_{\k,\s}(\u^n) = \sum_{\s'\in\Vv_\k} a^\k_{\s,\s'} ((u_\k^n)^2 - (u_{\s'}^n)^2) +
\frac{u_\k^n+u_\s^n}2
  \sum_{\s'\in\Vv_\k} a^\k_{\s,\s'}(V_\k -  V_{\s'}), \\
\quad \forall \k \in \Mm, \; \forall \s \in \Vv_\k.
\end{multline}
The resulting scheme is called the {\em quasilinear scheme}.
The numerical convergence of both schemes has been compared 
on the sequence of triangular meshes already used in the previous tests, 
thanks to the following analytical solution (again built in $1$-dimension),
\be\label{eq:sol-pmedrift}
\widehat{u}(x,y,t) =  
\max \left( \beta t - x \ ,\ 0 \right), \quad \forall ((x,y),t)\in\Omega\times(0,\tf),
\ee
with $\beta= l_x ( 2 + g)$. The profile \eqref{eq:sol-pmedrift} is the unique weak solution 
corresponding to the initial condition $u_0(x,y) = \widehat{u}(x,y,0)$ in $\O$ and 
the Dirichlet boundary condition $u_D(x,y,t) = \widehat{u}(x,y,t)$ on $\p\O\times(0,\tf)$.

\begin{table}[h!]
\resizebox{\textwidth}{!}{
 \begin{tabular}{|cc|cc|cc|cc|cc|c|c|}
\hline
$h$  & $\#\Vv$ & $\dt_{\textrm{init}}$ &  $\dt_{\max}$  & $\textrm{err}_{L^2}$ & rate &
$\textrm{err}_{L^1}$ & rate &  $\textrm{err}_{L^\infty}$ & rate &
$u_{\textrm{min}}$ & \#Newton  \\
\hline
 0.306 & 37   & 0.001   & 0.01024  &  0.130E-01 &    -   &  0.423E-02 &    -   &  0.890E-01 &    -   & -0.046 &187\\
 0.153 & 129  & 0.00025 & 0.00256  &  0.495E-02 &  1.398 &  0.133E-02 &  1.675 &  0.496E-01 &  0.843 & -0.032 &552\\
 0.077 & 481  & 0.00006 & 0.00064  &  0.184E-02 &  1.428 &  0.397E-03 &  1.741 &  0.283E-01 &  0.808 & -0.017 &1609\\
 0.038 & 1857 & 0.00002 & 0.00016  &  0.660E-03 &  1.479 &  0.116E-03 &  1.771 &  0.145E-01 &  0.970 & -0.009 &5586\\
\hline   
\end{tabular}
}
\caption{Test 3. Nonlinear scheme  \eqref{eq:syst}.}
\label{tab:t3-lx1ly100-VAG}
\end{table}

\begin{table}[h!]
\resizebox{\textwidth}{!}{
 \begin{tabular}{|cc|cc|cc|cc|cc|c|c|}
\hline
$h$  & $\#\Vv$ & $\dt_{\textrm{init}}$ &  $\dt_{\max}$  & $\textrm{err}_{L^2}$ & rate &
$\textrm{err}_{L^1}$ & rate &  $\textrm{err}_{L^\infty}$ & rate &
$u_{\textrm{min}}$ & \#Newton  \\
\hline
 0.306 & 37   & 0.001   & 0.01024  &  0.154E-01 &    -   &  0.568E-02 &    -   &  0.939E-01 &    -   & -0.068  &193\\
 0.153 & 129  & 0.00025 & 0.00256  &  0.671E-02 &  1.201 &  0.213E-02 &  1.416 &  0.613E-01 &  0.615 & -0.048  &642\\
 0.077 & 481  & 0.00006 & 0.00064  &  0.271E-02 &  1.309 &  0.702E-03 &  1.600 &  0.326E-01 &  0.910 & -0.027  &2178\\
 0.038 & 1857 & 0.00002 & 0.00016  &  0.104E-02 &  1.384 &  0.212E-03 &  1.725 &  0.170E-01 &  0.938 & -0.015  &7365\\
\hline   
\end{tabular}
}
\caption{Test 3. Quasilinear scheme, fluxes defined by~\eqref{eq:Fks-pmedrift}.}
\label{tab:t3-lx1ly100-linear}
\end{table}

Here again, the convergence orders of both scheme are similar, but strictly lower than $2$ because of the 
lack of regularity of the exact solution. 
Both schemes violate the positivity of the solution in this case, but the amplitude of the 
undershoots is smaller for the nonlinear scheme. There is no contradiction 
here with Lemma~\ref{lem:harnack} since $p$ is not singular at $u=0$.
Our nonlinear scheme is slightly more accurate, produces undershoots 
with a smaller amplitude, and is cheaper than the quasilinear one. 

\subsubsection{Test 4. A heterogeneous test case}\label{sssec:hetero}

The last test aims to illustrate the ability of the scheme to deal with 
heterogeneous situations. Motivated by an application to complex flows 
in porous media (see for instance~\cite{CP12,CGM15}), we test 
the nonlinear VAG scheme in a slightly more complicated configuration 
where both the permeability tensor $\L$ and the pressure function $p$ depend 
on $\x$ in a discontinuous way. 
More precisely, the domain $\O = (0,1)^2$ is made of two open subdomains 
$\O_1$ (the \emph{drain}) and $\O_2$ (the {\em barrier}) with 
$\ov \O = \ov \O_1 \cup \ov \O_2$ and $\O_1 \cap \O_2 = \emptyset$ (see 
Figure~\ref{fig:meshHete} for a representation of $\O_1$ and $\O_2$). 
The permeability tensor and the pressure function are defined by 
$$
\L(\x) = \begin{cases}
\L_1 = {\bf I}_d & \text{if}\; \x \in \O_1, \\
\L_2 = \begin{pmatrix}
1 & 0 \\ 0 & 0.01 
\end{pmatrix}
& \text{if}\; \x \in \O_2, \\
\end{cases}
$$
and 
$$
p(u,\x) = \begin{cases}
p_1(u) = 3\log(u) & \text{if}\; \x \in \O_1,\\
p_2(u) = \log(u)& \text{if}\; \x \in \O_2. 
\end{cases}
$$
The mobility function is linear and does not depend on $\x$, i.e., $\eta(u) = u$. 
For the sake of simplicity, we have set $V=0$.
At the interface $J$ between $\O_1$ and $\O_2$, the flux and the pressure are assumed to be continuous, 
i.e., denoting by $u_i$ the restriction of $u$ to $\O_i$ and by ${\n}_{i}$ the normal to $J$ 
outward w.r.t. $\O_i$, we require
\be\label{eq:jump}
3 \L_1 \grad u_1 \cdot {\n}_{1} +  \L_2 \grad u_2 \cdot \n_{2} = 0, \quad\text{and}\quad p_1(u_1) = p_2(u_2) 
\quad \text{on}\; J\times(0,\tf).
\ee
The problem is complemented with the boundary conditions 
\begin{itemize}
\item $p(u,\x) = 0$ (hence $u=1$) on the bottom boundary,
\item $p(u,\x) = -4$ (hence $u \simeq 0.018$) on the top boundary,
\item $\L \grad u \cdot \n = 0$ on the lateral boundaries.
\end{itemize}
The initial data is chosen at equilibrium, with $p(u_0, \x) = -4$ in the whole $\O$.
Existence and uniqueness for this problem follow from the 
analysis carried out in~\cite{NoDEA}.

Since $u$ is discontinuous across $J$ (in opposition to the pressure $p(u,\cdot)$ following~\eqref{eq:jump}), 
it is natural to choose $p$ rather than $u$ as the primary variable of the numerical scheme 
(cf.~\cite{HF08,EGHM14_zamm}, we refer to~\cite{BCtau} for an alternate strategy that 
improves robustness) in order to avoid the complex treatment of the jump condition~\eqref{eq:jump} 
at the interface performed for instance in~\cite{NoDEA,FVbarriere,EMS09,BCH13}. 

The mesh $\Mm$ is assumed to be compatible with the geometry of $\O$, in the sense that 
$\k \in \Mm$ is either contained in $\O_1$ or $\O_2$, but $J \cap \k = \emptyset$ for all $\k \in \Mm$ (cf. Figure~\ref{fig:meshHete}).
Define the functions $u_\k:\R \to (0,\infty)$ as the inverse of $p(\cdot,\x_\k)$ for all $\k \in \Mm$. 
The subset of $\Vv$ made of vertices belonging to the top or bottom boundaries where 
Dirichlet boundary conditions hold is denoted by $\Vv_{\rm ext}$. We also make use of the notations 
$\Vv_{\rm int} = \Vv \setminus \Vv_{\rm ext}$ and $\Vv_{\k,\rm int} = \Vv_{\rm int} \cap \Vv_\k$ for $\k \in \Mm$.
The scheme~\eqref{eq:syst} 
expressed with $p$ as a primary variable consists in finding $\boldsymbol{p} = \left(p_\k^n, p_\s^n\right)_{\k,\s,n}$ 
in $W_{\Dd,\bdt}$ such that for all $n\ge 1$, 
$$
\left\{
\begin{array}{ll}
\ds \frac{u_\k(p_\k^n) - u_\k(p_\k^{n-1})}{\dt_n} m_\k + \sum_{\s \in \Vv_\k} F_{\k,\s}^n = 0, & \forall \k \in \Mm, \\[5pt]
\ds \sum_{\k \in \Mm_\s} \frac{u_\k(p_\s^n) - u_\k(p_\s^{n-1})}{\dt_n} m_{ \k,\s} + \sum_{\k \in \Mm_\s} F_{\s,\k}^n = 0, &\forall \s \in \Vv_{\rm int}, \\[5pt]
F_{\k,\s}^n + F_{\s,\k}^n = 0, & \forall \k \in \Mm, \forall \s \in \Vv_{\k,\rm int}, \\[5pt]
\ds F_{\k,\s}^n = \sqrt{\eta_{\k,\s}^n} \sum_{\s' \in \Vv_\k} a_{\s,\s'}^\k \sqrt{\eta_{\k,\s'}^n} (p_\k^n - p_{\s'}^n), 
& \forall \k \in \Mm, \; \forall \s \in \Vv_\k, \\[5pt]
\ds \eta_{\k,\s}^n = {\frac{u_\k(p_\k^n) + u_\k(p_\s^n)}2},& \forall \k \in \Mm, \, \forall \s \in \Vv_\k.
\end{array}\right.
$$
\begin{figure}[h!]
\centering
\includegraphics[scale=0.125]{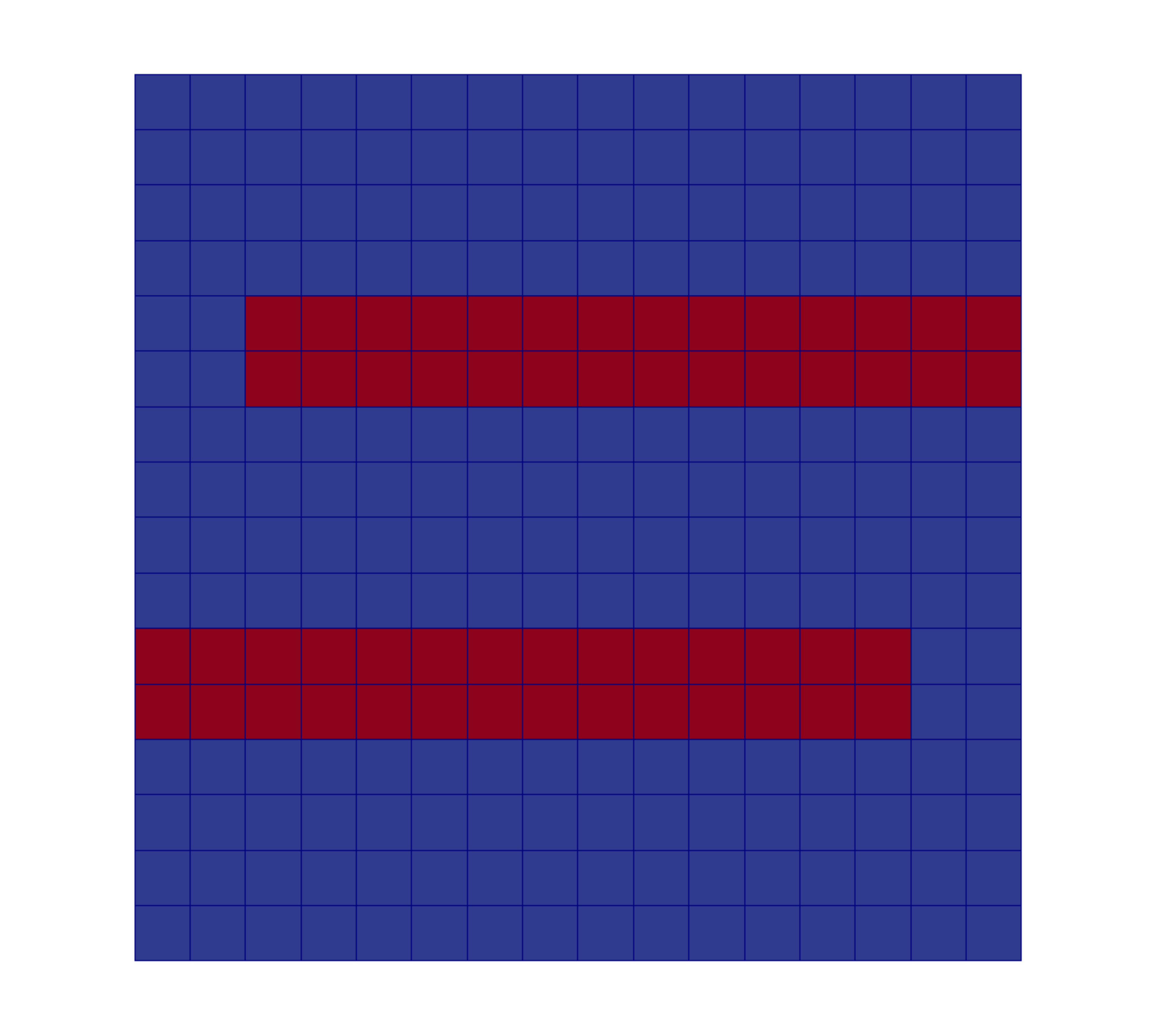}
\includegraphics[scale=0.125]{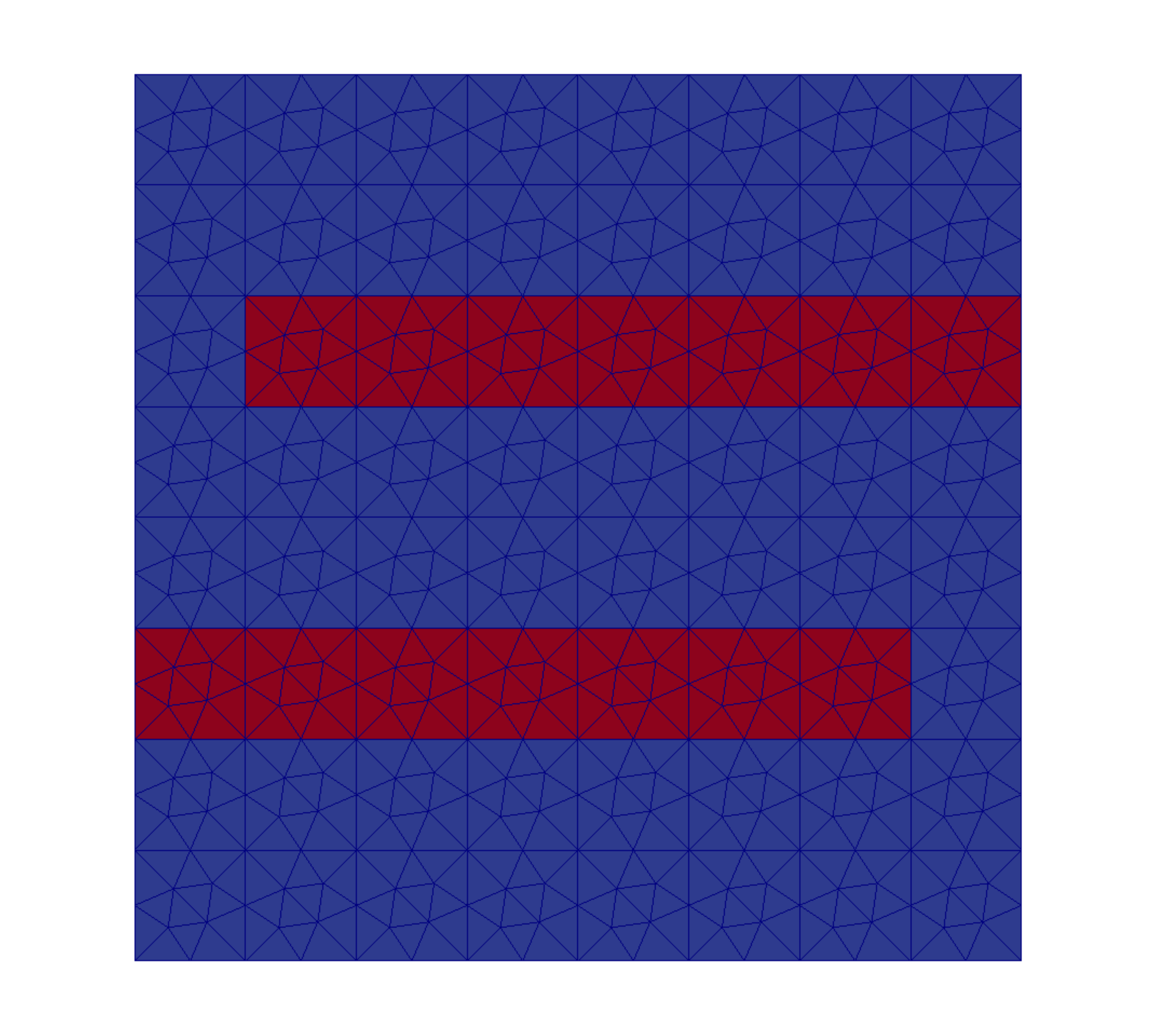}
\caption{Test 4. Illustration of the two sub-domains: the {\em drain} $\O_1$ in blue 
and the {\em barriers} $\O_2$ in red. Left: cartesian grid. Right: unstructured triangular grid.}
\label{fig:meshHete}
\end{figure}

We observe on Figure \ref{fig:resu-hete} that the results on the triangular mesh (with $481$ nodes) 
and on the cartesian mesh (with $289$ nodes) are similar. 
Moreover, the numbers of Newton-Raphson iteration needed to compute both solutions 
are of the same order.

\begin{figure}[htp]
  \centering
  \subfloat{\includegraphics[scale=0.15]{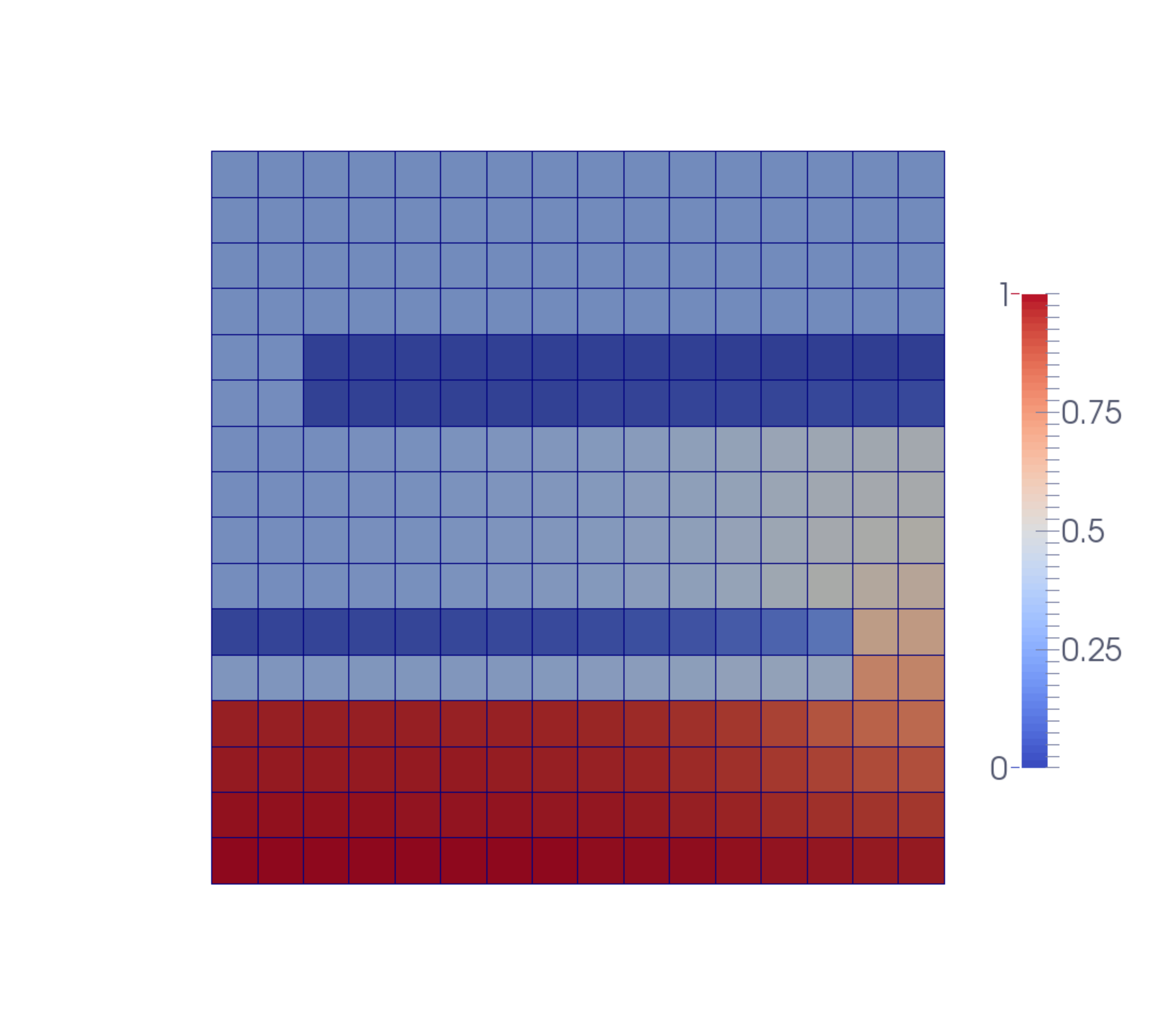}}
  \hspace{5pt}
  \subfloat{\includegraphics[scale=0.15]{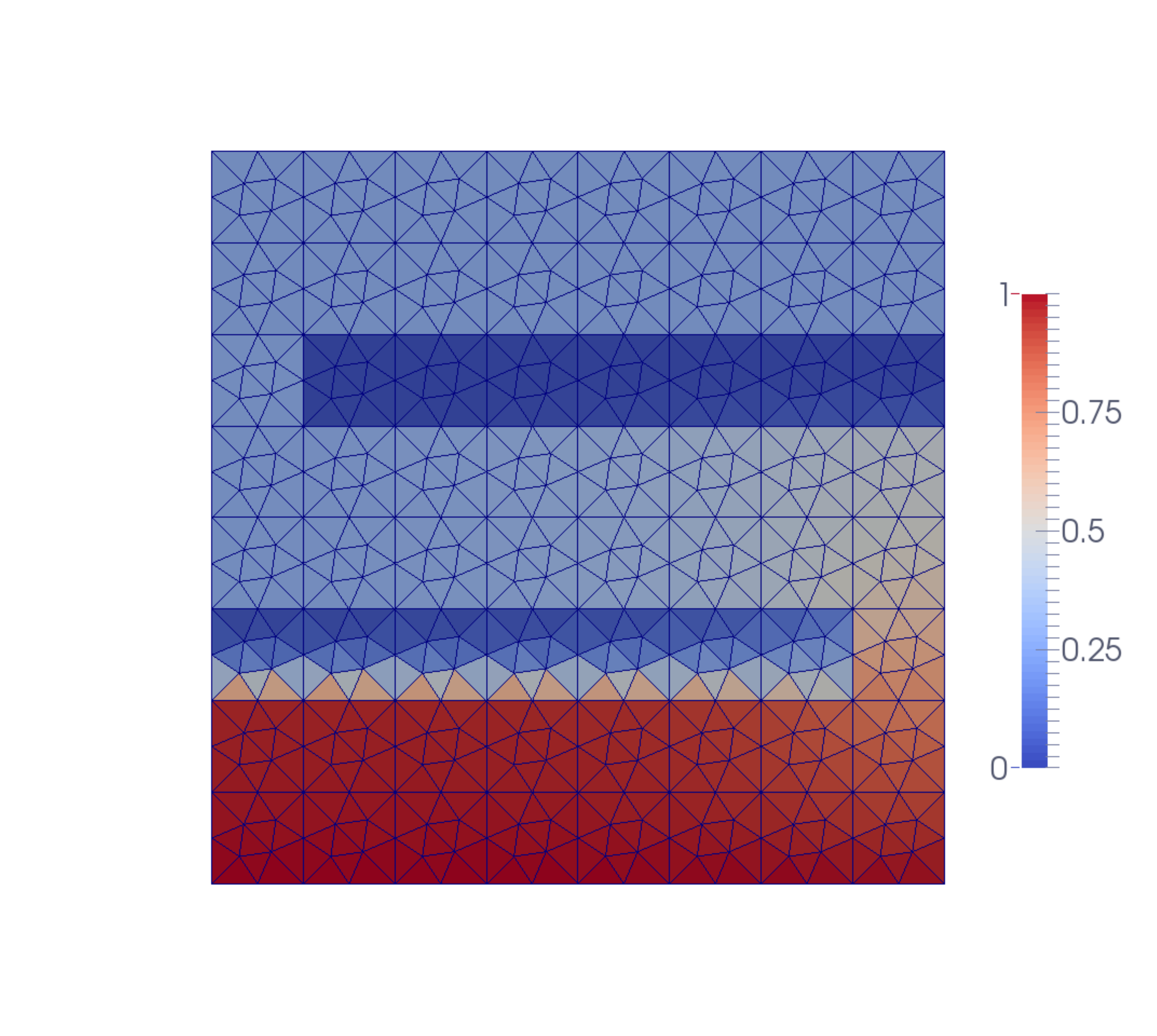}}
  \vspace*{-20pt}
  \subfloat{\includegraphics[scale=0.15]{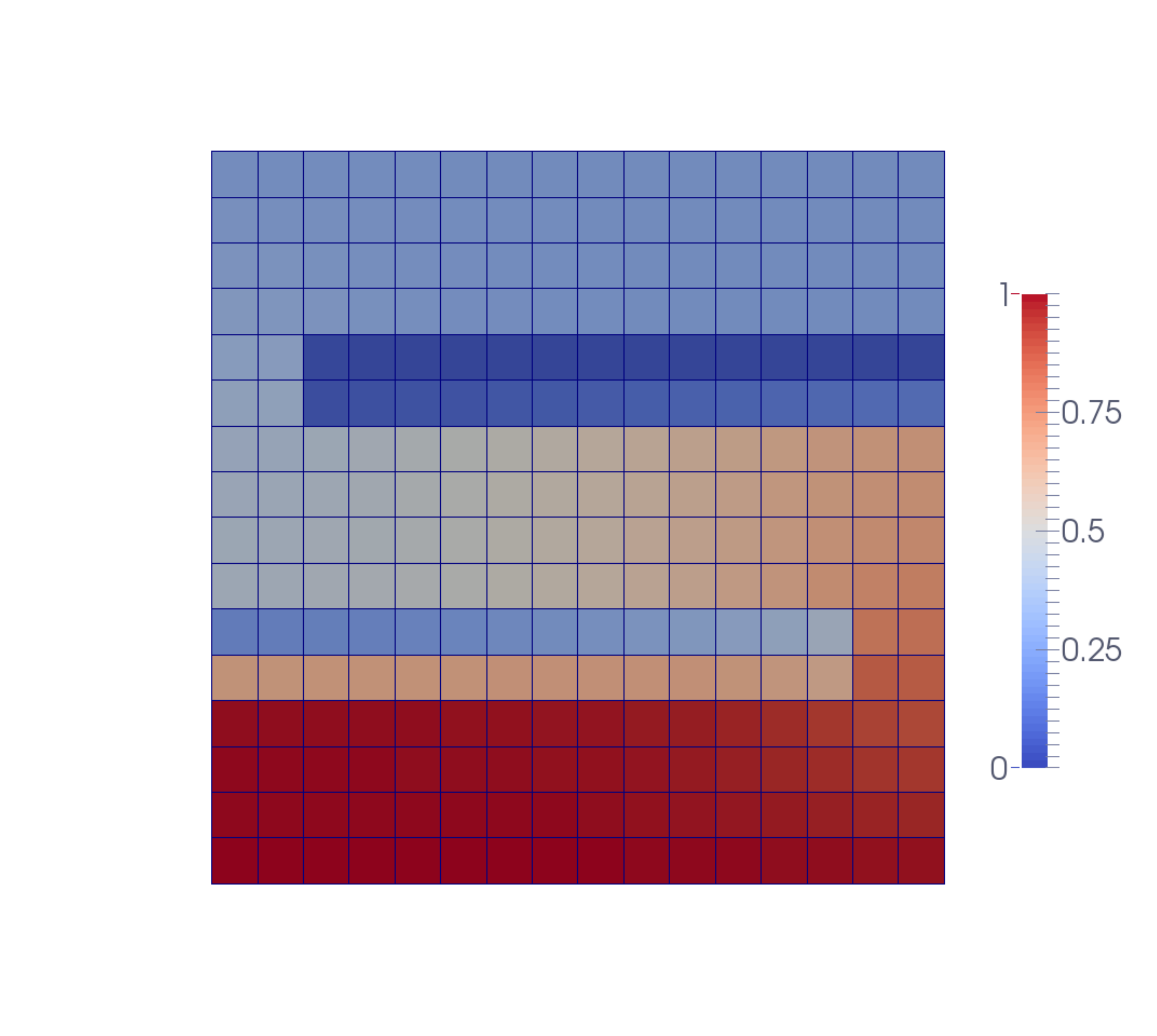}}
  \hspace{5pt}
  \subfloat{\includegraphics[scale=0.15]{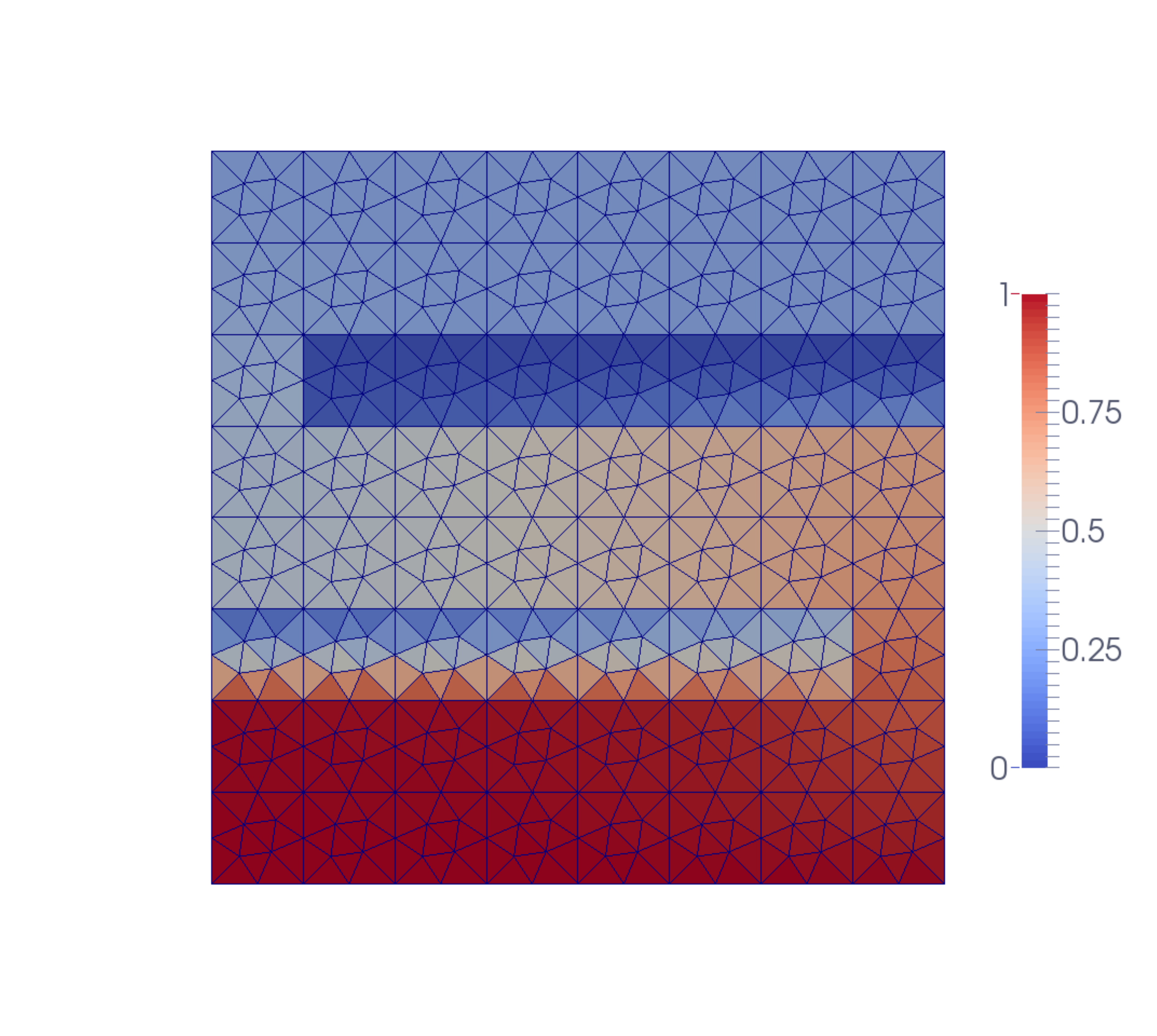}}
  \vspace*{-20pt}
  \subfloat{\includegraphics[scale=0.15]{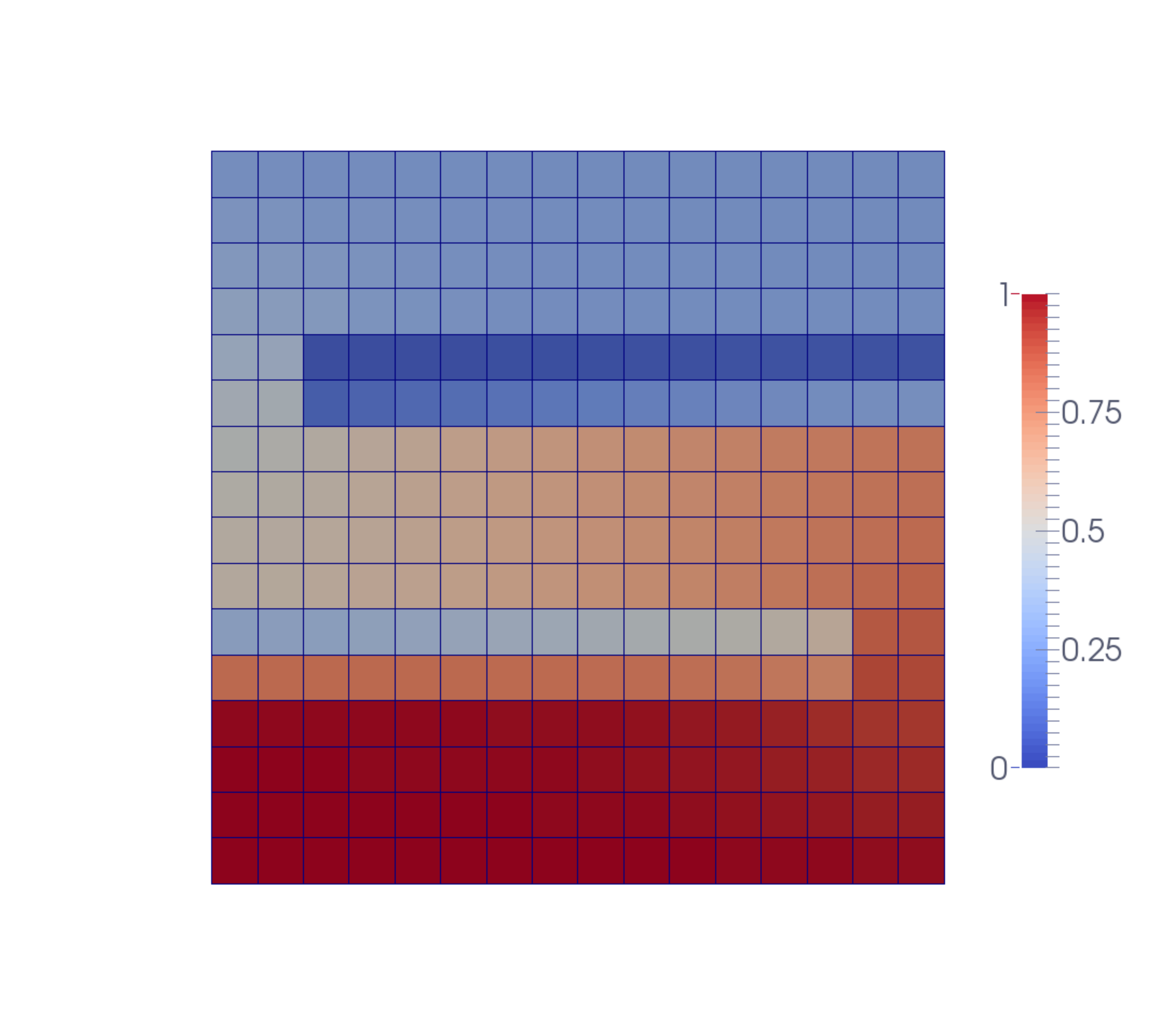}}
  \hspace{5pt}
  \subfloat{\includegraphics[scale=0.15]{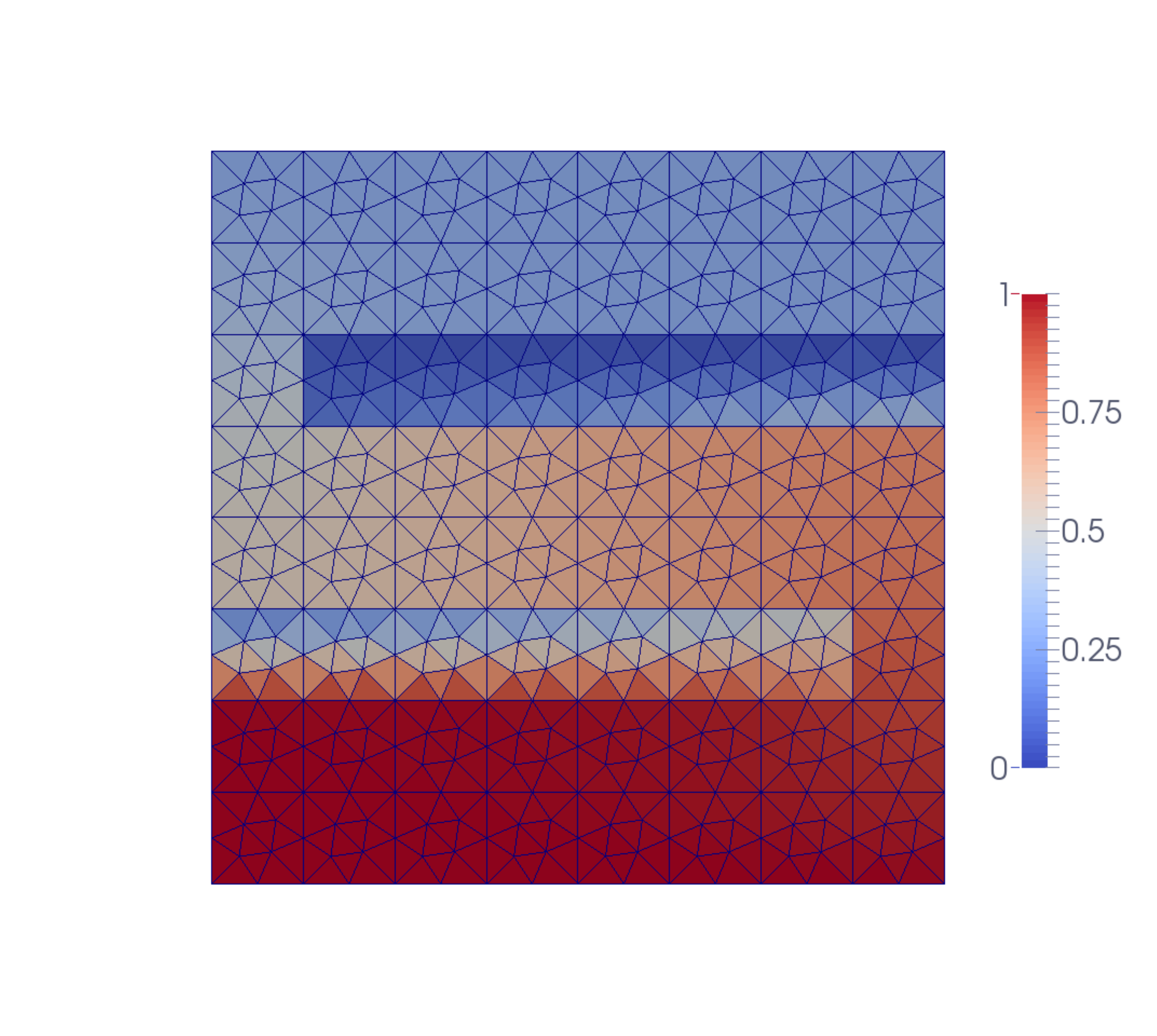}}
  \caption{Approximation of $u(p,\x)$ at times $t = 0.05$,  $t=0.2$, and  $t=1$ for the two different meshes.}
  \label{fig:resu-hete}
\end{figure}

\appendix

\section{Some lemmas related to the VAG discretization} \label{sec:VAG-lem} 

This appendix gathers lemmas on some properties of the VAG discretization that 
are independent of the continuous problem (and thus of the scheme).
In what follows,  $\Dd = (\Mm,\Tt)$ denote a discretization of $\O$ 
as prescribed in \S\ref{sssec:mesh-space}, and $\pi_\Tt, \pi_\Mm, \pi_\Dd$ 
and $\grad_\Tt$ are the corresponding reconstruction operators.

\begin{lem} \label{lem:cond-Ak} 
For $\k \in \Mm$, let $\A_\k = \left(a^\k_{\s,\s'} \right)_{\s,\s' \in \Vv_\k} $ be the matrix defined by~\eqref{eq:akss}, 
then there exists $C$ depending only on $\L$, $\theta_\Tt$ and $\ell_\Dd$ (but not on $\k$) such that ${\rm Cond} _2(\A_\k) \le C$.
\end{lem}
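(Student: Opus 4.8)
The plan is to show that the matrix $\A_\k$, which represents the local stiffness bilinear form $\bd_\k \u \cdot \A_\k \bd_\k \bv = \int_\k \L \grad_\Tt \u \cdot \grad_\Tt \bv \,\d\x$, is spectrally equivalent (up to mesh- and anisotropy-dependent constants) to the identity on $\R^{\ell_\k}$ modulo the one-dimensional kernel direction. First I would recall that $\A_\k$ is symmetric positive definite on $\R^{\ell_\k}$ (stated after~\eqref{eq:akss}), so ${\rm Cond}_2(\A_\k) = \lambda_{\max}(\A_\k)/\lambda_{\min}(\A_\k)$, and it suffices to produce two-sided bounds $c_1 |\bxi|^2 \le \bxi \cdot \A_\k \bxi \le c_2 |\bxi|^2$ for all $\bxi \in \R^{\ell_\k}$, with $c_1, c_2 > 0$ depending only on $\L$, $\theta_\Tt$ and $\ell_\Dd$. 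Given such bounds the conclusion follows with $C = c_2/c_1$.

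For a vector $\bxi = (\xi_\s)_{\s \in \Vv_\k} \in \R^{\ell_\k}$, I would use the key observation that $\bxi = \bd_\k \bv$ for some $\bv \in W_\Dd$ (indeed, fix $v_\k = 0$ and set $v_\s = -\xi_\s$); then $\bxi \cdot \A_\k \bxi = \int_\k \L \grad_\Tt \bv \cdot \grad_\Tt \bv\, \d\x$. Using the uniform ellipticity~\eqref{A:L} of $\L$, this is comparable to $\int_\k |\grad_\Tt \bv|^2 \d\x = \sum_{T \in \Tt, T \subset \k} |T|\, |\grad (\pi_\Tt \bv)_{|T}|^2$. On each simplex $T$ of the submesh the $\P_1$ gradient is explicit in terms of the nodal values at the vertices of $T$ and the geometry of $T$; a standard scaling argument on the reference simplex, controlled by the shape-regularity $\theta_\Tt$ of $\Tt$, gives $|T|\,|\grad(\pi_\Tt\bv)_{|T}|^2 \asymp h_T^{d-2} \sum (\text{squared differences of nodal values on } T)$. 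Summing over the simplices inside $\k$ and relating the nodal values at $\x_\sig$ to those at the $\x_\s$ via the convex-combination identity~\eqref{eq:x_sig} and the bound $0 \le \beta_{\sig,\s} \le 1$ (using also $\sum_\s \beta_{\sig,\s} = 1$), and relating $v_\k$ and the $v_\s$ as above, one controls everything by $\sum_{\s \in \Vv_\k} \xi_\s^2 = |\bxi|^2$, with multiplicative constants depending only on $\theta_\Tt$ and $\ell_\Dd$ (the latter bounding the number of simplices per cell and the number of vertices per face). This yields the upper bound $\bxi \cdot \A_\k \bxi \le c_2 |\bxi|^2$.

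For the lower bound I would argue in the reverse direction: since $\A_\k$ is positive definite, its smallest eigenvalue is positive, and the point is to bound it away from $0$ uniformly. Writing again $\bxi \cdot \A_\k \bxi = \int_\k \L \grad_\Tt \bv \cdot \grad_\Tt \bv \,\d\x \ge \lambda_\star \int_\k |\grad_\Tt\bv|^2\d\x$ with $\bv$ chosen as above ($v_\k = 0$, $v_\s = -\xi_\s$), it suffices to bound $\int_\k |\grad_\Tt \bv|^2 \d\x$ from below by $c\, |\bxi|^2$. This is a discrete Poincaré-type inequality on the single cell $\k$ with its simplicial decomposition: because the value $v_\k = 0$ is pinned at the cell center and $\pi_\Tt\bv$ is continuous piecewise affine on the simplices emanating from $\x_\k$, a path argument (walk from $\x_\k$ through the simplices to a vertex $\x_\s$, summing gradient contributions, using Cauchy–Schwarz and the number of simplices along the path controlled by $\ell_\Dd$) shows $|\xi_\s|^2 = |v_\s|^2 \le C(\theta_\Tt, \ell_\Dd)\, h_\k^{2-d}\int_\k|\grad_\Tt\bv|^2\d\x$; summing over $\s \in \Vv_\k$ gives the desired lower bound. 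I expect this lower bound — the discrete Poincaré inequality on the macrocell, with the center value pinned — to be the main technical obstacle, since it requires careful bookkeeping of the simplicial path and the face-center interpolation~\eqref{eq:x_sig}; the upper bound is essentially a routine trace/scaling estimate. Combining the two bounds gives ${\rm Cond}_2(\A_\k) \le c_2/c_1 =: C$, with $C$ depending only on $\L$ (through $\lambda_\star,\lambda^\star$), $\theta_\Tt$ and $\ell_\Dd$, and in particular independent of $\k$, as claimed.
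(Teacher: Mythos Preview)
Your approach is essentially the same as the paper's: reduce to two-sided bounds $c_1|\bxi|^2 \le \bxi\cdot\A_\k\bxi \le c_2|\bxi|^2$ via the identity $\bxi\cdot\A_\k\bxi = \int_\k \L\grad_\Tt\bv\cdot\grad_\Tt\bv\,\d\x$ with $\bxi=\bd_\k\bv$, use the ellipticity of $\L$, and then establish a two-sided equivalence between $\|\grad_\Tt\bv\|_{L^2(\k)}^2$ and $|\bd_\k\bv|^2$. The paper simply quotes this last equivalence from \cite[Lemma~3.2]{BM13}, which states
\[
C_1\,\frac{\meas(\k)}{h_\k^2}\,|\bd_\k\u|^2 \;\le\; \|\grad_\Tt\u\|_{L^2(\k)}^2 \;\le\; C_2\,\frac{\meas(\k)}{h_\k^2}\,|\bd_\k\u|^2,
\]
with $C_1,C_2$ depending only on $\theta_\Tt,\ell_\Dd$; the common factor $\meas(\k)/h_\k^2$ then cancels in the ratio, giving ${\rm Cond}_2(\A_\k)\le \lambda^\star C_2/(\lambda_\star C_1)$.

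There is one bookkeeping slip in your write-up worth fixing. In your upper bound you correctly record the simplex-level scaling $|T|\,|\grad(\pi_\Tt\bv)_{|T}|^2 \asymp h_T^{d-2}\sum(\cdots)$, but then you claim the resulting $c_2$ depends only on $\theta_\Tt,\ell_\Dd$ --- dropping the factor $h_\k^{d-2}$. In your lower bound you (correctly) keep $h_\k^{2-d}$. Taken literally this gives $c_2/c_1 \asymp h_\k^{2-d}$, which blows up as $h_\k\to0$ when $d=3$. The fix is simply to keep the common scaling factor $h_\k^{d-2}$ (equivalently $\meas(\k)/h_\k^2$) in \emph{both} bounds, exactly as the paper does; it then cancels in the condition number and the argument goes through. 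Your path/Poincar\'e argument for the lower bound and the scaling argument for the upper bound are valid ways to reprove the cited lemma from \cite{BM13}, so nothing is genuinely missing --- just tighten the scaling.
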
 
\begin{proof} 
Following~\cite[Lemma 3.2]{BM13}, there exist $C_1,C_2 >0$ depending only on $\theta_\Tt$ and $\ell_\Dd$ 
such that, for all $\u \in W_\Dd$ and all $\k \in \Mm$, one has 
 $$
 C_1 \frac{\meas(\k)} {(h_\k)^2} \sum_{\s \in \Vv_\k} \left(u_\s - u_\k\right)^2 \le {\|\grad_\Tt \u\|} ^2_{L^2(\k)} \le   C_2 \frac{\meas(\k)} {(h_\k)^2} \sum_{\s \in \Vv_\k} \left(u_\s - u_\k\right)^2, 
 $$
 where $h_\k$ denotes the diameter of the cell $\k \in \Mm$.
 As a consequence, one has 
 $$
 \lambda_\star C_1 \frac{\meas(\k)} {(h_\k)^2} |\bd_\k \u|^2 \le \bd_\k \u \cdot \A_\k \bd_\k \u = \int_\k \L \grad_\Tt \u \cdot \grad_\Tt \u\, \d\x \le \lambda^\star C_2 \frac{\meas(\k)} {(h_\k)^2} |\bd_\k \u|^2.
 $$
Since the application $\bd_\k: W_\Dd \to \R^{\ell_\k} $ is onto, we deduce that
$$
\lambda_\star C_1 \frac{\meas(\k)} {(h_\k)^2} |\bv|^2 \le \bv \cdot \A_\k \bv \le \lambda^\star C_2 \frac{\meas(\k)} {(h_\k)^2} |\bv|^2, 
\qquad \forall \bv \in \R^{\ell_\k}, 
$$
and thus that 
$
{\rm Cond} _2(\A_\k) \le \frac{\lambda^\star C_2} {\lambda_\star C_1}.
$
\end{proof}

\begin{lem} \label{lem:abs-Ak} 
There exists $C\ge 1$ depending only on $\L$, $\theta_\Tt$ and $\ell_\Dd$  such that, for all $\k \in \Mm$ 
and all $\bv = {(v_\s)} _{\s \in \Vv_\k} \in \R^{\ell_\k} $, 
one has 
$$
\sum_{\s \in \Vv_\k} \left( \sum_{\s' \in \Vv_\k} |a^\k_{\s,\s'} |\right) \left(v_\s\right)^2 \le C\; \bv \cdot \A_\k \bv.
$$
\end{lem}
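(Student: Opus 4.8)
The statement asks for a bound of the form $\sum_{\s} \big(\sum_{\s'} |a^\k_{\s,\s'}|\big) v_\s^2 \le C\, \bv\cdot\A_\k\bv$, where $C$ depends only on $\L$, $\theta_\Tt$ and $\ell_\Dd$. The natural strategy is a \emph{scaling and compactness} argument on a reference simplex, exactly in the spirit of the proof of Lemma~\ref{lem:cond-Ak}: both sides scale the same way under dilation of the cell, so the ratio is controlled by a dimensionless quantity that depends only on the shape of the simplicial submesh $\Tt_{|\k}$, which in turn is controlled by $\theta_\Tt$ and $\ell_\Dd$.

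First I would reduce the left-hand side to something already understood. Since $|\Vv_\k| = \ell_\k \le \ell_\Dd$, the inner sum satisfies $\sum_{\s'\in\Vv_\k}|a^\k_{\s,\s'}| \le \ell_\Dd \max_{\s'}|a^\k_{\s,\s'}|$, and each entry $a^\k_{\s,\s'} = \int_\k \L \grad_\Tt\bfe_\s\cdot\grad_\Tt\bfe_{\s'}\,\d\x$ is bounded, using Cauchy--Schwarz and~\eqref{A:L}, by $\lambda^\star \|\grad_\Tt\bfe_\s\|_{L^2(\k)}\|\grad_\Tt\bfe_{\s'}\|_{L^2(\k)}$. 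Then the estimate $\|\grad_\Tt\bfe_\s\|_{L^2(\k)}^2 \le C_2 \meas(\k)/h_\k^2$ from~\cite[Lemma 3.2]{BM13} (used already in the proof of Lemma~\ref{lem:cond-Ak}) gives $\sum_{\s'}|a^\k_{\s,\s'}| \le C \lambda^\star \meas(\k)/h_\k^2$ for a constant $C$ depending only on $\theta_\Tt$ and $\ell_\Dd$. Hence the left-hand side is bounded by $C\lambda^\star \frac{\meas(\k)}{h_\k^2}\,|\bv|^2$. On the other hand, the proof of Lemma~\ref{lem:cond-Ak} already shows $\bv\cdot\A_\k\bv \ge \lambda_\star C_1 \frac{\meas(\k)}{h_\k^2}|\bv|^2$ for all $\bv\in\R^{\ell_\k}$ (this used surjectivity of $\bd_\k$, but here we may simply quote the conclusion, which is an inequality valid for every $\bv$). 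Dividing, the claimed inequality follows with $C = \lambda^\star C / (\lambda_\star C_1)$, and enlarging if necessary we may take $C\ge 1$.

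The only point requiring a little care — and the main obstacle, such as it is — is making sure the constant from~\cite[Lemma 3.2]{BM13} giving the \emph{upper} bound $\|\grad_\Tt\bfe_\s\|_{L^2(\k)}^2 \le C_2\meas(\k)/h_\k^2$ indeed depends only on $\theta_\Tt$ and $\ell_\Dd$ and not on the individual cell; this is precisely what is asserted there, since each $\grad_\Tt\bfe_\s$ is a piecewise affine nodal basis gradient on the simplices $T\in\Tt$ with $T\subset\k$, and $\|\grad_\Tt\bfe_\s\|_{L^\infty(T)} \lesssim 1/\rho_T \le \theta_\Tt/h_T$, while $h_T \simeq h_\k$ up to $\ell_\Dd$ and $\theta_\Tt$. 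Assembling these observations is routine, so I would merely invoke~\cite[Lemma 3.2]{BM13} and the already-proven lower bound inside the proof of Lemma~\ref{lem:cond-Ak}, and leave the arithmetic to the reader.
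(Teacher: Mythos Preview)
Your argument is correct and is essentially the same as the paper's: both bound the left-hand side by a constant times $|\bv|^2$ and the right-hand side from below by a comparable constant times $|\bv|^2$, the ratio being controlled via the eigenvalue bounds on $\A_\k$ already established in Lemma~\ref{lem:cond-Ak}. The paper packages this slightly more concisely by observing directly that the left-hand side is at most $\|\A_\k\|_1\,|\bv|^2 \le C_1\|\A_\k\|_2\,|\bv|^2$ (norm equivalence in dimension $\le \ell_\Dd$) while $\bv\cdot\A_\k\bv \ge \|\A_\k\|_2\,|\bv|^2/\mathrm{Cond}_2(\A_\k)$, thereby avoiding the explicit entrywise estimate of $a^\k_{\s,\s'}$; your route simply unpacks this into the scaling factor $\meas(\k)/h_\k^2$, which is equivalent.
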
 
\begin{proof} 
Denoting by $\| \cdot \|_{q} $ the usual matrix $q$-norm, 
one has 
$$
\sum_{\s \in \Vv_\k} \left( \sum_{\s' \in \Vv_\k} |a^\k_{\s,\s'} |\right) \left(v_\s \right)^2 \le {\| \A_\k \|} _1 | \bv |^2.
$$
Since the dimension of the space $\R^{\ell_\k} $ is bounded by $\ell_\Dd$, there exists $C_1$ depending only on 
$\ell_\Dd$ such that $\|\A_\k\|_1 \le C_1 \|\A_\k\|_2$, so that 
\be\label{eq:abs-Ak1} 
\sum_{\s \in \Vv_\k} \left( \sum_{\s' \in \Vv_\k} |a^\k_{\s,\s'} |\right) \left(v_\s \right)^2 \le C_1 {\| \A_\k \|} _2 |  \bv |^2.
\ee
On the other hand, since $\A_\k$ is symmetric definite and positive, one has 
$$
 \bv \cdot \A_\k \bv \ge \frac{{\| \A_\k \|} _2} {{\rm Cond} _2(\A_\k)} |\bv|^2.
$$
Using Lemma~\ref{lem:cond-Ak}, we obtain that there exists $C_2>0$ depending only on $\L$, $\theta_\Tt$ and $\ell_\Dd$ 
such that 
\be\label{eq:abs-Ak2} 
\bv \cdot \A_\k \bv \ge C_2 {{\| \A_\k \|} _2} | \bv |^2.
\ee
Putting~\eqref{eq:abs-Ak1} and \eqref{eq:abs-Ak2} together, we conclude the proof of Lemma~\ref{lem:abs-Ak} by choosing $C = \frac{C_1} {C_2} $.
\end{proof} 

\begin{lem} \label{lem:poids-Aks} 
Let $\k \in \Mm$ and $\A_\k = (a^\k_{\s,\s'})_{\s,\s'\in\Vv_\k} \in \R^{\ell_\k\times\ell_\k} $ be the matrix defined by~\eqref{eq:akss}.
Let $\bmu_\k = \left(\mu_{\k,\s} \right)_{\s \in \Vv_\k} \in \R^{\ell_\k} $ and $\bv \in W_\Dd$, then 
$$
\sum_{\s \in \Vv_\k} \sum_{\s' \in \Vv_\k} (v_\s - v_\k) \mu_{\k,\s} a^\k_{\s,\s'} \mu_{\k,\s'} (v_{\s'} - v_\k) 
\le \max_{\s \in \Vv_\k} (\mu_{\k,\s})^2 \sum_{\s \in \Vv_\k} \left( \sum_{\s' \in \Vv_\k} |a^\k_{\s,\s'} |\right) \left(v_\s - v_\k \right)^2.
$$
\end{lem}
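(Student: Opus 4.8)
The plan is to view the left--hand side as a quadratic form in the auxiliary vector $\bw = (w_\s)_{\s\in\Vv_\k}$ defined by $w_\s = \mu_{\k,\s}(v_\s - v_\k)$, and then to bound this quadratic form crudely by passing to the matrix of absolute values and symmetrizing. No continuous--problem ingredient is needed; this is a purely algebraic statement about the symmetric matrix $\A_\k$.

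First I would rewrite the left--hand side as
$$
\sum_{\s\in\Vv_\k}\sum_{\s'\in\Vv_\k} (v_\s - v_\k)\,\mu_{\k,\s}\, a^\k_{\s,\s'}\,\mu_{\k,\s'}\,(v_{\s'} - v_\k)
= \sum_{\s\in\Vv_\k}\sum_{\s'\in\Vv_\k} w_\s\, a^\k_{\s,\s'}\, w_{\s'}.
$$
Since each individual term satisfies $w_\s\, a^\k_{\s,\s'}\, w_{\s'} \le |a^\k_{\s,\s'}|\,|w_\s|\,|w_{\s'}|$, summation yields
$$
\sum_{\s,\s'\in\Vv_\k} w_\s\, a^\k_{\s,\s'}\, w_{\s'} \;\le\; \sum_{\s,\s'\in\Vv_\k} |a^\k_{\s,\s'}|\,|w_\s|\,|w_{\s'}|.
$$

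Next I would apply Young's inequality in the form $|w_\s|\,|w_{\s'}| \le \frac12\big(w_\s^2 + w_{\s'}^2\big)$, and then use that $\A_\k$ is symmetric, i.e.\ $|a^\k_{\s,\s'}| = |a^\k_{\s',\s}|$ by the definition~\eqref{eq:akss}. Relabelling the two summation indices in the piece carrying $w_{\s'}^2$ collapses the two halves of the splitting into a single diagonally weighted sum:
$$
\sum_{\s,\s'\in\Vv_\k} |a^\k_{\s,\s'}|\,|w_\s|\,|w_{\s'}|
\;\le\; \sum_{\s,\s'\in\Vv_\k} |a^\k_{\s,\s'}|\,w_\s^2
\;=\; \sum_{\s\in\Vv_\k}\Big(\sum_{\s'\in\Vv_\k} |a^\k_{\s,\s'}|\Big)\, w_\s^2.
$$
Finally I would estimate termwise $w_\s^2 = (\mu_{\k,\s})^2 (v_\s - v_\k)^2 \le \big(\max_{\s\in\Vv_\k}(\mu_{\k,\s})^2\big)(v_\s - v_\k)^2$, pull the maximum out of the sum, and obtain the announced inequality. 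The argument is elementary; the only step needing a bit of care is the symmetrization, where one must use $a^\k_{\s,\s'} = a^\k_{\s',\s}$ (hence $|a^\k_{\s,\s'}| = |a^\k_{\s',\s}|$) to reduce the double Young splitting to the single weight $\sum_{\s'\in\Vv_\k}|a^\k_{\s,\s'}|$ appearing in the statement. I do not anticipate any genuine obstacle.
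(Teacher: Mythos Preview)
Your proof is correct and follows essentially the same approach as the paper: pass to absolute values, apply Young's inequality $ab \le \tfrac12(a^2+b^2)$, and use the symmetry $a^\k_{\s,\s'}=a^\k_{\s',\s}$ to merge the two halves. The only cosmetic difference is the order of operations---the paper pulls out $\max_{\s}(\mu_{\k,\s})^2$ before applying Young's inequality to $|v_\s-v_\k|\,|v_{\s'}-v_\k|$, whereas you apply Young to the full $|w_\s|\,|w_{\s'}|$ and extract the maximum at the end.
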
 
\begin{proof} 
Using $ab \le \frac{a^2} 2 +  \frac{b^2} 2$, we obtain that 
\begin{multline*} 
\sum_{\s \in \Vv_\k} \sum_{\s' \in \Vv_\k} (v_\s - v_\k) \mu_{\k,\s} a^\k_{\s,\s'} \mu_{\k,\s'} (v_{\s'} - v_\k) \\
\le \max_{\s \in \Vv_\k} (\mu_{\k,\s})^2\sum_{\s \in \Vv_\k} \sum_{\s' \in \Vv_\k} |v_\s - v_\k| |a^\k_{\s,\s'} |  |v_{\s'} - v_\k| \\
\le \frac{\max_{\s \in \Vv_\k} (\mu_{\k,\s})^2} 2\sum_{\s \in \Vv_\k} \left(\sum_{\s' \in \Vv_\k} |a^\k_{\s,\s'} |\right) (v_\s - v_\k)^2 \\
+ \frac{\max_{\s \in \Vv_\k} (\mu_{\k,\s})^2} 2\sum_{\s' \in \Vv_\k} \left(\sum_{\s \in \Vv_\k} |a^\k_{\s,\s'} |\right) (v_\s' - v_\k)^2.
\end{multline*} 
One concludes the proof of Lemma~\ref{lem:poids-Aks} by noticing that, since $\A_\k$ is symmetric, the two terms in the right-hand side 
of the above inequality are equal.
\end{proof}

\begin{lem} \label{lem:taille-maille} 
There exists $C$ depending only on $\theta_\Tt$ and $\ell_\Dd$ such that 
$$ \meas(T)  \le \meas(\k) \le C \meas(T), \qquad \forall \k \in \Mm, \; \forall T \in \Tt \text{with } T \subset \k.$$
\end{lem}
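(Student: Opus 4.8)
The plan is to treat the two inequalities separately. The left one is immediate: since $T\subset\k$, one has $\meas(T)\le\meas(\k)$. For the right one, I would use that the simplices of $\Tt$ contained in $\k$ (namely the $T_{\k,\sig}$, $\sig\in\Ff_\k$, if $d=2$, and the $T_{\k,\sig,\e}$, $\sig\in\Ff_\k$, $\e\in\Ee_\sig$, if $d=3$) form a partition of $\k$ up to a negligible set, so that $\meas(\k)=\sum_{T'\in\Tt,\;T'\subset\k}\meas(T')$. The number $N_\k$ of such simplices is controlled by $\ell_\Dd$ only: $N_\k=\#\Ff_\k\le\ell_\Dd$ if $d=2$, and $N_\k=\sum_{\sig\in\Ff_\k}\#\Ee_\sig\le\ell_\Dd^2$ if $d=3$, since $\#\Ee_\sig=\#\Vv_\sig\le\#\Vv_\k\le\ell_\Dd$. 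Hence $\meas(\k)\le N_\k\max_{T'\subset\k}\meas(T')$, and it suffices to show that $\meas(T')\le C\meas(T)$ for any two simplices $T,T'$ contained in $\k$, with $C$ depending only on $\theta_\Tt$ and $\ell_\Dd$ (the dimension $d\in\{2,3\}$ being fixed).

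For this comparison I would combine two elementary estimates — a $d$-simplex of diameter $h$ has volume at most $c_d h^d$ and, writing $\rho$ for its insphere diameter, at least $c_d'\rho^d$ — with the bound $\rho_{T'}\ge h_{T'}/\theta_\Tt$ coming from the definition of $\theta_\Tt$. These give $\meas(T')\le c_d h_{T'}^d\le c_d h_\k^d$ (using $T'\subset\k$) and $\meas(T)\ge c_d'(h_T/\theta_\Tt)^d$, so the whole statement reduces to the purely geometric inequality $h_\k\le C(\theta_\Tt,\ell_\Dd)\,h_T$, i.e.\ every submesh simplex contained in $\k$ has diameter comparable to ${\rm diam}(\k)$. \emph{This is the main obstacle.} I would establish it by a chain argument along $\partial\k$. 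In dimension two, $T=T_{\k,\sig_0}$ is the triangle with vertices $\x_\k$ and the two endpoints of an edge $\sig_0$ of $\k$; list the vertices of $\k$ as $\s_0,\s_1,\dots,\s_{m-1}$ ($m=\#\Ff_\k\le\ell_\Dd$) in cyclic order around $\partial\k$, with $\s_0$ an endpoint of $\sig_0$, so that $|\x_\k-\x_{\s_0}|\le h_T$. For consecutive vertices $\s_{j-1},\s_j$ joined by the edge $\sig_j$, the insphere diameter of the triangle $T_{\k,\sig_j}$ is at most its height issued from $\x_{\s_{j-1}}$ (relative to the side $[\x_\k,\x_{\s_j}]$), hence at most $|\x_\k-\x_{\s_{j-1}}|$; therefore $|\x_\k-\x_{\s_j}|\le h_{T_{\k,\sig_j}}\le\theta_\Tt\,\rho_{T_{\k,\sig_j}}\le\theta_\Tt\,|\x_\k-\x_{\s_{j-1}}|$, and by induction $|\x_\k-\x_{\s_j}|\le\theta_\Tt^{\ell_\Dd}h_T$ for every vertex $\s_j$ of $\k$. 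Since every point of $\ov{\k}$ is a convex combination of $\x_\k$ and of points $\x_\s$, $\x_\sig$ ($\s\in\Vv_\k$, $\sig\in\Ff_\k$), each $\x_\sig$ being itself a convex combination of the $\x_\s$, one gets $h_\k\le 2\max_{\s\in\Vv_\k}|\x_\k-\x_\s|\le 2\theta_\Tt^{\ell_\Dd}h_T$. The three-dimensional case is analogous: one propagates the same estimate first among the tetrahedra $T_{\k,\sig,\e}$ sharing a facet as $\e$ runs over the edges of a fixed face $\sig$ (the insphere diameter of $T_{\k,\sig,\e}$ being bounded, up to a dimensional factor, by $|\x_\k-\x_\s|$ for $\s$ an endpoint of $\e$, since that distance bounds the distance from $\x_\s$ to the opposite facet), and then from one face to an adjacent one through their common edge, using that the face–adjacency graph of $\partial\k$ is connected with at most $\ell_\Dd$ faces.

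Collecting these bounds yields $\meas(T')\le c_d h_\k^d\le c_d(2\theta_\Tt^{\ell_\Dd})^d h_T^d\le\frac{c_d}{c_d'}(2\theta_\Tt^{\ell_\Dd+1})^d\meas(T)$, whence $\meas(\k)\le N_\k\frac{c_d}{c_d'}(2\theta_\Tt^{\ell_\Dd+1})^d\meas(T)$, which is the announced estimate with a constant $C$ depending only on $\theta_\Tt$ and $\ell_\Dd$. The only nonroutine ingredient is the inequality $h_\k\le C(\theta_\Tt,\ell_\Dd)\,h_T$; the simplex volume/diameter/insphere comparisons and the combinatorial bound on $N_\k$ are standard.
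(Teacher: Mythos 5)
Your proof is correct and follows essentially the same route as the paper's: both decompose $\k$ into its submesh simplices, compare neighbouring simplices by means of the shape-regularity parameter $\theta_\Tt$, and chain this comparison over the boundedly many (in terms of $\ell_\Dd$) simplices contained in the cell. The only differences are cosmetic: you propagate the distances $|\x_\k-\x_\s|$ (hence diameters) and convert to measures at the end, making explicit the insphere/height estimate for adjacent simplices, whereas the paper chains the measures of adjacent simplices directly and bounds their number via the Euler--Descartes relation instead of your cruder $\ell_\Dd^2$ count.
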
 

\begin{proof} 
Let $\k \in \Mm$, then there exist $T_1, \dots, T_{r} $ simplexes, with $r=\ell_\k$ if $d = 2$ and 
$r = 2 \#\Ee_\k$ if $d=3$, 
 such that 
$$
\bigcup_{i=1} ^{r_\k} \ov T_i = \ov \k, \qquad  T_i \cap T_j = \emptyset \; \text{if } \; i \neq j.
$$
The Euler-Descartes theorem ensures that $r \le 4 (\ell_\Dd - 1)$ if $d=3$.

If $T_i$ and $T_j$ share a common edge, one gets that 
$$
\meas(T_i) \le \theta^d \; \meas(T_j).
$$
Let $i_0, i_1 \in \{1,\dots, r_\k\} $ be arbitrary but different, 
we deduce from the previous inequality the following non-optimal 
estimate:
$$
\meas(T_{i_0}) \le \theta^{4(\ell_\Dd - 1) d} \; \meas(T_{i_1}).
$$
Let $i_{\rm max} $ be such that $\meas(T_{i_{\rm max} }) = \max_{1 \le i \le r} \meas(T_i)$, 
then 
$$
\meas(\k) \le r \; \meas(T_{i_{\rm max} }) \le 4 (\ell_\Dd - 1) \theta^{4(\ell_\Dd - 1) d} \meas(T_i), \qquad 
\forall i \in \{1,\dots, r\}.
$$
\end{proof}

We state now a slight generalization of \cite[Lemma 3.4]{BM13}, where the same result is proven in the particular case $q=2$. The straightforward adaptation of the proof given in~\cite{BM13} to the case $q \neq 2$ is left to the reader. 
\begin{lem} \label{lem:34-BM13} 
There exists $C$ depending only on $\ell_\Dd$ and $\theta_\Tt$ defined in~\eqref{eq:gamma_M} and~\eqref{eq:theta_T} 
respectively such that, for all $\bv \in W_{\Dd}$ and all $q \in [1,\infty]$, one has 
$$
\left\| \pi_{\Dd} \bv -  \pi_{\Tt} \bv \right\|_{L^q(\O)} + \left\| \pi_{\Dd} \bv -  \pi_{\Mm} \bv \right\|_{L^q(\O)} \le C h_\Tt \left\| \grad_{\Tt} \bv \right\|_{L^q(\O)}. 
$$
\end{lem}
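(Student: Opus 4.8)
The plan is to argue cell by cell, exploiting that on each cell $\k\in\Mm$ the function $\pi_\Tt\bv$ is affine on every simplex $T\in\Tt$ with $T\subset\k$ --- with constant gradient $(\grad_\Tt\bv)_{|T}$ --- whereas $\pi_\Dd\bv$ and $\pi_\Mm\bv$ are piecewise constant on $\k$ and take only values in the finite set $\{v_\k\}\cup\{v_\s\mid\s\in\Vv_\k\}$, that is, in the set of nodal values $\pi_\Tt\bv(\x_\k)$, $\pi_\Tt\bv(\x_\s)$ attained by $\pi_\Tt\bv$ at the vertices of $\k$. By the triangle inequality it suffices to bound each of the two terms $\|\pi_\Dd\bv-\pi_\Tt\bv\|_{L^q(\O)}$ and $\|\pi_\Tt\bv-\pi_\Mm\bv\|_{L^q(\O)}$ separately by $Ch_\Tt\|\grad_\Tt\bv\|_{L^q(\O)}$.

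First I would prove the pointwise estimate $|\pi_\Dd\bv(\x)-\pi_\Tt\bv(\x)|+|\pi_\Mm\bv(\x)-\pi_\Tt\bv(\x)|\le C_0\,h_\Tt\max_{T\subset\k}|(\grad_\Tt\bv)_{|T}|$ for a.e. $\x$ in a cell $\k$, with $C_0$ an absolute constant. The contribution of $\pi_\Mm\bv$ is immediate: on $\k$ one has $\pi_\Mm\bv\equiv v_\k=\pi_\Tt\bv(\x_\k)$, $\x_\k$ is a vertex of every simplex $T\subset\k$, and affinity of $\pi_\Tt\bv$ on $T$ yields $|\pi_\Tt\bv(\x)-v_\k|\le h_T|(\grad_\Tt\bv)_{|T}|$. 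For $\pi_\Dd\bv$, on the pieces of $\k$ where it equals $v_\k$ the same bound applies; on a piece where it equals $v_\s$ for some $\s\in\Vv_\k$ I would add and subtract $v_\k$ and control $|v_\k-v_\s|=|\pi_\Tt\bv(\x_\k)-\pi_\Tt\bv(\x_\s)|$ by noting that the segment $[\x_\k,\x_\s]$ joins two vertices of a single simplex $T\subset\k$ of $\Tt$ --- namely $T_{\k,\sig}$ for a face $\sig\in\Ff_\k$ with $\s\in\Vv_\sig$ when $d=2$, and $T_{\k,\sig,\e}$ for such a $\sig$ and an edge $\e\in\Ee_\sig$ with $\s\in\Vv_\e$ when $d=3$ --- on which $\pi_\Tt\bv$ is affine; hence $|v_\k-v_\s|\le h_T|(\grad_\Tt\bv)_{|T}|\le h_\Tt\max_{T'\subset\k}|(\grad_\Tt\bv)_{|T'}|$.

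Once this pointwise bound is available the rest is routine. For $q=\infty$ I would simply take the essential supremum over $\x\in\k$ and then the maximum over $\k\in\Mm$. For $q<\infty$, I would raise the pointwise bound to the power $q$, integrate over $\k$, pick a simplex $T^\star\subset\k$ realizing $\max_{T\subset\k}|(\grad_\Tt\bv)_{|T}|$, and use Lemma~\ref{lem:taille-maille} to write $\meas(\k)\,|(\grad_\Tt\bv)_{|T^\star}|^q\le C\,\meas(T^\star)\,|(\grad_\Tt\bv)_{|T^\star}|^q=C\int_{T^\star}|\grad_\Tt\bv|^q\le C\int_\k|\grad_\Tt\bv|^q$, with $C$ depending only on $\theta_\Tt$ and $\ell_\Dd$. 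Summing over $\k\in\Mm$ and extracting $q$-th roots then gives the claimed inequality.

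The only point requiring care --- and the one I would single out as the genuine obstacle --- is that the mass-lumping subsets $\omega_\k$, $\omega_{\k,\s}$ are constrained only by their measures and by the covering property, and hence need not be aligned with the simplicial submesh $\Tt$: a point of $\omega_{\k,\s}$ may lie in a simplex of which $\x_\s$ is not a vertex. This is precisely why $|v_\k-v_\s|$ must be estimated intrinsically within the cell $\k$, through the one-simplex segment above, rather than on the simplex containing $\x$, and why Lemma~\ref{lem:taille-maille}, comparing the volume of $\k$ with that of any of its simplices, is needed to convert the cellwise $L^\infty$-type bound back into an $L^q$ bound against $\|\grad_\Tt\bv\|_{L^q}$.
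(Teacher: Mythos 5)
Your argument is correct. Note that the paper does not actually write out a proof of this lemma: it is presented as a slight generalization of Lemma~3.4 of the cited work of Brenner and Masson (the case $q=2$), with the adaptation left to the reader, so your proposal supplies exactly the kind of direct argument that is being omitted. Your two key observations are the right ones: on each cell $\k$ both $\pi_\Dd\bv$ and $\pi_\Mm\bv$ only take the nodal values $v_\k$, $v_\s$ ($\s\in\Vv_\k$) of the affine-per-simplex function $\pi_\Tt\bv$, and each such pair $(\x_\k,\x_\s)$ is a pair of vertices of a single simplex $T\subset\k$ of the submesh, so $|v_\k-v_\s|\le h_T\,|(\grad_\Tt\bv)_{|T}|$ independently of where in $\k$ the mass-lumping set $\omega_{\k,\s}$ happens to sit; Lemma~\ref{lem:taille-maille} then converts the cellwise sup bound into the $L^q$ bound. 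Two points you leave implicit are harmless but worth stating: (i) after taking $q$-th roots the constant is $C^{1/q}\le\max(1,C)$, so a single $C$ works for all $q\in[1,\infty]$ as claimed; (ii) the existence, for every $\s\in\Vv_\k$, of a face $\sig\in\Ff_\k$ (and in 3D an edge $\e\in\Ee_\sig$) with $\s\in\Vv_\sig$ (resp.\ $\s\in\Vv_\e$) is the standard geometric fact about the polyhedral cells that makes the simplex $T_{\k,\sig}$ or $T_{\k,\sig,\e}$ containing both $\x_\k$ and $\x_\s$ available; it is implicitly used throughout the construction of $\Tt$, so invoking it is legitimate.
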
 
\medskip

\begin{lem} \label{lem:equiv-TtDd} 
Let $\Dd$ be a discretization of $\O$ as introduced in~\S\ref{sssec:mesh-space} such that $\zeta_\Dd >0$, 
then there exist $C_1 >0$ depending only on $q$, $\theta_\Tt$ and $\ell_\Dd$ 
and $C_2$ depending moreover on  $\zeta_\Dd$
such that 
\be\label{eq:equiv-TtDd} 
C_1 \| \pi_{\Dd} \bv\|_{L^q(\O)} \le  \| \pi_{\Tt} \bv\|_{L^q(\O)} \le C_2 \| \pi_{\Dd} \bv\|_{L^q(\O)}, 
\qquad \forall  \bv \in W_{\Dd}.
\ee
\end{lem}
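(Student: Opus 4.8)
The statement compares the $L^q$-norms of the two piecewise-constant reconstructions $\pi_\Dd \bv$ and $\pi_\Tt \bv$ (wait — $\pi_\Tt\bv$ is piecewise affine, not constant; still it is a perfectly good $L^q$ function). The natural route is to exploit Lemma~\ref{lem:34-BM13}, which already bounds $\|\pi_\Dd\bv - \pi_\Tt\bv\|_{L^q(\O)}$ by $C h_\Tt \|\grad_\Tt\bv\|_{L^q(\O)}$. A triangle inequality then immediately gives one direction up to controlling $h_\Tt\|\grad_\Tt\bv\|_{L^q(\O)}$ by $\|\pi_\Dd\bv\|_{L^q(\O)}$; the other direction is symmetric. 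So the crux reduces to a discrete Poincaré-type inequality: $h_\Tt\|\grad_\Tt\bv\|_{L^q(\O)} \le C\|\pi_\Dd\bv\|_{L^q(\O)}$, with $C$ depending on $\theta_\Tt$, $\ell_\Dd$, $\zeta_\Dd$ and $q$.

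\textbf{Carrying out the core estimate.} First I would localize: on each cell $\k \in \Mm$, the affine reconstruction $\pi_\Tt\bv$ restricted to a simplex $T_{\k,\sigma}$ (or $T_{\k,\sigma,\e}$ in 3D) is determined by the values $v_\k$ and $v_\s$ for $\s\in\Vv_\sigma$ (and the interpolated $I_\sigma(\bv)$ at face centers in 3D, which is a convex combination of the $v_\s$ by~\eqref{eq:x_sig}). Standard finite-element scaling on the simplex $T$ of regularity $\theta_\Tt$ gives $h_T^q \|\grad_\Tt\bv\|_{L^q(T)}^q \le C(q,\theta_\Tt) \meas(T) \sum (\text{differences of the nodal values on } T)^q$, hence, summing over $T\subset\k$ and using $\#\Vv_\k,\#\Ff_\k \le \ell_\Dd$,
$$
h_\Tt^q \|\grad_\Tt\bv\|_{L^q(\k)}^q \le C(q,\theta_\Tt,\ell_\Dd)\,\meas(\k)\Big( |v_\k|^q + \sum_{\s\in\Vv_\k} |v_\s|^q\Big),
$$
where I have absorbed $|v_\k - v_\s|^q \le 2^{q-1}(|v_\k|^q + |v_\s|^q)$ (and the face-center values by convexity and Jensen). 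Next I would convert $\meas(\k)$ and the nodal terms back into $\|\pi_\Dd\bv\|_{L^q}^q$: by~\eqref{eq:mks} one has $m_\k = \meas(\k) - \sum_\s m_{\k,\s}$ and $m_\s = \sum_{\k} m_{\k,\s}$, and the hypothesis $\zeta_\Dd>0$ (definition~\eqref{eq:zeta_Dd}) together with the mass-lumping weights forces $m_\k \ge c\,\meas(\k)$ and $m_\s \ge c'\,\meas(\k)$ for every $\k\in\Mm_\s$, with $c,c'$ depending on $\zeta_\Dd,\ell_\Dd,d$ (this is exactly the lower bound used in the proof of Lemma~\ref{lem:mu}). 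Therefore $\meas(\k)|v_\k|^q \le C\, m_\k |v_\k|^q = C\int_\O |\pi_\Dd\bv|^q\1_{\omega_\k}$ and $\meas(\k)|v_\s|^q \le C\, m_\s |v_\s|^q = C\int_\O |\pi_\Dd\bv|^q\1_{\omega_\s}$. Summing over $\k$ and $\s$, each node $\s$ being counted at most $\ell_\Dd$ times, yields $h_\Tt^q\|\grad_\Tt\bv\|_{L^q(\O)}^q \le C(q,\theta_\Tt,\ell_\Dd,\zeta_\Dd)\,\|\pi_\Dd\bv\|_{L^q(\O)}^q$. Combined with Lemma~\ref{lem:34-BM13} and the triangle inequality, this gives $\|\pi_\Tt\bv\|_{L^q(\O)} \le C_2\|\pi_\Dd\bv\|_{L^q(\O)}$.

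\textbf{The reverse inequality and the obstacle.} For the lower bound $C_1\|\pi_\Dd\bv\|_{L^q(\O)}\le\|\pi_\Tt\bv\|_{L^q(\O)}$ the same mechanism runs in reverse: again by Lemma~\ref{lem:34-BM13}, $\|\pi_\Dd\bv\|_{L^q(\O)} \le \|\pi_\Tt\bv\|_{L^q(\O)} + C h_\Tt\|\grad_\Tt\bv\|_{L^q(\O)}$, but now I cannot close the loop by bounding $h_\Tt\|\grad_\Tt\bv\|$ by $\|\pi_\Dd\bv\|$ without circularity. Instead I would bound $h_\Tt\|\grad_\Tt\bv\|_{L^q(\O)}$ directly by $\|\pi_\Tt\bv\|_{L^q(\O)}$: this is a genuine inverse inequality for the $\P_1$ space $H_\Tt$ on the shape-regular mesh $\Tt$, valid cell-by-cell by scaling (the affine function on $T$ has $h_T\|\grad\|_{L^q(T)}\le C(\theta_\Tt,q)\|\cdot - \bar\cdot_T\|_{L^q(T)}\le C\|\cdot\|_{L^q(\text{neighborhood})}$), which holds without any reference to $\zeta_\Dd$; hence $C_1$ depends only on $q$, $\theta_\Tt$, $\ell_\Dd$ as claimed. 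Then $\|\pi_\Dd\bv\|_{L^q(\O)}\le (1+C)\|\pi_\Tt\bv\|_{L^q(\O)}$, giving the left inequality with $C_1 = (1+C)^{-1}$.

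The only subtle point — the main obstacle — is the passage from $\meas(\k)$-weighted nodal sums to genuine $\pi_\Dd$-norms: one must be careful that the mass-lumping weights $m_\k, m_{\k,\s}$ do not degenerate, which is precisely why the hypothesis $\zeta_\Dd>0$ is invoked (and why $C_2$, unlike $C_1$, depends on $\zeta_\Dd$). The finite-element scaling arguments are routine and I would simply cite shape-regularity; the bookkeeping of how many times each node is counted is controlled by $\ell_\Dd$. I would therefore only sketch these steps and leave the elementary scaling estimates to the reader, as is done elsewhere in this appendix.
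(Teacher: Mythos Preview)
Your approach is genuinely different from the paper's and more roundabout. The paper does not pass through Lemma~\ref{lem:34-BM13} or any gradient control at all: it first proves the two-sided equivalence
\[
\|\pi_\Tt\bv\|_{L^q(\O)}^q \;\approx\; \sum_{\k\in\Mm}\meas(\k)\Big(|v_\k|^q + \sum_{\s\in\Vv_\k}|v_\s|^q\Big)
\]
by a reference-element argument and scaling, and then compares this nodal sum directly with $\|\pi_\Dd\bv\|_{L^q(\O)}^q = \sum_\k m_\k|v_\k|^q + \sum_\s m_\s|v_\s|^q$ using the geometric bounds $m_\k\le\meas(\k)\le (d/\zeta_\Dd)m_\k$ and the analogous relations for $m_\s$. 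This is shorter and avoids gradients entirely.

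Your argument, as written, has a real gap on non-quasi-uniform meshes. You invoke the global inverse inequality $h_\Tt\|\grad_\Tt\bv\|_{L^q(\O)}\le C\|\pi_\Tt\bv\|_{L^q(\O)}$, but the standard inverse estimate is \emph{local}: $h_T\|\grad_\Tt\bv\|_{L^q(T)}\le C\|\pi_\Tt\bv\|_{L^q(T)}$. Replacing $h_T$ by the global $h_\Tt=\max_T h_T$ fails whenever the mesh has simplices of widely differing sizes (take $\bv$ supported near a small simplex). The same issue contaminates your upper-bound step ``summing over $T\subset\k$'' from $h_T^q$ to $h_\Tt^q$: since $h_\Tt\ge h_T$, the inequality goes the wrong way. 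The fix is to work cell-by-cell with $h_\k$ throughout: a local form of Lemma~\ref{lem:34-BM13} (which holds on each $\k$ with $h_\k$ in place of $h_\Tt$), combined with the fact that $h_T\approx h_\k$ for all $T\subset\k$ (a consequence of Lemma~\ref{lem:taille-maille} and shape regularity), makes both directions go through. Once localized, your argument is correct and yields the stated dependencies of $C_1,C_2$; but the direct nodal-sum route of the paper is cleaner.
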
 
\begin{proof} 
Let $\h T$ be a reference tetrahedron, and let $\h v: \h T \to \R$ be an affine function 
with nodal values $v_i$, $i \in \{1,\dots 4\} $, then for all $q>0$, there exists $C$ depending on $q$ such that 
$$
\frac1C \sum_{i = 1} ^4 |v_i|^q \le \| \h v \|_{L^q(\h T)} ^q \le C \sum_{i = 1} ^4 |v_i|^q. 
$$
Therefore, using classical properties of the affine change of variable between simplexes, one gets the existence 
of $C$ depending only on $q$, $\theta_\Tt$, and $\ell_\Dd$  such that, for all $\bv \in W_{\Dd} $,
\begin{multline} \label{eq:BMv} 
\frac1C \sum_{\k \in \Mm} \meas(\k) \left(  |v_\k|^q  + \sum_{\s \in \Vv_\k} | v_\s|^q \right) \\ 
\le \| \pi_{\Tt} \bv \|_{L^q(\O)} ^q \le
 C  \sum_{\k \in \Mm} \meas(\k) \left(  |v_\k|^q  + \sum_{\s \in \Vv_\k} | v_\s|^q \right).
\end{multline} 
On the other hand, one has 
$$
\| \pi_{\Dd} \bv\|_{L^q(\O)} ^q =  \sum_{\k \in \Mm} m_\k |v_\k|^q + \sum_{\s \in \Vv} m_\s |v_\s|^q. 
$$
A classical geometrical property and~\eqref{eq:zeta_Dd} yield 
\be\label{eq:mkk} 
m_\k \le  \meas(\k)  = d \int_{\O} \pi_\Tt \bfe_{\k} (\x) \d\x \le \frac{d} {\zeta_\Dd} m_\k \qquad \forall \k \in \Mm,
\ee
and similarly 
$$ m_\s \le d \int_{\O} \pi_\Tt \bfe_{\s} (\x) \d\x \le \frac{d} {\zeta_\Dd} m_\s, 
\qquad \; \forall \s \in \Vv.
$$
Notice now that the following geometrical identity holds:
$$
d \int_{\O} \pi_\Tt \bfe_{\s} (\x) \d\x  = \sum_{\substack{T \in \Tt \\ \x_\s \in \p T} } \meas(T), \qquad \forall \s \in \Vv.
$$
Lemma~\ref{lem:taille-maille}  yields the existence of $C>0$ depending on $\theta_\Tt$ and $\ell_\Dd$ such that
$$
\frac1C \sum_{\k \in \Mm_\s} \meas(\k) \le d \int_{\O} \pi_\Tt \bfe_{\s} (\x) \d\x \le \sum_{\k \in \Mm_\s} \meas(\k), \qquad \forall \s \in \Vv, 
$$
and the result of Lemma~\ref{lem:equiv-TtDd}  follows. 
\end{proof} 

\begin{lem} \label{lem:equiv-DdMm} 
Let $\Dd$ be a discretization of $\O$ as introduced in~\S\ref{sssec:mesh-space} such that $\zeta_\Dd >0$,
then, for all $q \in [1,\infty]$, one has 
$$
\left\| \pi_{\Mm} \bv \right\|_{L^q(\O)} 
\le \left(\frac d{\zeta_\Dd} \right)^{1/q} \left\| \pi_{\Dd} \bv \right\|_{L^q(\O)}, \qquad  \forall \bv \in W_{\Dd}.
$$
\end{lem}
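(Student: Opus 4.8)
The plan is to reduce the estimate to two elementary facts: that $\pi_\Mm$ only involves the cell unknowns $(v_\k)_{\k\in\Mm}$, and that the local volumes satisfy the comparison $\meas(\k)\le \frac{d}{\zeta_\Dd} m_\k$ already isolated in the proof of Lemma~\ref{lem:equiv-TtDd} (this is exactly~\eqref{eq:mkk}, which uses the geometrical identity $\meas(\k)=d\int_\O\pi_\Tt\bfe_\k\d\x$ together with the definition~\eqref{eq:zeta_Dd} of $\zeta_\Dd$ and the hypothesis $\zeta_\Dd>0$).

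First I would treat the range $q\in[1,\infty)$. Unfolding the definition~\eqref{eq:pi_Mm} of $\pi_\Mm$ and using that the cells $\k\in\Mm$ partition $\O$ up to a Lebesgue-null set, one gets $\|\pi_\Mm\bv\|_{L^q(\O)}^q=\sum_{\k\in\Mm}\meas(\k)\,|v_\k|^q$. Unfolding~\eqref{eq:pi_Dd} similarly gives $\|\pi_\Dd\bv\|_{L^q(\O)}^q=\sum_{\k\in\Mm}m_\k|v_\k|^q+\sum_{\s\in\Vv}m_\s|v_\s|^q\ge\sum_{\k\in\Mm}m_\k|v_\k|^q$. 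Inserting $\meas(\k)\le\frac{d}{\zeta_\Dd}m_\k$ into the first identity and comparing with the second yields $\|\pi_\Mm\bv\|_{L^q(\O)}^q\le\frac{d}{\zeta_\Dd}\|\pi_\Dd\bv\|_{L^q(\O)}^q$, and taking $q$-th roots gives the claimed bound for these $q$.

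It then remains to dispose of $q=\infty$, where $(d/\zeta_\Dd)^{1/q}$ is to be read as $1$. Since $\zeta_\Dd>0$ forces $m_\b>0$ for every $\b\in\Mm\cup\Vv$ (and $\meas(\k)>0$ for every $\k\in\Mm$), both $\pi_\Dd\bv$ and $\pi_\Mm\bv$ attain all the relevant nodal values, so $\|\pi_\Mm\bv\|_{L^\infty(\O)}=\max_{\k\in\Mm}|v_\k|\le\max_{\b\in\Mm\cup\Vv}|v_\b|=\|\pi_\Dd\bv\|_{L^\infty(\O)}$, which is exactly the asserted inequality. There is essentially no serious obstacle in this proof; the only point needing a little care is to make sure the volume comparison~\eqref{eq:mkk} is cited correctly and that the degenerate configurations $m_\k=0$ (or $m_\s=0$) are genuinely excluded — which is precisely what the standing assumption $\zeta_\Dd>0$ guarantees.
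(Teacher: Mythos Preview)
Your proof is correct and follows essentially the same route as the paper: expand $\|\pi_\Mm\bv\|_{L^q(\O)}^q$ as $\sum_{\k}\meas(\k)|v_\k|^q$, invoke the volume comparison~\eqref{eq:mkk}, and bound by $\|\pi_\Dd\bv\|_{L^q(\O)}^q$. The only difference is that you treat $q=\infty$ separately, which the paper's proof does not address explicitly (its computation is written for finite $q$); your handling of that endpoint is a small but welcome addition.
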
 
\begin{proof} 
Let $\bv = \left(v_\k, v_\s\right)_{\k \in \Mm, \s \in \Vv} \in W_{\Dd},$ then it follows from~\eqref{eq:mkk} 
that
\begin{align*} 
\left\| \pi_{\Mm} \bv \right\|_{L^q(\O)} ^q =& \sum_{\k \in \Mm} \meas(\k) \left| v_\k \right|^q  \\
\le& \left(\frac{d} {\zeta_\Dd} \right) \sum_{\k \in \Mm} m_\k \left| v_\k \right|^q \le 
\left(\frac{d} {\zeta_\Dd} \right) \left\| \pi_{\Dd} \bv \right\|_{L^q(\O)} ^q.
\end{align*} 
\end{proof} 

\begin{lem} \label{lem:Aovbv} 
Let $\bv = {(v_\k, v_\s)} _{\k,\s} \in W_\Dd$ be such that $v_\beta \ge 0$ for all $\beta \in \Mm \cup \Vv$, and 
define $\ov \bv = {(\ov v_\k, \ov v_\s)} _{\k,\s} \in W_\Dd$ by 
$$
\ov v_\s = 0, \qquad \ov v_\k = \max \left( v_\k, \max_{\s' \in \Vv_\k} v_{\s'} \right), \qquad \forall \s \in \Vv, \forall \k \in \Mm. 
$$
Then there exists $C$ depending only on $\theta_\Tt$, $\ell_\Dd$ and $\zeta_\Dd$ such that 
$$
\left\| \pi_\Mm \ov \bv \right\|_{L^1(\O)} \le C \left\| \pi_\Dd \bv \right\|_{L^1(\O)}.
$$
\end{lem}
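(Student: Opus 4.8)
The plan is to dominate $\pi_\Mm \ov\bv$ pointwise by a combination of cell- and vertex-contributions of $\bv$, and then to convert the vertex-contributions into $L^1$-mass of $\pi_\Dd \bv$ using the mass-lumping weights. Since all entries of $\bv$ are nonnegative, the definition of $\ov\bv$ gives the elementary bound $0 \le \ov v_\k = \max\bigl(v_\k, \max_{\s' \in \Vv_\k} v_{\s'}\bigr) \le v_\k + \sum_{\s' \in \Vv_\k} v_{\s'}$ for every $\k \in \Mm$. Multiplying by $\meas(\k)$ and summing over $\k$ yields
$$\|\pi_\Mm \ov\bv\|_{L^1(\O)} = \sum_{\k \in \Mm} \meas(\k)\, \ov v_\k \le \sum_{\k \in \Mm} \meas(\k)\, v_\k + \sum_{\k \in \Mm} \meas(\k) \sum_{\s' \in \Vv_\k} v_{\s'}.$$

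First I would treat the first sum on the right: it equals $\|\pi_\Mm \bv\|_{L^1(\O)}$, and Lemma~\ref{lem:equiv-DdMm} bounds it by $\frac{d}{\zeta_\Dd}\|\pi_\Dd \bv\|_{L^1(\O)}$ (here the nonnegativity of $\bv$ is not even needed). For the second sum, I would exchange the order of summation, writing $\sum_{\k \in \Mm} \meas(\k) \sum_{\s' \in \Vv_\k} v_{\s'} = \sum_{\s \in \Vv} v_\s \sum_{\k \in \Mm_\s} \meas(\k)$. The key point is then the geometric estimate $\sum_{\k \in \Mm_\s} \meas(\k) \le C\, m_\s$ for a constant $C$ depending only on $\theta_\Tt$, $\ell_\Dd$ and $\zeta_\Dd$: indeed, $\sum_{\k \in \Mm_\s} \meas(\k) \le C' d \int_\O \pi_\Tt \bfe_\s \d\x$ by the comparison between $\meas(\k)$ and $\int_\O \pi_\Tt\bfe_\s$ established in the proof of Lemma~\ref{lem:equiv-TtDd} (itself a consequence of Lemma~\ref{lem:taille-maille}), while $\int_\O \pi_\Tt \bfe_\s \d\x \le m_\s/\zeta_\Dd$ by the very definition~\eqref{eq:zeta_Dd} of $\zeta_\Dd$. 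Plugging this in and using $v_\s \ge 0$, the second sum is bounded by $C \sum_{\s \in \Vv} m_\s v_\s \le C \|\pi_\Dd \bv\|_{L^1(\O)}$, the last inequality being just the fact that the vertex part of $\pi_\Dd \bv$ has $L^1$-norm no larger than the whole.

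Combining the two estimates gives $\|\pi_\Mm \ov\bv\|_{L^1(\O)} \le \bigl(\frac{d}{\zeta_\Dd} + C\bigr)\|\pi_\Dd \bv\|_{L^1(\O)}$, which is the claim. I do not expect any serious obstacle here: the only point requiring a little care is the bound $\sum_{\k \in \Mm_\s} \meas(\k) \le C m_\s$, which rests on the shape-regularity of the simplicial submesh $\Tt$ and on the lower bound $\zeta_\Dd > 0$ guaranteeing that the nodal volumes are not degenerate; everything else is bookkeeping with nonnegative quantities.
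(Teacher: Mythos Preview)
Your proof is correct and follows essentially the same route as the paper. Both arguments start from the elementary bound $\ov v_\k \le v_\k + \sum_{\s \in \Vv_\k} v_\s$; the paper then invokes the equivalence~\eqref{eq:BMv} to pass through $\|\pi_\Tt \bv\|_{L^1(\O)}$ and applies Lemma~\ref{lem:equiv-TtDd}, whereas you unpack that lemma and apply its ingredients (Lemma~\ref{lem:taille-maille} and the definition of $\zeta_\Dd$) directly to the vertex sum, which amounts to the same estimate without the intermediate $\pi_\Tt$ norm.
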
 
\begin{proof} 
Let $\bv \in W_\Dd$ be a vector with positives coordinates, and let $\ov \bv$ be constructed as above. 
It follows from the construction of $\ov \bv$ that 
$$
\ov v_\k \le v_\k + \sum_{\s \in \Vv_\k} v_\s, \qquad \forall \k \in \Mm, 
$$
whence, applying~\eqref{eq:BMv}  with $q=1$, one gets 
$$
\| \pi_\Mm \ov \bv \|_{L^1(\O)} \le C \| \pi_\Tt \bv \|_{L^1(\O)}. 
$$ 
The result now directly follows from Lemma~\ref{lem:equiv-TtDd}.
\end{proof}

\begin{lem} \label{lem:A7} 
Let $\u = (u_\k, u_\s)_{\k \in \Mm, \s \in \Vv} \in W_\Dd$, then for all $\k \in \Mm$, we define 
$\ov \bd \u = \left( \ov \delta_\k \u, \ov \delta_\s \u\right)_{\k \in \Mm, \s \in \Vv} \in W_\Dd$ by 
$$ 
 \ov \delta_\s \u = 0 \quad \text{and } \quad \ov \delta_\k \u = \max_{\s' \in \Vv_\k} | u_\k - u_\s |, 
 \qquad \forall \k \in \Mm, \; \forall \s \in \Vv,
$$
then, for all $q \in [1,\infty]$, there exists $C$ depending only on $q$, $\theta_\Tt$, 
and $\ell_\Dd$ such that 
\be\label{eq:bdu-Lq} 
\left\| \pi_\Mm \ov \bd \u \right\|_{L^q(\O)} \le C h_\Tt \| \grad_\Tt \u \|_{L^q(\O)}.
\ee
\end{lem}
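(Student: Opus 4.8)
The plan is to reduce the estimate to a cell-by-cell comparison between the nodal differences $\ov\delta_\k\u$ and the (constant) gradient of $\pi_\Tt\u$ on the simplices of $\Tt$ contained in $\k$. First I would observe that, by the very construction of the submesh $\Tt$ recalled in \S\ref{sssec:mesh-space}, for every $\k\in\Mm$ and every $\s'\in\Vv_\k$ there is a simplex $T\in\Tt$ with $T\subset\k$ having both $\x_\k$ and $\x_{\s'}$ among its vertices: in the two-dimensional case one takes $T=T_{\k,\sig}$ with $\sig\in\Ff_\k$ such that $\s'\in\Vv_\sig$, and in the three-dimensional case one takes $T=T_{\k,\sig,\e}$ where $\e\in\Ee_\k$ has $\s'$ as an endpoint and $\sig\in\Ff_\k$ satisfies $\e\in\Ee_\sig$. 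Since $\pi_\Tt\u$ is affine on $T$ and interpolates $u_\k$ at $\x_\k$ and $u_{\s'}$ at $\x_{\s'}$, denoting by $g_T$ the Euclidean norm of the constant vector $\grad_\Tt\u$ on $T$ I would get $|u_\k-u_{\s'}|\le h_T\, g_T\le h_\Tt\max_{T\subset\k} g_T$. Maximising over $\s'\in\Vv_\k$ yields the pointwise bound $\ov\delta_\k\u\le h_\Tt\max_{T\subset\k} g_T$ for all $\k\in\Mm$.

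Next I would raise this bound to the power $q$ (assuming first $q<\infty$), multiply by $\meas(\k)$ and sum over $\k\in\Mm$; since $\pi_\Mm$ only retains the cell components, the left-hand side is exactly $\|\pi_\Mm\ov\bd\u\|_{L^q(\O)}^q$, and there remains to bound $\sum_{\k\in\Mm}\meas(\k)\max_{T\subset\k} g_T^q$ by $\|\grad_\Tt\u\|_{L^q(\O)}^q$ up to a multiplicative constant. For each $\k$ I would select a simplex $T_\k\subset\k$ realising the maximum, invoke Lemma~\ref{lem:taille-maille} to get $\meas(\k)\le C\,\meas(T_\k)$ with $C$ depending only on $\theta_\Tt$ and $\ell_\Dd$, and then estimate $\meas(T_\k)\, g_{T_\k}^q\le\sum_{T\subset\k}\meas(T)\, g_T^q=\|\grad_\Tt\u\|_{L^q(\k)}^q$. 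Summing over the disjoint cells $\k$ with $\bigcup_\k\ov\k=\ov\O$ and taking the $q$-th root gives the claimed inequality with constant $C^{1/q}$. The case $q=\infty$ follows directly from the pointwise bound $\ov\delta_\k\u\le h_\Tt\,\|\grad_\Tt\u\|_{L^\infty(\k)}$, without even needing Lemma~\ref{lem:taille-maille}.

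The only genuinely delicate point is the first geometric claim — that each vertex $\s'$ of $\k$ is joined to the cell centre $\x_\k$ by an edge of some simplex of $\Tt$ lying inside $\k$ — which must be checked separately in dimensions $2$ and $3$ and uses the assumption that $\k$ is star-shaped with respect to $\x_\k$; once it is granted, the rest is routine bookkeeping with the already-established mesh-regularity lemmas. This is in fact the $L^q$ analogue of the lower estimate used in the proof of Lemma~\ref{lem:cond-Ak} (borrowed from \cite[Lemma 3.2]{BM13}), so an alternative route would be to deduce it from a reference-simplex scaling argument exactly as in \cite{BM13}.
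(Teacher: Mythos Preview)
Your proof is correct and follows essentially the same route as the paper's: the same geometric observation that each vertex $\s'$ of $\k$ is joined to $\x_\k$ by an edge of some simplex $T\subset\k$, followed by the use of Lemma~\ref{lem:taille-maille} to pass from $\meas(T)$ to $\meas(\k)$, and finally summation over the cells. The only cosmetic difference is that the paper phrases the local bound via a reference-element scaling argument (yielding $|u_\k-u_\s|\le c\,h_T^2/\rho_T\cdot g_T$), whereas you use the more direct mean-value bound $|u_\k-u_{\s'}|\le h_T\,g_T$ on the affine function; both lead to the same conclusion.
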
 
\begin{proof} 
Let $\k \in \Mm$ and $\s \in \Vv_\k$, then there exists a simplicial sub-element $T \in \Tt$ of 
$\k \in \Mm$ such that $\x_\k$ and $\x_\s$ are vertices of $T$. 
Then it follows from classical finite element arguments (see e.g.~\cite{Ciarlet_Handbook, EG04}) 
that 
$$
\meas(T)^{1/q} |u_\k - u_\s| \le c \frac{\left(h_T\right)^2} {\rho_T} \| \grad_\Tt \u \|_{L^q(T)} \le C h_\Tt  \| \grad_\Tt \u \|_{L^q(\k)}, 
$$
where $c$ depends only on the dimension $d$ and on $q$, while $C$ depends additionally  on $\theta_\Tt$.
Thanks to Lemma~\ref{lem:taille-maille}, we get the existence of $C$ depending on $d$, $q$, $\theta_\Tt$ and $\ell_\Dd$ 
such that, 
$$
\meas(\k)^{1/q} |u_\k - u_\s| \le C  h_\Tt \left\| \grad_\Tt \u \right\|_{L^q(\k)}, \quad  \forall \k \in \Mm, \; \forall \s \in \Vv_\k. 
$$
Summing over $\k \in \Mm$ provides that~\eqref{eq:bdu-Lq}  holds.
\end{proof} 

\noindent
{\em Acknowledgements.} The authors are grateful to the anonymous referees for their valuable 
comments on the paper. They also warmly thank Flore Nabet and Thomas Rey for their precious 
feedback.


\def\ocirc#1{\ifmmode\setbox0=\hbox{$#1$}\dimen0=\ht0 \advance\dimen0
  by1pt\rlap{\hbox to\wd0{\hss\raise\dimen0
  \hbox{\hskip.2em$\scriptscriptstyle\circ$}\hss}}#1\else {\accent"17 #1}\fi}

\end{document}